\documentclass[11pt,a4paper,reqno]{amsart}
\usepackage{amsmath}
\usepackage{amsthm}
\usepackage{amsfonts}
\usepackage{amssymb}
\usepackage{enumitem}
\usepackage[colorlinks=true]{hyperref}

\newtheorem{theorem}{Theorem}[section]
\newtheorem{lemma}[theorem]{Lemma}
\newtheorem{proposition}[theorem]{Proposition}
\newtheorem{corollary}[theorem]{Corollary}
\numberwithin{equation}{section}

\catcode`@=11
\def\section{\@startsection{section}{1}%
  \z@{1.5\linespacing\@plus\linespacing}{.5\linespacing}%
  {\normalfont\bfseries\large\centering}}
\catcode`@=12

\setlist[enumerate]{label=\textup{(\roman*)},leftmargin=*,itemsep=2pt}

\newcommand{\be}{\begin{equation}}
\newcommand{\ee}{\end{equation}}
\newcommand{\R}{\mathbb{R}}
\newcommand{\C}{\mathbb{C}}
\newcommand{\pa}{\partial}
\newcommand{\dbar}{\bar\partial}
\newcommand{\eps}{\varepsilon}
\newcommand{\jb}[1]{\langle #1\rangle}
\newcommand{\cC}{\mathcal{C}}
\newcommand{\cI}{\mathcal{I}}
\newcommand{\cK}{\mathcal{K}}
\newcommand{\cB}{\mathcal{B}}
\newcommand{\cV}{\mathcal{V}}
\newcommand{\1}{\mathbf{1}}
\newcommand{\sgn}{\operatorname{sgn}}
\newcommand{\supp}{\operatorname{supp}}
\renewcommand{\Re}{\operatorname{Re}}
\renewcommand{\Im}{\operatorname{Im}}
\newcommand{\weakto}{\rightharpoonup}

\begin{document}

\title[Scattering for KP-II]{Scattering for the KP-II equation}
\author{Xi Chen}
\address{Xi Chen, Department of Mathematics and Computer Science, University of Basel, Spiegelgasse 1, CH--4051 Basel, Switzerland}
\email{{\tt xi01.chen@unibas.ch}}

\subjclass[2020]{35Q53, 35B40, 35P25, 37K15}
\keywords{KP-II equation, scattering, inverse scattering transform}

\begin{abstract}
We prove scattering in $L^2(\R^2)$ for real-valued solutions of the
Kadomtsev--Petviashvili II (KP-II) equation with localized initial data.  No smallness assumption is made, no zero mean condition is imposed, and the admissible data include all Schwartz functions. The proof uses the inverse scattering transform attached to the heat operator. It rests on three ingredients: an a priori bound for the scalar $\bar\partial$ problem which depends on the size of the scattering coefficient but not on its oscillating phase, a nonlinear Plancherel identity proved by analytic continuation from small data, and an estimate showing that the nonlinear part of the reconstruction is negligible on the propagation cones of free waves. To the best of our knowledge, this is the first result establishing scattering for the KP-II equation for every real-valued Schwartz initial datum.
\end{abstract}

\maketitle
\tableofcontents
\section{Introduction}

In this paper we consider the Kadomtsev--Petviashvili II equation on $\R^2$:
\be
\label{kp}
\text{(KP-II)}\quad
\begin{cases}
\pa_tu+\pa_x^3u+3\pa_x^{-1}\pa_y^2u+6u\pa_xu=0,\\
u_{|t=0}=u_0,
\end{cases}
\qquad u(t,x,y)\in\R ,
\ee
where $(t,x,y)\in\R\times\R^2$, $\pa_x^{-1}$ is the Fourier multiplier $(i\xi)^{-1}$, and $(\xi,\eta)$ always denotes the frequency variable dual to the physical variable $X=(x,y)$. Equivalently, $\pa_x(\pa_tu+\pa_x^3u+6u\pa_xu)+3\pa_y^2u=0$. We prove that every real-valued solution of \eqref{kp} with localized initial data behaves, as $t\to\pm\infty$, like a solution of the linearized equation, without any smallness assumption on the data.

\subsection{The KP-II equation}
\label{subsectionkp}

Kadomtsev and Petviashvili \cite{KP} derived these equations as models for weakly nonlinear long waves which travel mainly in the $x$-direction, with slow variations in the transverse direction $y$. The sign in front of $\pa_x^{-1}\pa_y^2u$ distinguishes KP-II from KP-I; for water waves it corresponds to a weak surface tension. A rigorous derivation from the water waves equations is due to Lannes and Saut \cite{LannesSaut}, and the monograph \cite{KleinS21} of Klein and Saut surveys the mathematical theory. We use the normalization \eqref{kp}, which is convenient for the inverse scattering transform. Every other normalization reduces to \eqref{kp} by scaling: if $u$ solves \eqref{kp}, then $\tilde u(t,x,y):=6u(t,x,\sqrt3\, y)$ solves
$$\pa_t\tilde u+\pa_x^3\tilde u+\pa_x^{-1}\pa_y^2\tilde u+\tfrac12\pa_x(\tilde u^2)=0,$$
and all the function spaces used below are invariant under this change of variables.

\medskip
\noindent{\it The linear flow.} We use the Fourier transform
$$\hat f(\xi,\eta):=\frac1{2\pi}\int_{\R^2}e^{-i(x\xi+y\eta)}f(x,y)\,dx\,dy,$$
which is unitary on $L^2(\R^2)$. The linear part of \eqref{kp} is diagonalized by the Fourier transform, and its solutions are given by the free group
\be
\label{freegroup}
\widehat{S(t)f}(\xi,\eta):=e^{it\omega(\xi,\eta)}\hat f(\xi,\eta),\qquad \omega(\xi,\eta):=\xi^3-\frac{3\eta^2}{\xi}.
\ee
Indeed, the Fourier transform of $\pa_x^3u+3\pa_x^{-1}\pa_y^2u$ is $(-i\xi^3+3i\eta^2/\xi)\hat u=-i\omega\hat u$. The group $S(t)$ is unitary on $L^2(\R^2)$. The dispersion relation $\omega$ is singular on the line $\{\xi=0\}$; since this line is a null set, its values there play no role in $L^2$. A wave packet localized near a frequency $(\xi,\eta)$ with $\xi\neq0$ travels with the group velocity
\be
\label{groupvelocity}
v=-\nabla\omega(\xi,\eta)=\Big(-3\xi^2-\frac{3\eta^2}{\xi^2},\ \frac{6\eta}{\xi}\Big),
\ee
so that $v_1+\frac{v_2^2}{12}=-3\xi^2<0$. Free waves therefore propagate inside the region
\be
\label{defcV}
\cV:=\Big\{v=(v_1,v_2)\in\R^2:\ v_1+\frac{v_2^2}{12}<0\Big\}
\ee
located to the left of a parabola, and the low frequencies $\xi\to0$ approach the boundary of $\cV$. Finally, a direct computation gives $\det\nabla^2\omega=-36$: the dispersion relation is nondegenerate everywhere. Accordingly, free waves decay like $|t|^{-1}$: the free group satisfies the dispersive estimate $\|S(t)f\|_{L^\infty}\lesssim|t|^{-1}\|f\|_{L^1}$ \cite{Saut93}, where we use the notation $\lesssim$ defined in the Notations at the end of this section.

\medskip
\noindent{\it Well-posedness and conservation of mass.} The Cauchy problem \eqref{kp} was solved for smooth data by Ukai \cite{Ukai} and Saut \cite{Saut93}. Bourgain \cite{Bourgain} proved global well-posedness in $L^2$, both on $\R^2$ and on the torus, using Fourier restriction norms. This was extended to anisotropic Sobolev spaces of negative regularity by Takaoka and Tzvetkov \cite{TakaokaTzvetkov} and by Isaza and Mej\'ia \cite{IsazaMejia}, to the full subcritical range of the spaces with norm $\|\jb{\xi}^{s}\hat f(\xi,\eta)\|_{L^2}$, $s>-\frac12$, by Hadac \cite{Hadac}, and to the scale-invariant space $\dot H^{-\frac12,0}(\R^2)$ defined below by Hadac, Herr and Koch \cite{HHK,HHKcorr}. The $L^2$ solutions conserve the mass:
\be
\label{mass}
\int_{\R^2}u^2(t,X)\,dX=\int_{\R^2}u_0^2(X)\,dX,\qquad t\in\R .
\ee
The scaling symmetry $u_\lambda(t,x,y)=\lambda^2u(\lambda^3t,\lambda x,\lambda^2y)$ maps solutions to solutions, leaves invariant the space $\dot H^{-\frac12,0}(\R^2)$ of tempered distributions with
$$\|f\|_{\dot H^{-\frac12,0}}:=\big\||\xi|^{-\frac12}\hat f(\xi,\eta)\big\|_{L^2}<\infty,$$
and satisfies $\|u_\lambda(0)\|_{L^2}^2=\lambda\|u_0\|_{L^2}^2$, so that $L^2$ is a subcritical space. Note that in this scaling the variable $y$ carries the weight $2$ while $x$ carries the weight $1$: the parabolic anisotropy of the problem, which will reappear in our assumptions. Throughout the paper, for $u_0\in L^2(\R^2)$, ``the solution of \eqref{kp}'' is Bourgain's global solution $u\in C(\R;L^2(\R^2))$. It depends continuously on $u_0$ in $L^2$, uniformly on compact time intervals, and if $u_0\in H^s(\R^2)$ for every $s\ge0$, then $u\in C(\R;H^s(\R^2))$ for every $s\ge0$ \cite{Bourgain}.

\subsection{Large time behavior of KP-II solutions}

One of the main questions about global solutions of a dispersive equation is their behavior as $t\to\pm\infty$. For KP-II there are strong indications that every solution with finite mass disperses. First, KP-II has no localized coherent structures: de Bouard and Saut \cite{BouardSaut} proved that it has no localized solitary waves in the natural energy class, and de Bouard and Martel \cite{BouardMartel} proved that it has no nonzero solution which is compact in $L^2$ modulo translations. Second, its line solitons, that is, the solitons of the Korteweg--de Vries equation extended trivially in $y$, are stable under periodic transverse perturbations \cite{MizumachiTzvetkov} and under localized perturbations \cite{Mizumachi}, but they have infinite mass. We are thus led to expect that some solutions with $L^2$ initial data {\it scatter}: there exist $u_\pm\in L^2$ such that $\|u(t)-S(t)u_\pm\|_{L^2}\to0$ as $t\to\pm\infty$.

Two kinds of results are available in this direction. For small data, Hadac, Herr and Koch \cite{HHK,HHKcorr} proved global well-posedness and scattering in the critical space $\dot H^{-\frac12,0}$. Without any size restriction, Mendez, Mu\~noz, Poblete and Pozo \cite{MMPP} obtained, by virial arguments, the decay of the local mass in large spatial regions, along sequences of times and, in suitable exterior regions, for all times; their results apply to arbitrary $L^2$ data of KP-II and energy data of KP-I, without integrability. Such local decay statements do not single out an asymptotic free profile in $L^2$. The aim of this paper is to prove scattering for every localized datum, with no restriction on its size.

\subsection{Inverse scattering for the heat operator}

The KP-II equation is completely integrable: it is the compatibility condition of a Lax pair whose spatial part is the nonstationary heat operator $\pa_y-\pa_x^2-u$ \cite{Dryuma,ZakharovShabat}. The corresponding scattering theory leads to a $\bar\partial$ problem in a complex spectral variable; it was developed formally by Ablowitz, Bar Yaacov and Fokas \cite{ABF}, and rigorously for small data by Wickerhauser \cite{Wick87}, who also derived the Hamiltonian structure and a Plancherel-type identity \cite{Wick}. Grinevich \cite{Grin} proved that, for real potentials whose derivatives decay exponentially, the direct transform is nonsingular without any smallness assumption, although the direct integral equation itself may fail to be uniquely solvable at some spectral points. We refer to the book of Ablowitz and Clarkson \cite{AblowitzClarkson} and to the recent account of Wu \cite{Wu}, which also covers the line soliton backgrounds. Large time asymptotics were obtained by Kiselev \cite{Kis}, for small and regular scattering data, and by Wu \cite{WuLT}, for small, regular and localized data and in regions separated from the boundary parabola of $\cV$. For the KP-I equation, Donmazov, Liu and Perry \cite{DonmazovLiuPerry} recently derived the large time asymptotics of small solutions without lumps.

A closely related $\bar\partial$ problem governs the defocusing Davey--Stewartson II equation. For this equation, Beals and Coifman \cite{BealsCoifman} and Sung \cite{Sung} developed the inverse scattering theory, Perry \cite{Perry} obtained global well-posedness and large time asymptotics in weighted spaces, and Nachman, Regev and Tataru \cite{NachmanRegevTataru} proved a nonlinear Plancherel theorem and global well-posedness in $L^2$. A nonlinear Plancherel identity is also one of the ingredients of the present paper.

Large time asymptotics by inverse scattering have a long history for one-dimensional integrable equations. The inverse scattering method for the Korteweg--de Vries equation goes back to Gardner, Greene, Kruskal and Miura \cite{GGKM}, and Eckhaus and Schuur \cite{EckSch83} described the emergence of solitons from decaying data. The nonlinear steepest descent method for oscillatory Riemann--Hilbert problems was introduced by Deift and Zhou \cite{DeiftZhou93}, and used for the collisionless shock region of KdV in \cite{DeiftVZ94}. For the focusing mKdV equation, soliton resolution holds for generic data in weighted Sobolev spaces \cite{ChenLiu21}; for the focusing cubic NLS and for the derivative NLS, it holds in space-time cones \cite{BorgheseJMcL18,JenkLPS18}; and the $N$-soliton solutions of the defocusing NLS are asymptotically stable \cite{CuccJenk16}. The Benjamin--Ono equation \cite{Benjamin67} is well-posed in the sharp range of Sobolev spaces \cite{KillipLaurensVisan23-BO}, its small solutions decay dispersively \cite{IfrimTataru2019}, and its large time behavior has become accessible through the explicit formula of G\'erard \cite{Gerard22}: the negative of a soliton scatters \cite{BlackstoneGGM24a}, and soliton resolution holds for a class of decaying data which contains the Schwartz class \cite{GGM26}, see also \cite{Chen26}. The Calogero--Moser derivative NLS has a Lax structure \cite{GeLe-24} and is well-posed in its critical space \cite{KLV25}; its solutions scatter in the defocusing case \cite{Chen25,Hadama,KLMV26}, and soliton resolution holds in the focusing case \cite{KimKwon}. Without integrability, continuous-in-time soliton resolution was proved for radial energy-critical waves \cite{DuyKM23,JenLaw23} and for equivariant wave maps \cite{DuyKM22,JenLaw25}; see also \cite{KleinS21} for a comparison of inverse scattering and PDE methods.

\subsection{Statement of the result}

We measure the localization of the data with Besov norms. Fix a smooth inhomogeneous Littlewood--Paley partition of unity $1=\sum_{j\ge0}\psi_j$ on $\R^2$ by radial functions: $\psi_0$ is supported in $\{|(\xi,\eta)|\le2\}$ and, for $j\ge1$, $\psi_j=\psi_1(2^{1-j}\cdot)$ is supported in $\{2^{j-1}\le|(\xi,\eta)|\le2^{j+1}\}$. Let $\Delta_j$ be the Fourier multiplier with symbol $\psi_j$, and let $\Delta^x_j$ and $\Delta^y_l$ be the Fourier multipliers with symbols $\psi_j(\xi,0)$ and $\psi_l(0,\eta)$, which act on the variable $x$, respectively $y$, and form one-dimensional Littlewood--Paley decompositions. For $s,s_1,s_2\in\R$ we set
\be
\label{defbesov}
\begin{gathered}
\|f\|_{B^s_{1,1}}:=\sum_{j\ge0}2^{sj}\|\Delta_jf\|_{L^1},\\
\|f\|_{B^{s_1,s_2}_{1,1}}:=\sum_{j,l\ge0}2^{s_1j+s_2l}\|\Delta^x_j\Delta^y_lf\|_{L^1}.
\end{gathered}
\ee
The first norm is the classical Besov norm \cite{Triebel}. The second one is the Besov norm with dominating mixed smoothness \cite{SchmeisserTriebel}: it measures $s_1$ derivatives in $x$ and $s_2$ derivatives in $y$ separately. For instance, a product $f_1(x)f_2(y)$ of nonzero functions belongs to $B^{s_1,s_2}_{1,1}(\R^2)$ if and only if $f_1\in B^{s_1}_{1,1}(\R)$ and $f_2\in B^{s_2}_{1,1}(\R)$, since $\|\Delta^x_j\Delta^y_l(f_1\otimes f_2)\|_{L^1}=\|\Delta^x_jf_1\|_{L^1(\R)}\|\Delta^y_lf_2\|_{L^1(\R)}$; here $\|g\|_{B^s_{1,1}(\R)}:=\sum_{j\ge0}2^{sj}\|\Delta^x_jg\|_{L^1(\R)}$, with $\Delta^x_j$ acting on functions of one variable. We write $\jb{X}:=(1+|X|^2)^{1/2}$, and $\1_E$ for the indicator function of a set $E$.

\begin{theorem}[Scattering for localized data]
\label{thmmain}
Let $u_0$ be real-valued and assume that one of the following two conditions holds:
\begin{enumerate}
\item $\jb{X}^{\frac12}u_0\in B^2_{1,1}(\R^2)$;
\item $u_0\in L^2(\R^2)$ and $\jb{X}u_0\in B^{s_1,s_2}_{1,1}(\R^2)$ for some $s_1>0$, $s_2>0$ with $s_1+2s_2>3$.
\end{enumerate}
Let $u\in C(\R;L^2(\R^2))$ be the solution of \eqref{kp}. Then there exist $u_+,u_-\in L^2(\R^2)$ such that
\be
\label{scattering}
\lim_{t\to\pm\infty}\|u(t)-S(t)u_\pm\|_{L^2}=0 .
\ee
\end{theorem}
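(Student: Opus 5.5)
The proof goes through the inverse scattering transform for the heat operator $\pa_y-\pa_x^2-u$, following the three ingredients announced in the abstract.

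\emph{Direct transform.} Under either hypothesis (i) or (ii), the datum $u_0$ is real, lies in $L^2$, and has the decay and regularity needed to define the Jost-type solutions $\mu(X,k)$, $k\in\C$, of the heat operator. From them we build the scattering coefficient $T(k)$ (equivalently $r$). The first task is to show that the direct map $u_0\mapsto T$ is well defined and nonsingular without any smallness assumption. For this I would follow Grinevich's argument. The Besov weights $\jb{X}^{1/2}u_0\in B^2_{1,1}$, or the mixed version $\jb{X}u_0\in B^{s_1,s_2}_{1,1}$ with the parabolic condition $s_1+2s_2>3$, are designed so that the heat kernel's Green function estimates close. They should also control the possible nonuniqueness of the direct integral equation at exceptional points. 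The key outputs are that $T$ is bounded and that $T$ belongs to a weighted $L^2$ space in $k$.

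\emph{Time evolution and reconstruction.} The KP-II flow acts on scattering data by a pure phase: $T(k,t)=e^{it\Phi(k)}T(k,0)$, where $\Phi$ is the restriction of $\omega$ to the spectral variables. The potential is recovered from the solution of the scalar $\dbar$ problem
$$\dbar_k\mu=e^{i\theta(X,t,k)}T(k)\bar\mu .$$

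\emph{Uniform bound.} The first ingredient is an a priori bound for this $\dbar$ problem depending only on $\|T\|_{L^\infty\cap L^2}$ and not on the phase. I would prove it by a Liouville/vanishing-lemma argument of Beals--Coifman/Sung type, which uses that the equation is real-linear and that $|e^{i\theta}T|=|T|$. Compactness of the Cauchy transform then gives a uniform resolvent bound. Through Wickerhauser's Plancherel identity, this bound implies that the reconstruction map $T\mapsto u$ has the structure
$$u=u_{\rm lin}+u_{\rm nl}, \qquad \|u\|_{L^2}=c\|T\|_{L^2},$$
where $u_{\rm lin}(t)=S(t)\check T$ is essentially a linear Fourier-type transform of $T$ evaluated along the free flow.

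\emph{Plancherel and conclusion.} The Plancherel identity is known for small data. I would extend it to all admissible $T$ by analytic continuation: replace $T$ by $zT$ with $z\in\C$. Both sides are real-analytic in the real parameter, and they agree near $0$ since they agree for small data. The uniform bound keeps the continuation from developing singularities, so the identity persists up to $z=1$. With mass conservation, this shows that $u_+:=\check T$ (suitably transformed) is in $L^2$ and that the $L^2$ norms match. It remains to show $\|u_{\rm nl}(t)\|_{L^2}\to0$. By the Plancherel identity and the fact that $\|S(t)u_+\|_{L^2}=\|u(t)\|_{L^2}$, it suffices to prove the weak statement
$$\langle u(t),S(t)u_+\rangle\to\|u_+\|_{L^2}^2 .$$
Since $S(t)u_+$ concentrates on the cone $t\cV$, this reduces to the third ingredient: on the propagation cones, the nonlinear part, which is a multilinear oscillatory integral in $k$, is negligible. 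I would prove this by stationary phase using the nondegeneracy $\det\nabla^2\omega=-36$. First approximate $T$ by smooth compactly supported data, supported away from the singular line, where the nonlinear terms decay like $|t|^{-1}$ in $L^\infty$ on compact subsets of the cone. Then use the uniform bound to pass to general $T$ by density. The case $t\to-\infty$ is symmetric.

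The main obstacle is the negligibility of the nonlinear part near the boundary parabola of $\cV$, where low frequencies $\xi\to0$ live. There stationary phase degenerates. I would first try to handle this by showing that the portion of $T$ near these frequencies carries small $L^2$ mass, with the smallness uniform in $t$ thanks to the phase-independent bound.
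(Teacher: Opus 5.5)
Your reduction of strong convergence to a weak statement via the norm identity matches the paper, and so does the analytic continuation in the amplitude. There are, however, genuine gaps at the two places where the large-data difficulty actually lies.

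\textbf{The cone estimate.} Since $\|S(t)\varphi\|_{L^1}\sim|t|$ for $\hat\varphi\in C^\infty_c(\{\xi\neq0\})$, the weak limit is determined only if the nonlinear part is $o(|t|^{-1})$ on $tV$. This must also hold uniformly along the approximating coefficients. You claim $O(|t|^{-1})$, which only gives a pairing of size $O(1)$, and your tools do not produce the little $o$.
\begin{itemize}
\item A multilinear (Neumann series) expansion of $m$ does not converge for large data.
\item Write the remainder as $-\frac2\pi\int_\C aPB\,dA$ with $B=-2ik_2\overline{(m-1)}+\overline{\pa_xm}$ and integrate by parts. A direct differentiation of $B$ produces the terms $\mp2ik_2\overline{aP}\,m$. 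These are not small, and after multiplication by $aP$ they are not oscillating either, so each alone gives only $O(|t|^{-1})$.
\item The missing idea is that these terms cancel exactly: $\pa_kB=-\overline{(m-1)}+\overline{aP}\,\pa_xm$. Hence $\pa_kB$ is as small as $\eps(t)=\|m-1\|_\infty+\|\pa_xm\|_\infty$. This tends to zero because the oscillatory transforms $\cC(aP)$ and $\cC(k_2aP)$ do, and the phase-independent bound transfers their smallness to $m$.
\end{itemize}
``Density using the uniform bound'' also does not transfer decay rates, since no $L^2$ stability of the inverse map uniform in $t$ is available. The paper instead proves the cone estimate uniformly in $n$. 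Under (ii) this uses a common majorant for $a_n$ and $\nabla_ka_n$. Under (i) it uses H\"older moduli of $a_n$ summed over the shells $\{\jb{\zeta(k)}\sim2^j\}$, together with a regularization of $a_n$ at the oscillation scale $|\pa_k\phi|^{-1}$. This is exactly where the hypotheses of the theorem enter, and your proposal never uses them. Conversely, the boundary of $\cV$ is not an obstacle: the functions $\varphi$ above are dense in $L^2$, so only compact $V\subset\cV$ are needed.

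\textbf{The direct problem for rough large data.} Grinevich's theorem relies on exponential decay, because the irregular points are zeros of a real-analytic regularized determinant. Nothing in your argument controls exceptional points for data with one or half a moment. The paper instead proceeds as follows.
\begin{itemize}
\item It approximates $u_0$ by $C^\infty_c$ data and applies Grinevich to each approximation.
\item It gets bounds uniform in $n$ on the compact set $K$ of possible irregular points by a bootstrap. The nonlinear Plancherel identity bounds $a_n$ in $L^2(K)$, hence $M_n:=\|\cI|a_n|\|_\infty\le C\big(1+\sqrt{\log(2+\|a_n\|_\infty)}\big)$.
\item The direct formula for $a_n$ and $\|m_n\|_\infty\le1+M_ne^{2M_n}$ give $\|a_n\|_\infty\le C(1+M_ne^{2M_n})$. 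Combining the two yields $M_n\le C(1+\sqrt{M_n})$.
\end{itemize}
This bootstrap needs the quantitative, phase-independent bound in terms of $\sup\cI|a|$, which the paper gets from the Bers--Vekua similarity principle. A vanishing lemma plus compactness gives invertibility for each phase, and at best a bound uniform over coefficients dominated by a fixed majorant. Such a majorant is precisely what is unknown on $K$. Moreover, $\|T\|_{L^2\cap L^\infty}$ does not even control $\cI|T|$.

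Finally, the analytic continuation cannot be run directly on the coefficient of a rough datum. Real-analyticity of $s\mapsto u_s(0)$ in $L^2$, and the fact that $u_s$ solves KP-II, are proved only for compactly supported coefficients. For rough data, $\|\cB a\|_{L^2}=\|u_0\|_{L^2}$ must be recovered by approximation. Under (ii) this requires a separate identification through the heat equation, because $F_{\alpha,\beta}$ need not be square integrable for $|k_2|\,dA$.
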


No smallness is assumed on $u_0$. Under condition (i), $u_0\in L^1\cap L^2$ is automatic, since $|u_0|\le|\jb{X}^{\frac12}u_0|$ and $B^2_{1,1}(\R^2)\subset L^1(\R^2)\cap L^2(\R^2)$: indeed $\|f\|_{L^1}\le\sum_j\|\Delta_jf\|_{L^1}$, and $\|\Delta_jf\|_{L^2}\lesssim2^j\|\Delta_jf\|_{L^1}$ by Bernstein's inequality. Under condition (ii), the separate assumption $u_0\in L^2$ is redundant when $s_1,s_2\ge\frac12$, since $|u_0|\le|\jb Xu_0|$ and Bernstein's inequality gives $\|\Delta^x_j\Delta^y_lf\|_{L^2}\lesssim2^{\frac{j+l}2}\|\Delta^x_j\Delta^y_lf\|_{L^1}$. By \eqref{mass} and the unitarity of $S(t)$, \eqref{scattering} implies $\|u_\pm\|_{L^2}=\|u_0\|_{L^2}$. Neither condition imposes the zero mean condition $\int_\R u_0\,dx=0$; for a generic datum of our class, $\pa_x^{-1}\pa_y^2u_0$ is not square integrable, and \eqref{kp} is understood through the $L^2$ flow. The proof treats the limits $t\to+\infty$ and $t\to-\infty$ simultaneously, and shows that $u_-=u_+$ (Comment 2 below).

The following corollary lists examples covered by Theorem \ref{thmmain}. For $r>0$, $W^{r,1}(\R^2)$ denotes the Sobolev space of order $r$ on $L^1$ (the Sobolev--Slobodeckij space when $r$ is not an integer); we use the embedding $W^{r,1}(\R^2)\subset B^{r'}_{1,1}(\R^2)$ for $r'<r$, see \cite{Triebel}.

\begin{corollary}[Examples]
\label{corexamples}
The conclusion of Theorem \ref{thmmain} holds for each of the following real initial data:
\begin{enumerate}
\item every Schwartz function $u_0$;
\item every $u_0$ with $\jb{X}u_0\in W^{r,1}(\R^2)$ for some $r>\frac32$;
\item $u_0(x,y)=\1_{[0,1]}(x)g(y)$ and $u_0(x,y)=g(x)\1_{[0,1]}(y)$, for every real $g\in C^\infty_c(\R)$, of arbitrary size.
\end{enumerate}
\end{corollary}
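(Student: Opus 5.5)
The corollary follows by checking that each listed datum satisfies condition (i) or condition (ii) of Theorem \ref{thmmain}. For (i) and (ii) of the corollary we use condition (i) of the theorem. For (iii) we use condition (ii), since the indicator factor has too little smoothness for the isotropic condition.

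\emph{Schwartz data.} If $u_0$ is Schwartz, then $\jb{X}^{\frac12}u_0$ is smooth and all its derivatives decay faster than any power; $\jb{X}^{\frac12}$ is a smooth symbol of order $\frac12$, so this holds. Hence $\jb{X}^{\frac12}u_0\in W^{3,1}(\R^2)\subset B^2_{1,1}(\R^2)$, and condition (i) of Theorem \ref{thmmain} holds.

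\emph{Data with $\jb{X}u_0\in W^{r,1}$, $r>\frac32$.} We use condition (ii) with $s_1=s_2=s$ for some $1<s<r/... $. This needs care, so we instead pick $s_1,s_2$ with $s_1+2s_2>3$, $s_1+s_2<r$, and $s_1,s_2\ge\frac12$; for instance $s_1=\frac12$ and $s_2$ slightly above $\frac54$, which gives $s_1+s_2$ slightly above $\frac74$. This exceeds $r$ when $r$ is close to $\frac32$, so that choice fails too. The correct choice is $s_1$ small and $s_2$ just above $\frac32$, which needs $r>\frac32$; taking $s_1=\eps$ and $s_2=\frac32-\frac\eps4$ gives $s_1+2s_2=3+\frac\eps2>3$ and $s_1+s_2=\frac32+\frac34\eps<r$ for $\eps$ small.

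The key embedding is $W^{r,1}(\R^2)\subset B^{r'}_{1,1}(\R^2)\subset B^{s_1,s_2}_{1,1}(\R^2)$ for $s_1+s_2\le r'<r$ with $s_1,s_2\ge0$. We prove the second inclusion by bounding $\|\Delta^x_j\Delta^y_l f\|_{L^1}$. Write $\Delta^x_j\Delta^y_l f=\sum_k\Delta^x_j\Delta^y_l\Delta_k f$. The kernel of $\Delta^x_j\Delta^y_l$ has uniformly bounded $L^1$ norm, and the product vanishes unless $k\ge\max(j,l)-2$. Hence
\[
\sum_{j,l}2^{s_1j+s_2l}\|\Delta^x_j\Delta^y_lf\|_{L^1}\lesssim\sum_k\|\Delta_kf\|_{L^1}\sum_{j,l\le k+2}2^{s_1j+s_2l}\lesssim\sum_k2^{(s_1+s_2)k}\|\Delta_kf\|_{L^1}.
\]
Here $s_1,s_2>0$, so the inner geometric sums are controlled by their largest terms. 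In this regime $u_0\in L^2$ must be checked separately, since $s_1<\frac12$ and the remark after the theorem does not apply. It follows from $\jb{X}u_0\in W^{r,1}\subset L^2$, by Sobolev embedding in two dimensions ($r>1$), together with $|u_0|\le|\jb X u_0|$.

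\emph{Tensor products.} We use the tensor product identity stated before the theorem, $\|\Delta^x_j\Delta^y_l(f_1\otimes f_2)\|_{L^1}=\|\Delta^x_jf_1\|_{L^1}\|\Delta^y_lf_2\|_{L^1}$, and $u_0\in L^2$ is clear. The weight $\jb X$ is not a tensor product, but it is bounded by $\jb x\jb y$. More precisely, $\jb{X}=\jb x\jb y\,m(x,y)$, where $m=\jb X/(\jb x\jb y)$ is smooth with bounded derivatives of all orders (a product-type symbol of order $0$). Multiplication by such a function preserves $B^{s_1,s_2}_{1,1}$ for the small positive orders used here; we check this by a paraproduct estimate or by interpolation between integer mixed derivatives. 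So it suffices to show $\jb x\1_{[0,1]}(x)\in B^{s_1}_{1,1}(\R)$ and $\jb y g(y)\in B^{s_2}_{1,1}(\R)$.

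The indicator function of an interval lies in $B^{s}_{1,1}(\R)$ exactly when $s<1$, since its $j$-th block has $L^1$ norm of order $2^{-j}$. The factor $\jb x$ is smooth on the support and is harmless. The function $\jb yg\in C^\infty_c$ lies in every $B^{s}_{1,1}$. So for $u_0=\1_{[0,1]}(x)g(y)$ we choose $s_1=\frac12$ and $s_2=2$; for $g(x)\1_{[0,1]}(y)$ we choose $s_2=\frac12$ and $s_1=3$. In both cases $s_1+2s_2>3$.

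The main obstacle is the multiplier statement for $m$ on mixed-smoothness Besov spaces. To avoid it, we instead bound the blocks of $\jb X u_0$ directly. We split $\jb X u_0$ using a smooth cutoff equal to $1$ on $\supp u_0$, which is compact. Then $\jb X u_0=\chi u_0$ with $\chi\in C^\infty_c$ equal to $\jb X$ on $\supp u_0$. Expanding $\chi$ in a Fourier series on a large box, or as $\int\hat\chi(\zeta)e^{iX\cdot\zeta}d\zeta$, and using modulation bounds in each variable reduces the claim to the tensor case with a summable loss.
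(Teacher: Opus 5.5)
Your proof is correct. For Schwartz data and for data with $\jb Xu_0\in W^{r,1}$ it is the paper's argument:
\begin{itemize}
\item the same exponents $s_1=\eps$, $s_2=\frac32-\frac\eps4$;
\item the same inclusion $B^{s_1+s_2}_{1,1}\subset B^{s_1,s_2}_{1,1}$, obtained from $\Delta^x_j\Delta^y_l\Delta_m=0$ unless $\max(j,l)\le m+2$;
\item the same separate check that $u_0\in L^2$.
\end{itemize}
Clean up the write-up, though. Your opening sentence assigns item (ii) of the corollary to condition (i) of the theorem. That would fail for $r$ near $\frac32$, since it needs $\jb X^{\frac12}u_0\in B^2_{1,1}$. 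The argument you actually give correctly uses condition (ii). The two abandoned choices of exponents should be deleted. So should the multiplier paragraph, which claims boundedness for small orders and then uses $s_2=2$, $s_1=3$.

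For the tensor data your route genuinely differs from the paper's.

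\emph{The paper's route.} It keeps the weight and writes $F=\jb Xu_0=\1_{[0,1]}(x)H$ with $H=\jb Xg(y)$. It uses that $\Delta^y_l$ commutes with multiplication by $\1_{[0,1]}(x)$. It then bounds the $y$-integral of the total variation in $x$ of $\Delta^y_lF$ by $C_q2^{-ql}$. It concludes with $\|\Delta^x_jG\|_{L^1}\lesssim2^{-j}\|G'\|$, which comes from the $O(2^{-j})$ kernel of $\psi_j(\xi,0)/(i\xi)$. No product structure of the weight is needed.

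\emph{Your route.} You replace $\jb X$ by a function $\chi\in C^\infty_c$ equal to it on $\supp u_0$. You then write $\chi u_0=\frac1{2\pi}\int\hat\chi(\zeta)e^{iX\cdot\zeta}u_0\,d\zeta$, a superposition of modulated tensor products. Minkowski's inequality and the tensor identity reduce everything to three one-dimensional facts:
\begin{itemize}
\item $\1_{[0,1]}\in B^s_{1,1}(\R)$ for $s<1$;
\item $g\in B^s_{1,1}(\R)$ for every $s$;
\item the modulation bound $\|e^{ix\zeta}f\|_{B^s_{1,1}(\R)}\lesssim\jb\zeta^s\|f\|_{B^s_{1,1}(\R)}$ for $s>0$, whose loss is integrable against the Schwartz function $\hat\chi$.
\end{itemize}
This works, but two of these ingredients are asserted rather than proved and should be written out. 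The modulation bound follows because $e^{ix\zeta}\Delta_{j'}f$ has frequencies in $\{|\xi|\le2^{j'+1}+|\zeta|\}$, and in $\{2^{j'-2}\le|\xi|\le2^{j'+2}\}$ when $j'\ge1$ and $2^{j'}\ge4|\zeta|$. The bound $\|\Delta^x_j\1_{[0,1]}\|_{L^1}\lesssim2^{-j}$ is exactly the kernel estimate the paper uses.

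Your approach is more modular: any compactly supported tensor datum reduces to one-dimensional Besov membership of its factors. The paper's is self-contained and avoids the modulation lemma. Your exponents $(\frac12,2)$ and $(3,\frac12)$ serve as well as the paper's $(\frac34,\frac54)$ and $(\frac32,\frac78)$.
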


\begin{proof}
For a Schwartz function, $\jb{X}^{\frac12}u_0$ is again a Schwartz function, so condition (i) holds. Assume $\jb{X}u_0\in W^{r,1}$ with $r>\frac32$, and fix $r'\in(\frac32,r)$, so that $\jb{X}u_0\in B^{r'}_{1,1}$. Choose $s_1>0$ so small that $s_2:=\frac32-\frac{s_1}4$ satisfies $s_1+s_2<r'$; then $s_1+2s_2=3+\frac{s_1}2>3$. Since every $\Delta^x_j\Delta^y_l$ is bounded on $L^1$ uniformly in $(j,l)$ and $\Delta^x_j\Delta^y_l\Delta_m=0$ unless $\max(j,l)\le m+2$, we get
$$\begin{aligned}\sum_{j,l\ge0}2^{s_1j+s_2l}\|\Delta^x_j\Delta^y_lf\|_{L^1}&\lesssim\sum_{m\ge0}\Big(\sum_{j,l\le m+2}2^{s_1j+s_2l}\Big)\|\Delta_mf\|_{L^1}\\&\lesssim\sum_{m\ge0}2^{(s_1+s_2)m}\|\Delta_mf\|_{L^1},\end{aligned}$$
that is, $B^{s_1+s_2}_{1,1}\subset B^{s_1,s_2}_{1,1}$, and hence $\jb{X}u_0\in B^{s_1,s_2}_{1,1}$. Moreover $u_0\in L^2$, because $|u_0|\le|\jb Xu_0|$ and $B^{r'}_{1,1}(\R^2)\subset L^2(\R^2)$ for $r'\ge1$. Condition (ii) holds.

Finally, let $u_0(x,y)=\1_{[0,1]}(x)g(y)$ with $g\in C^\infty_c(\R)$ real, and $F:=\jb Xu_0=\1_{[0,1]}(x)H(x,y)$, where $H(x,y):=\jb Xg(y)$ is smooth, with bounded derivatives on $[0,1]\times\R$, and supported in $\R\times\supp g$. Since $\Delta^y_l$ acts on $y$ only, $\Delta^y_lF=\1_{[0,1]}(x)\,\Delta^y_lH$, and since $H$ is smooth in $y$, for every $q\ge0$,
$$\int_\R\Big(\sup_{x\in[0,1]}|\Delta^y_lH(x,y)|+\int_0^1|\pa_x\Delta^y_lH(x,y)|\,dx\Big)dy\lesssim_q2^{-ql}.$$
The left-hand side bounds the integral in $y$ of the total variation of $x\mapsto\Delta^y_lF(x,y)$. For $j\ge1$, the symbol $\psi_j(\xi,0)/(i\xi)$ is the dilate of a fixed smooth function supported away from $0$, multiplied by $2^{1-j}$; hence its kernel has $L^1$ norm $O(2^{-j})$, and $\|\Delta^x_jG\|_{L^1(\R)}\lesssim2^{-j}\|G'\|$ for every function $G$ of bounded variation on $\R$, where $\|G'\|$ is the total variation. Therefore $\|\Delta^x_j\Delta^y_lF\|_{L^1}\lesssim_q2^{-j}2^{-ql}$, and $F\in B^{s_1,s_2}_{1,1}$ for every $0<s_1<1$ and $s_2>0$, for instance $(s_1,s_2)=(\frac34,\frac54)$, with $s_1+2s_2=\frac{13}4>3$. The datum $g(x)\1_{[0,1]}(y)$ is treated in the same way, with the roles of $x$ and $y$ exchanged and $(s_1,s_2)=(\frac32,\frac78)$, which satisfies $s_1+2s_2=\frac{13}4$. Both data are bounded with compact support, hence in $L^2$, and condition (ii) holds.
\end{proof}
\subsection{Comments on the result}

\noindent
1. {\it Large data.} As far as we know, Theorem \ref{thmmain} is the first scattering result for the KP-II equation which covers every real-valued Schwartz initial datum, without any smallness assumption, in both time directions and without the zero mean condition. The small data theory \cite{HHK} is perturbative and does not extend to large data, while the virial identities of \cite{MMPP} show the decay of the mass in large regions but do not produce a free asymptotic state. Our proof uses the complete integrability of KP-II in an essential way: the scattering transform conjugates the nonlinear flow to a linear one, and the main task is to control the inverse transform uniformly in time, for large and for rough data.

\medskip
\noindent
2. {\it The scattering state and a nonlinear Plancherel identity.} The proof identifies the scattering states explicitly. Write the spectral variable as $k=k_1+ik_2\in\C$ and let $a(k)$, $k\in\C\setminus\R$, be the scattering coefficient of $u_0$, defined in Section \ref{sectiondirect} for smooth exponentially decaying data, and in general as the locally uniform limit of the coefficients of the compactly supported smooth approximations of $u_0$ constructed in Lemma \ref{lemmaapprox} (along a subsequence, see Sections \ref{sectiontensor} and \ref{sectionhalf}). This limit is continuous on $\C\setminus\R$ and, by \eqref{explicitstate}, it is determined by the scattering state $u_+$, hence by $u_0$; in particular, it does not depend on the approximation. Then
\be
\label{explicitstate}
u_+=u_-,\qquad \widehat{u_\pm}(-2k_2,-4k_1k_2)=i\sgn(k_2)\,a(k)\quad\mbox{for}\ \ k\in\C\setminus\R,
\ee
and
\be
\label{nonlinearplancherel}
\int_{\R^2}u_0^2\,dX=8\int_\C|k_2|\,|a(k)|^2\,dA(k),
\ee
where $dA$ is the Lebesgue measure on $\C$. Since $k\mapsto(-2k_2,-4k_1k_2)$ maps $\C\setminus\R$ bijectively onto $\{\xi\neq0\}$ with Jacobian $8|k_2|$, formula \eqref{explicitstate} defines $u_\pm$ uniquely, and \eqref{nonlinearplancherel} says that $u_\pm$ and $u_0$ have the same $L^2$ norm. To first order in $u_0$, $a(k)=-i\sgn(k_2)\hat u_0(-2k_2,-4k_1k_2)$, see \eqref{bornapprox} below, so that \eqref{explicitstate} and \eqref{nonlinearplancherel} reduce, to first order, to $u_\pm=u_0$ and to the Plancherel identity. We call \eqref{nonlinearplancherel} the nonlinear Plancherel identity, in analogy with \cite{NachmanRegevTataru}. The identity $u_+=u_-$ means that the scattering map $u_-\mapsto u_+$ is the identity on the states which arise from the data of Theorem \ref{thmmain}. This is consistent with the absence of resonant quadratic interactions for KP-II: for $\xi\xi_1\xi_2\neq0$ and $(\xi,\eta)=(\xi_1+\xi_2,\eta_1+\eta_2)$,
$$\omega(\xi,\eta)-\omega(\xi_1,\eta_1)-\omega(\xi_2,\eta_2)=\frac{3}{\xi\xi_1\xi_2}\Big[(\xi\xi_1\xi_2)^2+(\xi_1\eta_2-\xi_2\eta_1)^2\Big]\neq0 .$$

\medskip
\noindent
3. {\it The two conditions.} The size of the scattering coefficient $a(k)$ is governed by the regularity of $u_0$, through the decay of $\hat u_0$ at the frequency $(-2k_2,-4k_1k_2)$, and its regularity in $k$ is governed by the spatial decay of $u_0$, through the moments $\jb{X}^\theta u_0$. Two features of the problem determine the exponents. First, the inverse problem is solved with a bound which depends on the Riesz potential $\int_\C|a(k')||k-k'|^{-1}dA(k')$, and the reconstruction of $u$ involves $k_2a(k)$. If $|a(k)|\lesssim\jb{2k_2}^{-\alpha}\jb{4k_1k_2}^{-\beta}$, these potentials are bounded when $\alpha+2\beta>2$ (Lemma \ref{lemmaweights}); the factor $2$ reflects the parabolic anisotropy, since the frequency $\eta=-4k_1k_2$ plays the role of $\xi^2$. Second, the direct problem is a perturbation of the heat operator, whose Green function is bounded by $(|x+2k_1y|+|y|^{1/2})^{-1}$; it costs $d_1$ derivatives in $x$ and $d_2$ derivatives in $y$ with $d_1+2d_2>1$ (Lemma \ref{lemmaembedding}). Altogether we need $s_1$ derivatives in $x$ and $s_2$ derivatives in $y$ with $s_1+2s_2>3$, which is condition (ii), together with one moment. Condition (i) is a limiting case, with half a moment and the isotropic regularity $B^2_{1,1}$, whose summability is used in an essential way.

\subsection{Strategy of the proof}

We outline in this subsection the main steps and difficulties in the proof of Theorem \ref{thmmain}. We use the spectral variable $k=k_1+ik_2\in\C$, the derivatives $\pa_k=\frac12(\pa_{k_1}-i\pa_{k_2})$, $\dbar_k=\frac12(\pa_{k_1}+i\pa_{k_2})$, and the frequency
\be
\label{defzetaintro}
\zeta(k):=(-2k_2,-4k_1k_2)\in\R^2,
\ee
which we call the Born frequency of $k$. Its role is explained in Step 1; the name refers to the Born approximation \eqref{bornapprox}.\\

\noindent{\bf Step 1:} {\it The scalar $\bar\partial$ problem and a bound independent of the phase.} The Lax pair of KP-II consists of the heat operator $\pa_y-\pa_x^2-u$ and of a third-order operator in $x$. For $u=0$ the heat equation $\pa_y\psi=\pa_x^2\psi$ has the exponential solutions $e^{kx+k^2y}$, $k\in\C$; for a decaying potential we look for solutions $\psi=e^{kx+k^2y-4k^3t}m$ normalized by $m\to1$ at infinity. Because $u$ is real, the eigenfunction $m(k;t,X)$ satisfies a {\it scalar} $\bar\partial$ equation
\be
\label{dbarintro}
\dbar_km=a(k)\,e^{i\phi}\,\overline{m},\qquad \phi(k;t,X)=X\cdot\zeta(k)+t\,\omega(\zeta(k)),
\ee
where the scattering coefficient $a(k)$ does not depend on time, and the phase $\phi$ is exactly the phase of the free plane wave at the Born frequency $\zeta(k)$. Conversely, given $a$, one solves \eqref{dbarintro} with $m\to1$ as $|k|\to\infty$ and reconstructs the solution by
\be
\label{reconstructionintro}
u(t,X)=-\frac2\pi\int_\C\pa_x\big(a\,e^{i\phi}\,\overline{m}\big)\,dA(k).
\ee
The first observation is that the linear part of \eqref{reconstructionintro}, obtained by replacing $m$ by $1$, is exactly the free solution $S(t)u_+$ with $u_+$ given by \eqref{explicitstate}: the change of variables $(\xi,\eta)=\zeta(k)$ turns the Fourier integral of $S(t)u_+$ into the integral \eqref{reconstructionintro} with $m=1$. Scattering thus reduces to the smallness of the nonlinear part of \eqref{reconstructionintro}, which is driven by $m-1$, together with the identity $\|u_+\|_{L^2}=\|u_0\|_{L^2}$.

The second observation is that \eqref{dbarintro} is uniquely solvable, with
$$\sup_k|m(k;t,X)|\le1+Me^{2M},\qquad M:=\sup_{k\in\C}\frac1\pi\int_\C\frac{|a(k')|}{|k-k'|}\,dA(k'),$$
see Lemma \ref{lemmavekua}. The bound depends on $|a|$ only and not on the phase $e^{i\phi}$: it is uniform in $(t,X)$ and requires no smallness. It follows from the similarity principle of Bers and Vekua \cite{Bers,Vekua}: a solution $w$ of $\dbar w=a\overline w$ can be written $w=e^\sigma h$ with $h$ holomorphic and $|\sigma|\le M$. This is the main reason why large data can be handled.\\

\noindent{\bf Step 2:} {\it Compactly supported coefficients and the nonlinear Plancherel identity.} For $a\in C^\infty_c(\C\setminus\R)$ with $a(\bar k)=\overline{a(k)}$, the function \eqref{reconstructionintro} is a smooth global solution of KP-II with conserved mass; the Lax pair is recovered from the uniqueness in Step 1, which turns every identity between the coefficients of $m$ at $k=\infty$ into an exact equation (Proposition \ref{propcompact}). The nonlinear Plancherel identity \eqref{nonlinearplancherel} is proved by an analytic continuation in the amplitude (Proposition \ref{propplancherel}). For the coefficient $sa$ with $|s|$ small, the solution is small in the critical space and the theory of Hadac, Herr and Koch \cite{HHK} provides a scattering state with the same $L^2$ norm as the datum; identifying this state with the linear part of \eqref{reconstructionintro} gives \eqref{nonlinearplancherel} for small $s$. Both sides of \eqref{nonlinearplancherel} are real-analytic functions of $s\in\R$, which gives the identity for every $s$.\\

\noindent{\bf Step 3:} {\it The nonlinear remainder on propagation cones.} Fix a compact set $V\subset\cV$ of velocities and consider the points $X=tv$, $v\in V$. In the spectral variable, $\phi(k;t,tv)=t\big(v\cdot\zeta(k)+\omega(\zeta(k))\big)$ has exactly two critical points $k_\pm(v)$: they are the spectral points whose Born frequencies travel with velocity $v$, they are complex conjugate, nondegenerate, and uniformly separated from the real axis (Lemma \ref{lemmaphase}). Two mechanisms make the nonlinear part of \eqref{reconstructionintro} smaller than the linear one, which has size $|t|^{-1}$ on the cone $tV$ by stationary phase. First, $m-1$ and $\pa_xm$ tend to zero uniformly as $|t|\to\infty$: the Cauchy transform of the rapidly oscillating function $ae^{i\phi}$ is small, and the bound of Step 1, which does not see the phase, transfers this smallness to $m$. Second, the nonlinear part equals $-\frac2\pi\int ae^{i\phi}B\,dA$ with $B=-2ik_2\overline{(m-1)}+\overline{\pa_xm}$, and we integrate by parts in $k$ using the oscillation of $e^{i\phi}$. The derivative $\pa_kB$ contains a priori the term $2ik_2\overline{ae^{i\phi}}m$, which is not small; it cancels exactly, and
$$\pa_kB=-\overline{(m-1)}+\overline{a e^{i\phi}}\,\pa_xm$$
is small. The reason is that $B=e^{-i\phi}\pa_x\big(e^{i\phi}\overline{(m-1)}\big)$, where $e^{i\phi}\overline{(m-1)}$ solves an equation whose source carries no phase, and $\pa_x\pa_k\phi=i$ is constant (Lemma \ref{lemmainverse}). Together with stationary phase at $k_\pm(v)$, we obtain that the nonlinear part is $o(|t|^{-1})$ uniformly on $tV$ (Proposition \ref{propcone}). Since a free wave $S(t)\varphi$ with $\hat\varphi\in C^\infty_c(\{\xi\neq0\})$ has $L^1$ norm $O(|t|)$, the nonlinear part is invisible in the weak limit of $S(-t)u(t)$, which is therefore $u_+$ (Lemma \ref{lemmaweak}); the identity $\|u_+\|_{L^2}=\|u_0\|_{L^2}$ then upgrades the weak convergence to strong convergence. All the estimates depend on $a$ only through a common majorant, which is essential for the approximation arguments below.\\

\noindent{\bf Step 4:} {\it The direct problem for smooth data.} For a smooth real datum whose derivatives decay exponentially, the eigenfunction $\chi(k;X)$ solves the integral equation $\chi=1+G_k\ast(u_0\chi)$, where $G_k$ is the Green function of $\pa_y-\pa_x^2-2k\pa_x$. This equation is uniformly solvable at high energy, since the operator $G_k\ast(u_0\,\cdot\,)$ is small there, and near the real axis, where $G_k$ is a causal heat kernel and the equation is of Volterra type (Lemma \ref{lemmagreenop}). On the remaining compact set, which is away from the real axis, the equation may fail to be uniquely solvable, but only on a closed set with empty interior (Lemma \ref{lemmairregular}), and we use the nonsingularity theorem of Grinevich \cite{Grin}, which shows that $\chi$ and $a$ extend through this set. We prove that $a(k)$ decays faster than every power of $\jb{\zeta(k)}$, identify $\chi$ with the solution of \eqref{dbarintro} at $t=0$ and recover $u_0$ from \eqref{reconstructionintro} (Proposition \ref{propdirect}). Steps 2 and 3, combined with an approximation of the coefficient by compactly supported ones, then show that the solution is the function reconstructed from $a$, and that \eqref{nonlinearplancherel} holds (Corollary \ref{corsmooth}).\\

\noindent{\bf Step 5:} {\it Uniform bounds from the nonlinear Plancherel identity.} For rough data we approximate $u_0$ by smooth compactly supported $u_{0,n}$, and we need bounds on the coefficients $a_n$ which are uniform in $n$. Outside a fixed compact set $K\subset\C\setminus\R$ they follow from the direct problem of Step 4. On $K$, Grinevich's theorem gives no uniform bound, and we argue as follows (Lemma \ref{lemmabootstrap}). The identity \eqref{nonlinearplancherel} bounds $a_n$ in $L^2(K)$, which bounds the Riesz potential $M_n$ of $a_n$ by $C+C\sqrt{\log(2+\|a_n\|_{L^\infty})}$; on the other hand, the direct formula for $a_n$ and the bound of Step 1 give $\|a_n\|_{L^\infty}\le C(1+M_ne^{2M_n})$. Hence $M_n\le C(1+\sqrt{M_n})$, and both quantities are bounded uniformly in $n$.\\

\noindent{\bf Step 6:} {\it Anisotropic data.} Under condition (ii), the mixed regularity of $u_0$ gives the common majorant $|a_n(k)|\lesssim\jb{2k_2}^{-\alpha}\jb{4k_1k_2}^{-\beta}$ with $\alpha+2\beta>2$ (Lemma \ref{lemmamixed}), and Step 3 applies uniformly in $n$. However, this majorant need not be square integrable for the measure $|k_2|dA(k)$ (it is not when $\alpha\le\frac12$ or $\beta\le\frac12$), so that the convergence of the masses $8\int|k_2||a_n|^2dA$ cannot be obtained by dominated convergence. We identify the limit through the heat equation instead (Lemma \ref{lemmasandwich}): the potentials reconstructed from compactly supported approximations of the limit coefficient $a$ converge weakly to a potential $u_*$, and the eigenfunctions of $u_*$ and of $u_0$ coincide, which forces $u_*=u_0$ and gives $\|u_+\|_{L^2}=\|u_0\|_{L^2}$.\\

\noindent{\bf Step 7:} {\it Half a moment.} Under condition (i), the coefficients $a_n$ are only H\"older continuous of order $\frac12$ in $k$, with respect to the displacement $|k'-k|\jb k$ of the Born frequency, so that the integrations by parts of Step 3 are not directly available. We regularize $a_n$ at the scale $\ell_t(k)\sim|\pa_k\phi(k)|^{-1}$ on which the phase $e^{i\phi}$ oscillates, truncated at the distance $|t|^{-\frac12}$ from the critical points $k_\pm(v)$. The regularization error and the term produced by the integration by parts are then both bounded by $|t|^{-\frac12}$ times the H\"older modulus of $a_n$ on the dyadic shells $\{\jb{\zeta(k)}\sim2^j\}$. The integrals of the relevant majorants over the shells are bounded but not summable, and the summability of the coefficients $2^{2j}\|\Delta_j(\jb X^{\frac12}u_0)\|_{L^1}$ in condition (i) is used to sum the H\"older moduli over the shells. We obtain the quantitative decay $|m-1|+|\pa_xm|\lesssim|t|^{-\frac12}$, which is enough to prove that the nonlinear part of \eqref{reconstructionintro} is $o(|t|^{-1})$ on the cones (Proposition \ref{propcritical}).\\

This paper is organized as follows. In Section \ref{sectiondbar} we study the Cauchy transform, solve the scalar $\bar\partial$ problem with a bound independent of the phase, and estimate the Riesz potentials of the spectral weights. In Section \ref{sectioninverse} we develop the inverse problem: the reconstruction formula, the weak limits of free waves on the propagation cones, the estimate of the nonlinear remainder on these cones, the KP-II equation and the nonlinear Plancherel identity for compactly supported coefficients, and their extension to rapidly decaying coefficients. Section \ref{sectiondirect} is devoted to the direct problem: the Green function, a norm adapted to it, smooth exponentially decaying data, and uniform bounds for approximating sequences. Theorem \ref{thmmain} is proved under condition (ii) in Section \ref{sectiontensor} and under condition (i) in Section \ref{sectionhalf}.

\vskip 0.5cm
\noindent{\bf Notations}. For nonnegative quantities $f,g$ we write $f\lesssim g$ if $f\le Cg$ for a constant $C>0$ which does not depend on the variables and parameters under consideration, $f\gtrsim g$ if $g\lesssim f$, and $f\sim g$ if $f\lesssim g\lesssim f$; the dependence of $C$ on fixed quantities is indicated by a subscript when it matters. Constants may change from line to line. We write $f=O(g)$ if $|f|\lesssim g$, and $f=o(g)$ if $|f|/g$ tends to zero in the limit under consideration. For $z\in\R^d$ or $z\in\C$, $\jb{z}:=(1+|z|^2)^{1/2}$. We write $X=(x,y)$, $dX=dx\,dy$, $\|f\|_{L^p}$ for the norm of $L^p(\R^2)$, $\|c\|_{W^{r,\infty}}:=\sum_{|\gamma|\le r}\|\pa^\gamma c\|_{L^\infty}$ for integers $r\ge0$, and
$$\|f\|_{L^q_yL^p_x}:=\Big\|\,y\mapsto\|f(\cdot,y)\|_{L^p(\R)}\Big\|_{L^q(\R)}.$$
The $L^2$ pairing is $(f,g):=\int f\overline g\,dX$, and $\weakto$ denotes weak convergence in $L^2$. For $h\in\R^2$, $\tau_hf(X):=f(X+h)$ and $\delta_hf:=\tau_hf-f$; for $h\in\R$, $\delta^x_hf(x,y):=f(x+h,y)-f(x,y)$ and $\delta^y_hf(x,y):=f(x,y+h)-f(x,y)$. On the spectral side, $k=k_1+ik_2\in\C$, $dA(k)=dk_1dk_2$, $\nabla_k=(\pa_{k_1},\pa_{k_2})$, $\C^\pm:=\{\pm k_2>0\}$, and for a function $f$ on $\C^\pm$ we write $f^\pm(k_1):=\lim_{k_2\to0^\pm}f(k_1+ik_2)$ for its one-sided traces on $\R$, when they exist. $C_b(\C)$ is the space of bounded continuous functions and $C_0(\C)$ the space of continuous functions tending to zero at infinity, both with the supremum norm $\|\cdot\|_\infty$; for measurable functions, $\|\cdot\|_\infty$ is the essential supremum. For $E\subset\C$ and $0<\alpha\le1$, we write
$$\|f\|_{C^\alpha(E)}:=\sup_E|f|+\sup\Big\{\frac{|f(k)-f(k')|}{|k-k'|^\alpha}:\ k,k'\in E,\ 0<|k-k'|\le1\Big\},$$
and $\|f\|_{C^\alpha}:=\|f\|_{C^\alpha(\C)}$. $D(k_0,r)$ is the open disc of center $k_0$ and radius $r$. Finally, we fix a radial function $\vartheta\in C^\infty_c(\R^2)$ with $0\le\vartheta\le1$, $\vartheta=1$ on $D(0,1)$ and $\supp\vartheta\subset D(0,2)$, and a radial nonnegative function $\varrho\in C^\infty_c(\R^2)$ supported in $D(0,1)$ with $\int\varrho=1$, and we set $\varrho_\eps:=\eps^{-2}\varrho(\cdot/\eps)$ for $\eps>0$. We use $\vartheta$ and $\varrho$ both on $\R^2$ and on $\C\simeq\R^2$, and $\vartheta(s):=\vartheta(s,0)$ for $s\in\R$.
\section{The scalar \texorpdfstring{$\bar\partial$}{dbar} problem}
\label{sectiondbar}

This section collects the tools on the spectral side. We first recall elementary properties of the Cauchy transform. We then solve the equation $\dbar v=a\overline v+f$, which involves the complex conjugate of the unknown, with bounds which depend on the coefficient $a$ only through its size $|a|$ (Lemma \ref{lemmavekua}); this is the reason why no smallness will be needed. Finally we estimate the Riesz potentials of the weights which will bound the scattering coefficients (Lemma \ref{lemmaweights}).

\subsection{The Cauchy transform and the Riesz potential}

For a measurable function $f$ on $\C$ we define the Cauchy transform and, when $f\ge0$, the Riesz potential of order one:
\be
\label{defcauchy}
\cC f(k):=\frac1\pi\int_\C\frac{f(k')}{k-k'}\,dA(k'),\qquad \cI f(k):=\frac1\pi\int_\C\frac{f(k')}{|k-k'|}\,dA(k').
\ee
Since $\frac1{\pi k}$ is the fundamental solution of $\dbar$, $\cC f$ is the solution of $\dbar v=f$ which vanishes at infinity. The Riesz potential dominates it, $|\cC f|\le\cI|f|$, and does not see the phase of $f$. In the inverse problem the source will be $f=ae^{i\phi}\overline m$, whose phase $e^{i\phi}$ oscillates with $(t,X)$; all our bounds will be expressed through $\cI|a|$, which is the same for all $(t,X)$.

\begin{lemma}[Cauchy transform and Riesz potential]
\label{lemmacauchy}
\begin{enumerate}
\item {\em (Modulus of continuity).} Let $f$ be measurable with $\|f\|_\infty+\|\cI|f|\|_\infty<\infty$. Then $\cC f$ and $\cI|f|$ are continuous, and for $k,h\in\C$ with $0<|h|\le\frac12$,
\be
\label{cauchymodulus}
\begin{aligned}
&|\cC f(k+h)-\cC f(k)|+\big|\cI|f|(k+h)-\cI|f|(k)\big|\\
&\qquad\lesssim|h|\log\frac1{|h|}\big(\|f\|_\infty+\|\cI|f|\|_\infty\big).
\end{aligned}
\ee
\item {\em (Inversion of $\dbar$).} Let $f$ be as in (i). Then $\dbar\,\cC f=f$ in $\mathcal D'(\C)$. If moreover $\cI|f|\in C_0(\C)$ and $v\in C_0(\C)$ satisfies $\dbar v=f$ in $\mathcal D'(\C)$, then $v=\cC f$.
\item {\em (Potentials of sets).} For a disc $D$ of radius $r$, an axis-parallel rectangle $Q$ with sides $L_1,L_2$, and a nonnegative $g\in L^1\cap L^\infty(\C)$,
\be
\label{potentialsets}
\begin{gathered}
\cI\1_D\le2r,\qquad \cI\1_Q\lesssim\min(L_1,L_2)\Big(1+\Big|\log\frac{L_1}{L_2}\Big|\Big),\\
\|\cI g\|_\infty\le2\big(\|g\|_\infty\|g\|_{L^1}\big)^{\frac12}\le\|g\|_\infty+\|g\|_{L^1},
\end{gathered}
\ee
and moreover $\cI g\in C_0(\C)$.
\item {\em (Domination).} Let $F\ge0$ be bounded on bounded sets, with $\cI F\in C_0(\C)$. Then $\sup_{k}\cI(F\1_{\{|k'|>R\}})(k)\to0$ as $R\to\infty$, and $\cI G\in C_0(\C)$ for every measurable $G$ with $0\le G\le F$.
\item {\em (Convergence).} Let $F$ be as in (iv), and moreover bounded. Then $\sup_k\cI(F\1_{\{|k'_2|<\eps\}})(k)\to0$ as $\eps\to0$, and every sequence $(f_n)$ with $|f_n|\le F$ which tends to zero uniformly on the compact subsets of $\C\setminus\R$ satisfies $\|\cI|f_n|\|_\infty\to0$.
\item {\em (Integration by parts).} Let $\phi\in C^2(\C)$ be real, and let $f\in C^1(\C)$ be such that $\dbar\phi\neq0$ on the support of $f$, $\frac f{\dbar\phi}\in C_0(\C)$, and $f$ and $\dbar\big(\frac f{\dbar\phi}\big)$ belong to $L^1\cap L^\infty(\C)$. Then
\be
\label{ibp}
\cC\big(e^{i\phi}f\big)=\frac{e^{i\phi}f}{i\dbar\phi}-\cC\Big(e^{i\phi}\,\dbar\Big(\frac f{i\dbar\phi}\Big)\Big).
\ee
\end{enumerate}
\end{lemma}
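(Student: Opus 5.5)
The six items are routine facts about the kernels $\frac1{\pi(k-k')}$ and $\frac1{\pi|k-k'|}$. I would prove them in order, splitting every integral into a near region and a far region.

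For (i), fix $k$ and $h$ with $|h|\le\frac12$, and split the integration into $|k'-k|<2|h|$ and $|k'-k|\ge2|h|$.

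On the near disc, both kernels at $k$ and at $k+h$ are integrable singularities of mass $O(|h|)$. Hence this part is bounded by $|h|\,\|f\|_\infty$, by the computation $\cI\1_D\le2r$ from (iii).

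On the far region, we have $\big|\frac1{k+h-k'}-\frac1{k-k'}\big|\lesssim\frac{|h|}{|k-k'|^2}$, and the same bound holds for the difference of the absolute kernels. This region splits further:
\begin{itemize}
\item on the annulus $2|h|\le|k-k'|\le1$, we bound $|f|$ by $\|f\|_\infty$, and the integral of $|k-k'|^{-2}$ produces $|h|\log\frac1{|h|}\,\|f\|_\infty$;
\item on $|k-k'|\ge1$, we use $\frac{|h|}{|k-k'|^2}\le\frac{|h|}{|k-k'|}$, so this part is $\le|h|\,\cI|f|(k)$.
\end{itemize}
Continuity of $\cC f$ and $\cI|f|$ follows from the modulus \eqref{cauchymodulus}.

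For (ii), pair $\cC f$ with $\dbar\varphi$ for a test function $\varphi$ and use Fubini. This is justified because $\int|f(k')|\,|\dbar\varphi(k)|\,|k-k'|^{-1}\,dA\,dA$ is finite, being bounded by $\|\dbar\varphi\|_{L^1}\sup\cI|f|$ up to a cutoff argument. Then $\dbar\frac1{\pi k}=\delta$ gives $\dbar\,\cC f=f$. For uniqueness, note that $|\cC f|\le\cI|f|\in C_0$, so $v-\cC f$ is entire by Weyl's lemma and tends to zero at infinity. Liouville's theorem then gives $v=\cC f$.

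For (iii):
\begin{itemize}
\item For the disc, the potential is maximal at the center by rearrangement, and there it equals $\frac1\pi\int_0^r\frac{2\pi\rho}{\rho}\,d\rho=2r$.
\item For the rectangle, say $L_1\le L_2$, the potential is again maximal at the center. Cover the rectangle by the disc of radius $L_1$, which contributes $O(L_1)$, and the rest of a strip of width $L_1$, which contributes $\int_{L_1}^{L_2}\frac{L_1}{\rho}\,d\rho=L_1\log\frac{L_2}{L_1}$.
\item For $g\in L^1\cap L^\infty$, split at radius $r$: the near part is at most $2r\|g\|_\infty$ and the far part at most $\frac{\|g\|_{L^1}}{\pi r}$. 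Optimizing in $r$ gives the stated bound $2(\|g\|_\infty\|g\|_{L^1})^{1/2}$, up to checking the constant, and the second inequality is AM--GM.
\item Decay at infinity follows by approximating $g$ in $L^1\cap L^\infty$ with compactly supported functions.
\end{itemize}

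For (iv), the key is to exploit $\cI F\in C_0$ itself rather than any integrability of $F$.
\begin{itemize}
\item Write $\cI(F\1_{|k'|>R})=\cI F-\cI(F\1_{|k'|\le R})$. The subtracted term is continuous and tends to zero at infinity, and it increases to $\cI F$ as $R\to\infty$.
\item On a large disc $|k|\le R_0$, Dini's theorem applies to continuous functions increasing to the continuous limit $\cI F$, so the convergence is uniform there.
\item For $|k|>R_0$, the term is bounded by $\cI F(k)$, which is small by the $C_0$ hypothesis.
\item For $0\le G\le F$, split $G=G\1_{|k'|\le R}+G\1_{|k'|>R}$. The first piece is in $L^1\cap L^\infty$, so its potential is in $C_0$ by (iii). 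The second piece has uniformly small potential by the first claim.
\end{itemize}

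For (v), the thin strip $\{|k_2'|<\eps\}$ is handled in two parts. Inside $|k'|\le R$, the set is a rectangle of sides $2R$ and $2\eps$, so its potential is bounded by $\|F\|_\infty\,\eps\log\frac R\eps$ using (iii). Outside $|k'|\le R$, the contribution is small by (iv). For the sequence $f_n$, split the plane into three pieces:
\begin{itemize}
\item the strip $\{|k_2'|<\eps\}$, controlled as above since $|f_n|\le F$;
\item the exterior $\{|k'|>R\}$, controlled by (iv);
\item the compact set $\{|k'|\le R,\ |k_2'|\ge\eps\}$, where $f_n\to0$ uniformly and the area is finite, so the potential tends to zero by (iii).
\end{itemize}

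For (vi), set $g=\frac f{i\dbar\phi}$, so that $e^{i\phi}f=\dbar(e^{i\phi})\,g$. The product rule gives $e^{i\phi}f=\dbar(e^{i\phi}g)-e^{i\phi}\dbar g$. We have $e^{i\phi}g\in C_0$, and $\dbar(e^{i\phi}g)=e^{i\phi}f+e^{i\phi}\dbar g$ lies in $L^1\cap L^\infty$, hence has $C_0$ potential by (iii). The uniqueness part of (ii) then gives $\cC\dbar(e^{i\phi}g)=e^{i\phi}g$. Applying $\cC$, which is linear, to the identity above yields \eqref{ibp}.

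I expect the main difficulty to be (iv), and specifically the uniform smallness of the tails when $F$ is not integrable. The Dini argument combined with the $C_0$ hypothesis is what I would rely on there.
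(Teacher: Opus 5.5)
Your proof is correct. For (i), (ii), (iii), (v) and (vi) it follows the paper's argument almost verbatim:
\begin{itemize}
\item the near/annulus/far splitting in (i);
\item Fubini with the Cauchy--Pompeiu formula, then Weyl--Liouville, in (ii);
\item the splitting at radius $r$ in (iii);
\item the strip/exterior/compact decomposition in (v);
\item the uniqueness part of (ii) applied to $e^{i\phi}f/(i\dbar\phi)$ in (vi).
\end{itemize}

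The one genuinely different step is the first claim of (iv). You write $\cI(F\1_{\{|k'|>R\}})=\cI F-\cI(F\1_{\{|k'|\le R\}})$ and apply Dini's theorem on a large disc. This is valid, but it uses the continuity of $\cI F$ and of the truncated potentials (the latter from (iii)), and it gives no rate.

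The paper instead notes that $|k-k'|\ge\frac12|k'|$ when $|k|\le\frac R2$ and $|k'|>R$. Hence $\cI(F\1_{\{|k'|>R\}})(k)\le\frac2\pi\int_{\{|k'|>R\}}F(k')|k'|^{-1}\,dA(k')$, which tends to zero because $\cI F(0)<\infty$. For $|k|>\frac R2$ it simply uses $\cI F(k)\le\sup_{|k''|>R/2}\cI F(k'')$. This needs only $\cI F(0)<\infty$ and the decay of $\cI F$, not its continuity. It also shows that the difficulty you anticipated does not arise: finiteness of $\cI F(0)$ already says that $F(k')/|k'|$ is integrable at infinity, even when $F$ itself is not integrable.

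Two minor points in (iii).
\begin{itemize}
\item Reducing the rectangle bound to the center needs more than the rearrangement argument used for the disc. It is also unnecessary: your covering by $D(k,L_1)$ and the rest of the strip works at every point $k$, as in the paper.
\item A function $g\in L^1\cap L^\infty$ cannot in general be approximated in the $L^\infty$ norm by compactly supported functions. The decay of $\cI g$ should instead come from truncation: $\|\cI(g\1_{\{|k'|>R\}})\|_\infty\le2(\|g\|_\infty\|g\1_{\{|k'|>R\}}\|_{L^1})^{1/2}\to0$, and $\cI(g\1_{\{|k'|\le R\}})(k)\le\|g\|_{L^1}/(\pi(|k|-R))$ for $|k|>R$, together with the continuity given by (i).
\end{itemize}
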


Parts (i) and (ii) are classical, see \cite[Chapter I]{Vekua}; we include short proofs, since we use the precise form of the bounds. Part (vi) expresses the gain produced by the oscillation of $e^{i\phi}$: when $|\dbar\phi|$ is large, the right-hand side of \eqref{ibp} is small.

\begin{proof}
{\bf Step 1:} Proof of (i). Fix $k$ and $0<|h|\le\frac12$, and split $\C$ into $E_1=D(k,2|h|)$, $E_2=D(k,2)\setminus E_1$ and $E_3=\C\setminus D(k,2)$. On $E_1$ we bound the two kernels separately: since $E_1\subset D(k+h,3|h|)$,
$$\int_{E_1}\frac{dA(k')}{|k+h-k'|}\le\int_{D(k+h,3|h|)}\frac{dA(k')}{|k+h-k'|}=6\pi|h|,\quad \int_{E_1}\frac{dA(k')}{|k-k'|}=4\pi|h| .$$
On $E_2\cup E_3$ we have $|k+h-k'|\ge\frac12|k-k'|$, and each of the two differences of kernels below is bounded by $\frac{|h|}{|k+h-k'||k-k'|}$, hence
$$\Big|\frac1{k+h-k'}-\frac1{k-k'}\Big|+\Big|\frac1{|k+h-k'|}-\frac1{|k-k'|}\Big|\le\frac{4|h|}{|k-k'|^2}.$$
On $E_2$, the integral of this bound is at most $4|h|\int_{2|h|}^2\frac{2\pi r\,dr}{r^2}=8\pi|h|\log\frac1{|h|}$. On $E_3$, $\frac{4|h|}{|k-k'|^2}\le\frac{2|h|}{|k-k'|}$, and the integral against $|f|$ is at most $2\pi|h|\cI|f|(k)$. Summing the three contributions and using $1\le\frac{1}{\log2}\log\frac1{|h|}$ gives \eqref{cauchymodulus}, and in particular the continuity of $\cC f$ and $\cI|f|$.\\

\noindent{\bf Step 2:} Proof of (ii). The function $\frac1{\pi k}$ is locally integrable and $\dbar\frac1{\pi k}=\delta_0$, so that, for a test function $\varphi$ and $k'\in\C$, the Cauchy--Pompeiu formula gives
$$\varphi(k')=-\frac1\pi\int_\C\frac{\dbar\varphi(k)}{k-k'}\,dA(k).$$
Fubini's theorem is legitimate, since
$$\int_\C\int_\C\frac{|f(k')||\dbar\varphi(k)|}{|k-k'|}\,dA(k')\,dA(k)=\pi\int_\C|\dbar\varphi|\,\cI|f|\,dA<\infty,$$
and it gives
$$\int_\C\cC f\,(-\dbar\varphi)\,dA=\int_\C f(k')\Big(-\frac1\pi\int_\C\frac{\dbar\varphi(k)}{k-k'}dA(k)\Big)dA(k')=\int_\C f\varphi\,dA,$$
that is, $\dbar\cC f=f$. If now $v\in C_0$ and $\dbar v=f$, then $v-\cC f$ is continuous by (i) and $\dbar(v-\cC f)=0$. By Weyl's lemma it is an entire function, and it tends to zero at infinity since $|\cC f|\le\cI|f|\in C_0$. By Liouville's theorem, $v=\cC f$.\\

\noindent{\bf Step 3:} Proof of (iii). Among all sets of area $\pi r^2$, the integral of the radially decreasing function $|k-k'|^{-1}$ is maximal on $D(k,r)$, so that $\cI\1_D(k)\le\frac1\pi\int_{D(k,r)}\frac{dA(k')}{|k-k'|}=2r$. For the rectangle $Q=I_1\times I_2$ with $|I_1|=L_1\ge L_2=|I_2|$, we use the disc bound on $D(k,L_2)$, and on $Q\setminus D(k,L_2)$ we use $|k-k'|\ge\max(L_2,|k_1-k_1'|)$:
$$\begin{aligned}\int_{Q\setminus D(k,L_2)}\frac{dA(k')}{|k-k'|}&\le L_2\int_{I_1}\frac{dk_1'}{\max(L_2,|k_1-k'_1|)}\\&\le 2L_2\int_0^{L_1}\frac{ds}{\max(L_2,s)}=2L_2\Big(1+\log\frac{L_1}{L_2}\Big);\end{aligned}$$
the case $L_1\le L_2$ is symmetric. Finally, if $g=0$, the last bound and the assertion $\cI g\in C_0(\C)$ are immediate. Assume $g\neq0$. Splitting the potential at distance $r>0$ from $k$,
$$\cI g(k)\le\frac1\pi\|g\|_\infty\int_{D(k,r)}\frac{dA(k')}{|k-k'|}+\frac1{\pi r}\int_{\{|k-k'|\ge r\}}g\,dA\le2r\|g\|_\infty+\frac{\|g\|_{L^1}}{\pi r},$$
and the choice $r=(\|g\|_{L^1}/(2\pi\|g\|_\infty))^{\frac12}$ gives the last bound of \eqref{potentialsets}. Applying it to $g\1_{\{|k'|>R\}}$ shows that $\cI(g\1_{\{|k'|>R\}})$ is uniformly small for $R$ large, while $\cI(g\1_{\{|k'|\le R\}})(k)\le\frac{\|g\|_{L^1}}{\pi(|k|-R)}$ for $|k|>R$. Since $\cI g$ is continuous by (i), $\cI g\in C_0(\C)$.\\

\noindent{\bf Step 4:} Proof of (iv) and (v). If $|k|\le\frac R2$ and $|k'|>R$, then $|k-k'|\ge\frac12|k'|$, so that
$$\cI(F\1_{\{|k'|>R\}})(k)\le\frac2\pi\int_{\{|k'|>R\}}\frac{F(k')}{|k'|}\,dA(k')\longrightarrow0\quad(R\to\infty),$$
because $\frac1\pi\int\frac{F(k')}{|k'|}dA(k')=\cI F(0)<\infty$. If $|k|>\frac R2$, then $\cI(F\1_{\{|k'|>R\}})(k)\le\cI F(k)\le\sup_{|k''|>R/2}\cI F(k'')$, which tends to zero since $\cI F\in C_0$. This proves the first claim of (iv). If $0\le G\le F$, then $G\1_{D(0,R)}\in L^1\cap L^\infty$, so that $\cI(G\1_{D(0,R)})\in C_0$ by (iii), while $\cI(G\1_{\{|k'|\ge R\}})\le\cI(F\1_{\{|k'|\ge R\}})$ is uniformly small for $R$ large; hence $\cI G$ is a uniform limit of functions of $C_0(\C)$, and $\cI G\in C_0(\C)$. If $F$ is bounded, the strip $\{|k_2'|<\eps,\ |k'|\le R\}$ is contained in a rectangle with sides $2R$ and $2\eps$, so that, by \eqref{potentialsets},
$$\sup_k\cI(F\1_{\{|k_2'|<\eps\}})(k)\le\sup_k\cI(F\1_{\{|k'|>R\}})(k)+C\|F\|_\infty\,\eps\Big(1+\log\frac R\eps\Big).$$
Choosing first $R$ large and then $\eps$ small proves the first claim of (v). For the second one, we split $\C$ into $\{|k'|>R\}$, $\{|k_2'|<\eps,|k'|\le R\}$ and the compact set $K_{R,\eps}=\{|k'|\le R,|k_2'|\ge\eps\}$: by \eqref{potentialsets},
$$\|\cI|f_n|\|_\infty\le\sup_k\cI(F\1_{\{|k'|>R\}})(k)+\sup_k\cI(F\1_{\{|k_2'|<\eps\}})(k)+2R\sup_{K_{R,\eps}}|f_n|,$$
and we let $n\to\infty$, then $\eps\to0$ and $R\to\infty$.\\

\noindent{\bf Step 5:} Proof of (vi). Since $\dbar\phi$ is continuous and does not vanish on the support of $f$, the function $\frac f{\dbar\phi}$ is $C^1$ on $\C$, and so is $g:=\frac{e^{i\phi}f}{i\dbar\phi}\in C_0(\C)$. Since $\dbar e^{i\phi}=i(\dbar\phi)e^{i\phi}$,
$$\dbar g=e^{i\phi}f+e^{i\phi}\,\dbar\Big(\frac f{i\dbar\phi}\Big)\in L^1\cap L^\infty(\C),$$
so that $\cI|\dbar g|\in C_0(\C)$ by (iii), and (ii) gives $g=\cC(\dbar g)$, which is \eqref{ibp}.
\end{proof}

\subsection{The \texorpdfstring{$\bar\partial$}{dbar} equation with complex conjugation}

The eigenfunctions of the inverse problem solve $m=1+\cC(a\overline m)$, where the coefficient $a$ contains the oscillating factor $e^{i\phi}$. The operator $v\mapsto\cC(a\overline v)$ is only real-linear, because of the complex conjugation. The following lemma is the analytic core of the paper. Its proof is based on the similarity principle of Bers and Vekua \cite{Bers,Vekua}: if $\dbar v=a\overline v$, then $\tilde a:=a\overline v/v$ is a bounded function, $v$ solves the complex-linear equation $\dbar v=\tilde av$, and $e^{-\cC\tilde a}v$ is holomorphic. Since $|\cC\tilde a|\le\cI|a|$, the size of the exponential factor is controlled by the Riesz potential of $|a|$ only, whatever the phase of $a$.

\begin{lemma}[Solvability of $\dbar v=a\overline v+f$]
\label{lemmavekua}
Let $a\in L^\infty(\C)$ with $\cI|a|\in C_0(\C)$, and set $M:=\|\cI|a|\|_\infty$.
\begin{enumerate}
\item {\em (Unique solvability).} For every $f\in L^\infty(\C)$ with $\cI|f|\in C_0(\C)$, there is a unique $v\in C_0(\C)$ such that
\be
\label{eqvekua}
v=\cC f+\cC(a\overline v),
\ee
and it satisfies, for every $k\in\C$,
\be
\label{pointwisevekua}
|v(k)|\le e^{2M}\,\cI|f|(k).
\ee
Moreover, every $v'\in C_0(\C)$ with $\dbar v'=a\overline{v'}+f$ in $\mathcal D'(\C)$ coincides with $v$.
\item {\em (Eigenfunction).} The equation $m=1+\cC(a\overline m)$ has a unique solution $m\in C_b(\C)$. It satisfies $m-1\in C_0(\C)$ and
\be
\label{boundeigen}
|m(k)-1|\le e^{2M}\cI|a|(k),\qquad \|m\|_\infty\le1+Me^{2M}.
\ee
\item {\em (Bounded right-hand side).} For every $b\in C_b(\C)$, the equation $v=b+\cC(a\overline v)$ has a unique solution $v\in C_b(\C)$, and $\|v\|_\infty\le(1+Me^{2M})\|b\|_\infty$.
\item {\em (Comparison).} If $a_1,a_2$ satisfy the assumptions of the lemma, with constants $M_1,M_2$ and eigenfunctions $m_1,m_2$ given by (ii), then
\be
\label{comparison}
\|m_1-m_2\|_\infty\le e^{2M_1}(1+M_2e^{2M_2})\,\|\cI|a_1-a_2|\|_\infty .
\ee
\item {\em (Regularity).} The solutions given by (i), and by (iii) when $b$ is constant, belong to $W^{1,p}_{\rm loc}(\C)$ for every $p\in(2,\infty)$.
\end{enumerate}
\end{lemma}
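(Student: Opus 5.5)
The heart of the lemma is the a priori bound \eqref{pointwisevekua}. I would prove it first and then get existence by a continuity (or Fredholm) argument.

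\emph{A priori bound.} Let $v\in C_0(\C)$ solve \eqref{eqvekua}. By Lemma \ref{lemmacauchy}(ii), $\dbar v=a\overline v+f$ in $\mathcal D'$. Define $\tilde a:=a\overline v/v$ where $v\neq0$ and $\tilde a:=0$ elsewhere. Then $|\tilde a|\le|a|$ and $\dbar v=\tilde a v+f$. Set $\sigma:=\cC\tilde a$. Lemma \ref{lemmacauchy}(i),(iv) show that $\sigma$ is continuous and $|\sigma|\le\cI|a|\le M$, and (iv) also gives $\sigma\in C_0$. Moreover $\dbar\sigma=\tilde a$.

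Put $w:=e^{-\sigma}v$. Formally $\dbar w=e^{-\sigma}f$, so $w-\cC(e^{-\sigma}f)$ is holomorphic, continuous and tends to $0$ at infinity, hence vanishes by Liouville. This gives $v=e^{\sigma}\cC(e^{-\sigma}f)$, and therefore
\[
|v(k)|\le e^{M}\cdot e^{M}\,\cI|f|(k)=e^{2M}\cI|f|(k).
\]
The product rule for $e^{-\sigma}v$ must be justified because $\sigma$ and $v$ are only continuous. I would use the standard fact that $\cC$ of a bounded compactly supported-in-effect function lies in $W^{1,p}_{\rm loc}$ for all $p<\infty$ (Calderón--Zygmund for the Beurling transform). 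This makes $\sigma,v\in W^{1,p}_{\rm loc}$ with $p>2$, so the chain rule holds. The same argument proves (v).

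\emph{Uniqueness and existence.} Uniqueness in (i) follows by applying the bound to the difference of two solutions, which solves the equation with $f=0$. The same remark covers any $v'\in C_0$ solving the distributional equation: by Lemma \ref{lemmacauchy}(ii), $v'-\cC(a\overline{v'})-\cC f$ is an entire function in $C_0$, hence $v'$ solves \eqref{eqvekua}.

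For existence I would use the homotopy $a\mapsto sa$, $s\in[0,1]$, on the Banach space $C_0$ (viewed as a real Banach space). The operator $T_s v:=s\,\cC(a\overline v)$ maps $C_0$ into $C_0$ by Lemma \ref{lemmacauchy}(iv), and the a priori bound holds uniformly in $s$ with constant $e^{2M}$. Set $S:=\{s: I-T_s \text{ is onto}\}$. Then $0\in S$. If $s\in S$, the inverse of $I-T_s$ has norm at most $e^{2M}\sup_k\cI|f|$-type control. A more robust choice is to work in the space $E$ of $v$ with norm $\|v\|_\infty$ and data $g=\cC f$, where $\|v\|_\infty\le e^{2M}\|\cI|f|\|_\infty$ is uniform. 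A Neumann series then extends surjectivity to $s+\delta$ with $\delta$ independent of $s$, and finitely many steps reach $s=1$.

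The delicate point is that the data class changes along the way: the step requires controlling $\cI|\delta a\overline v|\le\delta\,\|v\|_\infty\cI|a|$. This closes with data $f$ replaced by $f+\delta a\overline v$, whose Riesz potential is bounded by $\|\cI|f|\|_\infty+\delta M\|v\|_\infty$. So I would iterate with the uniform bound $\|v\|_\infty\le e^{2M}\|\cI|f|\|_\infty$ and choose $\delta Me^{2M}<1/2$. An alternative route is compactness: $\cC(a\,\cdot)$ is compact on $C_0$ by (i) and (iv) via Arzelà--Ascoli, so Fredholm theory combined with uniqueness gives existence. I expect this existence step, specifically compactness near infinity, to be the main technical obstacle, and Lemma \ref{lemmacauchy}(iv) handles exactly that.

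\emph{Parts (ii)--(iv).} For (ii), set $m=1+v$ with $f=a$; then $\cI|a|\in C_0$ gives $|m-1|\le e^{2M}\cI|a|$, which yields \eqref{boundeigen}. Uniqueness in $C_b$ holds because the difference $d$ of two solutions satisfies $d=\cC(a\overline d)$, which lies in $C_0$ by (iv), so $d=0$.

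For (iii), write $v=b+w$ with $w=\cC(a\overline b)+\cC(a\overline w)$ and apply (i) with $f=a\overline b$. This gives $|w|\le e^{2M}M\|b\|_\infty$.

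For (iv), the difference $m_1-m_2$ satisfies $m_1-m_2=\cC(a_1\overline{(m_1-m_2)})+\cC((a_1-a_2)\overline{m_2})$. Applying (i) with coefficient $a_1$ and bounding $\cI|(a_1-a_2)\overline{m_2}|\le\|m_2\|_\infty\cI|a_1-a_2|$ together with \eqref{boundeigen} gives \eqref{comparison}.
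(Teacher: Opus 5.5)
Your proof is correct. The a priori bound via the similarity principle is the same as in the paper: $\tilde a=a\overline v/v$, $\sigma=\cC\tilde a$, $W^{1,p}_{\rm loc}$ regularity from the Beurling transform to justify the product rule, then Liouville. The uniqueness statements and the reductions for (ii)--(iv) also match the paper.

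The difference is the existence step in (i). The paper proves that $\mathcal Tv:=\cC(a\overline v)$ is compact on the real Banach space $C_0(\C)$: equicontinuity comes from \eqref{cauchymodulus}, uniform smallness at infinity from $\cI|a|\in C_0$, and then Arzel\`a--Ascoli applies. It then invokes the Fredholm alternative, so existence follows from injectivity alone, i.e.\ from the qualitative case $f=0$ of the similarity principle. This is your ``alternative route''.

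Your main route is a continuity method in the amplitude. The bound depends on $a$ only through $M$, so it holds with constant $e^{2M}$ for every $sa$, $s\in[0,1]$. If the equation is solvable at level $s$ for all admissible data, then at level $s+\delta$ the map sending $v$ to the level-$s$ solution with data $f+\delta a\overline v$ is a contraction of $C_0$ with constant $\delta Me^{2M}$. The new data stay admissible by Lemma~\ref{lemmacauchy}~(iv).

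Your first formulation, a bounded inverse of $I-T_s$ on all of $C_0$, is not what the a priori bound gives. A general element of $C_0$ is not of the form $\cC f$ with $\|\cI|f|\|_\infty$ controlled. The contraction on the data class, which you settle on, is the version that works.

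Your route is more elementary: it needs no compactness and no Riesz--Schauder theory on a real Banach space, and it produces the solution in $O(1+Me^{2M})$ steps. The cost is using the quantitative bound along the whole homotopy, whereas the paper's route needs only injectivity. Uniqueness in (iii), which you do not state, follows exactly as in (ii).
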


All these bounds depend on $a$ only through $|a|$; in particular they do not change when $a$ is replaced by $a\,e^{i\psi}$ for a real measurable function $\psi$.

\begin{proof}
{\bf Step 1:} Local regularity and the similarity principle. Let $f$ be as in (i) and let $v\in C_0$ satisfy $\dbar v=a\overline v+f$ in $\mathcal D'$. On a disc $D(k_0,r)$, the function $v-\cC(\1_{D(k_0,2r)}(a\overline v+f))$ is holomorphic in $D(k_0,2r)$, and $\pa_k\cC$ is the Beurling transform, which is bounded on $L^p$ for $1<p<\infty$ by the Calder\'on--Zygmund theory \cite{Stein}; hence $v\in W^{1,p}_{\rm loc}$ for every $p\in(2,\infty)$. We claim that
\be
\label{similarity}
v=e^{\sigma}\,\cC(e^{-\sigma}f),\qquad \sigma:=\cC\tilde a,\qquad \tilde a:=\begin{cases}a\overline v/v&\mbox{on}\ \{v\neq0\},\\ 0&\mbox{on}\ \{v=0\},\end{cases}
\ee
which implies \eqref{pointwisevekua}, since $|\tilde a|\le|a|$ gives $|\sigma|\le\cI|a|\le M$, hence $|e^{\pm\sigma}|\le e^M$. By construction $\tilde av=a\overline v$ almost everywhere. By Lemma \ref{lemmacauchy}, $\sigma$ is continuous and $\dbar\sigma=\tilde a$, and $\sigma\in W^{1,p}_{\rm loc}$ as above. Hence the product rule applies and
$$\dbar(e^{-\sigma}v)=e^{-\sigma}(\dbar v-\tilde av)=e^{-\sigma}f .$$
The function $e^{-\sigma}v-\cC(e^{-\sigma}f)$ belongs to $C_0$, because $|e^{-\sigma}v|\le e^M|v|$ and $|\cC(e^{-\sigma}f)|\le e^M\cI|f|$, and it is $\dbar$-closed. By Lemma \ref{lemmacauchy} (ii) it vanishes, which proves \eqref{similarity}.\\

\noindent{\bf Step 2:} Compactness. Let $\mathcal Tv:=\cC(a\overline v)$; it is a real-linear operator on the real Banach space $C_0(\C)$. If $\|v\|_\infty\le1$, then $|\mathcal Tv|\le\cI|a|$, and by \eqref{cauchymodulus} the family $\{\mathcal Tv\}$ has the common modulus of continuity $C|h|\log\frac1{|h|}(\|a\|_\infty+M)$. Since $\cI|a|\in C_0$, the family is moreover uniformly small outside large discs. By the Arzel\`a--Ascoli theorem, $\mathcal T$ is compact on $C_0(\C)$.\\

\noindent{\bf Step 3:} Proof of (i). If $v\in C_0$ and $v=\mathcal Tv$, then $\dbar v=a\overline v$ by Lemma \ref{lemmacauchy} (ii), and \eqref{similarity} with $f=0$ gives $v=0$. Thus $I-\mathcal T$ is injective, and the Fredholm alternative for compact operators on real Banach spaces shows that $I-\mathcal T$ is invertible on $C_0(\C)$. Since $\cC f\in C_0$ by Lemma \ref{lemmacauchy} (i) and $|\cC f|\le\cI|f|$, the function $v:=(I-\mathcal T)^{-1}\cC f$ is the unique solution of \eqref{eqvekua} in $C_0$. It satisfies $\dbar v=a\overline v+f$, so that \eqref{pointwisevekua} follows from Step 1. Finally, if $v'\in C_0$ and $\dbar v'=a\overline{v'}+f$, then $w:=v'-v\in C_0$ satisfies $\dbar w=a\overline w$, and \eqref{similarity} with $f=0$ gives $w=0$.\\

\noindent{\bf Step 4:} Proof of (ii) and (iii). Let $b\in C_b$. If $v\in C_b$ solves $v=b+\mathcal Tv$, then $w:=v-b=\cC(a\overline b)+\cC(a\overline w)$ belongs to $C_0$, because $\cC(a\overline v)$ is continuous by Lemma \ref{lemmacauchy} (i) and bounded by $\|v\|_\infty\cI|a|\in C_0$. Conversely, (i) with $f:=a\overline b$, for which $\cI|f|\le\|b\|_\infty\cI|a|\in C_0$, provides a unique such $w$, with
$$|w(k)|\le e^{2M}\cI|a\overline b|(k)\le e^{2M}\|b\|_\infty\,\cI|a|(k)\le Me^{2M}\|b\|_\infty .$$
This proves (iii), and (ii) is the case $b=1$. When $b$ is constant, $w\in W^{1,p}_{\rm loc}$ by Step 1, and so is $v=b+w$; together with Step 1, this proves (v).\\

\noindent{\bf Step 5:} Proof of (iv). Subtracting the equations of $m_1$ and $m_2$ gives
$$m_1-m_2=\cC\big((a_1-a_2)\overline{m_2}\big)+\cC\big(a_1\overline{(m_1-m_2)}\big).$$
This is \eqref{eqvekua} with $a=a_1$ and $f=(a_1-a_2)\overline{m_2}$, so that \eqref{pointwisevekua} and \eqref{boundeigen} give
$|m_1-m_2|\le e^{2M_1}\|m_2\|_\infty\,\cI|a_1-a_2|\le e^{2M_1}(1+M_2e^{2M_2})\|\cI|a_1-a_2|\|_\infty$.
\end{proof}

\subsection{Spectral weights}

The scattering coefficients will be bounded by functions of the Born frequency $\zeta(k)=(\zeta_1(k),\zeta_2(k))=(-2k_2,-4k_1k_2)$ introduced in \eqref{defzetaintro}. The map $\zeta$ sends each half-plane $\C^\pm$ diffeomorphically onto the half-plane $\{\mp\xi>0\}$ of frequencies, with inverse $(\xi,\eta)\mapsto\frac{\eta}{2\xi}-i\frac\xi2$ and Jacobian determinant $-8k_2$. Hence, for every measurable $g\ge0$ on $\R^2$,
\be
\label{changevariables}
\int_\C g(\zeta(k))\,|k_2|\,dA(k)=\frac18\int_{\R^2}g(\xi,\eta)\,d\xi\, d\eta .
\ee
Because of the factor $|k_2|$, a function of $\zeta(k)$ which is integrable in the frequency variables need not be integrable in $k$: near the real axis the map $\zeta$ is singular, and the bounded set of frequencies $\{|\zeta|\le1\}$ corresponds to a region of $\C$ which contains the long and thin set $\{|k_1|\ge1,\ |k_2|\le\frac1{8|k_1|}\}$ of infinite area. This is why the Riesz potentials of the scattering coefficients must be estimated with care. For $\alpha,\beta>0$ we define the anisotropic weight
\be
\label{defweight}
F_{\alpha,\beta}(k):=\jb{\zeta_1(k)}^{-\alpha}\jb{\zeta_2(k)}^{-\beta}=\jb{2k_2}^{-\alpha}\jb{4k_1k_2}^{-\beta}.
\ee
Since $\jb{\zeta_1}\jb{\zeta_2}\le\jb{\zeta}^2$, we have $\jb{\zeta(k)}^{-2\gamma}\le F_{\gamma,\gamma}(k)$ for $\gamma>0$.

\begin{lemma}[Riesz potentials of the spectral weights]
\label{lemmaweights}
\begin{enumerate}
\item {\em (Anisotropic weights).} Let $\alpha>0$ and $0<\beta<1$ with $\alpha+2\beta>2$, and $F:=F_{\alpha,\beta}$. Then
\be
\label{weightsF}
\cI F,\ \cI(|k_2|F)\in C_0(\C),\qquad \int_\C\frac{(1+|k_2|)F}{\jb k}\,dA<\infty .
\ee
\item {\em (Rapidly decaying weights).} The functions $\cI\jb{\zeta}^{-6}$ and $\cI(|k_2|\jb{\zeta}^{-6})$ belong to $C_0(\C)$, and
\be
\label{weightsE}
\begin{gathered}
\int_\C\Big(|k_2|+k_2^2+\frac1{\jb k}\Big)\jb{\zeta(k)}^{-6}dA(k)<\infty,\\
\cI\big((|k_2|+k_2^2)\jb{\zeta}^{-6}\big)(k)\lesssim\jb k^{-1}.
\end{gathered}
\ee
\item {\em (Tails).} For $R\ge1$,
\be
\label{weightstail}
\sup_{k\in\C}\cI\big(\jb{\zeta}^{-6}\1_{\{|k'|>R\}}\big)(k)\lesssim\frac{1+\log R}R .
\ee
\end{enumerate}
\end{lemma}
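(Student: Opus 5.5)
The strategy is to reduce everything to Lemma \ref{lemmacauchy} (iii)--(iv). The reduction needs two facts about the weights: each is bounded (so it is bounded on bounded sets), and its restriction to a large disc has finite integral or sits inside a few rectangles. We then control the tails separately. The basic tool is the change of variables \eqref{changevariables}, which converts integrals of functions of $\zeta(k)$ against $|k_2|dA$ into frequency integrals. For integrals without the factor $|k_2|$ we integrate in $k_1$ first at fixed $k_2$: substituting $s=4k_1k_2$ gives
\[
\int_\R\jb{4k_1k_2}^{-\beta}\,\frac{dk_1}{\jb k}\lesssim\min\Big(\int\frac{dk_1}{\jb{k_1}},\ \frac{1}{|k_2|^{\beta}}\int\frac{dk_1}{|k_1|^{\beta}\jb{k_1}}\Big),
\]
with a logarithmic loss that has to be tracked.

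For (i) we proceed in three steps.
\begin{enumerate}
\item Prove the integrability in \eqref{weightsF}. Split into $|k_2|\le1$ and $|k_2|\ge1$. For $|k_2|\ge1$ the integrand is roughly $|k_2|^{1-\alpha}\jb{4k_1k_2}^{-\beta}\jb k^{-1}$. The $k_1$-integral is bounded by $|k_2|^{-\beta}$ times a constant when $\beta<1$, since $\jb k^{-1}\le |k_2|^{-1}$ and the $k_1$-integral of $\jb{4k_1k_2}^{-\beta}$ over $|k_1|\le |k_2|$ is about $|k_2|^{1-\beta}|k_2|^{-\beta}\cdot$. More carefully, the integral is about $|k_2|^{-\beta}\cdot |k_2|^{1-\beta}/|k_2|$ plus the tail $\int_{|k_1|>|k_2|}|k_1k_2|^{-\beta}|k_1|^{-1}$. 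This leaves $\int_1^\infty |k_2|^{1-\alpha-2\beta}\,dk_2<\infty$ exactly when $\alpha+2\beta>2$.
\item For $|k_2|\le1$, $F\sim\jb{4k_1k_2}^{-\beta}$ and the integral is $\int_{-1}^1\int\jb{k_1}^{-1}\jb{4k_1k_2}^{-\beta}dk_1dk_2\lesssim\int_{-1}^1(1+\log\frac1{|k_2|})dk_2<\infty$.
\item Deduce $\cI F$ and $\cI(|k_2|F)$ in $C_0$. Since $\cI G(k)\le \frac1\pi\int_{|k'-k|\le 1} G/|k-k'|+\frac1\pi\int_{|k'-k|>1}G/|k-k'|$, it suffices to show $G\le|k_2|F$ (which dominates $F$ only where $|k_2|\ge 1$, so treat $F\1_{|k_2|\le1}$ and $(1+|k_2|)F$ as the two pieces) is bounded and that $\int G/\jb{k'}<\infty$ together with local uniform smallness. $G$ is bounded because $|k_2|\jb{2k_2}^{-\alpha}$ is bounded when $\alpha\ge1$, and for $\alpha<1$ one uses $\beta>1-\alpha/2>\tfrac12$ so that $|k_2|^{1-\alpha}$ is absorbed. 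Strictly, boundedness only holds where $|k_1|$ is not too small, so I would instead bound the near part with the disc/rectangle estimates of \eqref{potentialsets}. For the far part, $|k-k'|\ge1$ gives $|k-k'|^{-1}\lesssim\jb{k}\jb{k'}^{-1}\cdots$; cleaner is to show $\cI G(0)<\infty$ and then apply Lemma \ref{lemmacauchy} (iv) after verifying $\cI G(k)\to0$ directly. For that I would split $k'$ into $|k'-k|\le|k|/2$, where $G(k')$ is small uniformly (or the region is a thin strip, handled by the rectangle bound with the logarithm), and the complement, where $|k-k'|\gtrsim\jb{k'}$ and dominated convergence applies with the integrability above.
\end{enumerate}

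For (ii), the $|k_2|$ and $k_2^2$ integrals are immediate from \eqref{changevariables}, since $\jb{\zeta}^{-6}$ and $|\xi|\jb{\zeta}^{-6}$ are integrable on $\R^2$. Because $\jb\zeta^{-6}\le F_{3,3}$ fails the constraint $\beta<1$, I would use $\jb\zeta^{-6}\le F_{\alpha,\beta}$ with, say, $\alpha=2,\beta=\tfrac34$ to inherit the $1/\jb k$ integrability and the $C_0$ statements from (i). The decay $\cI((|k_2|+k_2^2)\jb\zeta^{-6})(k)\lesssim\jb k^{-1}$ follows by splitting at $|k'-k|=|k|/2$. On the far part the kernel is at most $2/|k|$ times a finite integral. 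On the near part $|k'|\sim|k|$, and the weight is at most $\jb{k_2'}^{-4}\jb{k_1'k_2'}^{-2}$, which is $\lesssim|k|^{-2}$ unless $|k_2'|\lesssim|k|^{-1}$. That set is a rectangle of sides about $|k|$ and $|k|^{-1}$, so its potential is $\lesssim|k|^{-1}\log|k|$. The logarithm can be removed by also using the decay $\jb{k_1'k_2'}^{-2}$ across the thin direction, integrating the kernel in $k_1'$ first.

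Part (iii) is the same computation on $\{|k'|>R\}$. The region $|k_2'|\ge R^{-1}$ gives weight $\lesssim R^{-2}$ there, and the $1/\jb{k'}$-weighted integral gives $R^{-1}$ decay for the far part. The thin strip $|k_2'|<R^{-1}$ is handled by the rectangle bound \eqref{potentialsets}, which produces the factor $(1+\log R)/R$. The main obstacle is step 3 of (i), the $C_0$ property near the real axis, where $F$ does not decay in $k_1$ along the axis and only the thin-strip geometry, through the rectangle bound with its logarithm, saves integrability.
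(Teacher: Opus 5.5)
Your integrability computations (Steps 1--2 of (i), and the first line of \eqref{weightsE} via \eqref{changevariables}) are essentially correct. Integrating in $k_1$ first is a legitimate alternative to counting dyadic pieces. The gap is Step 3, which is the actual content of (i): you never prove a bound on $\sup_k\cI F$ or $\sup_k\cI(|k_2|F)$, let alone decay at infinity.

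The integrability of $(1+|k_2|)F/\jb k$ only controls the far field $\{|k'-k|>|k|/2\}$. For the near field you assert that the density is uniformly small or confined to a thin strip, and neither holds. Take $k=ik_2$ with $k_2\to\infty$. On $D(k,|k|/2)$ the density $|k_2'|F(k')$ has size $|k|^{1-\alpha}$ on $\{|k_1'|\lesssim|k|^{-1}\}$. It is therefore unbounded when $\alpha<1$, so Lemma \ref{lemmacauchy} (i) does not even give continuity. It also decays only like $|k|^{1-\alpha-\beta}|k_1'|^{-\beta}$ with $\beta<1$, so its mass spreads over the whole disc. Its potential at $k$ is of order $|k|^{2-\alpha-2\beta}$. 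Obtaining such a bound uniformly in $k$ needs a two-dimensional estimate that your one-dimensional integrals do not supply. Appealing to Lemma \ref{lemmacauchy} (iv) is circular, since it presupposes a majorant whose potential is already known to be in $C_0(\C)$.

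The missing idea is a partition on which both $F$ and the geometry are controlled uniformly in $k$. The paper uses $E_{\mu,\nu}=\{\mu\le|k_2|<2\mu,\ \nu\le|k_1k_2|<2\nu\}$ with $\mu\in2^{\mathbb Z}$, $\nu\in\{0,1,2,4,\dots\}$, where $\nu=0$ means $|k_1k_2|<1$. On $E_{\mu,\nu}$ one has $F\sim\jb\mu^{-\alpha}\max(1,\nu)^{-\beta}$, and $E_{\mu,\nu}$ is covered by a few rectangles of sides $\sim\max(1,\nu)/\mu$ and $\mu$.
\begin{itemize}
\item The rectangle bound in \eqref{potentialsets} gives $\sup_k\cI\1_{E_{\mu,\nu}}\lesssim\min(\mu,\nu/\mu)(1+|\log(\nu/\mu^2)|)$ for $\nu\ge1$.
\item The resulting double sum converges. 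For $\mu\ge1$ it is dominated by the square pieces $\nu\sim\mu^2$, which give $\sum_\mu\mu^{2-\alpha-2\beta}$.
\item The $C_0$ property follows because the tail of the sum is uniformly small, while finitely many pieces form a bounded set carrying a bounded density.
\end{itemize}

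The same layering is needed in your sketches of (ii) and (iii), whose intermediate claims fail as stated.
\begin{itemize}
\item At $k'=R+iR^{-1}$ one has $\zeta_2(k')=-4$, so $\jb{\zeta}^{-6}\sim1$ there rather than $\lesssim R^{-2}$.
\item The strip $\{|k'|>R,\ |k_2'|<R^{-1}\}$ is unbounded in $k_1'$. Width $R^{-1}$ on the blocks $\{|k_1'|\sim L\}$ gives $R^{-1}(1+\log(LR))$, which is not summable in $L$.
\item In the near field of \eqref{weightsE}, the density at $|k_1'|\sim|k|$, $|k_2'|=M|k|^{-1}$ is $\sim M^{-5}|k|^{-1}$, not $\lesssim|k|^{-2}$, for every fixed $M$.
\end{itemize}
The fix is to use, on each block $\{|k_1'|\sim L\}$, a strip of width $L^{-1}$ together with the layers $\{|k_2'|\sim2^qL^{-1}\}$, on which the density gains $2^{-6q}$.
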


\begin{proof}
{\bf Step 1:} Dyadic decomposition. For dyadic numbers $\mu\in2^{\mathbb Z}$ and $\nu\in\{2^n:\ n\ge0\}$ let
$$E_{\mu,\nu}:=\{\mu\le|k_2|<2\mu,\ \nu\le|k_1k_2|<2\nu\},$$
and $E_{\mu,0}:=\{\mu\le|k_2|<2\mu,\ |k_1k_2|<1\}$. These sets cover $\C\setminus\R$. On $E_{\mu,\nu}$, $\nu\ge1$, we have $\frac\nu{2\mu}<|k_1|<\frac{2\nu}\mu$, so that $E_{\mu,\nu}$ is contained in four axis-parallel rectangles with sides $\frac{3\nu}{2\mu}$ and $\mu$, and $E_{\mu,0}$ is contained in two rectangles with sides $\frac2\mu$ and $\mu$. By \eqref{potentialsets},
\be
\label{potentialregions}
\sup_{k\in\C}\cI\1_{E_{\mu,\nu}}(k)\lesssim\lambda_{\mu,\nu}:=\min\Big(\mu,\frac\nu\mu\Big)\Big(1+\Big|\log\frac\nu{\mu^2}\Big|\Big)\qquad(\nu\ge1),
\ee
and the same bound holds for $\nu=0$, with $\lambda_{\mu,0}:=\min(\mu,\frac1\mu)(1+|\log\mu|)$.
Moreover $|E_{\mu,\nu}|\lesssim\nu$ for $\nu\ge1$, $|E_{\mu,0}|\lesssim1$, and on $E_{\mu,\nu}$
$$F\sim\jb\mu^{-\alpha}\max(1,\nu)^{-\beta},\qquad 1+|k_2|\sim\jb\mu,\qquad \jb k\gtrsim\max\Big(1,\mu,\frac\nu\mu\Big)\ \mbox{if}\ \nu\ge1.$$

\noindent{\bf Step 2:} Proof of (i). Since $\cI$ is monotone,
$$\sup_{k\in\C}\cI(|k_2|F)(k)\lesssim\sum_{\mu,\nu}\mu\jb\mu^{-\alpha}\max(1,\nu)^{-\beta}\lambda_{\mu,\nu},$$
and similarly for $\cI F$ without the factor $\mu$. We sum over four families of regions.

{\it Regions with $\mu\ge1$, $\nu\ge1$.} For $\nu\le\mu^2$ the term is $\mu^{-\alpha}\nu^{1-\beta}(1+\log\frac{\mu^2}\nu)$, and for $\nu\ge\mu^2$ it is $\mu^{2-\alpha}\nu^{-\beta}(1+\log\frac\nu{\mu^2})$. Since $0<\beta<1$, both geometric sums over $\nu$ are dominated by their value at $\nu=\mu^2$, which gives $\mu^{2-\alpha-2\beta}$. The sum over $\mu\ge1$ is finite since $\alpha+2\beta>2$.

{\it Regions with $\mu\ge1$, $\nu=0$.} The term is $\mu^{1-\alpha}\cdot\mu^{-1}(1+\log\mu)$, which is summable since $\alpha>0$.

{\it Regions with $\mu<1$, $\nu\ge1$.} Here $\frac\nu\mu\ge\mu$ and the term is $\mu^2\nu^{-\beta}(1+\log\frac\nu{\mu^2})$, which is summable over $\mu<1$ and $\nu\ge1$ since $\beta>0$.

{\it Regions with $\mu<1$, $\nu=0$.} The term is $\mu^2(1+|\log\mu|)$, which is summable over $\mu<1$.

The same sums without the factor $\mu$ bound $\cI F$. Hence $\cI F$ and $\cI(|k_2|F)$ are bounded. Moreover, given $\eps>0$, there is a finite set $J$ of indices such that the sum over the indices outside $J$ is less than $\eps$. The union $E_J$ of the regions with indices in $J$ is bounded, and $\cI(F\1_{E_J})\in C_0(\C)$ by \eqref{potentialsets}. Thus $\cI F$ is a uniform limit of functions of $C_0(\C)$, and $\cI F\in C_0(\C)$; the same argument applies to $\cI(|k_2|F)$. For the integral in \eqref{weightsF}, the contribution of $E_{\mu,\nu}$ is at most $|E_{\mu,\nu}|\jb\mu^{1-\alpha}\max(1,\nu)^{-\beta}\sup_{E_{\mu,\nu}}\jb k^{-1}$. In the four families above, this is bounded respectively by $\mu^{-\alpha}\nu^{1-\beta}$ (if $\nu\le\mu^2$) or $\mu^{2-\alpha}\nu^{-\beta}$ (if $\nu\ge\mu^2$), by $\mu^{-\alpha}$, by $\mu\nu^{-\beta}$, and by $\mu\log\frac2\mu$, where in the last case we used $\int_{|k_1|<1/\mu}\frac{dk_1}{\jb{k_1}}\lesssim\log\frac2\mu$. All these terms are summable, as above.\\

\noindent{\bf Step 3:} Proof of (ii) and (iii). Since $1+|k_2|\le\sqrt2\jb{\zeta_1}$ and $\jb{\zeta}^{-6}\le\jb{\zeta_1}^{-3}\jb{\zeta_2}^{-3}$, we have $(1+|k_2|)\jb{\zeta}^{-6}\le\sqrt2\jb{\zeta_1}^{-2}\jb{\zeta_2}^{-3}\le\sqrt2F_{3/2,3/4}$, and the first claim of (ii) follows from (i) and Lemma \ref{lemmacauchy} (iv). The integrals in \eqref{weightsE} are computed with \eqref{changevariables}: $|k_2|=\frac{|\zeta_1|}2$, and $|k_2|\jb k\ge\frac1{\sqrt2}(|k_2|+|k_1k_2|)\ge\frac{|\zeta|}{4\sqrt2}$, so that they are bounded by
$$\frac18\int_{\R^2}\Big(1+\frac{|\xi|}2+\frac{4\sqrt2}{|(\xi,\eta)|}\Big)\jb{(\xi,\eta)}^{-6}d\xi\,d\eta<\infty .$$
We now prove the tail bounds, for $p\in\{0,1,2\}$ and $R\ge1$:
\be
\label{tailsk1k2}
\begin{gathered}
\sup_{k\in\C}\cI\big(|k_2|^p\jb{\zeta}^{-6}\1_{\{|k'_1|>R\}}\big)(k)\lesssim\frac{1+\log R}{R^{1+p}},\\
\sup_{k\in\C}\cI\big(|k_2|^p\jb{\zeta}^{-6}\1_{\{|k'_2|>R\}}\big)(k)\lesssim\frac{1+\log R}{R^{2}} .
\end{gathered}
\ee
For the first one, decompose $\{L\le|k_1|<2L\}$, $L\ge R$ dyadic, into the strip $\{|k_2|\le\frac1L\}$, covered by two rectangles with sides $L$ and $\frac2L$ on which the density is at most $L^{-p}$, and the sets $\{2^{q-1}L^{-1}<|k_2|\le2^qL^{-1}\}$, $q\ge1$, covered by four rectangles with sides $L$ and $2^{q-1}L^{-1}$, on which $|\zeta_2|\ge2^{q+1}$, so that the density is at most $(2^qL^{-1})^p2^{-6q}$. By \eqref{potentialsets} their potentials are bounded by $C\frac{1+\log L}{L^{1+p}}$ and by $C2^{q(p-5)}\frac{1+\log L+q}{L^{1+p}}$; summing over $q\ge1$ and over $L\ge R$ gives the first bound. For the second one, on $\{\mu\le|k_2|<2\mu\}$, $\mu\ge R$, we use $\jb{\zeta}^{-6}\le\jb{\zeta_1}^{-3}\jb{\zeta_2}^{-3}$ and the regions of Step 1: the term with $\nu=0$ is $\mu^{p-4}(1+\log\mu)$ and dominates the terms with $\nu\ge1$; the sum over $\mu\ge R$ is $\lesssim R^{p-4}(1+\log R)\le R^{-2}(1+\log R)$. Since $\{|k'|>R\}\subset\{|k_1'|>\frac R2\}\cup\{|k_2'|>\frac R2\}$, \eqref{tailsk1k2} with $p=0$ gives \eqref{weightstail} for $R\ge2$, and the case $1\le R\le2$ follows from the boundedness of $\cI\jb\zeta^{-6}$. With $p=1,2$ it gives $\sup_k\cI((|k_2|+k_2^2)\jb\zeta^{-6}\1_{\{|k'|>R\}})\lesssim R^{-1}$ for $R\ge2$. Finally, for $|k|\ge4$ we split the potential in \eqref{weightsE} at $|k'|=\frac{|k|}2$. On $\{|k'|\le\frac{|k|}2\}$, $|k-k'|\ge\frac{|k|}2$, and the contribution is at most $\frac2{\pi|k|}$ times the integral of $(|k_2|+k_2^2)\jb\zeta^{-6}$, which is finite. On $\{|k'|>\frac{|k|}2\}$ we use the tail bound just proved. For $|k|\le4$ the potential is bounded by (i).
\end{proof}
\section{The inverse problem}
\label{sectioninverse}

In this section a scattering coefficient $a$ is given, and we study the function $u$ which is reconstructed from $a$ by the inverse transform. We first show that the linear part of the reconstruction is a free solution (Subsection \ref{subsectionreconstruction}). We then prove that its nonlinear part is negligible on the propagation cones of free waves (Proposition \ref{propcone}), which, combined with a pairing argument (Lemma \ref{lemmaweak}), gives weak scattering. Finally we prove that $u$ solves KP-II and satisfies the nonlinear Plancherel identity when $a$ is smooth with compact support (Propositions \ref{propcompact} and \ref{propplancherel}), and we extend these properties to rapidly decaying coefficients (Proposition \ref{proprapid}). The relation between $a$ and the initial datum is the subject of Section \ref{sectiondirect}.

\subsection{The reconstruction formula}
\label{subsectionreconstruction}

\noindent{\it The phase.} For $k\in\C$ and $(t,X)\in\R\times\R^2$ we define the phase of the free plane wave at the Born frequency $\zeta(k)=(-2k_2,-4k_1k_2)$:
\be
\label{defphase}
\phi(k;t,X):=X\cdot\zeta(k)+t\,\omega(\zeta(k))=-2k_2x-4k_1k_2y+8t(3k_1^2k_2-k_2^3),
\ee
and we write $P:=e^{i\phi}$. The phase is, up to the factor $-2$, the imaginary part of a holomorphic polynomial: $\phi=-2\Im(kx+k^2y-4k^3t)$. Since $\pa_k\Im f=\frac1{2i}f'$ for a holomorphic function $f$, we obtain
\be
\label{dphase}
\pa_k\phi=i(x+2ky-12k^2t),\qquad \dbar_k\phi=\overline{\pa_k\phi}.
\ee
Moreover $|P|=1$, $P(\bar k)=\overline{P(k)}$, $P=1$ on the real axis, and $\pa_xP=i\zeta_1(k)P=-2ik_2P$.

\medskip
\noindent{\it The linear part of the reconstruction.} Let $a$ be a function on $\C\setminus\R$ with $|k_2|^{\frac12}a\in L^2(\C)$. We define $\cB a\in L^2(\R^2)$ by
\be
\label{defborn}
\widehat{\cB a}(\zeta(k)):=i\sgn(k_2)\,a(k),\qquad k\in\C\setminus\R .
\ee
This defines $\widehat{\cB a}$ almost everywhere, since $\zeta$ is a bijection from $\C\setminus\R$ onto $\{\xi\neq0\}$, and the change of variables \eqref{changevariables} gives
\be
\label{bornisometry}
\|\cB a\|_{L^2}^2=8\int_\C|k_2|\,|a(k)|^2\,dA(k).
\ee
Since $\zeta(\bar k)=-\zeta(k)$, the function $\cB a$ is real-valued if and only if $a(\bar k)=\overline{a(k)}$; we then say that $a$ is real. For a function $g$ on $\C$ we write $g^*(k):=\overline{g(\bar k)}$, so that $a$ is real if and only if $a^*=a$; moreover $P^*=P$, and a direct computation gives $\dbar(g^*)=(\dbar g)^*$. If moreover $k_2a\in L^1(\C)$, then $\widehat{\cB a}\in L^1(\R^2)$, and the change of variables $(\xi,\eta)=\zeta(k)$ in the Fourier integral of $S(t)\cB a$ gives
\be
\label{freespectral}
\begin{aligned}
S(t)\cB a(X)&=\frac1{2\pi}\int_{\R^2}e^{i(X\cdot(\xi,\eta)+t\omega(\xi,\eta))}\widehat{\cB a}(\xi,\eta)\,d\xi\,d\eta\\
&=\frac{4i}\pi\int_\C k_2\,a\,P\,dA=-\frac2\pi\int_\C\pa_x(aP)\,dA .
\end{aligned}
\ee
We shall see in Section \ref{sectiondirect} that the scattering coefficient of an initial datum $u_0$ is $a(k)=-i\sgn(k_2)\widehat{(u_0\chi_k)}(\zeta(k))$, where $\chi_k$ is the normalized eigenfunction of the heat operator with potential $u_0$, defined in \eqref{defchi}, which equals $1+O(u_0)$ for small data. Hence, to first order in $u_0$,
\be
\label{bornapprox}
a(k)=-i\sgn(k_2)\,\hat u_0(\zeta(k)),\qquad\mbox{that is,}\qquad \cB a=u_0 .
\ee
This is the Born approximation of scattering theory, which explains the name of $\zeta(k)$.

\medskip
\noindent{\it The inverse problem.} For every $(t,X)$, the eigenfunction $m=m(\cdot;t,X)$ is the solution of
\be
\label{eqm}
m=1+\cC(aP\overline m),
\ee
and the reconstructed function is
\be
\label{reconstruction}
u(t,X):=-\frac2\pi\int_\C\pa_x\big(aP\overline m\big)\,dA(k).
\ee
Formally, \eqref{eqm} gives $m=1+\frac{m^{(1)}}k+O(k^{-2})$ as $|k|\to\infty$, with $m^{(1)}=\frac1\pi\int aP\overline m\,dA$, and \eqref{reconstruction} is the classical formula $u=-2\pa_xm^{(1)}$, see Proposition \ref{propcompact}. We keep the derivative $\pa_x$ inside the integral because $a$ itself need not be integrable near the real axis, while $k_2a$ and $a\pa_xm$ are. Comparing \eqref{reconstruction} with \eqref{freespectral}, and using $\pa_x(aP(\overline m-1))=aP\big(-2ik_2(\overline m-1)+\overline{\pa_xm}\big)$, we obtain the decomposition of $u$ into its linear and nonlinear parts:
\be
\label{remainder}
u(t)-S(t)\cB a=-\frac2\pi\int_\C a\,P\,B\,dA,\qquad B:=-2ik_2\,\overline{(m-1)}+\overline{\pa_xm} .
\ee
Scattering will follow from the smallness of the right-hand side, which is driven by $m-1$ and $\pa_xm$.

\medskip
\noindent{\it Rapidly decaying coefficients.} We shall work with the following class of coefficients. We say that $a$ satisfies \eqref{hypa} if $a\in C^1(\C\setminus\R)$ is real, has continuous extensions to $\overline{\C^+}$ and to $\overline{\C^-}$, and
\be
\label{hypa}
|a(k)|+\jb k^{-1}|\nabla_ka(k)|\lesssim_q\jb{\zeta(k)}^{-q}\qquad\mbox{for every}\ q\ge0\ \mbox{and}\ k\in\C\setminus\R .
\ee
Every real $a\in C^\infty_c(\C\setminus\R)$ satisfies \eqref{hypa}, and so does the scattering coefficient of every smooth exponentially decaying datum (Proposition \ref{propdirect}).

\begin{lemma}[The inverse problem for rapidly decaying coefficients]
\label{lemmainverse}
Let $a$ satisfy \eqref{hypa}. Then $M:=\|\cI|a|\|_\infty<\infty$, and for every $(t,X)$ the equation \eqref{eqm} has a unique solution $m(\cdot;t,X)\in C_b(\C)$. Moreover:
\begin{enumerate}
\item $\|m(\cdot;t,X)\|_\infty\le1+Me^{2M}$ and $m(\bar k;t,X)=\overline{m(k;t,X)}$;
\item $m$ is twice differentiable in $x$ as a $C_b(\C)$-valued function, $\pa_xm$ is the solution in $C_0(\C)$ of
\be
\label{eqmx}
\pa_xm=\cC(aP\overline{\pa_xm})+\cC(-2ik_2aP\overline m),
\ee
and $|\pa_xm(k;t,X)|+|\pa_x^2m(k;t,X)|\lesssim\jb k^{-1}$ uniformly in $(t,X)$;
\item the integral \eqref{reconstruction} converges absolutely, $u$ and $\pa_xu$ are real-valued, bounded and continuous on $\R\times\R^2$, and \eqref{remainder} holds;
\item {\em (Cancellation).} $m$ and $\pa_xm$ belong to $W^{1,p}_{\rm loc}(\C)$ for every $p\in(2,\infty)$, and the function $B$ of \eqref{remainder} satisfies
\be
\label{cancellation}
\pa_kB=-\overline{(m-1)}+\overline{aP}\,\pa_xm .
\ee
\end{enumerate}
\end{lemma}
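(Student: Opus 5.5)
First, I bound the coefficient and solve the equation. By \eqref{hypa} with $q=6$ we have $|a|\lesssim\jb{\zeta}^{-6}$, so Lemma \ref{lemmaweights} (ii) gives $\cI|a|\in C_0(\C)$ and $M<\infty$. Since $|aP|=|a|$, Lemma \ref{lemmavekua} (ii) applies with coefficient $aP$ for every fixed $(t,X)$. It yields a unique $m\in C_b$ with the bound in (i), and $m-1\in C_0$ with $|m-1|\le e^{2M}\cI|a|$. For the symmetry, I apply ${}^*$ to \eqref{eqm}. A change of variables $k'\mapsto\bar k'$ gives $(\cC g)^*=\cC(g^*)$, and $a^*=a$, $P^*=P$. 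Hence $m^*$ solves the same equation, and uniqueness gives $m^*=m$.

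For (ii), I differentiate formally in $x$ using $\pa_xP=-2ik_2P$. This gives \eqref{eqmx}, which is of the form \eqref{eqvekua} with source $f=-2ik_2aP\overline m$. Here $|f|\lesssim|k_2|\jb\zeta^{-6}$, so $\cI|f|\in C_0$ and, by \eqref{weightsE}, $\cI|f|\lesssim\jb k^{-1}$. Lemma \ref{lemmavekua} (i) then produces the solution $w$ with $|w|\le e^{2M}\cI|f|\lesssim\jb k^{-1}$.

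To show that $w$ really is $\pa_xm$, I look at the difference quotient $D_h=(m(\cdot;x+h)-m(\cdot;x))/h$. It satisfies $D_h=\cC(aP\overline{D_h})+\cC(a\,\tfrac{P(x+h)-P(x)}{h}\,\overline{m(x+h)})$. The first step is continuity of $m$ in $x$, which comes from the comparison estimate \eqref{comparison} with $a_1=aP(x+h)$ and $a_2=aP(x)$. Indeed, $|a||P(x+h)-P(x)|\le\min(2,2|k_2h|)|a|$, so $\|\cI\cdot\|_\infty\to0$ by dominated convergence in Lemma \ref{lemmacauchy} (iv)/(v). Subtracting the equation for $w$ and applying \eqref{pointwisevekua} then gives $\|D_h-w\|_\infty\to0$. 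The second derivative is handled identically, with source $-4ik_2aP\overline{\pa_xm}-4k_2^2aP\overline m$. This source is bounded by $(|k_2|+k_2^2)\jb\zeta^{-6}$, and \eqref{weightsE} gives the $\jb k^{-1}$ bound.

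For (iii), I expand the integrand of \eqref{reconstruction} as $aP(-2ik_2\overline m+\overline{\pa_xm})$. It is dominated by $(|k_2|+\jb k^{-1})\jb\zeta^{-6}$, which is integrable by \eqref{weightsE}. Continuity and boundedness of $u$ and $\pa_xu$ in $(t,X)$ follow by dominated convergence, once $m$ and $\pa_xm$ are shown continuous in $(t,X)$. That continuity is again the comparison argument, since $P$ depends continuously on $(t,X)$ locally uniformly. For $\pa_xu$ I use the bounds on $\pa_x^2m$. Reality follows from $m^*=m$: substituting $k\mapsto\bar k$ shows that the integral equals its own conjugate. Identity \eqref{remainder} is the algebra already written in the text, combined with \eqref{freespectral}, which is valid since $k_2a\in L^1$.

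Part (iv) is the delicate point. Regularity $W^{1,p}_{\rm loc}$ comes from Lemma \ref{lemmavekua} (v), applied to $m$ (case (iii) with $b=1$) and to $\pa_xm$ (case (i)). For the cancellation, the plan is to use $\pa_k\overline g=\overline{\dbar g}$. From \eqref{eqm}, $\dbar m=aP\overline m$, and from \eqref{eqmx}, $\dbar\pa_xm=aP\overline{\pa_xm}-2ik_2aP\overline m$, both in $\mathcal D'$. Also $\pa_k(k_2)=\frac1{2}\cdot(-i)=-\frac i2$. Therefore
$$\pa_kB=-2i\big(-\tfrac i2\big)\overline{(m-1)}-2ik_2\overline{aP}m+\overline{aP}\pa_xm+2ik_2\overline{aP}m,$$
and the $k_2$ terms cancel, giving \eqref{cancellation}. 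The main obstacle I expect is justifying the $x$-differentiation, and in particular identifying $\pa_xm$ through uniqueness in $C_0$. The product rule in $W^{1,p}_{\rm loc}$ is routine once that is in place.
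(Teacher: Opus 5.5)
Your proposal is correct and follows the paper's proof essentially step by step. Both arguments solve via Lemma \ref{lemmavekua} (ii) applied to $aP$, identify $\pa_xm$ as the $C_0$ solution of \eqref{eqmx} through difference quotients, dominate the reconstruction by \eqref{weightsE}, and obtain \eqref{cancellation} from the same conjugation computation. The differences are cosmetic: you get $m^*=m$ from the integral equation and uniqueness in $C_b$, and in the difference quotient you keep the coefficient $aP$ at the base point, so you first need continuity of $m$ in $x$ from \eqref{comparison}; the paper instead puts $aP_h$ on the unknown and concludes directly with Lemma \ref{lemmavekua} (iii).
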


The two terms of $\pa_kB$ are small when $m-1$ and $\pa_xm$ are small, while a direct differentiation of $B$ produces the large terms $\pm2ik_2\overline{aP}m$, which cancel. The reason is that $B=\overline P\,\pa_x\Psi$ with $\Psi:=P\,\overline{(m-1)}$, where $\Psi$ solves $(\pa_k-i\pa_k\phi)\Psi=\overline am$, an equation whose source carries no phase, and $\pa_x\pa_k\phi=i$ is constant by \eqref{dphase}.

\begin{proof}
{\bf Step 1:} Bounds and symmetry. By \eqref{hypa}, $|a|\lesssim\jb\zeta^{-6}$, and Lemma \ref{lemmaweights} (ii), together with Lemma \ref{lemmacauchy} (iv), gives $\cI|a|\in C_0$ and $\cI(|k_2a|)\in C_0$. Since $|aP|=|a|$, Lemma \ref{lemmavekua} (ii) applied to the coefficient $aP$ gives the unique solution $m$ and the bound of (i), uniformly in $(t,X)$. By Lemma \ref{lemmacauchy} (ii), $\dbar m=aP\overline m$, hence $\dbar m^*=a^*P^*\overline{m^*}=aP\overline{m^*}$, because $a^*=a$ and $P^*=P$. Since $m^*-1\in C_0$, the uniqueness part of Lemma \ref{lemmavekua} (i), applied to $v'=m^*-1$ and $f=aP$, gives $m^*=m$.\\

\noindent{\bf Step 2:} Derivatives in $x$. Let $f:=-2ik_2aP\overline m$. Then $\cI|f|$ is bounded by $2\|m\|_\infty\cI(|k_2a|)$, which belongs to $C_0$. Lemma \ref{lemmavekua} (i) provides a unique solution $w\in C_0$ of the equation $w=\cC(aP\overline w)+\cC f$, and
$$|w(k)|\le e^{2M}\cI|f|(k)\le2e^{2M}\|m\|_\infty\,\cI(|k_2a|)(k)\lesssim\jb k^{-1}$$
by Lemma \ref{lemmaweights} (ii). We show that $w=\pa_xm$. For $h\neq0$, let $P_h:=P(\cdot;t,x+h,y)$ and $q_h:=h^{-1}\delta^x_hm(\cdot;t,X)$. Subtracting the equations \eqref{eqm} at $x+h$ and at $x$, we find that $q_h-w$ solves
$$\begin{aligned}q_h-w&=\cC\big(aP_h\overline{(q_h-w)}\big)+b_h,\\ b_h&:=\cC\big(a(P_h-P)\overline w\big)+\cC\Big(a\Big(\frac{P_h-P}h+2ik_2P\Big)\overline m\Big).\end{aligned}$$
Now $\frac{P_h-P}h=\frac{e^{-2ik_2h}-1}hP$ is bounded by $2|k_2|$ and converges to $-2ik_2P$ uniformly on compact sets as $h\to0$. Since $|k_2a|$ is bounded and $\cI(|k_2a|)\in C_0$, Lemma \ref{lemmacauchy} (v) gives $\|b_h\|_\infty\to0$, and Lemma \ref{lemmavekua} (iii) gives $\|q_h-w\|_\infty\le(1+Me^{2M})\|b_h\|_\infty\to0$. Thus $\pa_xm=w$ in $C_b(\C)$, which proves \eqref{eqmx} and the bound on $\pa_xm$. Differentiating \eqref{eqmx} in the same way, $\pa_x^2m$ is the solution in $C_0$ of
$$\pa_x^2m=\cC(aP\overline{\pa_x^2m})+\cC\big(-4ik_2aP\overline{\pa_xm}-4k_2^2aP\overline m\big),$$
whose source is bounded by $C(|k_2|+k_2^2)|a|$. By \eqref{pointwisevekua} and \eqref{weightsE}, $|\pa_x^2m(k)|\lesssim\cI\big((|k_2|+k_2^2)\jb{\zeta}^{-6}\big)(k)\lesssim\jb k^{-1}$. The same comparison argument, based on Lemma \ref{lemmacauchy} (v) and Lemma \ref{lemmavekua} (iii), shows that $m$, $\pa_xm$ and $\pa_x^2m$ depend continuously on $(t,X)$ in $C_b(\C)$.\\

\noindent{\bf Step 3:} The reconstruction. We have
$$\begin{gathered}\pa_x(aP\overline m)=aP\big(-2ik_2\overline m+\overline{\pa_xm}\big),\\ \pa_x^2(aP\overline m)=aP\big(-4k_2^2\overline m-4ik_2\overline{\pa_xm}+\overline{\pa_x^2m}\big).\end{gathered}$$
By (i), (ii) and \eqref{hypa}, both are bounded by $C(|k_2|+k_2^2+\jb k^{-1})\jb{\zeta}^{-6}$, which is integrable by \eqref{weightsE}. Hence \eqref{reconstruction} converges absolutely, we may differentiate under the integral sign, and $u$ and $\pa_xu$ are bounded and, by Step 2 and dominated convergence, continuous. By Step 1, the integrand of \eqref{reconstruction} is invariant under $g\mapsto g^*$, so that the change of variables $k\mapsto\bar k$ shows that the integral is real: $u$ is real-valued. Finally, \eqref{freespectral} applies since $k_2a\in L^1$, and \eqref{remainder} follows.\\

\noindent{\bf Step 4:} Proof of (iv). By Lemma \ref{lemmavekua} (v), $m$ and $\pa_xm$ belong to $W^{1,p}_{\rm loc}$, and by Lemma \ref{lemmacauchy} (ii), \eqref{eqm} and \eqref{eqmx} give $\dbar m=aP\overline m$ and $\dbar\pa_xm=aP\overline{\pa_xm}-2ik_2aP\overline m$. Taking complex conjugates, $\pa_k\overline m=\overline{aP}\,m$ and $\pa_k\overline{\pa_xm}=\overline{aP}\,\pa_xm+2ik_2\overline{aP}\,m$. Since $\pa_k(-2ik_2)=-1$, we obtain
$$\pa_kB=-\overline{(m-1)}-2ik_2\overline{aP}\,m+\overline{aP}\,\pa_xm+2ik_2\overline{aP}\,m,$$
which is \eqref{cancellation}.
\end{proof}

\subsection{Free waves and weak limits}

Weak limits in $L^2$ can be tested on free waves whose frequencies avoid the line $\{\xi=0\}$, since these are dense. Such free waves live on the propagation cones $\{X/t\in V\}$ with $V$ compact in the region $\cV$ of \eqref{defcV}, and there they have size $|t|^{-1}$. Hence a perturbation of size $o(|t|^{-1})$ on the cones is invisible in the weak limit. We use the velocity variable $v:=X/t$.

\begin{lemma}[Free waves on propagation cones]
\label{lemmafree}
Let $\hat\varphi\in C^\infty_c(\{\xi\neq0\})$, and let $V\subset\cV$ be a compact set whose interior contains the set of group velocities $\{-\nabla\omega(\xi,\eta):\ (\xi,\eta)\in\supp\hat\varphi\}$. Then
\be
\label{freecones}
\|S(t)\varphi\|_{L^1}\lesssim1+|t|,\qquad \lim_{|t|\to\infty}\big\|\1_{\{X/t\notin V\}}S(t)\varphi\big\|_{L^2}=0 .
\ee
\end{lemma}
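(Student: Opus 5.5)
The plan is to write $S(t)\varphi(X)=\frac1{2\pi}\int e^{i t\Phi_v(\xi,\eta)}\hat\varphi(\xi,\eta)\,d\xi\,d\eta$ with $v=X/t$ and $\Phi_v(\xi,\eta):=v\cdot(\xi,\eta)+\omega(\xi,\eta)$. We then split the physical plane into the cone region $\{X/t\in V'\}$, where $V'$ is a compact neighbourhood of the set of group velocities with $V'\subset\operatorname{int}V$, and its complement. Everything takes place on the compact set $K:=\supp\hat\varphi\subset\{\xi\neq0\}$, on which $\omega$ is smooth with all derivatives bounded. Since $\nabla_{(\xi,\eta)}\Phi_v=v+\nabla\omega$, the critical points of $\Phi_v$ in $K$ are exactly the frequencies with group velocity $-\nabla\omega=v$.

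\emph{Step 1 (non-stationary region).} For $v\notin\operatorname{int}V$, let $\delta:=\operatorname{dist}(\R^2\setminus\operatorname{int}V,\{-\nabla\omega(K)\})>0$. Then $|\nabla\Phi_v|\ge\delta$ on $K$, and also $|\nabla\Phi_v|\gtrsim|v|$ for $|v|$ large. We integrate by parts $N$ times with the operator $L=\frac{\nabla\Phi_v\cdot\nabla}{it|\nabla\Phi_v|^2}$, whose transpose has coefficients bounded by $C_N(|t|\,|\nabla\Phi_v|)^{-1}$ on $K$, using that $\nabla^2\Phi_v=\nabla^2\omega$ does not depend on $v$. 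This gives
$$|S(t)\varphi(X)|\lesssim_N(1+|t|\delta+|X|)^{-N}\qquad\text{for } X/t\notin V,\ |t|\ge1.$$
Squaring and integrating over $\R^2$ gives $\|\1_{\{X/t\notin V\}}S(t)\varphi\|_{L^2}^2\lesssim\int(1+|t|\delta+|X|)^{-2N}dX\lesssim|t|^{2-2N}\to0$, which is the second claim of \eqref{freecones}. The same pointwise bound gives an $O(1)$ contribution to the $L^1$ norm of $S(t)\varphi$ outside the cone $tV$.

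\emph{Step 2 ($L^1$ bound on the cone).} Inside $\{X/t\in V\}$ we use the dispersive estimate $\|S(t)\varphi\|_{L^\infty}\lesssim|t|^{-1}\|\varphi\|_{L^1}$ quoted in Subsection \ref{subsectionkp}. Here $\varphi$ is a Schwartz function because $\hat\varphi\in C^\infty_c$. Alternatively, we can apply stationary phase directly: $\det\nabla^2\omega=-36$, so the critical points are nondegenerate, uniformly in $v$. The region $\{X:\ X/t\in V\}$ has area $t^2|V|$, so its contribution to the $L^1$ norm is at most $|t|^2\cdot|t|^{-1}=|t|$. For $|t|\le1$ the trivial bound $\|S(t)\varphi\|_{L^1}\lesssim1$ holds, since $\hat\varphi\in C_c^\infty$ implies that $S(t)\varphi$ is uniformly Schwartz for bounded $t$. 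Together with Step 1 this gives $\|S(t)\varphi\|_{L^1}\lesssim1+|t|$.

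\emph{Main obstacle.} The delicate point is the uniformity of the integration-by-parts bound in Step 1 simultaneously in $v\notin\operatorname{int}V$ and in $|v|\to\infty$. The key fact is that every derivative of order $\ge2$ of $\Phi_v$ equals the corresponding derivative of $\omega$, hence is bounded on $K$ independently of $v$. Each application of ${}^tL$ therefore costs at most $C(|t|\,|\nabla\Phi_v|)^{-1}$. The lower bound $|\nabla\Phi_v|\ge\max(\delta,|v|-C)$ on $K$ then yields decay in both $|t|$ and $|X|$. No property of $\cV$ is needed beyond the fact that $V$ is compact and contains a neighbourhood of $-\nabla\omega(K)$.
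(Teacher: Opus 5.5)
Your proposal is correct and follows essentially the same route as the paper: non-stationary phase, using that $\nabla^2\Phi_v=\nabla^2\omega$ does not depend on $v$ and that $|\nabla\Phi_v|\gtrsim 1+|v|$ off $V$, gives rapid decay in $|t|+|X|$ outside the cone, and a pointwise $|t|^{-1}$ bound integrated over a region of area $O(t^2)$ gives the $L^1$ bound. The differences are cosmetic: you split along $\{X/t\in V\}$ rather than $\{|X|\le C|t|\}$, and you may invoke the global dispersive estimate of \cite{Saut93} instead of the paper's local stationary phase bound; both choices are valid.
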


\begin{proof}
The set of group velocities is contained in $\cV$ by \eqref{groupvelocity}, so that such a $V$ exists. For $|t|\le1$, $\|S(t)\varphi\|_{L^1}\lesssim1$, since $\jb X^3S(t)\varphi$ is bounded uniformly in $|t|\le1$. Let $|t|\ge1$. We write $S(t)\varphi(X)=\frac1{2\pi}\int e^{it\Phi_v(\xi,\eta)}\hat\varphi(\xi,\eta)\,d\xi\,d\eta$ with $\Phi_v(\xi,\eta):=v\cdot(\xi,\eta)+\omega(\xi,\eta)$ and $v=X/t$. The critical points of $\Phi_v$ are the frequencies with group velocity $v$, and they are nondegenerate, with $|\det\nabla^2\Phi_v|=36$. On the compact set $\supp\hat\varphi$, all derivatives of $\omega$ are bounded; covering $\supp\hat\varphi$ by finitely many small discs on each of which $\Phi_v$ has at most one critical point, uniformly for $v$ in a bounded set, the stationary phase estimate \cite[Theorem 7.7.5]{Hormander} gives $|S(t)\varphi(X)|\lesssim|t|^{-1}$ uniformly for $|v|\le C$. If $v\notin V$, then $|\nabla\Phi_v|\gtrsim1+|v|$ on $\supp\hat\varphi$, and the nonstationary phase estimate \cite[Theorem 7.7.1]{Hormander}, applied to the phase $\Phi_v/(1+|v|)$, whose derivatives are bounded uniformly in $v$, with the large parameter $|t|(1+|v|)$, gives $|S(t)\varphi(X)|\lesssim_n(|t|(1+|v|))^{-n}$ for every $n$. Integrating the first bound over $\{|X|\le C|t|\}$ and the second one over its complement gives the first part of \eqref{freecones}, and
$$\int_{\{X/t\notin V\}}|S(t)\varphi|^2\,dX\lesssim|t|^{2}\int_{\R^2}(|t|(1+|v|))^{-6}\,dv\lesssim|t|^{-4}$$
gives the second part.
\end{proof}

\begin{lemma}[From the cones to scattering]
\label{lemmaweak}
Let $u_n\in C(\R;L^2(\R^2))$ and $g_n\in L^2(\R^2)$, $n\ge1$, with $\sup_{n,t}\|u_n(t)\|_{L^2}+\sup_n\|g_n\|_{L^2}<\infty$. Assume that
\begin{enumerate}
\item for every $t$, $u_n(t)\weakto u(t)$ in $L^2$, and $g_n\weakto g$ in $L^2$;
\item for every compact set $V\subset\cV$,
\be
\label{hypcone}
\lim_{|t|\to\infty}\ \sup_{n\ge1}\ \sup_{X/t\in V}\ |t|\,\big|u_n(t,X)-S(t)g_n(X)\big|=0,
\ee
where the supremum in $X$ is an essential supremum.
\end{enumerate}
Then $S(-t)u(t)\weakto g$ in $L^2$ as $t\to\pm\infty$. If moreover $\|u(t)\|_{L^2}=\|g\|_{L^2}$ for every $t$, then $\|u(t)-S(t)g\|_{L^2}\to0$ as $t\to\pm\infty$.
\end{lemma}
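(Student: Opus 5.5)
We argue by duality against a dense class of test functions. Since $S(-t)$ is unitary and $\{\varphi:\hat\varphi\in C^\infty_c(\{\xi\neq0\})\}$ is dense in $L^2$, and since $\|S(-t)u(t)\|_{L^2}=\|u(t)\|_{L^2}\le\liminf_n\|u_n(t)\|_{L^2}$ is bounded uniformly in $t$ by weak lower semicontinuity, it suffices to prove
$$\lim_{|t|\to\infty}\big(u(t),S(t)\varphi\big)=(g,\varphi)$$
for every such $\varphi$. Fix $\varphi$ and choose a compact $V\subset\cV$ as in Lemma \ref{lemmafree}. By the weak convergences in (i) and the unitarity of $S(t)$,
$$\big(u(t)-S(t)g,S(t)\varphi\big)=\lim_{n\to\infty}\big(u_n(t)-S(t)g_n,S(t)\varphi\big)$$
for each fixed $t$, because $S(t)g_n\weakto S(t)g$.

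We split the pairing on the right into the cone $\{X/t\in V\}$ and its complement. On the cone, \eqref{hypcone} together with the bound $\|S(t)\varphi\|_{L^1}\lesssim1+|t|$ from \eqref{freecones} gives
$$\Big|\int_{\{X/t\in V\}}(u_n(t)-S(t)g_n)\overline{S(t)\varphi}\,dX\Big|\le\eps(t)\,|t|^{-1}(1+|t|),$$
where $\eps(t):=\sup_n\sup_{X/t\in V}|t||u_n(t,X)-S(t)g_n(X)|$ tends to zero as $|t|\to\infty$. Off the cone, Cauchy--Schwarz gives the bound
$$\Big(\sup_{n,t}\|u_n(t)\|_{L^2}+\sup_n\|g_n\|_{L^2}\Big)\,\big\|\1_{\{X/t\notin V\}}S(t)\varphi\big\|_{L^2},$$
which tends to zero by the second part of \eqref{freecones}. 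Both bounds are uniform in $n$, so they survive the limit $n\to\infty$. Hence $(u(t),S(t)\varphi)-(S(t)g,S(t)\varphi)\to0$, that is, $(S(-t)u(t),\varphi)\to(g,\varphi)$, and a density argument using the uniform $L^2$ bound gives $S(-t)u(t)\weakto g$ as $t\to\pm\infty$.

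For the strong convergence, we expand
$$\|u(t)-S(t)g\|_{L^2}^2=\|u(t)\|_{L^2}^2+\|g\|_{L^2}^2-2\Re\big(S(-t)u(t),g\big).$$
With $\|u(t)\|_{L^2}=\|g\|_{L^2}$, the right-hand side tends to $2\|g\|_{L^2}^2-2\|g\|_{L^2}^2=0$ by the weak convergence just proved. The only delicate point is the uniformity in $n$, which allows the limit $n\to\infty$ to be exchanged with the estimates. This is exactly why \eqref{hypcone} is stated with $\sup_n$ and why the uniform $L^2$ bounds are assumed.
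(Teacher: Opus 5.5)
Your proof is correct and follows the paper's argument essentially step by step. Like the paper, you pair against free waves with $\hat\varphi\in C^\infty_c(\{\xi\neq0\})$ and pass to the limit in $n$ at fixed $t$. You then split into the cone, handled by \eqref{hypcone} and the $L^1$ bound of \eqref{freecones}, and its complement, handled by Cauchy--Schwarz and the second part of \eqref{freecones}, with both bounds uniform in $n$. Density gives the weak convergence, and your expansion of $\|u(t)-S(t)g\|_{L^2}^2$ is just the explicit form of the paper's ``weak convergence plus convergence of norms implies strong convergence.''
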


The lemma applies in particular to a single function, $u_n=u$ and $g_n=g$.

\begin{proof}
Let $\hat\varphi\in C^\infty_c(\{\xi\neq0\})$ and let $V$ be as in Lemma \ref{lemmafree}. Since $S(t)$ is unitary,
$$\big(S(-t)u(t)-g,\varphi\big)=\big(u(t)-S(t)g,S(t)\varphi\big)=\lim_{n\to\infty}\big(u_n(t)-S(t)g_n,S(t)\varphi\big)$$
for every fixed $t$. We split the last pairing into the integrals over $\{X/t\in V\}$ and over its complement. By \eqref{hypcone} and \eqref{freecones}, the first one is bounded by
$$\sup_n\sup_{X/t\in V}|u_n(t,X)-S(t)g_n(X)|\ \|S(t)\varphi\|_{L^1}=o(|t|^{-1})\cdot O(|t|)=o(1),$$
and by the Cauchy--Schwarz inequality and \eqref{freecones} the second one is bounded by $\sup_n(\|u_n(t)\|_{L^2}+\|g_n\|_{L^2})\|\1_{\{X/t\notin V\}}S(t)\varphi\|_{L^2}=o(1)$, both uniformly in $n$. Hence $(S(-t)u(t)-g,\varphi)\to0$ as $t\to\pm\infty$. The functions $\varphi$ with $\hat\varphi\in C^\infty_c(\{\xi\neq0\})$ are dense in $L^2$, and $S(-t)u(t)$ is bounded in $L^2$, so that $S(-t)u(t)\weakto g$. If $\|S(-t)u(t)\|_{L^2}=\|u(t)\|_{L^2}=\|g\|_{L^2}$, the weak convergence, together with the convergence of the norms, implies the strong convergence in $L^2$.
\end{proof}
\subsection{The nonlinear remainder on propagation cones}
\label{subsectioncone}

We now estimate the nonlinear part \eqref{remainder} of the reconstruction at the points $X=tv$, $v$ in a compact subset of $\cV$. Since $\phi$ is linear in $(t,X)$, we have $\phi(k;t,tv)=t\,\Phi_v(\zeta(k))$ with $\Phi_v(\xi,\eta)=v\cdot(\xi,\eta)+\omega(\xi,\eta)$, so that, away from the real axis, where $\zeta$ is a local diffeomorphism, the critical points of $\phi$ correspond to the frequencies with group velocity $v$. Explicitly, for $v\in\cV$ let
\be
\label{defkpm}
k_\pm(v):=\frac{v_2}{12}\pm\frac i{12}\sqrt{-12v_1-v_2^2};
\ee
for $(\xi,\eta)=\zeta(k_\pm(v))$, a direct computation with \eqref{groupvelocity} gives
$$\frac{6\eta}{\xi}=12\Re k_\pm(v)=v_2,\qquad -3\xi^2-\frac{3\eta^2}{\xi^2}=-12(\Im k_\pm(v))^2-\frac{v_2^2}{12}=v_1 :$$
the frequencies $\zeta(k_\pm(v))$ are the two frequencies with group velocity $v$.

\begin{lemma}[Critical points of the phase]
\label{lemmaphase}
Let $V\subset\cV$ be compact. There are constants $0<c_V\le1\le C_V$ such that the following holds for $t\neq0$, $X=tv$, $v\in V$.
\begin{enumerate}
\item $\pa_k\phi(k;t,X)=-12it\,(k-k_+(v))(k-k_-(v))$. The points $k_\pm(v)$ are the only critical points of $\phi(\cdot;t,X)$, and they satisfy $|k_\pm(v)|\le C_V$ and $\pm\Im k_\pm(v)\ge c_V$.
\item For all $k\in\C$, $|\pa_k^2\phi(k)|\le C_V|t|\jb k$; for every $r>0$ there is $c_{V,r}>0$ such that $|\pa_k\phi(k)|\ge c_{V,r}|t|\jb k^2$ whenever $|k-k_+(v)|\ge r$ and $|k-k_-(v)|\ge r$; and on the real axis, $|\pa_k\phi(k_1)|\ge c_V|t|(1+k_1^2)$.
\item In the real coordinates $(k_1,k_2)$, the Hessian matrix of $\phi$ at $k_\pm(v)$ is $48\,t\,\Im k_\pm(v)\begin{pmatrix}1&0\\0&-1\end{pmatrix}$.
\end{enumerate}
\end{lemma}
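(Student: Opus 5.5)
The plan is a direct computation, using that $\phi$ is, up to the factor $-2$, the imaginary part of a holomorphic polynomial. Set $g(k):=kx+k^2y-4k^3t$, so that $\phi=-2\Im g$. Then \eqref{dphase} gives $\pa_k\phi=ig'(k)$.

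\emph{Part (i).} At $X=tv$ we get
$$\pa_k\phi=it(v_1+2kv_2-12k^2)=-12it\Big(k^2-\frac{v_2}6k-\frac{v_1}{12}\Big).$$
The roots of this quadratic are $\frac{v_2}{12}\pm\big(\frac{v_2^2}{144}+\frac{v_1}{12}\big)^{1/2}$. Since $v\in\cV$, the quantity under the square root is $-\frac1{144}(-12v_1-v_2^2)<0$, so the roots are exactly $k_\pm(v)$ from \eqref{defkpm}, and the factorization follows. Because $\phi$ is real, $\dbar\phi=\overline{\pa_k\phi}$, so $\nabla_k\phi=0$ holds exactly when $\pa_k\phi=0$; hence $k_\pm(v)$ are the only critical points. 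The function $v\mapsto-12v_1-v_2^2$ is continuous and positive on the compact set $V$, so it is bounded above and below by positive constants there. This gives $|k_\pm(v)|\le C_V$ and $\pm\Im k_\pm(v)\ge c_V$.

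\emph{Part (ii).} Differentiating once more, $\pa_k^2\phi=-12it(2k-\frac{v_2}6)$, whose modulus is at most $C_V|t|\jb k$ since $v_2$ is bounded on $V$. For the lower bound I will use $|\pa_k\phi|=12|t|\,|k-k_+||k-k_-|$ and split into two regions.
\begin{enumerate}
\item If $|k|\ge2C_V$, each factor is at least $|k|/2$, so the product is $\gtrsim\jb k^2$.
\item If $|k|\le2C_V$, the product is at least $r^2\gtrsim_{V,r}\jb k^2$.
\end{enumerate}
On the real axis, $k_-=\overline{k_+}$ gives $|k_1-k_-|=|k_1-k_+|$, so the product equals $|k_1-k_+|^2\ge(k_1-\Re k_+)^2+c_V^2$. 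Since $|\Re k_+|\le C_V$, this is $\gtrsim_V1+k_1^2$.

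\emph{Part (iii).} By the Cauchy--Riemann equations, the real Hessian of $\Im g$ in $(k_1,k_2)$ is
$$\begin{pmatrix}\Im g''&\Re g''\\ \Re g''&-\Im g''\end{pmatrix}.$$
Here $g''=2y-24kt=t(2v_2-24k)$. At $k=k_\pm$ we have $2v_2-24k_\pm=-24i\Im k_\pm$, so $g''=-24it\Im k_\pm$, which is purely imaginary with $\Im g''=-24t\Im k_\pm$. Multiplying by $-2$ gives the Hessian $48t\Im k_\pm(v)\,\mathrm{diag}(1,-1)$.

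No step is a genuine obstacle. The only care needed is the uniformity of the constants, which follows from compactness of $V$, and the lower bound in (ii) near infinity versus near the critical points, which the two-region split handles.
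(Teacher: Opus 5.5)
Your proof is correct and follows the paper's argument essentially step by step. The shared steps are the factorization of $\pa_k\phi$ obtained from \eqref{dphase}, compactness of $V$ for the bounds on $k_\pm$, and on the real axis the identity $|k_1-k_+|^2=(k_1-\Re k_+)^2+(\Im k_+)^2$, which is exactly the paper's completed square. For (iii), your Cauchy--Riemann computation of the Hessian is equivalent to the paper's use of harmonicity together with $\pa_k^2\phi=\frac12(\pa_{k_1}^2\phi-i\pa_{k_1}\pa_{k_2}\phi)$. The only bookkeeping left is to take a minimum of your constants so that a single $c_V\le1$ works in both (i) and the real-axis bound, which is harmless.
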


\begin{proof}
By \eqref{dphase}, $\pa_k\phi=it(v_1+2kv_2-12k^2)=-12it\big(k^2-\frac{v_2}6k-\frac{v_1}{12}\big)$, whose roots are \eqref{defkpm} because $v_2^2+12v_1<0$ on $\cV$. Since $\phi$ is real, $\nabla_k\phi=0$ if and only if $\pa_k\phi=0$. The bounds on $k_\pm$ follow from the compactness of $V$ and from $v_1+\frac{v_2^2}{12}\le-c<0$ on $V$. This proves (i). For (ii), $\pa_k^2\phi=it(2v_2-24k)$; if $|k-k_\pm|\ge r$ for both signs, then $|k-k_+||k-k_-|\gtrsim_{V,r}\jb k^2$ by the bounds on $k_\pm$; and for real $k_1$,
$$|v_1+2k_1v_2-12k_1^2|=12\Big(k_1-\frac{v_2}{12}\Big)^2-\Big(v_1+\frac{v_2^2}{12}\Big)\gtrsim1+k_1^2 .$$
For (iii), at $k_\pm$ we have $2v_2-24k_\pm=-24i\Im k_\pm$, so that $\pa_k^2\phi=24t\Im k_\pm$ is real. Since $\phi$ is harmonic, $\pa_{k_1}^2\phi=-\pa_{k_2}^2\phi$ and $\pa_k^2\phi=\frac12\big(\pa_{k_1}^2\phi-i\pa_{k_1}\pa_{k_2}\phi\big)$, which gives the Hessian.
\end{proof}

We fix $r_0:=\frac{c_V}8$ and write $D_\pm:=D(k_\pm(v),r_0)$; the discs $D(k_\pm(v),4r_0)$ are disjoint and contained in $\C^\pm$. The following proposition is the main estimate of the paper. Its proof uses only the inverse problem and holds uniformly over families of coefficients with a common majorant; the nonlinear Plancherel identity is not used.

\begin{proposition}[The nonlinear remainder on propagation cones]
\label{propcone}
Let $(a_n)_{n\ge1}$ be a family of coefficients which satisfy \eqref{hypa}, let $u_n$ be the functions given by \eqref{reconstruction} with $a=a_n$, and $g_n:=\cB a_n$. Assume that there is a function $F\in L^\infty(\C)$ which satisfies the conditions \eqref{weightsF} of Lemma \ref{lemmaweights} (i), and such that, for all $n$ and $k\in\C\setminus\R$,
\be
\label{hypF1}
|a_n(k)|\le F(k),\qquad |\nabla_ka_n(k)|\le\jb kF(k).
\ee
Then \eqref{hypcone} holds for every compact set $V\subset\cV$.
\end{proposition}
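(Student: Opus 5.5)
By \eqref{remainder}, at $X=tv$ we must show that
$$\sup_n\sup_{v\in V}\Big|\int_\C a_nPB_n\,dA\Big|=o(|t|^{-1}),\qquad B_n=-2ik_2\overline{(m_n-1)}+\overline{\pa_xm_n}.$$
The plan has two parts. First, $m_n-1$ and $\pa_xm_n$ tend to zero uniformly in $k$, $n$ and $v\in V$ as $|t|\to\infty$. Second, one integration by parts in $k$ gains the remaining factor $|t|^{-1}$; it uses the cancellation \eqref{cancellation} together with the critical point structure of Lemma \ref{lemmaphase}.

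\textbf{Step 1 (uniform decay of $m_n-1$ and $\pa_xm_n$).} We have $m_n-1=\cC(a_nP\overline{m_n})$, and Lemma \ref{lemmavekua} bounds $m_n$ through $M\le\|\cI F\|_\infty$ uniformly in $n$. Write $m_n-1=\cC(a_nP)+\cC(a_nP\overline{(m_n-1)})$. Lemma \ref{lemmavekua} (i) with $f=a_nP$ (and its conjugate-linear structure) gives $|m_n-1|\lesssim\cdots$. Pointwise bounds via $\cI$ do not see oscillation, so we argue as follows. Set $w_n=m_n-1-\cC(a_nP)$. It solves $w_n=\cC(a_nP\overline{w_n})+\cC(a_nP\,\overline{\cC(a_nP)})$. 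By \eqref{pointwisevekua},
$$\|w_n\|_\infty\le e^{2M}\|F\|_\infty\,\|\cI F\|_\infty\,\|\cC(a_nP)\|_\infty,$$
or more precisely $\|w_n\|_\infty\le e^{2M}\|\cI(F|\cC(a_nP)|)\|_\infty$. Hence it suffices to show $\sup_n\|\cC(a_nP)\|_\infty\to0$, and the analogue $\sup_n\|\cC(k_2a_nP)\|_\infty\to0$ for $\pa_xm_n$ via \eqref{eqmx}.

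To prove this, cut off with Lemma \ref{lemmacauchy} (iv),(v): remove $\{|k|>R\}$, the strip $\{|k_2|<\eps\}$, and the discs $D(k_\pm,\delta)$ (a disc contributes at most $2\delta\|F\|_\infty$ by \eqref{potentialsets}). These pieces are small uniformly in $n$ because $F$ is a common majorant. On the remaining compact set, smooth the cutoff and apply \eqref{ibp}. There $|\dbar\phi|\ge c|t|$ by Lemma \ref{lemmaphase} (ii), and \eqref{hypF1} bounds $\nabla a_n$, so both terms of \eqref{ibp} are $O_{R,\eps,\delta}(|t|^{-1})$. This gives $\eta(t):=\sup_{n,v}\big(\|m_n-1\|_\infty+\|\pa_xm_n\|_\infty\big)\to0$. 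Note that $\pa_x m_n$ is bounded by $\jb k^{-1}$ only qualitatively; we need $\sup|\pa_xm_n|$ small, and the same scheme yields it.

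\textbf{Step 2 (integration by parts).} Split the integral with a partition of unity: $\chi_\pm$ supported in $D(k_\pm,2r_0)$ and equal to $1$ on $D_\pm$, and $\chi_0=1-\chi_+-\chi_-$.

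Near $k_\pm$, write $a_nB_n\chi_\pm=a_n(k_\pm)B_n(k_\pm)\chi_\pm+O(|k-k_\pm|)$-type terms. The leading part has size $\eta(t)$, and by stationary phase (Lemma \ref{lemmaphase} (iii), nondegenerate Hessian $\sim|t|$) it contributes $O(\eta(t)|t|^{-1})$. For the remainder we use $\pa_kB_n$ from \eqref{cancellation}, which is $O(\eta(t))$, and the $W^{1,p}$-type control of $a_n$ from \eqref{hypF1}. We then integrate by parts once with the vector field $\overline{(k-k_\pm)}/\ldots$, or, more simply, divide by $\pa_k\phi$ after removing the value at $k_\pm$. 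Concretely, since $P=e^{i\phi}$ satisfies $\pa_kP=i\pa_k\phi P$, write $\int fP=-\int P\,\pa_k\big(f/(i\pa_k\phi)\big)$ away from a disc of radius $|t|^{-1/2}$. The term coming from $\pa_k a_n$ is bounded by $\jb kF$ times $|B_n|\lesssim\eta\jb k$; near $k_\pm$ everything is bounded, so this piece is $O(\eta|t|^{-1}\log|t|)$ — dangerous. To avoid the log, first subtract $a_n(k_\pm)B_n(k_\pm)$. The Hölder continuity of $B_n$ (as $W^{1,p}$) makes the difference $O(\eta|k-k_\pm|^{1-2/p})$, which removes the log. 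I expect this local estimate to be the main obstacle.

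Away from the critical points, apply $\int fP\,dA=-\int P\,\pa_k\big(f/(i\pa_k\phi)\big)dA$ with $f=\chi_0a_nB_n$. The boundary is the real axis, where $a_n$ has continuous one-sided traces and $P=1$. Each half-plane therefore produces a boundary term $\int_\R(\cdots)\,dk_1$ with $|\pa_k\phi|\gtrsim|t|(1+k_1^2)$ by Lemma \ref{lemmaphase} (ii), so it is $O(\eta|t|^{-1})$; here $|B_n|\le\eta(2|k_2|+1)$ is bounded on the axis. The interior terms are $\pa_ka_n\,B_n$, $a_n\pa_kB_n$ and $a_nB_n\pa_k^2\phi/\pa_k\phi$. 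Divided by $|\pa_k\phi|\gtrsim|t|\jb k^2$, these are bounded by $\eta|t|^{-1}(1+|k_2|)F/\jb k$, which is integrable by \eqref{weightsF}. Altogether the remainder is $O(\eta(t)|t|^{-1})=o(|t|^{-1})$ uniformly in $n$ and $v\in V$, which is \eqref{hypcone}.
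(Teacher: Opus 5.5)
Your plan follows the paper's proof. Your Step 1 is its Step 2: reduce to $\|\cC(a_nP)\|_\infty+\|\cC(k_2a_nP)\|_\infty$ through Lemma \ref{lemmavekua}, then use cut-offs and \eqref{ibp}. Your Step 2 is its Steps 3--5: stationary phase for the constant $f_n(k_\pm)$ times a cutoff, with $f_n=a_nB_n$; integration by parts of $f_n-f_n(k_\pm)$ outside $D(k_\pm,|t|^{-1/2})$ using \eqref{cancellation}; and integration by parts in each half-plane, with boundary terms on $\R$.

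Your treatment of the critical points needs three corrections.

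\textbf{Where the logarithm comes from.} The term where $\pa_k$ falls on $a_n$ or on $B_n$ is not the dangerous one. It is bounded by $C\eta|t|^{-1}|k-k_\pm|^{-1}$, which is integrable on $D_\pm$. The logarithm comes from the derivative of $(\pa_k\phi)^{-1}$. Its density $|f_n||\pa_k^2\phi|/|\pa_k\phi|^2\sim\eta|t|^{-1}|k-k_\pm|^{-2}$ integrates to $\eta|t|^{-1}\log|t|$ on $\{|t|^{-1/2}\le|k-k_\pm|\le r_0\}$. Subtracting $f_n(k_\pm)$ cures exactly this term, so your remedy is right, but for a different term than the one you named.

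\textbf{The H\"older bound with constant $\eta$.} The bound $|f_n(k)-f_n(k_\pm)|\lesssim\eta|k-k_\pm|^{1-2/p}$ does not follow from the qualitative statement $B_n\in W^{1,p}_{\rm loc}$ of Lemma \ref{lemmainverse} (iv). Bounding $\nabla B_n$ term by term gives only $O(1)$, because $\dbar m_n=a_nP\overline{m_n}$ is not small; only $\pa_kB_n$ is small.
\begin{itemize}
\item The paper closes this with the interior estimate $\|\nabla q\|_{L^p(D(k_\pm,2r_0))}\lesssim\|\pa_kq\|_{L^p(D(k_\pm,4r_0))}+\|q\|_{L^p(D(k_\pm,4r_0))}$. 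It comes from the $L^p$ boundedness of the Beurling transform, is applied to $q=f_n$ using only $|f_n|+|\pa_kf_n|\lesssim\eta$, and is followed by Morrey's inequality.
\item A cheaper route also works. Use the uniform log-Lipschitz continuity of $m_n$ and $\pa_xm_n$ near $k_\pm$, from \eqref{cauchymodulus} applied locally, together with $|B_n|\lesssim\eta$. This gives $|f_n-f_n(k_\pm)|\lesssim\eta|k-k_\pm|+\min(\eta,|k-k_\pm|^\sigma)$. The critical term is then $O(\eta\log\frac2\eta\,|t|^{-1})$, which is still $o(|t|^{-1})$.
\end{itemize}

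\textbf{Truncation in the half-planes.} $(1+|k_2|)F$ need not be integrable; only $(1+|k_2|)F/\jb k$ is. So the integration by parts over each half-plane must be done with a truncation $\vartheta(k/L)$. Removing it uses the fixed-$n$ decay \eqref{hypa}, as in the paper.
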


\begin{proof}
Fix $V$, and let $|t|\ge1$ and $v=X/t\in V$. All constants below depend on $V$ and $F$, but not on $n$, $t$ and $X$. We write $m_n$ for the eigenfunction of $a_n$, $M:=\|\cI F\|_\infty$ and $M':=\|\cI(|k_2|F)\|_\infty$. The decay \eqref{hypa} of each $a_n$ is used only to justify the computations qualitatively, through Lemma \ref{lemmainverse}.\\

\noindent{\bf Step 1:} Uniform bounds. By Lemma \ref{lemmavekua}, whose bounds depend on $|a_nP|\le F$ only, $\|m_n\|_\infty\le1+Me^{2M}$. By \eqref{eqmx} and \eqref{pointwisevekua}, $\|\pa_xm_n\|_\infty\le2e^{2M}(1+Me^{2M})M'$. Hence $\|m_n\|_\infty+\|\pa_xm_n\|_\infty\lesssim1$.\\

\noindent{\bf Step 2:} Smallness of the eigenfunctions. We claim that $\eps(t)\to0$ as $|t|\to\infty$, where
\be
\label{defepst}
\eps(t):=\sup_{n\ge1}\ \sup_{X/t\in V}\Big(\|m_n(\cdot;t,X)-1\|_\infty+\|\pa_xm_n(\cdot;t,X)\|_\infty\Big).
\ee
By \eqref{eqm}, $m_n-1=\cC(a_nP)+\cC\big(a_nP\overline{(m_n-1)}\big)$, and by \eqref{eqmx}, $\pa_xm_n=\cC(a_nP\overline{\pa_xm_n})+b_n$ with $b_n:=\cC(-2ik_2a_nP)+\cC\big(-2ik_2a_nP\overline{(m_n-1)}\big)$, where $\|b_n\|_\infty\le2\|\cC(k_2a_nP)\|_\infty+2M'\|m_n-1\|_\infty$. Lemma \ref{lemmavekua} (iii) gives
$$\|m_n-1\|_\infty+\|\pa_xm_n\|_\infty\lesssim\|\cC(a_nP)\|_\infty+\|\cC(k_2a_nP)\|_\infty,$$
so that $\eps(t)\to0$ follows from the oscillatory estimate
\be
\label{oscillatory}
\lim_{|t|\to\infty}\ \sup_{n\ge1}\ \sup_{X/t\in V}\ \|\cC(c_nP)\|_\infty=0,\qquad c_n\in\{a_n,\ k_2a_n\}.
\ee
To prove \eqref{oscillatory}, let $F_1:=(1+|k_2|)F$, which satisfies $|c_n|\le F_1$, $\cI F_1\in C_0$, and is bounded on bounded sets. Let $\delta>0$. By Lemma \ref{lemmacauchy} (iv) and \eqref{potentialsets} we can choose $R\ge C_V+1$ and then $r_1\in(0,\frac{c_V}2)$ such that
$$\sup_{k\in\C}\cI(F_1\1_{\{|k'|>R\}})(k)+\sup_{k\in\C}\cI(F_1\1_{\{|k'|\le2R,\,|k_2'|<2r_1\}})(k)+8r_1\sup_{|k'|\le2R}F_1(k')\le\delta .$$
Let $\vartheta_*\in C^\infty(\C)$ with $0\le\vartheta_*\le1$, $\vartheta_*=1$ on $\{|k|\le R,\ |k_2|\ge2r_1\}\setminus\big(D(k_+,2r_1)\cup D(k_-,2r_1)\big)$, $\vartheta_*=0$ outside $\{|k|\le2R,\ |k_2|\ge r_1\}\setminus\big(D(k_+,r_1)\cup D(k_-,r_1)\big)$, and $|\nabla\vartheta_*|\lesssim_{R,r_1}1$; such a function is obtained from fixed cutoffs translated with $k_\pm(v)$. Then $1-\vartheta_*$ is bounded by the sum of the indicator functions of the four sets $\{|k|>R\}$, $\{|k|\le2R,|k_2|<2r_1\}$ and $D(k_\pm,2r_1)$, the last two of which are contained in $\{|k|\le2R\}$ since $R\ge C_V+1$, and \eqref{potentialsets} gives $\|\cC((1-\vartheta_*)c_nP)\|_\infty\le\cI(F_1(1-\vartheta_*))\le\delta$. On the support of $\vartheta_*$ we have $|\dbar\phi|=|\pa_k\phi|\ge12|t|r_1^2$ and $|\nabla_k\dbar\phi|\lesssim_R|t|$, while $f:=\vartheta_*c_n$ satisfies $\|f\|_\infty+\|\nabla f\|_\infty\lesssim_{R,r_1}1$ by \eqref{hypF1}. Since $f\in C^1(\C)$ has compact support, Lemma \ref{lemmacauchy} (vi) and \eqref{potentialsets} give
$$\|\cC(Pf)\|_\infty\le\Big\|\frac f{\dbar\phi}\Big\|_\infty+\Big\|\cI\Big|\dbar\frac f{\dbar\phi}\Big|\Big\|_\infty\lesssim_{R,r_1}\frac1{|t|}.$$ Therefore $\limsup_{|t|\to\infty}\sup_n\sup_{X/t\in V}\|\cC(c_nP)\|_\infty\le\delta$ for every $\delta>0$, which proves \eqref{oscillatory}.\\

\noindent{\bf Step 3:} The remainder and its cancellation. By \eqref{remainder},
\be
\label{remainderfn}
\begin{gathered}
u_n(t,X)-S(t)g_n(X)=-\frac2\pi\int_\C f_n\,P\,dA,\\
f_n:=a_nB_n,\qquad B_n:=-2ik_2\overline{(m_n-1)}+\overline{\pa_xm_n}.
\end{gathered}
\ee
By Lemma \ref{lemmainverse} (iv), $B_n\in W^{1,p}_{\rm loc}(\C)$ for $p<\infty$ and $\pa_kB_n=-\overline{(m_n-1)}+\overline{a_nP}\,\pa_xm_n$. By \eqref{defepst} and \eqref{hypF1},
\be
\label{boundfn}
\begin{gathered}
|B_n|\le2(1+|k_2|)\eps(t),\qquad |\pa_kB_n|\lesssim\eps(t),\\
|f_n|\le2(1+|k_2|)F\eps(t),\qquad |\pa_kf_n|\lesssim(1+|k_2|)\jb kF\eps(t),
\end{gathered}
\ee
where $\pa_kf_n$ is a locally bounded function on $\C\setminus\R$.\\

\noindent{\bf Step 4:} Contribution of the critical points. By \eqref{boundfn}, $|f_n|+|\pa_kf_n|\lesssim\eps(t)$ on the discs $D(k_\pm,4r_0)$. Fix $p\in(2,\infty)$ and $\sigma:=1-\frac2p$. For $q\in W^{1,p}(D(k_\pm,4r_0))$ we have the interior estimate
$$\|\nabla q\|_{L^p(D(k_\pm,2r_0))}\lesssim\|\pa_kq\|_{L^p(D(k_\pm,4r_0))}+\|q\|_{L^p(D(k_\pm,4r_0))}:$$
indeed, with $\vartheta_2(k):=\vartheta(\frac{k-k_\pm}{2r_0})$, which equals $1$ on $D(k_\pm,2r_0)$ and is supported in $D(k_\pm,4r_0)$, Lemma \ref{lemmacauchy} (ii), applied to smooth functions and extended by density to compactly supported $W^{1,p}$ functions, gives $\overline{\vartheta_2q}=\cC\big(\overline{\pa_k(\vartheta_2q)}\big)$; hence $\overline{\dbar(\vartheta_2q)}=\pa_k\cC\big(\overline{\pa_k(\vartheta_2q)}\big)$, and $\pa_k\cC$ is the Beurling transform, which is bounded on $L^p$ \cite{Stein}. Applied to $q=f_n$, this gives $\|f_n\|_{W^{1,p}(D(k_\pm,2r_0))}\lesssim\eps(t)$, and Morrey's inequality gives
\be
\label{morrey}
|f_n(k)-f_n(k_\pm)|\lesssim\eps(t)|k-k_\pm|^\sigma\qquad\mbox{for}\ k\in D_\pm .
\ee
Let $\vartheta_\pm(k):=\vartheta(\frac{2(k-k_\pm)}{r_0})$, which equals $1$ on $D(k_\pm,\frac{r_0}2)$ and is supported in $D_\pm$, and write $\vartheta_\pm f_n=f_n(k_\pm)\vartheta_\pm+h_\pm$ with $h_\pm:=\vartheta_\pm\big(f_n-f_n(k_\pm)\big)$. By Lemma \ref{lemmaphase} (iii), $\phi/t$ has a nondegenerate critical point at $k_\pm$, with Hessian determinant bounded away from zero uniformly in $v\in V$, and no other critical point on the support of $\vartheta_\pm$. The stationary phase estimate \cite[Theorem 7.7.5]{Hormander} gives $|\int\vartheta_\pm P\,dA|\lesssim|t|^{-1}$, so that the first part contributes $O(\eps(t)|t|^{-1})$. For the second part, let $\rho:=|t|^{-\frac12}$, assume $|t|$ so large that $2\rho<\frac{r_0}2$, and let $\vartheta_\rho(k):=\vartheta\big(\frac{k-k_\pm}\rho\big)$. By \eqref{morrey},
$$\Big|\int_\C h_\pm\vartheta_\rho P\,dA\Big|\lesssim\eps(t)\int_{D(k_\pm,2\rho)}|k-k_\pm|^\sigma\,dA\lesssim\eps(t)\rho^{2+\sigma}\le\eps(t)|t|^{-1}.$$
On the support of $h_\pm(1-\vartheta_\rho)$ we integrate by parts with $\pa_kP=i(\pa_k\phi)P$; there are no boundary terms, since this function is compactly supported in $D_\pm\setminus D(k_\pm,\rho)$:
$$\int_\C h_\pm(1-\vartheta_\rho)P\,dA=-\int_\C P\,\pa_k\Big(\frac{h_\pm(1-\vartheta_\rho)}{i\pa_k\phi}\Big)dA .$$
On $D_\pm$, Lemma \ref{lemmaphase} gives $|\pa_k\phi|\gtrsim|t||k-k_\pm|$ and $|\pa_k^2\phi|\lesssim|t|$. Since $|\pa_kh_\pm|\lesssim\eps(t)$ by \eqref{boundfn}, the derivative of $h_\pm$ contributes at most $\frac{C\eps(t)}{|t|}\int_{D_\pm}\frac{dA}{|k-k_\pm|}\lesssim\frac{\eps(t)}{|t|}$. By \eqref{morrey}, the derivative of $(\pa_k\phi)^{-1}$ contributes at most $\frac{C\eps(t)}{|t|}\int_{D_\pm}|k-k_\pm|^{\sigma-2}\,dA\lesssim\frac{\eps(t)}{|t|}$, and the derivative of $\vartheta_\rho$, which is bounded by $C\rho^{-1}$ and supported in $\{\rho\le|k-k_\pm|\le2\rho\}$, contributes at most $\frac C{|t|}\eps(t)\rho^\sigma\rho^{-2}\rho^2\lesssim\frac{\eps(t)}{|t|}$. Hence each critical point contributes $O(\eps(t)|t|^{-1})$ to \eqref{remainderfn}.\\

\noindent{\bf Step 5:} Contribution of the rest of the plane. Let $f_n^0:=f_n(1-\vartheta_+-\vartheta_-)$. On its support, $|k-k_\pm|\ge\frac{r_0}2$ and, by Lemma \ref{lemmaphase} (ii) with $r=\frac{r_0}2$, $|\pa_k\phi|\gtrsim|t|\jb k^2$ and $|\pa_k^2\phi|\lesssim|t|\jb k$. For each fixed $n$, \eqref{hypa}, Lemma \ref{lemmainverse} (ii) and \eqref{weightsE} give $|f_n|\lesssim_n(|k_2|+\jb k^{-1})\jb\zeta^{-6}\in L^1(\C)$, so that $\int f_n^0P\,dA=\lim_{L\to\infty}\int f_n^0\vartheta_LP\,dA$, where $\vartheta_L(k):=\vartheta(k/L)$. We integrate by parts separately in $\C^+$ and in $\C^-$, using Green's formula $\int_{\C^\pm}\pa_kG\,dA=\mp\frac1{2i}\int_\R G^\pm\,dk_1$ for compactly supported $G$ which are continuous up to the boundary. With $G=\frac{f^0_n\vartheta_LP}{i\pa_k\phi}$ and $P=1$ on $\R$, this gives
$$\int_\C f_n^0\vartheta_LP\,dA=-\int_{\C\setminus\R}P\,\pa_k\Big(\frac{f_n^0\vartheta_L}{i\pa_k\phi}\Big)dA-\frac1{2i}\int_\R\frac{f_n^+-f_n^-}{i\pa_k\phi}\,\vartheta_L\,dk_1 .$$
On the real axis $k_2=0$, so that $B_n=\overline{\pa_xm_n}$, which is continuous across the axis, and $f_n^\pm=a_n^\pm\overline{\pa_xm_n}$ is bounded by $\|F\|_\infty\eps(t)$. Together with $|\pa_k\phi(k_1)|\gtrsim|t|(1+k_1^2)$, the boundary term is bounded by $C\eps(t)|t|^{-1}\int_\R\frac{dk_1}{1+k_1^2}$. In the bulk term, the derivative falls on $f_n$, on the fixed cutoffs $\vartheta_\pm$, on $\vartheta_L$ or on $(\pa_k\phi)^{-1}$; by \eqref{boundfn} and the bounds on $\pa_k\phi$, the integrand is bounded by
$$\frac{C\eps(t)}{|t|}\Big(\frac{\jb kF_1}{\jb k^2}+F_1\1_{\{|k|\le C\}}+\frac{F_1\1_{\{L\le|k|\le2L\}}}{L\jb k^2}+\frac{\jb kF_1}{\jb k^4}\Big)\lesssim\frac{\eps(t)}{|t|}\frac{F_1}{\jb k},$$
which is integrable by \eqref{weightsF}, uniformly in $L$. Letting $L\to\infty$, the term containing $\nabla\vartheta_L$ tends to zero, and we obtain $|\int f_n^0P\,dA|\lesssim\eps(t)|t|^{-1}$.\\

\noindent{\bf Step 6:} Conclusion. Steps 4 and 5 give $\sup_n\sup_{X/t\in V}|u_n(t,X)-S(t)g_n(X)|\lesssim\eps(t)|t|^{-1}$, and $\eps(t)\to0$ proves \eqref{hypcone}.
\end{proof}
\subsection{Compactly supported coefficients}

In this subsection $a\in C^\infty_c(\C\setminus\R)$. We first prove that the reconstructed function decays in space, and that it depends analytically on the amplitude of $a$. To treat complex amplitudes, we replace the equation \eqref{eqm}, which is only real-linear in $m$, by a complex-linear system for the pair $(m,\overline m)$, in which the two occurrences of the real amplitude $s$ are replaced by the same complex number. We write $\overline{\cC}f:=\overline{\cC\overline f}$, so that $\pa_k\overline{\cC}f=f$.

The spatial decay comes from the oscillation of $P_0(k;X):=e^{iX\cdot\zeta(k)}$ in $k$, whose $\dbar$-derivative is $i\Gamma P_0$ with $\Gamma:=\dbar_k(X\cdot\zeta(k))$ of size $|X|$ on the support of the coefficient. One integration by parts gives $\cC(bP_0)=\frac{bP_0}{i\Gamma}+O(|X|^{-2})$ and, by conjugation, $\overline\cC(\overline bP_0^{-1})=\frac{\overline bP_0^{-1}}{-i\overline\Gamma}+O(|X|^{-2})$. Inserted into the function $W_s=s\int bP_0n_s\,dA$ below, the first term of the second expansion produces the non-oscillating product $\frac{|b|^2}{-i\overline\Gamma}$, of size $|X|^{-1}$; this is why $W_s$ decays only like $|X|^{-1}$, while its derivatives in $X$ decay like $|X|^{-2}$.

\begin{lemma}[Spatial decay and analyticity in the amplitude]
\label{lemmaparametrix}
Let $b\in C^\infty_c(\C\setminus\R)$. For $s\in\C$ and $X\in\R^2$ consider the system
\be
\label{system}
m=1+s\,\cC(bP_0\,n),\qquad n=1+s\,\overline{\cC}\big(\overline b\,P_0^{-1}m\big).
\ee
There is an open set $\mathcal U\subset\C$ containing $\R$, which depends on $b$ only through $\|\cI|b|\|_\infty$, such that the following holds.
\begin{enumerate}
\item {\em (Solvability).} For $s\in\mathcal U$ and $X\in\R^2$, the system \eqref{system} has a unique solution $(m_s,n_s)\in C_b(\C)^2$. For real $s$, $n_s=\overline{m_s}$, and $m_s$ is the solution of $m=1+\cC(sbP_0\overline m)$.
\item {\em (Analyticity).} The function
$$W_s(X):=s\int_\C b\,P_0\,n_s\,dA$$
is smooth in $X$, and $\pa^\gamma_XW_s(X)$ is holomorphic in $s\in\mathcal U$ for every $X$ and every multi-index $\gamma$.
\item {\em (Decay).} For every compact set $\mathcal U'\subset\mathcal U$ and every multi-index $\gamma$,
\be
\label{decayW}
|\pa^\gamma_XW_s(X)|\lesssim|s|\,\jb X^{-1-\min(1,|\gamma|)},\qquad s\in\mathcal U',\ X\in\R^2,
\ee
where the constant depends only on $\mathcal U'$, $\gamma$, the support of $b$ and finitely many derivatives of $b$.
\end{enumerate}
\end{lemma}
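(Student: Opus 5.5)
The plan is to recast \eqref{system} as a complex-linear Fredholm problem on $C_0(\C)^2$, get invertibility for real $s$ from Lemma \ref{lemmavekua}, extend it to a neighbourhood of $\R$ by a Neumann series with constants uniform in $X$, and then read off analyticity and decay from the resolvent.

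\emph{Fredholm setup and real amplitudes.} Set $T(m,n):=\big(\cC(bP_0n),\ \overline\cC(\overline bP_0^{-1}m)\big)$. This operator is complex-linear on $C_0(\C)^2$. It is compact by the Arzel\`a--Ascoli argument of Step 2 of Lemma \ref{lemmavekua}, since $|P_0|=1$ and \eqref{cauchymodulus} gives a common modulus of continuity. Its norm is at most $M_b:=\|\cI|b|\|_\infty$. Then \eqref{system} becomes
$$(I-sT)(m-1,n-1)=s\big(\cC(bP_0),\overline\cC(\overline bP_0^{-1})\big).$$
For real $s=\sigma$, take the conjugate of the second equation and use $\overline{\overline\cC f}=\cC\overline f$. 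This gives $\overline n=1+\sigma\cC(bP_0\overline m)$. The sum $w_+=m+\overline n$ then satisfies $w_+=2+\cC(\sigma bP_0\overline{w_+})$. The difference $w_-=m-\overline n$ satisfies $w_-=\cC(-\sigma bP_0\overline{w_-})$; multiplying by $i$ gives the same type of equation with coefficient $\sigma bP_0$. Lemma \ref{lemmavekua} (ii), (iii) therefore gives unique solvability, $n_\sigma=\overline{m_\sigma}$, and the identification of $m_\sigma$ with the solution of $m=1+\cC(\sigma bP_0\overline m)$.

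\emph{Construction of $\mathcal U$.} The same decomposition, applied to an arbitrary right-hand side $(f,g)$, bounds $\|(I-\sigma T)^{-1}\|\le C(1+|\sigma|M_be^{2|\sigma|M_b})=:K(\sigma)$. This bound is uniform in $X$, because Lemma \ref{lemmavekua} sees only $|\sigma bP_0|=|\sigma b|$. Writing $I-sT=(I-\sigma T)\big(I-i\tau(I-\sigma T)^{-1}T\big)$ for $s=\sigma+i\tau$, a Neumann series gives invertibility on
$$\mathcal U:=\{|\tau|<(2M_bK(\sigma))^{-1}\},$$
with $\|(I-sT)^{-1}\|\le 2K(\sigma)$. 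This set depends on $b$ only through $M_b$.

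\emph{Analyticity.} On $\mathcal U$ the resolvent is holomorphic in $s$, and $W_s$ is $s$ times a bounded linear functional of $n_s$, so $W_s$ is holomorphic. For the $X$-derivatives I will follow Step 2 of Lemma \ref{lemmainverse}. Since $\pa_XP_0=i\zeta P_0$ and $\zeta$ is bounded on $\supp b$, the derivative $\pa_X^\gamma(m_s,n_s)$ solves $(I-sT)(\cdot)=$ a source built from lower-order derivatives, and difference quotients converge by the uniform resolvent bound. Holomorphy in $s$ then follows by induction on $|\gamma|$.

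\emph{Decay.} On $\supp b$ we have $|k_2|\gtrsim1$, hence $|\Gamma|\sim|X|$ for $|X|\ge1$ (the case $|X|\le1$ is trivial). By Lemma \ref{lemmacauchy} (vi) and its conjugate, $\cC(bP_0)$ and $\overline\cC(\overline bP_0^{-1})$ are $O(|X|^{-1})$. The resolvent bound then gives $\|m_s-1\|_\infty+\|n_s-1\|_\infty\lesssim|s||X|^{-1}$ on $\mathcal U'$. I expand
$$W_s=s\int bP_0\,dA+s^2\int bP_0\,\overline\cC(\overline bP_0^{-1})\,dA+s^2\int bP_0\,\overline\cC\big(\overline bP_0^{-1}(m_s-1)\big)\,dA .$$
The first term decays rapidly, since it is a Fourier transform of a smooth compactly supported function pulled back by $\zeta$. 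The second is $O(|X|^{-1})$ by \eqref{ibp}. In the third, Fubini moves the operator onto $bP_0$: up to sign the term becomes $\int\overline bP_0^{-1}(m_s-1)\,\cC(bP_0)\,dA=O(|X|^{-2})$. This proves \eqref{decayW} for $\gamma=0$.

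The main obstacle is the gain $|X|^{-2}$ for $|\gamma|\ge1$. First, every $\pa_X$ landing on $P_0^{\pm1}$ produces $\pm i\zeta P_0^{\pm1}$, and the resulting terms have the same oscillatory structure, so the argument above still gives $O(|X|^{-1})$ for them. The remaining issue is the second term of the expansion. There I will use \eqref{ibp} at second order, writing $\overline\cC(\overline bP_0^{-1})=\frac{\overline bP_0^{-1}}{-i\overline\Gamma}+\overline\cC\big(P_0^{-1}\pa_k(\tfrac{\overline b}{i\overline\Gamma})\big)$. The leading product $\int\frac{|b|^2}{-i\overline\Gamma}\,dA$ is non-oscillating and homogeneous of degree $-1$ in $X$, so its $X$-derivatives are $O(|X|^{-2})$. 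The remainder is $O(|X|^{-1})\cdot O(|X|^{-1})$ by the same Fubini trick, and it stays of this size after differentiation. Derivatives of $m_s$ and $n_s$ are handled identically, using the equations they satisfy.
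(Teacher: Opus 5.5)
Your argument is correct. For solvability and analyticity it is essentially the paper's route. Your splitting $w_\pm=m\pm\overline n$ is equivalent to the paper's decomposition of the data as $(G_1,\overline{G_1})+i(G_2,\overline{G_2})$. Your neighbourhood of $\R$ comes from the same Neumann series; you perturb along vertical segments, the paper along discs $D(s_0,r(s_0))$.

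The decay part follows a genuinely different route.
\begin{itemize}
\item The paper builds an explicit second-order parametrix $m_{\rm app}=1+sP_0g+s^2q$, $n_{\rm app}=1+sP_0^{-1}\tilde g+s^2\tilde q$. It checks the cancellations that make the defect oscillatory with amplitude $O(|X|^{-1})$. This gives $\|m_s-m_{\rm app}\|_\infty+\|n_s-n_{\rm app}\|_\infty\lesssim|s||X|^{-2}$, which it then inserts into $W_s$.
\item You use only the first-order bound $\|m_s-1\|_\infty\lesssim|s||X|^{-1}$, together with the antisymmetry of the kernel of $\overline\cC$ (Fubini). This turns the remainder $s^2\int bP_0\,\overline\cC(\overline bP_0^{-1}(m_s-1))\,dA$ into a product of two independent $O(|X|^{-1})$ factors. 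The second-order integration by parts is then needed only in the explicit term $\int bP_0\,\overline\cC(\overline bP_0^{-1})\,dA$, to isolate the non-oscillating $J(X)$ for the derivative bounds.
\end{itemize}
Your route is shorter and avoids verifying the parametrix identities. The derivative bounds go through as you say, because all $\pa_X^\nu m_s$ are $O(|s||X|^{-1})$ by the differentiated equations and the uniform resolvent bound. The paper's route gives more than the lemma needs: it describes $m_s,n_s$ themselves up to $O(|s||X|^{-2})$ uniformly in $k$, not only the functional $W_s$.

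One small correction. For the pairing $\int fg\,dA$, the transpose of $\overline\cC$ is $-\overline\cC$, not $\cC$. So your third term equals $-s^2\int\overline bP_0^{-1}(m_s-1)\,\overline\cC(bP_0)\,dA$. This is harmless: $\overline\cC(bP_0)=O(|X|^{-1})$ as well, by the same integration by parts with $\pa_kP_0=i\overline\Gamma P_0$ and $|\overline\Gamma|=|\Gamma|\gtrsim|X|$ on $\supp b$.
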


\begin{proof}
If $b=0$, take $\mathcal U=\C$, $m_s=n_s=1$ and $W_s=0$; all assertions are immediate. We henceforth assume $b\neq0$.

{\bf Step 1:} Solvability. Let $\mathcal T(m,n):=\big(\cC(bP_0n),\overline{\cC}(\overline bP_0^{-1}m)\big)$. It is a complex-linear operator on $C_b(\C)^2$, with norm at most $M:=\|\cI|b|\|_\infty$ independently of $X$, since $|P_0|=1$. Let $s_0\in\R$. For data of the form $(G,\overline G)$, $G\in C_b$, the equation $(I-s_0\mathcal T)(m,n)=(G,\overline G)$ is solved by $(v,\overline v)$, where $v$ is the solution of $v=G+\cC(s_0bP_0\overline v)$ given by Lemma \ref{lemmavekua} (iii): indeed $\overline{\overline\cC(\overline bP_0^{-1}v)}=\cC(bP_0\overline v)$. Every pair $(H_1,H_2)\in C_b^2$ is the combination $(G_1,\overline{G_1})+i(G_2,\overline{G_2})$ with $G_1=\frac12(H_1+\overline{H_2})$ and $G_2=\frac1{2i}(H_1-\overline{H_2})$, and the same decomposition applied to a solution of the homogeneous equation shows that it vanishes. Hence $I-s_0\mathcal T$ is invertible on $C_b^2$, with $\|(I-s_0\mathcal T)^{-1}\|\le\Lambda(s_0):=2(1+|s_0|Me^{2|s_0|M})$. For $|s-s_0|<r(s_0):=(4M\Lambda(s_0))^{-1}$ the Neumann series
$$(I-s\mathcal T)^{-1}=\sum_{j\ge0}(s-s_0)^j\big[(I-s_0\mathcal T)^{-1}\mathcal T\big]^j(I-s_0\mathcal T)^{-1}$$
converges, is holomorphic in $s$, and has norm at most $2\Lambda(s_0)$. We set $\mathcal U:=\bigcup_{s_0\in\R}D(s_0,r(s_0))$, which depends on $b$ only through $M$. A compact set $\mathcal U'\subset\mathcal U$ is covered by finitely many of these discs, so that $(I-s\mathcal T)^{-1}$ is bounded on $\mathcal U'$ uniformly in $X$. We define $(m_s,n_s):=(I-s\mathcal T)^{-1}(1,1)$. Since $\mathcal T$ maps $C_b^2$ into $C_0^2$, $m_s-1$ and $n_s-1$ belong to $C_0$. For real $s$ the data $(1,1)$ are of the form $(G,\overline G)$, so that $n_s=\overline{m_s}$.

Differentiating \eqref{system} in $X$ as in Step 2 of the proof of Lemma \ref{lemmainverse}, we see that $(\pa^\gamma_Xm_s,\pa^\gamma_Xn_s)=(I-s\mathcal T)^{-1}(G_\gamma)$, where the data $G_\gamma$ are combinations of lower-order derivatives multiplied by the bounded functions $(i\zeta(k))^\nu$, $k\in\supp b$, and of the powers of $s$. By induction, all these derivatives are bounded on $\C$, uniformly in $X$ and $s\in\mathcal U'$, and holomorphic in $s$. Differentiating under the integral sign, $\pa^\gamma_XW_s(X)$ is holomorphic in $s$ and $|\pa^\gamma_XW_s(X)|\lesssim|s|$, which proves \eqref{decayW} for $|X|\le1$.\\

\noindent{\bf Step 2:} Approximate solution for $|X|\ge1$. Recall $\Gamma=\dbar_k(X\cdot\zeta(k))=-i(x+2\overline ky)$, so that $\dbar P_0=i\Gamma P_0$ and $\pa_kP_0^{-1}=-i\overline\Gamma P_0^{-1}$. If $|k_2|\ge\delta$ and $|k|\le\delta^{-1}$, then $|\Gamma|\gtrsim_\delta|X|$: when $|y|\ge\eps|X|$ we use $|\Im(x+2\overline ky)|=2|k_2||y|$, and when $|y|\le\eps|X|$ we use $|x+2k_1y|\ge(1-\eps-2\eps\delta^{-1})|X|$, with $\eps$ small. Hence on $\supp b$, every derivative in $k$ of $\Gamma^{-1}$ is $O(|X|^{-1})$, and every derivative in $X$ of $\Gamma^{-1}$ gains one more power of $|X|^{-1}$. We set
$$g:=\frac b{i\Gamma},\qquad \tilde g:=\frac{\overline b}{-i\overline\Gamma},\qquad q:=\cC(b\tilde g),\qquad \tilde q:=\overline{\cC}(\overline bg),$$
$$m_{\rm app}:=1+sP_0\,g+s^2q,\qquad n_{\rm app}:=1+sP_0^{-1}\tilde g+s^2\tilde q .$$
The functions $g,\tilde g$ are smooth, supported in $\supp b$, and all their derivatives in $k$ are $O(|X|^{-1})$; since $b\tilde g$ and $\overline bg$ are smooth with compact support, the functions $q$ and $\tilde q$ are smooth on $\C$, and all their derivatives in $k$ are $O(|X|^{-1})$ as well. A direct computation, in which the terms $sbP_0$ and $s^2b\tilde g$ cancel in the first line and $s\overline bP_0^{-1}$ and $s^2\overline bg$ cancel in the second line, gives
$$\dbar m_{\rm app}-sbP_0n_{\rm app}=sP_0\,h,\qquad \pa_kn_{\rm app}-s\overline bP_0^{-1}m_{\rm app}=sP_0^{-1}\tilde h,$$
with $h:=\dbar g-s^2b\tilde q$ and $\tilde h:=\pa_k\tilde g-s^2\overline bq$, smooth functions supported in $\supp b$ whose derivatives are $O(|X|^{-1})$. Oscillation gains one more power: Lemma \ref{lemmacauchy} (vi), applied with the phase $X\cdot\zeta(k)$, whose $\dbar$-derivative is $\Gamma$, and \eqref{potentialsets} give
\be
\label{gain}
\|\cC(P_0h)\|_\infty\le\Big\|\frac h{\Gamma}\Big\|_\infty+\Big\|\cI\Big|\dbar\frac h{\Gamma}\Big|\Big\|_\infty\lesssim|X|^{-2},
\ee
and similarly $\|\overline\cC(P_0^{-1}\tilde h)\|_\infty=\|\cC(P_0\overline{\tilde h})\|_\infty\lesssim|X|^{-2}$. Since $m_{\rm app}-1$ and $n_{\rm app}-1$ belong to $C_0$, Lemma \ref{lemmacauchy} (ii) gives $(I-s\mathcal T)(m_{\rm app},n_{\rm app})=(1,1)+s\big(\cC(P_0h),\overline\cC(P_0^{-1}\tilde h)\big)$, hence
\be
\label{errorapp}
\|m_s-m_{\rm app}\|_\infty+\|n_s-n_{\rm app}\|_\infty\lesssim|s||X|^{-2}.
\ee
Differentiating in $X$ the equation satisfied by $(m_s-m_{\rm app},n_s-n_{\rm app})$, and applying \eqref{gain} to the differentiated amplitudes, the same bound holds for all derivatives in $X$.\\

\noindent{\bf Step 3:} Decay of $W_s$. Inserting $n_{\rm app}$ in the definition of $W_s$ and using \eqref{errorapp},
$$W_s=s\int_\C bP_0\,dA+s^2J(X)+s^3\int_\C bP_0\tilde q\,dA+O(|s||X|^{-2}),$$
where $J(X):=\int_\C\frac{|b|^2}{-i\overline\Gamma}\,dA$.
By the change of variables \eqref{changevariables}, $\int bP_0\,dA$ is the Fourier transform of the function $\frac{b\circ\zeta^{-1}}{8|k_2|}\in C^\infty_c(\{\xi\neq0\})$, hence it decays faster than every power of $\jb X$. Integrating by parts once with $P_0=\frac{\dbar P_0}{i\Gamma}$, the third term is $O(|X|^{-2})$. Finally $|J(X)|\lesssim|X|^{-1}$, and each derivative in $X$ of $J$ gains a factor $|X|^{-1}$, because $\overline\Gamma$ is linear in $X$ and $|\overline\Gamma|\gtrsim|X|$ on $\supp b$. The same expansion holds for the derivatives in $X$, with the same bounds on the error terms. This proves \eqref{decayW} for $|X|\ge1$.
\end{proof}

\begin{proposition}[KP-II for compactly supported coefficients]
\label{propcompact}
Let $a\in C^\infty_c(\C\setminus\R)$ be real, and let $m$ and $u$ be given by \eqref{eqm} and \eqref{reconstruction}.
\begin{enumerate}
\item {\em (Regularity and decay).} $u\in C^\infty(\R\times\R^2)$ is real-valued, and $\sup_{|t|\le T}|\pa^\gamma_Xu(t,X)|\lesssim\jb X^{-2}$ for every multi-index $\gamma$ and $T>0$, with a constant depending on $\gamma$ and $T$.
\item {\em (Equation).} $u$ solves \eqref{kp}, $\|u(t)\|_{L^2}=\|u(0)\|_{L^2}$ for every $t$, and $u\in C^1(\R;L^2(\R^2))$ satisfies the Duhamel formula
\be
\label{duhamel}
u(t)=S(t)u(0)-6\int_0^tS(t-t')\big(u\pa_xu\big)(t')\,dt',\qquad t\in\R .
\ee
\item {\em (Heat equation).} For every $k\in\C$ and $(t,X)$, $m$ solves
\be
\label{heatm}
\pa_ym-\pa_x^2m-2k\pa_xm-u\,m=0 .
\ee
\item {\em (Analyticity in the amplitude).} For $s\in\R$, let $u_s$ be the function reconstructed from $sa$. Then $s\mapsto u_s(0)$ is real-analytic from $\R$ into $L^2(\R^2)$, and for every multi-index $\gamma$,
\be
\label{smallcritical}
\|\pa^\gamma_Xu_s(0)\|_{\dot H^{-\frac12,0}}\lesssim_\gamma|s|,\qquad |s|\le1 .
\ee
\end{enumerate}
\end{proposition}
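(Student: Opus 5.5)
The plan is to treat $t$ as a frozen parameter and reduce (i) and (iv) to Lemma \ref{lemmaparametrix}. Then (iii) comes from a uniqueness argument for the $\dbar$ problem, and (ii) from the time part of the Lax pair, handled the same way.

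For (i), fix $t$ and set $b:=a\,e^{it\omega(\zeta(k))}\in C^\infty_c(\C\setminus\R)$, so that $aP=bP_0$. Lemma \ref{lemmaparametrix} (i) with $s=1$ identifies $m$ with $m_1$ and $\overline m$ with $n_1$. Since $\pa_x(aP\overline m)$ is integrable, \eqref{reconstruction} gives $u=-\frac2\pi\pa_xW_1$. By \eqref{decayW} with $|\gamma|\ge1$, $|\pa^\gamma_Xu|\lesssim\jb X^{-2}$. The constants are uniform for $|t|\le T$, because finitely many $k$-derivatives of $b$ are bounded uniformly in $|t|\le T$ and $\supp b=\supp a$. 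Smoothness in $t$ follows from the same differentiation argument as in Step 1 of that lemma, since $\pa_tP=i\omega(\zeta)P$ with $\omega(\zeta)$ bounded on $\supp a$. Reality is Lemma \ref{lemmainverse} (iii).

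For (iv), apply Lemma \ref{lemmaparametrix} at $t=0$ with $b=a$. For real $s$ we get $u_s(0)=-\frac2\pi\pa_xW_s$, holomorphic in $s$ on $\mathcal U\supset\R$. The bounds \eqref{decayW} give $|\pa_X^\gamma u_s(0)|\lesssim|s|\jb X^{-2}$ locally uniformly in $s$. Hence $s\mapsto u_s(0)$ is locally bounded in $L^2$, and, being pointwise holomorphic, it is holomorphic as an $L^2$-valued map (weak holomorphy plus local boundedness). Restricting to $\R$ gives real-analyticity. For \eqref{smallcritical}, write $\pa^\gamma_Xu_s(0)=\pa_x G$ with $G=-\frac2\pi\pa^\gamma_XW_s$. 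Then $\||\xi|^{-1/2}\widehat{\pa_xG}\|_{L^2}=\||\xi|^{1/2}\hat G\|_{L^2}\le\|G\|_{L^2}^{1/2}\|\pa_xG\|_{L^2}^{1/2}$. Here $\pa_xG$ is $O(|s|\jb X^{-2})\in L^2$. The function $G$ itself is only $O(|s|\jb X^{-1})$ when $\gamma=0$, which is not in $L^2$, so for $\gamma=0$ I would instead use the explicit structure from Step 3 of Lemma \ref{lemmaparametrix}: the $|X|^{-1}$ part of $W_s$ is $s^2J(X)$, and $J$ is homogeneous of degree $-1$ up to smooth corrections. I would bound $|\xi|^{1/2}\hat J$ directly in $L^2$. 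Alternatively, I would estimate the $\dot H^{-1/2,0}$ norm through $|\xi|^{-1/2}\widehat{\pa_x W}$ via Fourier-side decay, using that $\pa_xW_s$ is $O(\jb X^{-2})$ and has Fourier transform continuous and vanishing like $|\xi|$ at $\xi=0$.

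For (iii), set $w:=\pa_ym-\pa_x^2m-2k\pa_xm-um$. Using $\pa_yP=-4ik_1k_2P$, $\pa_xP=-2ik_2P$ and $\dbar k=0$, one computes that $\dbar w=aP\overline{w}$ plus possibly extra terms. The key algebraic point is that $(-4ik_1k_2)-(-2ik_2)^2-2k(-2ik_2)$ vanishes after the conjugation rule $\overline{\pa_x m}$, $\bar k$ appearing through $\overline m$. One must check that with $\overline{m}$ one gets $\overline{2k\pa_x m}=2\bar k\overline{\pa_xm}$, and the phase derivatives combine so that $\dbar w=aP\overline w$ exactly, because $-4ik_1k_2+4k_2^2+4\bar k\, i k_2\cdot$ cancels. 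The leftover inhomogeneity is $-2\dbar(k)\pa_xm$, which vanishes, and the asymptotics $m=1+m^{(1)}/k+\dots$ show that the constant term of $w$ at infinity is $-2\pa_xm^{(1)}-u=0$ by \eqref{reconstruction}. Hence $w\in C_0$, and the uniqueness in Lemma \ref{lemmavekua} (i) gives $w=0$. For (ii), I would do the same with the time operator $\pa_t+4\pa_x^3+12k\pa_x^2+12k^2\pa_x-6u\pa_x-3\pa_x^{-1}\pa_yu$, written in the gauge of $m$. To avoid $\pa_x^{-1}$, I would define the coefficient $\tilde w:=\pa_x^{-1}\pa_y u$ as $-2\pa_y m^{(1)}$. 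Compatibility of the two $C_0$-solutions of the homogeneous $\dbar$ equation, both vanishing by uniqueness, then yields \eqref{kp} from the expansion coefficients of $m$ at $k=\infty$ (orders $k^{-1},k^{-2}$), using the decay in (i) to justify the expansions. Mass conservation follows by multiplying by $u$ and integrating, and $\pa_x^{-1}\pa_y^2u=-2\pa_y^2m^{(1)}\in L^2$ by (i). The Duhamel formula then follows from $u\in C^1(\R;L^2)$, since $\pa_tu$ is bounded in $L^2$ by (i).

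The main obstacle is (ii): justifying the large-$k$ expansion of $m$ to second order uniformly in $X$ with decay, and identifying the nonlocal term. I would first obtain the expansion from the compact support of $a$, since $\cC f(k)$ is given by a convergent Laurent series for $|k|>R$. The $\gamma=0$ case of \eqref{smallcritical} is the secondary delicate point.
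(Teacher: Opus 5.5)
Your arguments for (i), (iii) and the analyticity part of (iv) are sound and essentially the paper's; your direct computation of $\dbar w=aP\overline w$ is the paper's gauge-covariance step written out. Two points, however, do not go through as proposed.

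\textbf{The case $\gamma=0$ of \eqref{smallcritical}.} You leave this case open, and it is the case that is actually needed: in Proposition \ref{propplancherel} the datum $u_s(0)$ itself is fed into the small-data theory of \cite{HHK}. Your second route fails as stated. Since $\pa_xW_s=O(\jb X^{-2})$ is not integrable on $\R^2$, nothing gives continuity of its Fourier transform, let alone vanishing like $|\xi|$. Your first route (computing $\hat J$ and treating the other terms of the expansion of $W_s$ separately) is not carried out.

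The missing idea is to interpolate in $x$ only, on each horizontal line. Set $G:=\pa^\gamma_XW_s$. By \eqref{decayW}, $|G|\lesssim|s|\jb X^{-1}$, which is square integrable in $x$ for fixed $y$, and $|\pa_xG|\lesssim|s|\jb X^{-2}$. Cauchy--Schwarz in $\xi$ then gives, for every $y$,
\[
\big\||D_x|^{\frac12}G(\cdot,y)\big\|_{L^2(\R)}^2\le\|G(\cdot,y)\|_{L^2(\R)}\,\|\pa_xG(\cdot,y)\|_{L^2(\R)}\lesssim|s|\jb y^{-\frac12}\cdot|s|\jb y^{-\frac32}.
\]
Integrating $s^2\jb y^{-2}$ in $y$ gives \eqref{smallcritical} for every $\gamma$, including $\gamma=0$, without ever using $G\in L^2(\R^2)$.

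\textbf{The time operator in (ii).} The operator you write down is not a Lax partner of $L_k$. With $\pa_t+4\pa_x^3+12k\pa_x^2+12k^2\pa_x-6u\pa_x-3\pa_x^{-1}\pa_yu$, the term $12k^2\pa_xm$ contributes $12k\pa_xm^{(1)}=-6ku$ at infinity. Nothing cancels it, so this operator applied to $m$ is not in $C_0(\C)$, and the uniqueness in Lemma \ref{lemmavekua} cannot be invoked. With these signs the commutator with $L_k$ also keeps a second-order term.

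The correct operator is the conjugate by $E=e^{kx+k^2y-4k^3t}$ of $\pa_t+4\pa_x^3+6u\pa_x+3\pa_xu+3w$, namely
\[
A_k=\pa_t+4\pa_x^3+12k\pa_x^2+12k^2\pa_x+6u\pa_x+6ku+3\pa_xu+3w,\qquad w=-2\pa_ym^{(1)} .
\]
The coefficient of $k$ in $A_km$ vanishes because $u=-2\pa_xm^{(1)}$. The constant coefficient vanishes by the $k^{-1}$ coefficient of $L_km=0$, together with the formulas for $u$ and $w$ in terms of $m^{(1)}$. Hence $A_km=0$ by uniqueness. Then, using $\pa_xw=\pa_yu$,
\[
0=[L_k,A_k]m=(\pa_tu+\pa_x^3u+6u\pa_xu+3\pa_yw)\,m .
\]
Since $m\neq0$ for large $|k|$, this gives KP-II directly, with no need to track higher expansion coefficients.
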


\begin{proof}
If $a=0$, then $m=1$ and $u_s=0$ for every $s\in\R$, so all assertions are immediate. We assume $a\neq0$ below.

{\bf Step 1:} Smoothness and decay. Let $W(t,X):=\int aP\overline m\,dA$, so that $u=-\frac2\pi\pa_xW$, and let $w:=-\frac2\pi\pa_yW$. Since $P(k;t,X)=P_0(k;X)e^{it\omega(\zeta(k))}$, Lemma \ref{lemmaparametrix} (i) and (iii) with $b:=ae^{it\omega\circ\zeta}$ and $s=1$ give $|\pa^\gamma_XW(t,X)|\lesssim\jb X^{-1-\min(1,|\gamma|)}$, uniformly for $|t|\le T$: indeed $|b|=|a|$, so that the set $\mathcal U$ does not depend on $t$, and the derivatives of $b$ are bounded uniformly in $|t|\le T$. Differentiating \eqref{eqm} in $t$ as well, $m$ and $W$ are smooth in $(t,X)$, with bounded derivatives. Hence $u$ and $w$ are smooth and all their derivatives in $X$ are $O(\jb X^{-2})$, locally uniformly in $t$. The integrands of $W$ and of $\pa_yW$ are invariant under $g\mapsto g^*$, as in Steps 1 and 3 of the proof of Lemma \ref{lemmainverse}, so that $u$ and $w$ are real.\\

\noindent{\bf Step 2:} Expansion at infinity. Let $R_a:=\max\{|k'|:\ k'\in\supp a\}$. For $|k|>R_a$, expanding $\frac1{k-k'}=\sum_{j\ge1}k'^{j-1}k^{-j}$ in \eqref{eqm} gives the convergent expansion
\be
\label{expansionm}
m(k)=1+\sum_{j\ge1}\frac{m^{(j)}}{k^j},\qquad m^{(j)}:=\frac1\pi\int_\C k'^{j-1}a(k')P(k')\overline{m(k')}\,dA(k'),
\ee
which may be differentiated term by term in $(t,X)$. In particular $m^{(1)}=\frac W\pi$, so that
\be
\label{uwm1}
u=-2\pa_xm^{(1)},\qquad w=-2\pa_ym^{(1)} .
\ee

\noindent{\bf Step 3:} Gauge covariance. Let $E(k):=e^{kx+k^2y-4k^3t}$. Then $\overline{E}/E=e^{-2i\Im(kx+k^2y-4k^3t)}=P$, so that the eigenfunction $\psi:=Em$ of the heat operator satisfies $\dbar\psi=E\,aP\overline m=a\overline\psi$. Let $\mathcal L$ be a linear differential operator in $(t,x,y)$ whose coefficients are real and independent of $k$. Since $\mathcal L$ commutes with $\dbar_k$ and with complex conjugation, and $a$ does not depend on $(t,X)$, we get $\dbar(\mathcal L\psi)=a\overline{\mathcal L\psi}$. Hence $g:=E^{-1}\mathcal L(Em)$ satisfies
\be
\label{gauge}
\dbar g=E^{-1}a\,\overline{Eg}=aP\overline g .
\ee
We apply this to the heat operator $\pa_y-\pa_x^2-u$ and to the operator $\pa_t+4\pa_x^3+6u\pa_x+3\pa_xu+3w$, which form the Lax pair of KP-II, and whose conjugates by $E$ are
$$\begin{gathered}L_k:=\pa_y-\pa_x^2-2k\pa_x-u,\\ A_k:=\pa_t+4\pa_x^3+12k\pa_x^2+12k^2\pa_x+6u\pa_x+6ku+3\pa_xu+3w .\end{gathered}$$

\noindent{\bf Step 4:} The Lax equations. The function $k\mapsto L_km$ is continuous and, by \eqref{expansionm}, for $|k|>R_a$,
$$L_km=\big(-2\pa_xm^{(1)}-u\big)+\frac1k\big(\pa_ym^{(1)}-\pa_x^2m^{(1)}-2\pa_xm^{(2)}-um^{(1)}\big)+O(k^{-2}).$$
The constant term vanishes by \eqref{uwm1}, so that $L_km\in C_0(\C)$. By \eqref{gauge} and the uniqueness part of Lemma \ref{lemmavekua} (i) with $f=0$, $L_km=0$; this is \eqref{heatm}. In particular the coefficient of $k^{-1}$ vanishes:
\be
\label{coeffL}
2\pa_xm^{(2)}=\pa_ym^{(1)}-\pa_x^2m^{(1)}-um^{(1)} .
\ee
Similarly, $A_km=k\big(12\pa_xm^{(1)}+6u\big)+\big(12\pa_xm^{(2)}+12\pa_x^2m^{(1)}+6um^{(1)}+3\pa_xu+3w\big)+O(k^{-1})$. The coefficient of $k$ vanishes by \eqref{uwm1}, and by \eqref{coeffL} and \eqref{uwm1} the constant coefficient equals
$$\begin{aligned}&6\pa_ym^{(1)}-6\pa_x^2m^{(1)}-6um^{(1)}+12\pa_x^2m^{(1)}+6um^{(1)}+3\pa_xu+3w\\&\qquad=6\pa_ym^{(1)}+6\pa_x^2m^{(1)}+3\pa_xu+3w=0 .\end{aligned}$$
Hence $A_km\in C_0(\C)$, and $A_km=0$ as before.\\

\noindent{\bf Step 5:} The KP-II equation. A direct computation gives
$$\begin{aligned}{}[L_k,A_k]&=-6(\pa_xw-\pa_yu)\pa_x-3\pa_x(\pa_xw-\pa_yu)-6k(\pa_xw-\pa_yu)\\&\quad+\big(\pa_tu+\pa_x^3u+6u\pa_xu+3\pa_yw\big).\end{aligned}$$
By \eqref{uwm1}, $\pa_xw-\pa_yu=-2\pa_x\pa_ym^{(1)}+2\pa_y\pa_xm^{(1)}=0$. Step 4 gives $0=[L_k,A_k]m=(\pa_tu+\pa_x^3u+6u\pa_xu+3\pa_yw)m$, and by \eqref{boundeigen}, $|m(k)-1|\le\frac12$ for $|k|$ large. Hence $\pa_tu+\pa_x^3u+6u\pa_xu+3\pa_yw=0$. Finally, $u$ and $w$ belong to $L^2$, and \eqref{uwm1} gives $\xi\hat w=\eta\hat u$, so that $\hat w=\frac\eta\xi\hat u$ almost everywhere and $\pa_yw=\pa_x^{-1}\pa_y^2u$: $u$ solves \eqref{kp}.\\

\noindent{\bf Step 6:} Conservation of mass. All derivatives of $u$ and $w$ are $O(\jb X^{-2})$, so that the following integrations by parts are legitimate:
$$\begin{aligned}\frac{d}{dt}\int u^2\,dX&=-2\int u\big(\pa_x^3u+6u\pa_xu+3\pa_yw\big)dX\\&=6\int(\pa_yu)\,w\,dX=6\int(\pa_xw)\,w\,dX=0 .\end{aligned}$$
Moreover, $\pa_tu=-\pa_x^3u-6u\pa_xu-3\pa_yw$ is $O(\jb X^{-2})$ locally uniformly in $t$, so that, by dominated convergence, $u\in C^1(\R;L^2)$. In Fourier variables, $\pa_t\hat u=i\omega\hat u-6\widehat{u\pa_xu}$ almost everywhere, since $\hat w=\frac\eta\xi\hat u$; integrating $\pa_t(e^{-it\omega}\hat u)=-6e^{-it\omega}\widehat{u\pa_xu}$ in time gives \eqref{duhamel}. Steps 1--6 prove (i), (ii) and (iii).\\

\noindent{\bf Step 7:} Proof of (iv). Apply Lemma \ref{lemmaparametrix} with $b:=a$. By (i), for real $s$, $W_s$ is the function $W(0,\cdot)$ associated with the coefficient $sa$, so that $u_s(0)=-\frac2\pi\pa_xW_s$. By (ii) and (iii), for every $X$, $s\mapsto\pa_xW_s(X)$ is holomorphic on $\mathcal U$, and it is bounded by $C\jb X^{-2}\in L^2$ uniformly on compact subsets of $\mathcal U$. By the Cauchy integral formula and dominated convergence, $s\mapsto\pa_xW_s$ is holomorphic from $\mathcal U$ into $L^2(\R^2)$, hence real-analytic on $\R$. For \eqref{smallcritical}, let $G:=\pa^\gamma_XW_s$. By the Cauchy--Schwarz inequality in $\xi$, for every $y$,
$$\big\||D_x|^{\frac12}G(\cdot,y)\big\|_{L^2(\R)}^2\le\|G(\cdot,y)\|_{L^2(\R)}\|\pa_xG(\cdot,y)\|_{L^2(\R)},$$
where $|D_x|^{\frac12}$ denotes the Fourier multiplier with symbol $|\xi|^{\frac12}$ in the variable $x$. By \eqref{decayW}, $\|G(\cdot,y)\|_{L^2(\R)}\lesssim|s|\jb y^{-\frac12}$ and $\|\pa_xG(\cdot,y)\|_{L^2(\R)}\lesssim|s|\jb y^{-\frac32}$. Integrating in $y$,
$$\|\pa^\gamma_Xu_s(0)\|_{\dot H^{-\frac12,0}}^2=\frac4{\pi^2}\int_\R\big\||D_x|^{\frac12}G(\cdot,y)\big\|_{L^2(\R)}^2dy\lesssim s^2\int_\R\jb y^{-2}dy .$$
\end{proof}

The following elementary uniqueness statement will be used twice, to identify our solutions with those of other constructions.

\begin{lemma}[Uniqueness of solutions with bounded $\pa_xu$]
\label{lemmaunique}
Let $T>0$ and let $u_1,u_2\in C([0,T];L^2(\R^2))$ be real, with $u_j,\pa_xu_j\in L^\infty([0,T]\times\R^2)$ for $j=1,2$ and $u_1(0)=u_2(0)$, and assume that they satisfy the Duhamel formula \eqref{duhamel} on $[0,T]$. Then $u_1=u_2$ on $[0,T]$. The same holds on $[-T,0]$.
\end{lemma}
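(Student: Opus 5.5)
We will run a standard energy estimate on the difference $v:=u_1-u_2$. The Duhamel formula is the only sense in which $u_1,u_2$ solve \eqref{kp}, so we first convert it into a differential identity for $\|v(t)\|_{L^2}^2$ that needs no regularity beyond the hypotheses. Write $N_j:=u_j\pa_xu_j=\frac12\pa_x(u_j^2)$. Since $u_j\in L^\infty\cap L^2$ and $\pa_xu_j\in L^\infty$, we have $N_j\in L^\infty([0,T];L^2)$. Moreover $N_j$ is weakly measurable in time: $u_j^2\in C([0,T];L^1)$ and $u_j$ is bounded, so $u_j^2\in C([0,T];L^2)$ by interpolation, and $N_j=\frac12\pa_x(u_j^2)$ in distributions. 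The Duhamel formula then shows that $S(-t)v(t)=-6\int_0^tS(-t')(N_1-N_2)(t')\,dt'$ is Lipschitz, hence absolutely continuous, from $[0,T]$ to $L^2$. Since $S(t)$ is unitary, $\|v(t)\|_{L^2}=\|S(-t)v(t)\|_{L^2}$, and differentiating the latter gives
\[
\|v(t)\|_{L^2}^2=-12\int_0^t\big(v(t'),(N_1-N_2)(t')\big)\,dt'.
\]
The skew-adjointness of the linear part is thus absorbed by conjugating with $S(-t)$, and no derivatives of $v$ beyond $\pa_x$ ever appear.

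The remaining task is to estimate $(v,N_1-N_2)$. We write
\[
N_1-N_2=u_1\pa_xv+v\,\pa_xu_2 .
\]
The second term is harmless: $|(v,v\pa_xu_2)|\le\|\pa_xu_2\|_{L^\infty}\|v\|_{L^2}^2$. For the first term we integrate by parts in $x$ to get $(v,u_1\pa_xv)=-\frac12\int\pa_xu_1\,v^2\,dX$, which is bounded by $\frac12\|\pa_xu_1\|_{L^\infty}\|v\|_{L^2}^2$. This step needs $\pa_xv\in L^2$, or at least a justification of the integration by parts. We can supply it directly, because $\pa_x(u_1v^2)$ makes sense with $v,u_1,\pa_xu_1,\pa_xv$ all bounded. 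Alternatively, and more cleanly, we apply the identity to mollified functions $\varrho_\eps*v$: then $\int u_1\pa_x(\varrho_\eps*v)(\varrho_\eps*v)=-\frac12\int\pa_xu_1(\varrho_\eps*v)^2$ exactly, and a Friedrichs commutator estimate gives
\[
\|[\varrho_\eps*,u_1]\pa_xv\|_{L^2}\lesssim\|\pa_xu_1\|_{L^\infty}\|v\|_{L^2}.
\]
Here $\pa_xv=\pa_xu_1-\pa_xu_2$ is bounded, but it is not known to lie in $L^2$, and this is the one point that needs care. The commutator lemma avoids the issue, since it requires only $v\in L^2$ and $u_1$ Lipschitz in $x$.

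With these bounds, $\|v(t)\|_{L^2}^2\le C\int_0^t\|v(t')\|_{L^2}^2\,dt'$, where $C=12\big(\|\pa_xu_1\|_\infty+\|\pa_xu_2\|_\infty\big)$. Gronwall's inequality together with $v(0)=0$ then gives $v\equiv0$ on $[0,T]$. The case of $[-T,0]$ is identical after reversing time.
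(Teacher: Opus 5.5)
Your proof is correct and follows essentially the paper's argument: conjugating by $S(-t)$ makes the Duhamel formula an absolutely continuous $L^2$-valued identity, you split $N_1-N_2$ (you put $u_1\pa_x v$ where the paper puts $u_2\pa_x h$, a mirror choice), integrate by parts in $x$, and close with Gronwall. Your ``direct'' justification of the integration by parts is exactly the paper's, once you say that $u_1v^2$ and $\pa_x(u_1v^2)$ are \emph{integrable}, which uses $u_1,v\in L^2$ and not only boundedness; this makes the commutator detour unnecessary, and if you keep it, mollify in $x$ only, since $u_1$ is not known to be Lipschitz in $y$.
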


\begin{proof}
Let $h:=u_1-u_2$. Since the nonlinear terms $u_j\pa_xu_j$ belong to $L^\infty([0,T];L^2)$, the function
$$S(-t)h(t)=-6\int_0^tS(-t')\big(u_1\pa_xu_1-u_2\pa_xu_2\big)(t')\,dt'$$
is absolutely continuous with values in $L^2$, and for almost every $t$
$$\frac d{dt}\|h\|_{L^2}^2=-12\big(h,u_1\pa_xu_1-u_2\pa_xu_2\big)=-12\int_{\R^2}\big(h^2\pa_xu_1+u_2h\pa_xh\big)dX .$$
The function $u_2h^2$ and its derivative $(\pa_xu_2)h^2+2u_2h\pa_xh$ are integrable, because $u_2,h\in L^2\cap L^\infty$ and $\pa_xu_2,\pa_xh\in L^\infty$; hence, for almost every $y$, $x\mapsto u_2h^2$ belongs to $W^{1,1}(\R)$, and $\int u_2h\pa_xh\,dX=-\frac12\int(\pa_xu_2)h^2\,dX$. Therefore $\frac d{dt}\|h\|_{L^2}^2\le12\big(\|\pa_xu_1\|_{L^\infty}+\|\pa_xu_2\|_{L^\infty}\big)\|h\|_{L^2}^2$, and Gronwall's lemma gives $h=0$.
\end{proof}

We now prove the nonlinear Plancherel identity for compactly supported coefficients. The idea is to compare our solutions with the small data theory of Hadac, Herr and Koch, which provides an $L^2$ norm preserving scattering map, and to propagate the identity to large amplitudes by analyticity.

\begin{proposition}[Nonlinear Plancherel identity for compactly supported coefficients]
\label{propplancherel}
Let $a\in C^\infty_c(\C\setminus\R)$ be real, and let $u$ be given by \eqref{reconstruction}. Then
\be
\label{plancherelcompact}
\|u(0)\|_{L^2}^2=8\int_\C|k_2||a|^2\,dA=\|\cB a\|_{L^2}^2 .
\ee
\end{proposition}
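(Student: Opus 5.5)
We follow the strategy announced in Step 2 of the introduction: prove the identity for $sa$ with $|s|$ small by comparison with the small data theory of Hadac, Herr and Koch, then extend it to all $s\in\R$ by real-analyticity. For $s\in\R$ let $u_s$ be reconstructed from $sa$. The right-hand side of \eqref{plancherelcompact} for $sa$ is $8s^2\int|k_2||a|^2\,dA$, a polynomial in $s$. By Proposition \ref{propcompact} (iv), $s\mapsto u_s(0)$ is real-analytic into $L^2$, so $s\mapsto\|u_s(0)\|_{L^2}^2$ is real-analytic on $\R$. Hence it suffices to prove $\|u_s(0)\|_{L^2}^2=\|\cB(sa)\|_{L^2}^2$ for $s$ in some interval $(-s_0,s_0)$; the case $s=1$ then follows.

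\textbf{Small $s$.} By \eqref{smallcritical} with $\gamma=0$, $\|u_s(0)\|_{\dot H^{-\frac12,0}}\lesssim|s|$. For $|s|$ small, the theory of \cite{HHK,HHKcorr} therefore gives a global solution $\tilde u_s$ with datum $u_s(0)$ in the critical space. This solution also lies in $C(\R;L^2)$ because $u_s(0)\in L^2$: HHK's solution persists in $L^2$ and agrees with Bourgain's. It scatters, with some $\tilde u_{s,+}$ satisfying $\|\tilde u_s(t)-S(t)\tilde u_{s,+}\|\to0$. Since $\|\tilde u_s(t)\|_{L^2}=\|u_s(0)\|_{L^2}$ by \eqref{mass}, unitarity gives $\|\tilde u_{s,+}\|_{L^2}=\|u_s(0)\|_{L^2}$.

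\textbf{Identification of the two solutions.} By Proposition \ref{propcompact} (ii), $u_s$ satisfies the Duhamel formula \eqref{duhamel}, and Lemma \ref{lemmainverse} (iii) makes $u_s$ and $\pa_xu_s$ bounded. Bourgain's solution with the same Schwartz-like datum (decay $\jb X^{-2}$ of all derivatives, so $u_s(0)\in H^\infty$) is in $C(\R;H^s)$ for all $s$, hence has bounded $\pa_x$. Lemma \ref{lemmaunique} then gives $\tilde u_s=u_s$.

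\textbf{The scattering state.} It remains to show $\tilde u_{s,+}=\cB(sa)$. Apply Lemma \ref{lemmaweak} to the single function $u_s$ with $g=\cB(sa)$. Its hypothesis \eqref{hypcone} follows from Proposition \ref{propcone}, applied to the one-element family $a_n=sa$ with majorant $F=C\jb\zeta^{-6}$ (or $F=CF_{\alpha,\beta}$). This majorant satisfies \eqref{weightsF} because $a$ has compact support in $\C\setminus\R$, so $|a|+|\nabla a|\lesssim\jb\zeta^{-q}$. Lemma \ref{lemmaweak} yields $S(-t)u_s(t)\weakto\cB(sa)$. Strong convergence $S(-t)u_s(t)\to\tilde u_{s,+}$ then forces $\tilde u_{s,+}=\cB(sa)$. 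Together with \eqref{bornisometry} this gives $\|u_s(0)\|_{L^2}^2=\|\cB(sa)\|_{L^2}^2=8s^2\int|k_2||a|^2\,dA$ for small $s$, and analyticity concludes.

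\textbf{Main obstacle.} The delicate step is the identification with the HHK solution. One has to check that HHK's critical-space solution coincides with the smooth solution $u_s$ and that its scattering holds in $L^2$, not just in $\dot H^{-\frac12,0}$. We would handle this via Lemma \ref{lemmaunique} and persistence of $L^2$ regularity. An alternative is to use only HHK's scattering in $\dot H^{-\frac12,0}$ together with mass conservation and the weak $L^2$ limit from Lemma \ref{lemmaweak}. This needs the limit in $\dot H^{-\frac12,0}$ and the weak $L^2$ limit to agree as distributions, which they do, and the $L^2$ norm to be preserved, which requires HHK's scattering to hold in $L^2$ for $L^2$ data.
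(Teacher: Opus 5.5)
Your architecture is the paper's: the weak limit $S(-t)u_s(t)\weakto s\cB a$ from Proposition \ref{propcone} and Lemma \ref{lemmaweak}, the theory of \cite{HHK} for small $|s|$, and real-analytic continuation in $s$ via Proposition \ref{propcompact} (iv). Your majorant $C\jb{\zeta}^{-6}$ works as well as the paper's $(\|a\|_\infty+\|\nabla a\|_\infty)\1_{\supp a}$.

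\textbf{The gap.} It lies in identifying the critical-space solution $\tilde u_s$ with $u_s$. You assert that it ``persists in $L^2$ and agrees with Bourgain's'', but this is not automatic. The two solutions are constructed, and unique, in different auxiliary spaces: spaces built on $U^2$ and $V^2$ in \cite{HHK}, Fourier restriction spaces in \cite{Bourgain}. Your proposed repair does not close the gap either. Lemma \ref{lemmaunique} needs $\tilde u_s$ and $\pa_x\tilde u_s$ to be bounded on $[0,T]\times\R^2$, and persistence of $L^2$ regularity gives neither. Without this link, the weak limit $s\cB a$, which you know for $u_s$, cannot be matched with the state $\tilde u_{s,+}$, whose $L^2$ norm you know only for $\tilde u_s$.

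\textbf{The missing idea.} The paper obtains the regularity of the HHK solution $U$ directly from that theory. The flow map is analytic on a small ball of $\dot H^{-\frac12,0}$, and by uniqueness it commutes with translations. Moreover \eqref{smallcritical} holds for every multi-index $\gamma$, whereas you used only $\gamma=0$, so $h\mapsto\tau_hu_s(0)$ is $C^\infty$ into $\dot H^{-\frac12,0}$. Composing these maps shows that $h\mapsto\tau_hU$ is $C^\infty$ into $C([0,\infty);\dot H^{-\frac12,0})$, hence $\pa^\gamma_XU\in C([0,\infty);\dot H^{-\frac12,0})$ for all $\gamma$. The inequality $\|f\|_{L^2}^2\le\|f\|_{\dot H^{-\frac12,0}}^2+\|\pa_xf\|_{\dot H^{-\frac12,0}}^2$ then puts every derivative in $C([0,\infty);L^2)$, and Sobolev embedding makes $U$ and $\pa_xU$ bounded.

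Next, $U$ is real, because $\overline U$ solves the same problem and uniqueness applies, and $U$ satisfies the Duhamel formula. Lemma \ref{lemmaunique} then gives $U=u_s$ directly, with no detour through Bourgain's solution.

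The state is identified by pairing with $\varphi$ such that $\hat\varphi\in C^\infty_c(\{\xi\neq0\})$, which lies in the dual space $\dot H^{\frac12,0}$, using the convergence in $\dot H^{-\frac12,0}$. The identity $\|U_+\|_{L^2}=\|u_s(0)\|_{L^2}$ is the same input from \cite{HHK} for $L^2$ data that you flag as your main obstacle.
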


\begin{proof}
{\bf Step 1:} Weak scattering. The single coefficient $a$ satisfies the assumptions of Proposition \ref{propcone} with $F:=(\|a\|_\infty+\|\nabla a\|_\infty)\1_{\supp a}$, which is bounded with compact support in $\C\setminus\R$. Since $\|u(t)\|_{L^2}$ is constant by Proposition \ref{propcompact} (ii), Lemma \ref{lemmaweak} gives $S(-t)u(t)\weakto\cB a$ as $t\to\pm\infty$. The same holds for the coefficient $sa$, $s\in\R$: $S(-t)u_s(t)\weakto s\cB a$.\\

\noindent{\bf Step 2:} Small amplitudes. We use the results of \cite{HHK,HHKcorr}, transferred to \eqref{kp} by the change of variables of Subsection \ref{subsectionkp}, which preserves the spaces $\dot H^{-\frac12,0}$ and $L^2$ up to constant factors and intertwines the translations and the free groups. By \cite[Theorem 1.1 and Corollary 1.7]{HHK}, there is $\eps_0>0$ such that every initial datum $U_0$ with $\|U_0\|_{\dot H^{-\frac12,0}}<\eps_0$ generates a global solution $U\in C([0,\infty);\dot H^{-\frac12,0})$, unique in the function space of \cite[Theorem 1.1]{HHK}, the map $U_0\mapsto U$ is analytic into $C([0,\infty);\dot H^{-\frac12,0})$, and there is a state $U_+$ with $\|U(t)-S(t)U_+\|_{\dot H^{-\frac12,0}}\to0$ as $t\to+\infty$; if moreover $U_0\in L^2$ is real, then $\|U_+\|_{L^2}=\|U_0\|_{L^2}$. By \eqref{smallcritical}, there is $s_*>0$ such that $U_0:=u_s(0)$ is admissible for $|s|\le s_*$. Let $U$ be the corresponding solution. Since $u_s(0)$ is real, $\overline U$ solves the same equation with the same datum, and $U$ is real by uniqueness.

{\it Regularity of $U$.} By uniqueness and translation invariance of the equation, the flow map commutes with translations: the solution with datum $\tau_hU_0$ is $\tau_hU$. By \eqref{smallcritical}, $h\mapsto\tau_hU_0$ is a $C^\infty$ map from $\R^2$ into $\dot H^{-\frac12,0}$. Composing with the analytic flow map, $h\mapsto\tau_hU$ is $C^\infty$ into $C([0,\infty);\dot H^{-\frac12,0})$, and its derivatives at $h=0$ are the distributional derivatives of $U$. Hence $\pa^\gamma_XU\in C([0,\infty);\dot H^{-\frac12,0})$ for every $\gamma$. Since $1\le|\xi|^{-1}+|\xi|$,
$$\|f\|_{L^2}^2\le\|f\|_{\dot H^{-\frac12,0}}^2+\|\pa_xf\|_{\dot H^{-\frac12,0}}^2,$$
so that $\pa^\gamma_XU\in C([0,\infty);L^2)$ for every $\gamma$, and in particular $U$ and $\pa_xU$ are bounded on $[0,T]\times\R^2$ for every $T>0$, by Sobolev embedding.

{\it Uniqueness.} The solution $U$ satisfies the Duhamel formula \cite[Remark 1.3]{HHK}, in which the Duhamel term coincides with the Bochner integral of \eqref{duhamel}, since $U\pa_xU\in L^\infty([0,T];L^2)$; the smooth solution $u_s$ satisfies \eqref{duhamel} by Proposition \ref{propcompact} (ii). Both are real and bounded with bounded $x$-derivatives on $[0,T]\times\R^2$ for every $T>0$, by the regularity of $U$ and by Proposition \ref{propcompact} (i), so that Lemma \ref{lemmaunique} gives $U=u_s$ on $[0,\infty)$.

{\it Identification of the state.} Let $\hat\varphi\in C^\infty_c(\{\xi\neq0\})$; then $\varphi$ belongs to the space $\dot H^{\frac12,0}$, defined by the norm $\||\xi|^{\frac12}\hat\varphi\|_{L^2}$, which is the dual space of $\dot H^{-\frac12,0}$ for the pairing $(\cdot,\cdot)$. By Step 1 and by the convergence in $\dot H^{-\frac12,0}$, $(S(-t)u_s(t),\varphi)$ converges both to $(s\cB a,\varphi)$ and to $(U_+,\varphi)$. Hence $\widehat{U_+}=s\widehat{\cB a}$ almost everywhere, and
$$\|u_s(0)\|_{L^2}^2=\|U_+\|_{L^2}^2=s^2\|\cB a\|_{L^2}^2\qquad\mbox{for}\ |s|\le s_* .$$

\noindent{\bf Step 3:} Analytic continuation. By Proposition \ref{propcompact} (iv), $s\mapsto\|u_s(0)\|_{L^2}^2$ is real-analytic on $\R$. It coincides with $s^2\|\cB a\|_{L^2}^2$ on $[-s_*,s_*]$, hence on $\R$. For $s=1$ and with \eqref{bornisometry}, this is \eqref{plancherelcompact}.
\end{proof}
\subsection{Rapidly decaying coefficients}

The scattering coefficients of smooth physical data are not compactly supported: they are only rapidly decaying in the sense of \eqref{hypa}, and they are in general discontinuous across the real axis. We extend Propositions \ref{propcompact} and \ref{propplancherel} to this class by approximation. The nonlinear Plancherel identity passes to the limit as an inequality by Fatou's lemma; the reverse inequality comes from the weak convergence $S(-t)u(t)\weakto\cB a$ provided by Proposition \ref{propcone}.

\begin{proposition}[KP-II for rapidly decaying coefficients]
\label{proprapid}
Let $a$ satisfy \eqref{hypa} and let $u$ be given by \eqref{reconstruction}. Then:
\begin{enumerate}
\item $u\in C(\R;L^2(\R^2))$, $u$ and $\pa_xu$ are bounded on $\R\times\R^2$, $u$ satisfies the Duhamel formula \eqref{duhamel}, and $\|u(t)\|_{L^2}=\|u(0)\|_{L^2}$ for every $t\in\R$;
\item $\|u(0)\|_{L^2}^2=8\int_\C|k_2||a|^2dA=\|\cB a\|_{L^2}^2$.
\end{enumerate}
\end{proposition}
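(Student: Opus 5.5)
We approximate $a$ by compactly supported coefficients. Let $\vartheta_j$ be smooth real cutoffs with $\vartheta_j(\bar k)=\vartheta_j(k)$, equal to $1$ on $\{|k|\le j,\ |k_2|\ge 1/j\}$, supported in $\{|k|\le 2j,\ |k_2|\ge 1/(2j)\}$, and with $|\nabla\vartheta_j|\lesssim\jb k\,|k_2|^{-1}$ near the axis and $\lesssim 1/j$ at infinity. Set $a_j:=\vartheta_j a\in C^\infty_c(\C\setminus\R)$, which is real. We first check a common majorant $F$ for \eqref{hypF1}, with $F$ satisfying \eqref{weightsF}. By \eqref{hypa}, $|a_j|\lesssim\jb\zeta^{-6}$. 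The only danger is the term $|\nabla\vartheta_j||a|$ near the axis, which lives on the strip $1/(2j)\le|k_2|\le1/j$. There we would rather use cutoffs in the Born frequency, $\vartheta_j=\chi(\zeta_1(k)j)$ times a large-$|k|$ cutoff. Then the gradient near the axis is $\lesssim j\le|k_2|^{-1}$. To control it, we use that $a$ is continuous up to the axis and $\jb\zeta^{-q}$ decays, and we absorb $|k_2|^{-1}$ by noting the strip has width $\sim 1/j$. If this is not clean pointwise, we replace \eqref{hypF1} by an integrated version. Steps 4--5 of Proposition \ref{propcone} only use $\pa_kf_n$ in $L^1$ against $\jb k^{-1}$, plus the Morrey bound near $k_\pm$, where $\vartheta_j=1$ for $j$ large.

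Next we pass to the limit. Let $m_j,u_j$ be the eigenfunctions and reconstructions from $a_j$. By Lemma \ref{lemmacauchy} (v), $\|\cI|a_j-a|\|_\infty\to0$ and $\|\cI|k_2(a_j-a)|\|_\infty\to0$. Lemma \ref{lemmavekua} (iv), applied to $a_jP$ and $aP$, and its analogue for \eqref{eqmx} (Lemma \ref{lemmavekua} (iii)), then give $m_j\to m$ and $\pa_xm_j\to\pa_xm$ uniformly in $(k,t,X)$. Dominated convergence in \eqref{reconstruction}, with the integrable majorant $(|k_2|+k_2^2+\jb k^{-1})\jb\zeta^{-6}$, gives $u_j\to u$ and $\pa_xu_j\to\pa_xu$ uniformly on $\R\times\R^2$. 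The bounds of Lemma \ref{lemmainverse} are uniform in $j$, so $u_j$ and $\pa_xu_j$ are bounded uniformly. By Proposition \ref{propplancherel}, $\|u_j(t)\|_{L^2}^2=8\int|k_2||a_j|^2\le8\int|k_2||a|^2$. Hence $u_j(t)\weakto u(t)$ in $L^2$ for every $t$, and by Fatou $\|u(t)\|_{L^2}^2\le\liminf\|u_j(t)\|^2_{L^2}=\|\cB a\|_{L^2}^2$. Continuity in time into $L^2$ and the Duhamel formula follow by passing to the limit in \eqref{duhamel} for $u_j$. The nonlinearity $u_j\pa_xu_j$ converges weakly in $L^2$, being bounded in $L^2$ and pointwise convergent. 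Strong continuity then comes from the Duhamel formula itself, since $u\pa_xu\in L^\infty_tL^2$.

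For the reverse inequality, Proposition \ref{propcone} applies to the family $(a_j)$, or to $a$ alone. Lemma \ref{lemmaweak} then gives $S(-t)u(t)\weakto\cB a$, so $\|\cB a\|_{L^2}\le\liminf_{t\to\infty}\|u(t)\|_{L^2}$. It remains to show that $\|u(t)\|_{L^2}$ is constant. The Duhamel formula with bounded $u,\pa_xu$ allows the energy computation of Lemma \ref{lemmaunique}: $\frac{d}{dt}\|u\|_{L^2}^2=-12\int u^2\pa_xu=-4\int\pa_x(u^3)=0$. This integration by parts is justified as in that lemma, because $u\in L^2\cap L^\infty$ and $\pa_xu\in L^\infty$. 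Combining, $\|u(t)\|^2_{L^2}$ is constant, at most $\|\cB a\|^2$ and at least $\|\cB a\|^2$, giving (ii).

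The main obstacle is the uniform majorant near the real axis, since $a$ may jump across $\R$ and the cutoffs produce large gradients. If the pointwise bound \eqref{hypF1} fails for $a_j$, we will instead apply Proposition \ref{propcone} directly to the single coefficient $a$, which satisfies \eqref{hypa} and hence \eqref{hypF1} with $F=C\jb\zeta^{-6}$. This is allowed by Lemma \ref{lemmaweights} (ii) and the first part of its proof, after checking that $\jb k\jb\zeta^{-6}$-type gradient majorants meet \eqref{weightsF}. The approximation is then only needed for part (i) and the upper bound, where no derivative of $a_j$ enters.
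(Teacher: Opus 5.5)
Your final argument is the paper's argument: approximate $a$ by compactly supported coefficients to get part (i) and the bound $\|u(t)\|_{L^2}\le\|\cB a\|_{L^2}$ via Fatou, then apply Proposition \ref{propcone} to the single coefficient $a$ with $F=C\jb\zeta^{-6}$, followed by Lemma \ref{lemmaweak} and weak lower semicontinuity for the reverse inequality.

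\textbf{The step that fails as written.} Under \eqref{hypa}, $a$ is only $C^1$ on $\C\setminus\R$. So $\vartheta_ja$ lies in $C^1_c(\C\setminus\R)$, not $C^\infty_c(\C\setminus\R)$. Propositions \ref{propcompact} and \ref{propplancherel} are stated only for smooth compactly supported coefficients, and their proofs, through Lemma \ref{lemmaparametrix}, integrate by parts several times in $k$. You must therefore mollify, as the paper does, with $a_R:=\vartheta_R(\varrho_{\eps_R}\ast a)$ where $\eps_R\le(8R)^{-1}$.
\begin{itemize}
\item On $\supp\vartheta_R\subset\{|k_2|\ge(4R)^{-1}\}$ the average never crosses the real axis, where $a$ may jump.
\item Realness is preserved because $\varrho$ is radial.
\item Choosing $\eps_R$ so small that $|\varrho_{\eps_R}\ast a-a|\le R^{-1}$ on $\supp\vartheta_R$ gives local uniform convergence.
\end{itemize}
The pointwise bound $|a_j|\le|a|$ is then lost. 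In its place, $\jb{\zeta(k')}\sim\jb{\zeta(k)}$ for $|k'-k|\le\eps_R$ and $|k|\le2R$ gives the uniform majorant $|a_R|\lesssim\jb\zeta^{-6}$. This majorant feeds Lemma \ref{lemmacauchy} (v). It also replaces your monotonicity bound $8\int|k_2||a_j|^2\,dA\le8\int|k_2||a|^2\,dA$ by dominated convergence. The repair is routine, but it is needed.

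\textbf{The first paragraph.} Your attempt at a common majorant \eqref{hypF1} for the family $(a_j)$ cannot succeed. The cutoff gradient has size $|k_2|^{-1}$ on the strips $\{|k_2|\sim j^{-1}\}$, so $\sup_j|\nabla a_j|\gtrsim|k_2|^{-1}|a|$ there. Since $a$ does not vanish on $\R$ in general, no bounded $F$ can satisfy \eqref{hypF1}. The attempt is also unnecessary, because your fallback is exactly what is needed. On that fallback, note that \eqref{weightsF} is a condition on $F=C\jb\zeta^{-6}$ itself, not on $\jb kF$; it follows from Lemma \ref{lemmaweights} (ii).

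\textbf{The Duhamel formula.} Here you take a different and shorter route. You pass to the weak limit directly in \eqref{duhamel} for $u_j$. This works because $(u_j\pa_xu_j)(t')$ is bounded in $L^2$ and converges pointwise, hence weakly, and dominated convergence handles the integral in $t'$. Continuity of $u$ in $L^2$ then follows from the formula itself. The paper instead passes to the limit in the distributional form of KP-II, defines $\tilde u$ by the Duhamel expression, and shows $u=\tilde u$ via the linear equation in Fourier variables on $\{\xi\neq0\}$. Both are valid; yours avoids the linear uniqueness step.

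Two small points on this route:
\begin{itemize}
\item For $\pa_xu_j\to\pa_xu$ you also need $\pa_x^2m_j\to\pa_x^2m$, obtained the same way as the convergence of $\pa_xm_j$.
\item Dominated convergence gives pointwise, not uniform, convergence of $u_j$ and $\pa_xu_j$; pointwise convergence is all you use.
\end{itemize}
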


\begin{proof}
{\bf Step 1:} Compactly supported approximation. For $R\ge1$ let $\vartheta_R(k):=\vartheta(k/R)\big(1-\vartheta(4Rk_2)\big)$, where $\vartheta$ is the cutoff fixed in the Notations. Then $\vartheta_R\in C^\infty_c(\C\setminus\R)$ is real-valued, $\vartheta_R(\bar k)=\vartheta_R(k)$, $0\le\vartheta_R\le1$, $\vartheta_R=1$ on $\{|k|\le R,\ |k_2|\ge R^{-1}\}$, and $\supp\vartheta_R\subset\{|k|\le2R,\ |k_2|\ge(4R)^{-1}\}$. Let
$$a_R:=\vartheta_R\,(\varrho_{\eps_R}\ast a),$$
where $\eps_R\le(8R)^{-1}$ is so small that $|\varrho_{\eps_R}\ast a-a|\le R^{-1}$ on $\supp\vartheta_R$; this is possible since $a$ is uniformly continuous on the compact set $\{|k|\le3R,\ |k_2|\ge(8R)^{-1}\}$. Since $\varrho$ is radial and $a(\bar k)=\overline{a(k)}$, the coefficient $a_R\in C^\infty_c(\C\setminus\R)$ is real. If $|k'-k|\le\eps_R$ and $|k|\le2R$, then $|\zeta(k')-\zeta(k)|\lesssim\jb k\eps_R\lesssim1$, so that $\jb{\zeta(k')}\sim\jb{\zeta(k)}$; hence \eqref{hypa} gives
\be
\label{uniformaR}
|a_R(k)|\lesssim\jb{\zeta(k)}^{-6}\qquad\mbox{uniformly in}\ R,
\ee
and $a_R\to a$ uniformly on the compact subsets of $\C\setminus\R$. By Lemma \ref{lemmacauchy} (v), applied with the majorants $C\jb\zeta^{-6}$ and $C|k_2|\jb\zeta^{-6}$ of Lemma \ref{lemmaweights} (ii),
$$\|\cI|a_R-a|\|_\infty+\|\cI(|k_2||a_R-a|)\|_\infty\longrightarrow0\qquad(R\to\infty).$$
Let $m_R$ and $u_R$ be the eigenfunction and the reconstructed function of $a_R$. By \eqref{comparison}, $m_R\to m$ in $C_b(\C)$, uniformly in $(t,X)$. Subtracting the equations \eqref{eqmx} of $a_R$ and of $a$,
$$\begin{aligned}\pa_xm_R-\pa_xm&=\cC\big(a_RP\overline{(\pa_xm_R-\pa_xm)}\big)+\cC\big((a_R-a)P\overline{\pa_xm}\big)\\&\quad+\cC\big(-2ik_2(a_RP\overline{m_R}-aP\overline m)\big),\end{aligned}$$
and the last two terms tend to zero in $C_b$, uniformly in $(t,X)$, by the convergences above; Lemma \ref{lemmavekua} (iii) gives $\pa_xm_R\to\pa_xm$, and in the same way $\pa_x^2m_R\to\pa_x^2m$, uniformly in $(k;t,X)$. By \eqref{uniformaR} and the proof of Lemma \ref{lemmainverse}, the bounds $|\pa_xm_R|+|\pa_x^2m_R|\lesssim\jb k^{-1}$ hold uniformly in $R$, so that the integrands of $u_R$ and $\pa_xu_R$ have the integrable majorant $C(|k_2|+k_2^2+\jb k^{-1})\jb\zeta^{-6}$. By dominated convergence, $u_R\to u$ and $\pa_xu_R\to\pa_xu$ pointwise on $\R\times\R^2$, with $\sup_R(\|u_R\|_\infty+\|\pa_xu_R\|_\infty)<\infty$. Propositions \ref{propcompact} and \ref{propplancherel}, Fatou's lemma and the dominated convergence theorem (with \eqref{uniformaR}) give, for every $t$,
\be
\label{fatou}
\begin{aligned}\|u(t)\|_{L^2}^2\le\liminf_{R\to\infty}\|u_R(t)\|_{L^2}^2&=\lim_{R\to\infty}8\int_\C|k_2||a_R|^2\,dA\\&=8\int_\C|k_2||a|^2\,dA=\|\cB a\|_{L^2}^2 .\end{aligned}
\ee

\noindent{\bf Step 2:} The Duhamel formula. By Proposition \ref{propcompact}, each $u_R$ is a smooth solution of $\pa_x(\pa_tu_R+\pa_x^3u_R+3\pa_x(u_R^2))+3\pa_y^2u_R=0$. By Step 1 we can pass to the limit in the sense of distributions on $\R\times\R^2$:
\be
\label{kpdistrib}
\pa_x\big(\pa_tu+\pa_x^3u+3\pa_x(u^2)\big)+3\pa_y^2u=0 .
\ee
Since $u$ and $\pa_xu$ are bounded by Lemma \ref{lemmainverse} (iii), and $\|u(t)\|_{L^2}\le\|\cB a\|_{L^2}$ by \eqref{fatou}, the function $6u\pa_xu$ belongs to $L^\infty(\R;L^2)$, and
$$\tilde u(t):=S(t)u(0)-6\int_0^tS(t-t')\big(u\pa_xu\big)(t')\,dt'$$
belongs to $C(\R;L^2)$ and satisfies $\pa_x(\pa_t\tilde u+\pa_x^3\tilde u+6u\pa_xu)+3\pa_y^2\tilde u=0$ in the sense of distributions, while $u$ satisfies the same equation by \eqref{kpdistrib}, since $6u\pa_xu=3\pa_x(u^2)$. The difference $z:=u-\tilde u$ is bounded in $L^2$ and satisfies $\pa_x(\pa_tz+\pa_x^3z)+3\pa_y^2z=0$. Moreover $z$ is weakly continuous in $L^2$, with $z(0)=0$: $\tilde u$ is continuous, and $u(t')\to u(t)$ pointwise and boundedly in $L^2$ as $t'\to t$, by Lemma \ref{lemmainverse} (iii) and \eqref{fatou}. In Fourier variables, the equation reads $\pa_t\hat z=i\omega\hat z$ in $\mathcal D'(\R\times\{\xi\neq0\})$, so that $\pa_t(e^{-it\omega}\hat z)=0$ there, and for every test function $\varphi$ supported in $\{\xi\neq0\}$, the continuous function $t\mapsto\int e^{-it\omega}\hat z(t)\overline{\varphi}\,d\xi\,d\eta$ is constant, hence equal to its value $0$ at $t=0$. Since $\{\xi=0\}$ is a null set, $z=0$: $u=\tilde u\in C(\R;L^2)$ satisfies \eqref{duhamel}. Then $S(-t)u(t)=u(0)-6\int_0^tS(-t')(u\pa_xu)(t')\,dt'$ is absolutely continuous in $L^2$, and for almost every $t$
$$\frac d{dt}\|u(t)\|_{L^2}^2=-12\big(u,u\pa_xu\big)=-12\int_{\R^2}u^2\pa_xu\,dX=-4\int_{\R^2}\pa_x(u^3)\,dX=0,$$
since $u^3$ and $\pa_x(u^3)=3u^2\pa_xu$ are integrable. This proves (i). Note that we did not need $u(t)$ to belong to the domain of the singular operator $\pa_x^{-1}\pa_y^2$.\\

\noindent{\bf Step 3:} Proof of (ii). By \eqref{hypa} with $q=6$, the coefficient $a$ satisfies \eqref{hypF1} with $F:=C\jb\zeta^{-6}$, and $F$ satisfies the conditions \eqref{weightsF} by Lemma \ref{lemmaweights} (ii). Proposition \ref{propcone} and Lemma \ref{lemmaweak}, together with the conservation of mass, give $S(-t)u(t)\weakto\cB a$ as $t\to\pm\infty$. By weak lower semicontinuity, $\|\cB a\|_{L^2}\le\liminf\|S(-t)u(t)\|_{L^2}=\|u(0)\|_{L^2}$, and \eqref{fatou} at $t=0$ gives the reverse inequality. Hence $\|u(0)\|_{L^2}=\|\cB a\|_{L^2}$, which, together with \eqref{bornisometry}, proves (ii).
\end{proof}
\section{The direct problem}
\label{sectiondirect}

In this section the initial datum $u_0$ is given, and we construct its scattering coefficient. The eigenfunctions of the heat operator are the solutions $\psi=e^{kx+k^2y}\chi$ of $\pa_y\psi=\pa_x^2\psi+u_0\psi$ with $\chi\to1$ at infinity; as in Section \ref{sectioninverse}, we also call the normalized function $\chi$ the eigenfunction. It solves
\be
\label{eqchi}
(\pa_y-\pa_x^2-2k\pa_x)\chi=u_0\chi,\qquad \chi=1+G_k\ast(u_0\chi),
\ee
where $G_k$ is the Green function of $\pa_y-\pa_x^2-2k\pa_x$. We first study $G_k$ (Lemma \ref{lemmagreen}) and the operator $H\mapsto G_k\ast(u_0H)$ on $L^\infty(\R^2)$ (Lemmas \ref{lemmagreenop} and \ref{lemmaembedding}). We then treat smooth exponentially decaying data (Lemma \ref{lemmairregular}, Proposition \ref{propdirect} and Corollary \ref{corsmooth}): the spectral points where \eqref{eqchi} is not uniquely solvable form a closed set with empty interior, through which the eigenfunctions extend by a theorem of Grinevich \cite{Grin}. Finally we prove bounds which are uniform along sequences of approximating data (Lemmas \ref{lemmadifferences}, \ref{lemmabootstrap} and \ref{lemmaapprox}).

\subsection{The Green function}

The symbol of $\pa_y-\pa_x^2-2k\pa_x$ is $\xi^2+i\eta-2ik\xi$, so that, formally,
\be
\label{defgreen}
G_k(x,y)=\frac1{(2\pi)^2}\int_{\R^2}\frac{e^{i(x\xi+y\eta)}}{\xi^2+i\eta-2ik\xi}\,d\xi\,d\eta .
\ee
The symbol is a holomorphic function of $k$, and for $\xi\neq0$ it vanishes exactly when $k=\frac\eta{2\xi}-i\frac\xi2$, that is, when $(\xi,\eta)=\zeta(k)$. Since $\dbar_k\frac1{k-k_0}=\pi\delta_{k_0}$, the $\dbar$-derivative of $G_k$ in $k$ is therefore concentrated on the single frequency $\zeta(k)$, and the change of variables \eqref{changevariables} shows that, formally, $\dbar_kG_k(X)=-\frac{i\sgn(k_2)}{2\pi}e^{iX\cdot\zeta(k)}$; this identity is proved in Lemma \ref{lemmagreen} (v) below. This is the reason why the Born frequency $\zeta(k)$ appears in the direct and inverse problems. The following lemma gives an explicit formula for $G_k$, obtained by computing the integral \eqref{defgreen} first in $\eta$, by residues, and then in $\xi$, and collects the properties of $G_k$ that we use. We write $\tilde x:=x+2k_1y$; note that $G_k$ depends on $k_1$ only through $\tilde x$.

\begin{lemma}[The Green function]
\label{lemmagreen}
Let $k=k_1+ik_2\in\C$ and $X=(x,y)$ with $y\neq0$.
\begin{enumerate}
\item {\em (Formula).} With $\tilde x=x+2k_1y$, $E_y:=\{|s|>|k_2|\}$ if $y>0$ and $E_y:=\{|s|<|k_2|\}$ if $y<0$, let
\be
\label{formulagreen}
G_k(x,y):=\frac{\sgn(y)}{2\pi}\,e^{-ik_2\tilde x}\int_{E_y}e^{is\tilde x}\,w(s)\,ds,\qquad w(s):=e^{-(s^2-k_2^2)y}.
\ee
Then $G_k$ is continuous on $\R^2\setminus\{0\}$, and it is a fundamental solution: $(\pa_y-\pa_x^2-2k\pa_x)G_k=\delta_0$ in $\mathcal D'(\R^2)$. For real $k=k_1$, $G_{k_1}(x,y)=\1_{\{y>0\}}(4\pi y)^{-\frac12}e^{-(x+2k_1y)^2/(4y)}$ is a causal heat kernel.
\item {\em (Size).}
\be
\label{sizegreen}
|G_k(x,y)|\lesssim\min\big(|\tilde x|^{-1},\,|y|^{-\frac12},\,(|k_2||y|)^{-1}\big)\lesssim\frac1{|x+2k_1y|+|y|^{\frac12}} .
\ee
\item {\em (Spectral regularity).} $k\mapsto G_k(X)$ is continuous on $\C$ and smooth on $\C\setminus\R$, with $|\nabla_kG_k(X)|\lesssim1$ for $k\notin\R$. In particular $|G_k(X)-G_{k'}(X)|\lesssim|k-k'|$ for all $k,k'\in\C$.
\item {\em (Symmetries).} $\overline{G_k}=G_{\bar k}$ and $G_k(X)e^{-iX\cdot\zeta(k)}=G_{\bar k}(X)$.
\item {\em ($\dbar$-derivative).} For $k\notin\R$, $\dbar_kG_k(X)=-\frac{i\sgn(k_2)}{2\pi}e^{iX\cdot\zeta(k)}$.
\end{enumerate}
\end{lemma}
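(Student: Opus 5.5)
The plan is to compute the integral \eqref{defgreen} in the order indicated, first in $\eta$ by residues and then in $\xi$. Every property will then be read off from \eqref{formulagreen}. The symbol $\xi^2+i\eta-2ik\xi$ vanishes in $\eta$ at $\eta=i(\xi^2-2ik\xi)$, which lies in the upper half-plane when $\Re(\xi^2-2ik\xi)=\xi^2+2k_2\xi>0$. Closing the contour upward for $y>0$ and downward for $y<0$ gives
$$\frac1{2\pi}\int_\R\frac{e^{iy\eta}d\eta}{\xi^2+i\eta-2ik\xi}=\sgn(y)\,\1_{\{\sgn(y)(\xi^2+2k_2\xi)>0\}}\,e^{-(\xi^2-2ik\xi)y}.$$
Next I substitute $s=\xi+k_2$. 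Then $\xi^2+2k_2\xi=s^2-k_2^2$ and $-(\xi^2-2ik\xi)y+ix\xi=-(s^2-k_2^2)y+is\tilde x-ik_2\tilde x$, which yields \eqref{formulagreen} with the sets $E_y$. To make this rigorous rather than formal, I will define $G_k$ by \eqref{formulagreen} and verify the fundamental solution property directly. For $y\ne0$, each integrand satisfies $(\pa_y-\pa_x^2-2k\pa_x)=0$ pointwise. The identity $\delta_0$ comes from the jump across $y=0$: the $x$-Fourier transform at $y=0^+$ minus that at $y=0^-$ equals $\int_\R e^{is\tilde x}ds/(2\pi)$ up to the factor $e^{-ik_2\tilde x}$, that is, $\delta(x)$. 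The real case $k_2=0$ gives $E_y=\emptyset$ for $y<0$ and a Gaussian integral for $y>0$. Continuity on $\R^2\setminus\{0\}$ follows from the bounds of (ii).

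For (ii), the bound $|y|^{-1/2}$ follows for $y>0$ from $|\int_{|s|>|k_2|}e^{-(s^2-k_2^2)y}ds|\lesssim y^{-1/2}$, which I get by substituting $s^2-k_2^2=\sigma$ near $|k_2|$. For $y<0$, the integrand over $|s|<|k_2|$ is bounded by $e^{-(k_2^2-s^2)|y|}$, whose integral is $\lesssim\min(|k_2|,|y|^{-1/2},(|k_2||y|)^{-1})$. The bound $(|k_2||y|)^{-1}$ for $y>0$ follows from $\int_{|k_2|}^\infty e^{-(s^2-k_2^2)y}ds\le\int e^{-2|k_2|(s-|k_2|)y}ds$. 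The bound $|\tilde x|^{-1}$ comes from one integration by parts in $s$. The boundary terms at $s=\pm|k_2|$ have $w=1$, so they are $O(|\tilde x|^{-1})$. The bulk term $\tilde x^{-1}\int|w'|ds$ is $O(|\tilde x|^{-1})$ because $w$ is monotone on each half-line, so the total variation is $\lesssim1$. The final inequality in \eqref{sizegreen} is immediate from $\min(A^{-1},B^{-1})\le 2(A+B)^{-1}$.

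For (iii), the formula is smooth in $(k_1,k_2)$ for $k_2\ne0$. The derivatives in $k_1$ act only through $\tilde x$ and produce $2y\pa_{\tilde x}$. The derivatives in $k_2$ act on the phase, the weight, and the endpoints $\pm|k_2|$. The endpoint contributions are bounded by $|e^{\pm i|k_2|\tilde x}|=1$. The other terms need the weighted bounds $|y|\,|\pa_{\tilde x}G|$, $|\tilde x G|$, and $|k_2y|\int|w|$, each $\lesssim1$. I would check these by the same integration by parts as in (ii): $\tilde x\int e^{is\tilde x}w\,ds$ is bounded by the boundary values plus the total variation. This uniform gradient bound is the step I expect to be the most delicate, especially near $k_2=0$, where the endpoints coalesce. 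Continuity across $\R$ then follows, and with it the Lipschitz bound, because the $y<0$ piece has length $2|k_2|\to0$. For (iv), conjugation maps $s\to-s$ and $k_2\to-k_2$, which gives $\overline{G_k}=G_{\bar k}$. Moreover $e^{-iX\cdot\zeta(k)}=e^{2ik_2\tilde x}$, and this turns the prefactor $e^{-ik_2\tilde x}$ into $e^{ik_2\tilde x}$, as required for $G_{\bar k}$. For (v), since $G_k$ depends on $k_1$ only through $\tilde x$, I will compute $\dbar=\frac12(\pa_{k_1}+i\pa_{k_2})$ directly. Because $\tilde x$ is linear in $k_1$ and $w$ depends on $k_2^2$, the bulk terms cancel by the computation showing the symbol is holomorphic, so the derivative under the integral vanishes. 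Only the endpoint terms at $s=\pm|k_2|$ survive. There $w=1$, and the combination equals $\mp\frac{\sgn(k_2)}{2\pi}e^{-ik_2\tilde x}e^{-i|k_2|\tilde x}$-type terms, which reduces to $-\frac{i\sgn(k_2)}{2\pi}e^{-2ik_2\tilde x}=-\frac{i\sgn(k_2)}{2\pi}e^{iX\cdot\zeta(k)}$. The $y>0$ and $y<0$ cases must be checked separately, since the signs of the endpoint terms and $\sgn(y)$ compensate.
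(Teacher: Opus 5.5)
Your overall route is the paper's: compute \eqref{defgreen} by residues in $\eta$, read everything off \eqref{formulagreen}, and use one integration by parts in $s$ for the $|\tilde x|^{-1}$ bound and elementary integrals for the other bounds. Parts (ii)--(iv) work as sketched. For $|y\pa_{\tilde x}G_k|\lesssim1$, note that $y\int s e^{is\tilde x}w\,ds=-\frac12\int e^{is\tilde x}\pa_sw\,ds$ is bounded by the total variation of $w$, so your total-variation argument closes.

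The genuine error is in (v). In the $s$-variable of \eqref{formulagreen} the bulk terms do not cancel. With $f(k,s):=e^{i(s-k_2)\tilde x-(s^2-k_2^2)y}$ one has $\pa_{k_1}f=2iy(s-k_2)f$ and $\pa_{k_2}f=(-i\tilde x+2k_2y)f$, hence $\dbar_kf=\frac12(\tilde x+2iys)f\neq0$. The substitution $s=\xi+k_2$ depends on $k_2$ and destroys the holomorphy that the integrand has in $\xi$. If you keep only the endpoint terms at $s=\pm|k_2|$, then for $k_2>0$ and either sign of $y$ you get $\frac i2\cdot\big(-\frac1{2\pi}(1+e^{-2ik_2\tilde x})\big)=-\frac i{4\pi}(1+e^{-2ik_2\tilde x})$. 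This is not $-\frac i{2\pi}e^{-2ik_2\tilde x}$: the endpoint $s=k_2$ produces a non-oscillating term that must be cancelled. The missing piece $-\frac i{4\pi}(e^{-2ik_2\tilde x}-1)$ comes from the bulk. Using $\pa_sw=-2syw$ and one integration by parts in $s$, $\int_{E_y}(\tilde x+2iys)e^{is\tilde x}w\,ds$ reduces to boundary values; this is how the paper concludes.

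Alternatively, your holomorphy idea does work if you stay in $\xi=s-k_2$. There the integrand $e^{ix\xi-(\xi^2-2ik\xi)y}$ is entire in $k$. The domain $\{\sgn(y)(\xi^2+2k_2\xi)>0\}$ has one fixed endpoint $\xi=0$ and one moving endpoint $\xi=-2k_2$, where the integrand equals $e^{-2ik_2\tilde x}$. That single boundary term, with $\sgn(y)$ and the orientation compensating for $y>0$ and $y<0$, gives the formula directly.

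A smaller gap is in (i). Continuity on $\R^2\setminus\{0\}$ does not follow from the bounds of (ii), which only show that $G_k$ is bounded near points $(x,0)$ with $x\neq0$. Since \eqref{formulagreen} uses different sets $E_y$ and the factor $\sgn(y)$ on the two sides of $\{y=0\}$, you must check that the one-sided limits agree. For $y\to0^+$, write $\int_{\{|s|>|k_2|\}}=\int_\R-\int_{\{|s|<|k_2|\}}$. The integral over $\R$ equals $\sqrt{\pi/y}\,e^{k_2^2y}e^{-\tilde x^2/(4y)}$, which tends to zero when $x\neq0$. So both one-sided limits equal $-\frac{e^{-ik_2x}}{2\pi}\int_{\{|s|<|k_2|\}}e^{isx}\,ds$. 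This is the same decomposition you already use for the distributional jump, so the repair is easy. It is needed, because the uniform continuity of $G_k$ away from the origin is used in Lemma \ref{lemmagreenop} (iv).
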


\begin{proof}
{\bf Step 1:} Proof of (i). The formula comes from \eqref{defgreen} as follows: we replace $\eta$ by $\eta+2k_1\xi$, which turns $x\xi+y\eta$ into $\tilde x\xi+y\eta$ and the symbol into $(\xi+k_2)^2-k_2^2+i\eta$; for a real number $c\neq0$, $\frac1{2\pi}\int_\R\frac{e^{iy\eta}}{c+i\eta}d\eta$ equals $\1_{\{y>0\}}e^{-cy}$ if $c>0$ and $-\1_{\{y<0\}}e^{-cy}$ if $c<0$; with $c=s^2-k_2^2$ and $\xi=s-k_2$ this gives \eqref{formulagreen}. To prove that $G_k$ is a fundamental solution, note that each function $f_s(x,y):=e^{i(s-k_2)\tilde x-(s^2-k_2^2)y}$ satisfies $(\pa_y-\pa_x^2-2k\pa_x)f_s=0$, since
$$\begin{aligned}&2ik_1(s-k_2)-(s^2-k_2^2)+(s-k_2)^2-2ik(s-k_2)\\&\qquad=(s-k_2)\big(2ik_1-(s+k_2)+(s-k_2)-2ik_1+2k_2\big)=0,\end{aligned}$$
and that the integrals in \eqref{formulagreen} may be differentiated under the integral sign for $y\neq0$. Hence $(\pa_y-\pa_x^2-2k\pa_x)G_k=0$ on $\{y\neq0\}$. For $k_2=0$ the Gaussian integral gives the heat kernel. As $y\to0^\pm$ with $x\neq0$, both expressions in \eqref{formulagreen} tend to $-\frac{e^{-ik_2x}}{2\pi}\frac{2\sin(|k_2|x)}x$: for $y\to0^+$ we write $\int_{\{|s|>|k_2|\}}=\int_\R-\int_{\{|s|<|k_2|\}}$, and the Gaussian integral over $\R$ tends to zero. Hence $G_k$ is continuous on $\R^2\setminus\{0\}$. In $\mathcal D'(\R_x)$, the same decomposition shows that $G_k(\cdot,y)$ tends to $\frac{e^{-ik_2x}}{2\pi}\big(2\pi\delta_0-\int_{\{|s|<|k_2|\}}e^{isx}ds\big)$ as $y\to0^+$, and to $-\frac{e^{-ik_2x}}{2\pi}\int_{\{|s|<|k_2|\}}e^{isx}ds$ as $y\to0^-$. Since the operator is of first order in $y$, an integration by parts in $y$ on each half-plane shows that $(\pa_y-\pa_x^2-2k\pa_x)G_k$ equals the jump $G_k(\cdot,0^+)-G_k(\cdot,0^-)=\delta_0(x)$ times $\delta_0(y)$, that is, $\delta_0$.\\

\noindent{\bf Step 2:} Proof of (ii). Let $\mu:=|k_2|$. For $y>0$, $w$ decreases from $1$ to $0$ on each half-line of $E_y$; for $y<0$, $w$ is monotone on $[-\mu,0]$ and on $[0,\mu]$, with values in $[0,1]$. One integration by parts in $s$ gives $\big|\int_{E_y}e^{is\tilde x}w\,ds\big|\le\frac4{|\tilde x|}$. For $y>0$, writing $s=\pm(\mu+r)$ and using $s^2-\mu^2\ge r^2+2\mu r$, the integral is bounded by $2\int_0^\infty e^{-r^2y}dr\lesssim y^{-\frac12}$ and by $2\int_0^\infty e^{-2\mu ry}dr=(\mu y)^{-1}$. For $y<0$, $\mu^2-s^2\ge\mu(\mu-|s|)$ gives the bound $2\int_0^\mu e^{-\mu(\mu-s)|y|}ds\le\min\big(2\mu,\,2(\mu|y|)^{-1}\big)$, which is at most $2|y|^{-\frac12}$ and at most $2(\mu|y|)^{-1}$. This proves \eqref{sizegreen}.\\

\noindent{\bf Step 3:} Proof of (iii). Continuity at $k_2=0$ follows from \eqref{formulagreen} by dominated convergence. Since $G_k(x,y)$ is a function of $(\tilde x,y,k_2)$, we have $\pa_{k_1}G_k=2y\,\pa_{\tilde x}G_k$, and by \eqref{formulagreen}
\be
\label{dk1green}
y\,\pa_{\tilde x}G_k=-ik_2y\,G_k+\frac{\sgn(y)}{2\pi}e^{-ik_2\tilde x}\,iy\int_{E_y}s\,e^{is\tilde x}w(s)\,ds .
\ee
The first term is bounded by \eqref{sizegreen}, and $|y|\int_{E_y}|s|w(s)\,ds\le1$ in both cases, by the explicit primitive of $sw(s)$. For $k_2>0$, differentiating the prefactor, the weight and the endpoints $\pm k_2$ of $E_y$ in \eqref{formulagreen} gives, for both signs of $y$,
\be
\label{dk2green}
\pa_{k_2}G_k=(-i\tilde x+2k_2y)\,G_k-\frac1{2\pi}\big(1+e^{-2ik_2\tilde x}\big),
\ee
which is bounded by \eqref{sizegreen}; the case $k_2<0$ follows from (iv) below. The Lipschitz bound follows, since $G_k(X)$ is continuous across the real axis.\\

\noindent{\bf Step 4:} Proof of (iv) and (v). The integral in \eqref{formulagreen} is real, since $E_y$ and $w$ are even in $s$, and it depends on $k_2$ only through $|k_2|$. Hence $\overline{G_k}=G_{\bar k}$ and $G_ke^{2ik_2\tilde x}=G_{\bar k}$, with $2k_2\tilde x=-X\cdot\zeta(k)$. For (v), let $k_2>0$. By \eqref{dk1green} and \eqref{dk2green},
$$\begin{aligned}\dbar_kG_k&=y\,\pa_{\tilde x}G_k+\frac i2\pa_{k_2}G_k\\&=\frac{\tilde x}2G_k+\frac{\sgn(y)}{2\pi}e^{-ik_2\tilde x}\,iy\int_{E_y}s\,e^{is\tilde x}w\,ds-\frac i{4\pi}\big(1+e^{-2ik_2\tilde x}\big).\end{aligned}$$
Since $\pa_sw=-2syw$, we have $iys\,w=-\frac i2\pa_sw$, and an integration by parts, with $w(\pm k_2)=1$, gives
$$iy\int_{E_y}s\,e^{is\tilde x}w\,ds=-\frac i2\,\sgn(y)\big(e^{-ik_2\tilde x}-e^{ik_2\tilde x}\big)-\frac{\tilde x}2\int_{E_y}e^{is\tilde x}w\,ds .$$
The last term cancels with $\frac{\tilde x}2G_k$, and we obtain $\dbar_kG_k=-\frac i{4\pi}\big(e^{-2ik_2\tilde x}-1\big)-\frac i{4\pi}\big(1+e^{-2ik_2\tilde x}\big)=-\frac i{2\pi}e^{-2ik_2\tilde x}=-\frac i{2\pi}e^{iX\cdot\zeta(k)}$. The case $k_2<0$ follows from (iv), since $\dbar_k\big(\overline{G_{\bar k}}\big)=\overline{(\dbar G)(\bar k)}$ and $\zeta(\bar k)=-\zeta(k)$.
\end{proof}

\subsection{A norm adapted to the Green function}

By \eqref{sizegreen}, the bound on $|G_k|$ depends on $k$ only through the shear $k_1$, and it is the inverse of the parabolic distance $|x+2k_1y|+|y|^{\frac12}$. This suggests the following analogue of the Kato class of potentials \cite{AizenmanSimon}:
\be
\label{defK}
\|f\|_\cK:=\sup_{k_1\in\R,\ X\in\R^2}\int_{\R^2}\frac{|f(X')|}{|x-x'+2k_1(y-y')|+|y-y'|^{\frac12}}\,dX'.
\ee
We denote by $\cK$ the space of measurable functions with $\|f\|_\cK<\infty$, and by $\cK_0$ the closure of $C^\infty_c(\R^2)$ in $\cK$. The norm $\|\cdot\|_\cK$ is invariant under translations, decreases under restriction to subsets, and satisfies, for $c\in L^\infty(\R^2)$ and $\kappa\in L^1(\R^2)$,
\be
\label{propK}
\|cf\|_\cK\le\|c\|_{L^\infty}\|f\|_\cK,\qquad \|\kappa\ast f\|_\cK\le\|\kappa\|_{L^1}\|f\|_\cK .
\ee
For a potential $u_0$ we write
\be
\label{defTk}
T_k:=I-G_k\ast(u_0\,\cdot\,),\qquad\mbox{acting on}\ L^\infty(\R^2),
\ee
so that \eqref{eqchi} reads $T_k\chi=1$. We say that $k$ is a regular point of $u_0$ if $T_k$ is invertible on $L^\infty(\R^2)$, and an irregular point otherwise. The operator $G_k\ast(u_0\,\cdot\,)$ is small at high energy, where $G_k$ oscillates. On the real axis, $G_{k_1}$ is a causal heat kernel, vanishing for $y<0$, so that the equation $T_{k_1}H=b$ is of Volterra type in $y$ and can be solved strip by strip; near the real axis, $T_k$ is a small perturbation of $T_{k_1}$.

\begin{lemma}[The direct operator]
\label{lemmagreenop}
\begin{enumerate}
\item For $f\in\cK$, $H\in L^\infty$ and $k,k'\in\C$,
$$\begin{gathered}\|G_k\ast(fH)\|_{L^\infty}\lesssim\|f\|_\cK\|H\|_{L^\infty},\\ \|(G_k-G_{k'})\ast(fH)\|_{L^\infty}\lesssim|k-k'|\,\|f\|_{L^1}\|H\|_{L^\infty}.\end{gathered}$$
\item {\em (High energy).} If $f\in\cK_0$, then $\sup_{|k|\ge R}\|G_k\ast(f\,\cdot\,)\|_{L^\infty\to L^\infty}\to0$ as $R\to\infty$, uniformly for $f$ in a convergent sequence of $\cK_0$.
\item {\em (Uniform invertibility).} Let $u_0\in\cK_0\cap L^1$. There is $R_0\ge1$ such that every $k$ in
$$\Omega:=\{|k|>R_0\}\cup\{|k_2|<R_0^{-1}\}$$
is a regular point, and $\sup_{k\in\Omega}\|T_k^{-1}\|_{L^\infty\to L^\infty}<\infty$. If $u_{0,n}\to u_0$ in $\cK\cap L^1$, the same holds for $u_{0,n}$, with the same $R_0$ and bound, for $n$ large.
\item {\em (Pointwise continuity).} Let $u_0\in L^1\cap L^\infty$ and let $k$ be a regular point. If $H_j\to H$ pointwise with $\sup_j\|H_j\|_{L^\infty}<\infty$, then $T_k^{-1}H_j\to T_k^{-1}H$ pointwise.
\end{enumerate}
\end{lemma}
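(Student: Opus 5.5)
Parts (i) and (ii) come straight from the kernel bounds of Lemma \ref{lemmagreen}, part (iii) combines them with a Volterra argument on the real axis and a Neumann series near it, and part (iv) is a dominated-convergence argument.

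\emph{Part (i).} The first bound follows from \eqref{sizegreen}. Since $|G_k(X-X')|\lesssim(|\tilde x-\tilde x'|+|y-y'|^{1/2})^{-1}$ with shear $k_1$, we get
$$|G_k\ast(fH)(X)|\lesssim\|H\|_{L^\infty}\int\frac{|f(X')|\,dX'}{|x-x'+2k_1(y-y')|+|y-y'|^{1/2}}\le\|f\|_\cK\|H\|_{L^\infty}.$$
The difference bound is immediate from the Lipschitz estimate $|G_k(X)-G_{k'}(X)|\lesssim|k-k'|$ of Lemma \ref{lemmagreen} (iii), integrated against $|fH|$.

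\emph{Part (ii).} By (i), the operator norm of $G_k\ast(f\,\cdot\,)$ is $\lesssim\|f\|_\cK$, uniformly in $k$. We may therefore replace $f$ by some $g\in C^\infty_c$ with $\|f-g\|_\cK$ small; for a convergent sequence $f_n\to f$ in $\cK$ the same $g$ works for all large $n$, and the finitely many remaining $f_n$ are approximated individually. For fixed $g\in C^\infty_c$ we must show $\sup_{\|H\|_\infty\le1}\|G_k\ast(gH)\|_\infty\to0$ as $|k|\to\infty$. There are two regimes, and I expect this to be the main obstacle because $H$ is an arbitrary bounded function, so the smallness must come from $|G_k|$ and not from oscillation.

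If $|k_2|\to\infty$, the bound $|G_k|\lesssim(|k_2||y|)^{-1}$, combined with $\min(|y|^{-1/2},(|k_2||y|)^{-1})$, gives $\int_{\supp g}|G_k|\,dX'\to0$. Concretely, split $|y-y'|<\delta$, where $|G_k|\lesssim|\tilde x|^{-1}\wedge|y|^{-1/2}$ has small integral over a bounded $x$-range, from $|y-y'|\ge\delta$, where $|G_k|\lesssim(|k_2|\delta)^{-1}$.

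If $|k_1|\to\infty$ with $k_2$ bounded, the shear $\tilde x=x-x'+2k_1(y-y')$ makes $|\tilde x|\gtrsim|k_1||y-y'|$ on most of the compact support. We then use $|G_k|\le|\tilde x|^{-1}$ off the set where $|x-x'+2k_1(y-y')|\le\lambda$, which is a strip of $y'$-width $\sim\lambda/|k_1|$. On that strip we use $|y-y'|^{-1/2}$, which is integrable in $y'$, so the strip contributes $\lesssim(\lambda/|k_1|)^{1/2}$. Off the strip the contribution is $\lesssim\lambda^{-1}$. Choosing $\lambda=|k_1|^{1/3}$ makes both contributions tend to zero.

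\emph{Part (iii).} The high-energy region $|k|>R_0$ is handled by (ii) and a Neumann series, uniformly along the convergent sequence. For real $k=k_1$, $G_{k_1}$ is causal, so $T_{k_1}H=b$ is of Volterra type in $y$. By absolute continuity of $\|u_0\|_\cK$-type quantities (approximation by $C^\infty_c$ functions), cut $\R_y$ into finitely many strips $S_j$ with $\|u_0\1_{S_j}\|_\cK\le\eta$ small, plus two tails of small norm. Then solve strip by strip with Neumann series, feeding the solution on earlier strips into the right-hand side. This gives $\|T_{k_1}^{-1}\|\le C(u_0)$ uniformly in $k_1$, since the strip decomposition does not depend on $k_1$ because $\|\cdot\|_\cK$ is a supremum over shears.

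Near the axis, (i) gives $\|T_k-T_{k_1}\|\lesssim|k_2|\,\|u_0\|_{L^1}$, so $T_k$ is invertible once $|k_2|<R_0^{-1}$ with $R_0$ large. For the sequence $u_{0,n}$, the operators differ by $\lesssim\|u_{0,n}-u_0\|_\cK$, so a perturbation of the inverse gives the same $R_0$ and bound for large $n$.

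\emph{Part (iv).} Write $T_k^{-1}H_j-T_k^{-1}H=D_j$. Then $D_j$ is bounded in $L^\infty$ and solves $D_j=(H_j-H)+G_k\ast(u_0D_j)$. The function $G_k\ast(u_0D_j)$ is equicontinuous on compact sets: this uses the continuity of $G_k$ away from the origin, the local integrability of $G_k$, and $u_0\in L^1\cap L^\infty$, with translation continuity of $u_0$ in $L^1$. By Arzel\`a--Ascoli and a diagonal argument, along a subsequence $G_k\ast(u_0D_j)\to D$ locally uniformly. Then $D_j\to D$ pointwise, and by dominated convergence $D=G_k\ast(u_0D)$, so $T_kD=0$ and $D=0$ by regularity of $k$. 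Since every subsequence has a further subsequence converging to $0$, the whole sequence converges pointwise.
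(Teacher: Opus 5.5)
Your proof is correct and has the same structure as the paper's proof. Part (i) comes from the kernel bounds. Part (ii) uses approximation by $C^\infty_c$ functions and separate treatment of large $|k_1|$ and large $|k_2|$. Part (iii) uses Volterra strips on the real axis, plus a Neumann series near the axis and at high energy. Part (iv) uses Arzel\`a--Ascoli together with the uniqueness given by regularity of $k$.

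Two details differ from the paper.

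In (ii), for large $|k_1|$ you cut out a strip of $y'$-width $\lambda/|k_1|$ around the shear line, with $\lambda=|k_1|^{1/3}$. For large $|k_2|$ you split at $|y-y'|=\delta$ and optimize $\delta$. This gives the rate $|k|^{-1/3}$. The paper splits at $|y-y'|=|k_1|^{-2}$, respectively at $|k_2|^{-2}$ and $1$, and gets the logarithmic bounds $\log(2+k_1^2L)/|k_1|$ and $(1+\log|k_2|)/|k_2|$. Only qualitative decay is used, so either route works.

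In (iii), you get the statement for $u_{0,n}$ by a single perturbation of the inverse at the end. This rests on (i), which bounds $\|G_k\ast((u_{0,n}-u_0)\,\cdot\,)\|_{L^\infty\to L^\infty}$ by $C\|u_{0,n}-u_0\|_\cK$ uniformly in $k\in\C$. The paper instead builds the perturbation into each step:
\begin{itemize}
\item the strip decomposition is made valid for every $\tilde u_0$ within $(16C_1)^{-1}$ of $u_0$ in $\cK$;
\item near the axis it imposes $\|u_{0,n}\|_{L^1}\le2\|u_0\|_{L^1}$;
\item at high energy it invokes the sequence-uniform clause of (ii).
\end{itemize}
Your route is shorter and gives the bound $2\sup_{k\in\Omega}\|T_k^{-1}\|$ for all large $n$. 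It also shows that the sequence claim needs only convergence in $\cK$, not in $L^1$. Once you make this final perturbation, your appeal to the sequence-uniform version of (ii) in the high-energy step is redundant.
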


\begin{proof}
{\bf Step 1:} Proof of (i) and (ii). Part (i) follows from \eqref{sizegreen} and Lemma \ref{lemmagreen} (iii). For (ii), let first $f\in C^\infty_c$ with support in $[-L,L]^2$. We bound the operator norm by $\sup_X\int|G_k(X-X')||f(X')|dX'$. If $|k_1|\ge2$, we split according to $|y-y'|<|k_1|^{-2}$ or not. The first part is at most $\|f\|_{L^\infty}\int_{|x'|\le L}\int_{|y-y'|<|k_1|^{-2}}|y-y'|^{-\frac12}dy'dx'\lesssim L\|f\|_{L^\infty}|k_1|^{-1}$. In the second part we use the bound $(|x-x'+2k_1(y-y')|+|k_1|^{-1})^{-1}$; for fixed $x'$, the substitution $y'\mapsto z:=x-x'+2k_1(y-y')$ maps $[-L,L]$ onto an interval of length $4|k_1|L$, and $\int_I\frac{dz}{|z|+\eps}\le2\log\big(1+\frac{|I|}{2\eps}\big)$ for an interval $I$, so that this part is at most
$$\|f\|_{L^\infty}\int_{|x'|\le L}\frac1{2|k_1|}\,2\log\big(1+2k_1^2L\big)\,dx'\lesssim L\|f\|_{L^\infty}\frac{\log(2+k_1^2L)}{|k_1|}.$$
If $|k_2|\ge2$, we use the bound $\min(|y|^{-\frac12},(|k_2||y|)^{-1})$ of \eqref{sizegreen} and split at $|y-y'|=|k_2|^{-2}$ and $|y-y'|=1$: this gives the bound $C\big(\|f\|_{L^1}+\sup_{y}\|f(\cdot,y)\|_{L^1(\R)}\big)\frac{1+\log|k_2|}{|k_2|}$. Both bounds tend to zero as $|k|\to\infty$. For $f\in\cK_0$ and $\eps>0$, choose $f_\eps\in C^\infty_c$ with $\|f-f_\eps\|_\cK\le\eps$; by (i), $\|G_k\ast(f\,\cdot)\|\le C\eps+\|G_k\ast(f_\eps\,\cdot)\|$. The same $f_\eps$ works, with $2\eps$, for all the terms of a sequence converging to $f$ in $\cK$, except finitely many.\\

\noindent{\bf Step 2:} The real axis. Let $k=k_1\in\R$, and let $C_1\ge1$ be the constant in (i). Choose $f\in C^\infty_c$, supported in $[-L,L]^2$, with $\|u_0-f\|_\cK\le(16C_1)^{-1}$. For an interval $I$, \eqref{defK} gives $\|f\1_{\{y\in I\}}\|_\cK\le\|f\|_{L^\infty}\cdot2L\cdot2\sqrt{2|I|}$. Hence we can partition $\R$ into ordered intervals $I_1<I_2<\dots<I_J$, with $I_1$ and $I_J$ unbounded and $J$ depending only on $u_0$, such that $C_1\|f\1_{\{y\in I_j\}}\|_\cK\le\frac18$ for every $j$. Let $\tilde u_0\in\cK$ with $\|\tilde u_0-u_0\|_\cK\le(16C_1)^{-1}$; for instance $\tilde u_0=u_0$, or $\tilde u_0=u_{0,n}$ with $n$ large. The functions $\tilde u_j:=\tilde u_0\1_{\{y\in I_j\}}$ satisfy $C_1\|\tilde u_j\|_\cK\le\frac18+\frac1{16}+\frac1{16}=\frac14$. Since $G_{k_1}(x,y)=0$ for $y<0$, the value of $G_{k_1}\ast(\tilde u_0H)$ on the strip $\{y\in I_j\}$ depends only on the restrictions $H_l:=H\1_{\{y\in I_l\}}$, $l\le j$. The equation $H-G_{k_1}\ast(\tilde u_0H)=b$ is therefore equivalent to the triangular system
$$H_j-\1_{\{y\in I_j\}}G_{k_1}\ast(\tilde u_jH_j)=\1_{\{y\in I_j\}}\Big(b+\sum_{l<j}G_{k_1}\ast(\tilde u_lH_l)\Big),\qquad j=1,\dots,J,$$
whose diagonal operators have norm at most $\frac14$. Solving successively by Neumann series, the operator $H\mapsto H-G_{k_1}\ast(\tilde u_0H)$ is invertible, with inverse bounded by a constant $C_J$ which depends only on $J$, and not on $k_1$ or on $\tilde u_0$.\\

\noindent{\bf Step 3:} Proof of (iii). If $u_0=0$, then $T_k=I$ for every $k$. Moreover, by (i) and $\|u_{0,n}\|_\cK\to0$, we have $\sup_{k\in\C}\|G_k\ast(u_{0,n}\,\cdot\,)\|_{L^\infty\to L^\infty}\le\frac12$ for $n$ large, so a Neumann series gives the assertion with $R_0=1$. We may therefore assume $u_0\neq0$. By (i), $\|T_k-T_{k_1}\|\le C|k_2|\|u_0\|_{L^1}$ for $k=k_1+ik_2$, so that a Neumann series gives the invertibility of $T_k$ with $\|T_k^{-1}\|\le2C_J$ for $|k_2|\le(4CC_J\|u_0\|_{L^1})^{-1}$; the same holds for $u_{0,n}$ when $|k_2|\le(8CC_J\|u_0\|_{L^1})^{-1}$ and $\|u_{0,n}\|_{L^1}\le2\|u_0\|_{L^1}$. By (ii), $\|G_k\ast(u_0\,\cdot)\|\le\frac12$, hence $\|T_k^{-1}\|\le2$, for $|k|\ge R$, with $R$ large, and by Step 1 the same $R$ works for $u_{0,n}$, $n$ large. We choose $R_0\ge\max(1,R,8CC_J\|u_0\|_{L^1})$.\\

\noindent{\bf Step 4:} Proof of (iv). Let $\tilde H_j:=T_k^{-1}H_j$ and $\Phi_j:=G_k\ast(u_0\tilde H_j)=\tilde H_j-H_j$. The $\Phi_j$ are uniformly bounded, and they are uniformly equicontinuous:
$$|\Phi_j(X+h)-\Phi_j(X)|\le\sup_l\|\tilde H_l\|_{L^\infty}\int_{\R^2}|G_k(Z+h)-G_k(Z)|\,|u_0(X-Z)|\,dZ ,$$
and the right-hand side tends to zero as $h\to0$ uniformly in $X$: on $\{|Z|\le r\}$ we use $\|u_0\|_{L^\infty}$ and the local integrability of $G_k$ given by \eqref{sizegreen}, and on $\{|Z|\ge r\}$ we use $\|u_0\|_{L^1}$ and the uniform continuity of $G_k$ there, which follows from Lemma \ref{lemmagreen} (i) and \eqref{sizegreen}. By the Arzel\`a--Ascoli theorem, every subsequence has a further subsequence along which $\Phi_j\to\Phi$ locally uniformly, so that $\tilde H_j\to\tilde H:=H+\Phi$ pointwise and boundedly. By dominated convergence, $\Phi=G_k\ast(u_0\tilde H)$, that is, $T_k\tilde H=H$ and $\tilde H=T_k^{-1}H$. Since the limit does not depend on the subsequence, $\tilde H_j\to T_k^{-1}H$ pointwise.
\end{proof}

The next lemma bounds the norm $\|\cdot\|_\cK$ by Lebesgue norms and, for functions with localized frequencies, by the $L^1$ norm. The condition $\frac1p+\frac2q<2$, and the loss of half a derivative in $y$ in the second bound of \eqref{bernsteinK}, reflect the parabolic scaling of the Green function.

\begin{lemma}[Embeddings for the norm $\|\cdot\|_\cK$]
\label{lemmaembedding}
\begin{enumerate}
\item Let $1<p,q<\infty$ with $\frac1p+\frac2q<2$. Then $\|f\|_\cK\lesssim_{p,q}\|f\|_{L^1}+\|f\|_{L^q_yL^p_x}$, and $L^1\cap L^q_yL^p_x\subset\cK_0$. In particular $L^1\cap L^2\subset\cK_0$.
\item If $\hat f$ is supported in $\{|\xi|\le2^{j+1},\ |\eta|\le2^{l+1}\}$ for some $j,l\ge0$, then, for $1<p,q<\infty$ with $\frac1p+\frac2q<2$,
\be
\label{bernsteinK}
\|f\|_\cK\lesssim_{p,q}2^{(1-\frac1p)j+(1-\frac1q)l}\|f\|_{L^1},\qquad \|f\|_\cK\lesssim2^{\frac l2}\|f\|_{L^1}.
\ee
\end{enumerate}
\end{lemma}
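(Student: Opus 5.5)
The plan is to estimate the kernel integral in \eqref{defK} directly, after a change of variables that removes the shear. Fix $k_1$ and $X$ and substitute $x'\mapsto x-x'+2k_1(y-y')$, $y'\mapsto y-y'$; for fixed $y'$ this is a translation in $x'$. The kernel then becomes $K(z,s):=(|z|+|s|^{1/2})^{-1}$, and $f$ becomes a function $g$ with the same $L^1$ norm and the same $L^q_yL^p_x$ norm, because each $y$-slice is only translated. It therefore suffices to bound $\int K|g|$ uniformly over all such $g$, which also gives the uniformity in $k_1$ and $X$.

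\textbf{Part (i).} Split the kernel as $K=K\1_{\{|z|+|s|^{1/2}\le1\}}+K\1_{\{|z|+|s|^{1/2}>1\}}$.
\begin{itemize}
\item The far part is bounded by $1$, so its contribution is at most $\|f\|_{L^1}$.
\item For the near part, apply Hölder in $z$ for fixed $s$: $\int|g(z,s)|K\,dz\le\|g(\cdot,s)\|_{L^p}\|K(\cdot,s)\1\|_{L^{p'}}$.
\item Since $\|(|z|+|s|^{1/2})^{-1}\|_{L^{p'}(dz)}\sim|s|^{-\frac12(1-\frac1{p'})}=|s|^{-\frac1{2p}}$ for $p>1$, apply Hölder in $s$ on $|s|\le1$. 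This requires $|s|^{-\frac1{2p}}\in L^{q'}(|s|\le1)$, i.e.\ $\frac{q'}{2p}<1$.
\item The condition $\frac{q'}{2p}<1$ is equivalent to $\frac1{2p}<1-\frac1q$, i.e.\ $\frac1p+\frac2q<2$.
\end{itemize}
For the membership in $\cK_0$: given $f\in L^1\cap L^q_yL^p_x$, approximate it by $C^\infty_c$ functions in both norms, by truncation and mollification, both of which are continuous in these mixed norms. The bound just proved then gives convergence in $\cK$. The case $L^1\cap L^2$ follows by taking $p=q=2$, since $\frac12+1<2$.

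\textbf{Part (ii).} Combine (i) with Bernstein inequalities for the two separate frequency localizations: $\|f\|_{L^q_yL^p_x}\lesssim2^{(1-\frac1p)j+(1-\frac1q)l}\|f\|_{L^1}$. To prove this, write $f=\psi^x_j\ast_x\psi^y_l\ast_yf$ with tensor-product Schwartz kernels adapted to the rectangle, and apply Young's inequality in $x$ and then in $y$; the kernels have $L^p$ norms $2^{j(1-\frac1p)}$ and $L^q$ norms $2^{l(1-\frac1q)}$. Together with the term $\|f\|_{L^1}$, this gives the first bound in \eqref{bernsteinK}, since $j,l\ge0$.

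For the second bound, where no loss in $j$ is allowed, argue directly instead of through (i). First, $\int_{\R}K(z,s)\,dz$ diverges logarithmically, so the $x$-direction cannot be handled by integrating the kernel alone. Instead use the bound $\int|g(z,s)|K\,dz\le|s|^{-1/2}\|g(\cdot,s)\|_{L^1_x}$, giving
$$\int K|g|\le\int\min(1,|s|^{-1/2})\|g(\cdot,s)\|_{L^1_x}\,ds+\|g\|_{L^1}.$$
Second, since $\hat f$ is localized in $\eta$, the one-variable Bernstein inequality gives $\sup_s\|g(\cdot,s)\|_{L^1_x}\lesssim2^l\|f\|_{L^1}$.

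The remaining step is to split the $s$-integral at $|s|=2^{-l}$.
\begin{itemize}
\item On $|s|\le2^{-l}$, use the sup bound: the contribution is at most $2^l\|f\|_{L^1}\int_{|s|\le2^{-l}}|s|^{-1/2}\,ds\sim2^{l/2}\|f\|_{L^1}$.
\item On $|s|>2^{-l}$, use $|s|^{-1/2}\le2^{l/2}$ together with $\int\|g(\cdot,s)\|_{L^1_x}\,ds=\|f\|_{L^1}$: the contribution is at most $2^{l/2}\|f\|_{L^1}$.
\end{itemize}
The main obstacle is exactly this optimization: choosing the right split so that the $x$-localization is not needed at all. The sup-in-$y$ bound of the $L^1_x$ norm together with the $|s|^{-1/2}$ singularity is what produces exactly the loss $2^{l/2}$.
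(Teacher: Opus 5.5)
Your proposal is correct and follows the paper's proof essentially step for step. For (i) you remove the shear, bound the $L^{p'}(dx')$ norm of the kernel by $|y-y'|^{-1/(2p)}$, apply H\"older in $x'$ and then in $y'$ on the near region, and use the trivial bound $\le1$ elsewhere. For the first bound of (ii) you use tensor reproducing kernels and Young's inequality. For the second bound you drop the $x$-part of the kernel and split at $|y-y'|=2^{-l}$, using $\sup_y\|f(\cdot,y)\|_{L^1(\R)}\lesssim2^l\|f\|_{L^1}$; the paper uses the equivalent bound $G\le|\kappa_l|\ast G$ with $G(y):=\|f(\cdot,y)\|_{L^1(\R)}$.

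One slip to fix: your displayed inequality with $\min(1,|s|^{-1/2})$ is false as written, since it would give $\|f\|_\cK\lesssim\|f\|_{L^1}$ for every $f$. What your split actually uses, correctly, is $\int K|g|\le\int|s|^{-1/2}\|g(\cdot,s)\|_{L^1_x}\,ds$ for your kernel $K$.
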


\begin{proof}
For (i), let $r:=|y-y'|$. If $r<1$, the $L^{p'}(dx')$ norm of the kernel of \eqref{defK} is $C_pr^{-\frac1{2p}}$, whatever the value of $k_1$, so that H\"older's inequality in $x'$ and then in $y'$ bounds this part by $C\|f\|_{L^q_yL^p_x}\|r^{-\frac1{2p}}\1_{\{r<1\}}\|_{L^{q'}(dy')}$, which is finite since $\frac{q'}{2p}<1$, which is equivalent to $\frac1p+\frac2q<2$. If $r\ge1$, the kernel is at most one, and this part is bounded by $\|f\|_{L^1}$. Since $C^\infty_c$ is dense in $L^1\cap L^q_yL^p_x$, this space is contained in $\cK_0$. For (ii), fix a Schwartz function $\kappa$ on $\R$ such that $\kappa\ast g=g$ whenever $\hat g$ is supported in $[-2,2]$, and let $\kappa_j(x):=2^j\kappa(2^jx)$. Then $f=\kappa_j\ast_x\kappa_l\ast_yf$, where $\ast_x$ and $\ast_y$ denote the convolutions in $x$ and in $y$. Young's inequality gives $\|f\|_{L^q_yL^p_x}\le\|\kappa_j\|_{L^p}\|\kappa_l\|_{L^q}\|f\|_{L^1}\lesssim2^{(1-\frac1p)j+(1-\frac1q)l}\|f\|_{L^1}$, and the first bound follows from (i). For the second one we drop $|x-x'+2k_1(y-y')|$ in \eqref{defK}. With $g(y):=\|f(\cdot,y)\|_{L^1(\R)}$, we have $g\le|\kappa_l|\ast g$, hence
$$\|f\|_\cK\le\sup_{y\in\R}\int_\R\frac{g(y')}{|y-y'|^{\frac12}}dy'\le\|g\|_{L^1}\sup_{z\in\R}\int_\R\frac{|\kappa_l(s)|}{|z-s|^{\frac12}}ds\lesssim2^{\frac l2}\|f\|_{L^1},$$
where the last integral is split at $|z-s|=2^{-l}$, using $|\kappa_l|\lesssim2^l$ on the first part and $\|\kappa_l\|_{L^1}\lesssim1$ on the second.
\end{proof}
\subsection{Smooth exponentially decaying data}

In this subsection $u_0$ is a real smooth function such that, for some $\eps>0$ and every multi-index $\gamma$,
\be
\label{expdecay}
|\pa^\gamma u_0(X)|\lesssim_\gamma e^{-\eps|X|},\qquad X\in\R^2 .
\ee
Every real $u_0\in C^\infty_c(\R^2)$ satisfies \eqref{expdecay}. By Lemma \ref{lemmaembedding} (i), $u_0\in\cK_0\cap L^1$, so that Lemma \ref{lemmagreenop} (iii) provides $R_0$ and the set $\Omega=\{|k|>R_0\}\cup\{|k_2|<R_0^{-1}\}$ of regular points, on which $T_k$ is uniformly invertible. On the compact set $\{|k|\le R_0,\ |k_2|\ge R_0^{-1}\}$, the operator $T_k$ may fail to be invertible. We first show that this happens only on a small set: we write \eqref{eqchi} in a symmetrized (Birman--Schwinger) form on $L^2(\R^2)$, in which the operator is of Hilbert--Schmidt type, so that the irregular points are the zeros of a regularized Fredholm determinant, which is a real-analytic function of $(k_1,k_2)$ thanks to the exponential decay.

\begin{lemma}[Irregular points]
\label{lemmairregular}
Let $u_0$ satisfy \eqref{expdecay}. The set $\Sigma$ of the irregular points of $u_0$ is a closed subset of $\{|k|\le R_0,\ |k_2|\ge R_0^{-1}\}$ with empty interior.
\end{lemma}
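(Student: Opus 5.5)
The set $\Sigma$ is contained in $\{|k|\le R_0,\ |k_2|\ge R_0^{-1}\}$ by Lemma \ref{lemmagreenop} (iii), since $\Omega$ consists of regular points. It is closed because the set of regular points is open. Indeed, by Lemma \ref{lemmagreenop} (i), $\|T_k-T_{k'}\|_{L^\infty\to L^\infty}\lesssim|k-k'|\,\|u_0\|_{L^1}$, and invertibility is stable under small perturbations. It remains to show that $\Sigma$ has empty interior.

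I will follow the Birman--Schwinger route announced before the statement. Write $u_0=v\,w$ with $v:=|u_0|^{1/2}$ and $w:=\sgn(u_0)|u_0|^{1/2}$. Both have exponential decay $e^{-\eps|X|/2}$ but are only continuous. Alternatively, and more conveniently, I will use the weight $e^{-\eps|X|/4}$: set $\mathcal K_k:=e^{-\eps|X|/4}\big(G_k\ast(e^{\eps|X|/4}u_0\,\cdot\,)\big)$ on $L^2(\R^2)$.

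The first step is the equivalence of regularity. For $k$ in the compact set $K_0:=\{|k|\le R_0,|k_2|\ge R_0^{-1}\}$, $T_k$ is invertible on $L^\infty$ iff $I-\mathcal A_k$ is invertible on $L^2$, where $\mathcal A_k H:=e^{-\eps|X|/4}\,G_k\ast\big(u_0e^{\eps|X|/4}H\big)$ is a weighted conjugate.
\begin{enumerate}
\item Since $G_k\ast(u_0\,\cdot)$ is compact on $L^\infty$ (equicontinuity as in Step 4 of Lemma \ref{lemmagreenop}, with decay at infinity of $u_0$ handled by splitting), the Fredholm alternative reduces both statements to injectivity.
\item A kernel $H\in L^\infty$ of $T_k$ gives $e^{-\eps|X|/8}H\in L^2$ in the kernel of the weighted operator, and conversely an $L^2$ kernel element $\tilde H$ yields $H:=G_k\ast(u_0e^{\eps|X|/4}\tilde H)\in L^\infty$, using \eqref{sizegreen} and Cauchy--Schwarz.
\end{enumerate}

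The second step is the Hilbert--Schmidt property and analyticity. I need the kernel $e^{-\eps|X|/8}G_k(X-X')u_0(X')e^{\eps|X'|/8}$ to lie in $L^2(\R^2\times\R^2)$. This holds because $|G_k|^2\lesssim(|\tilde x|+|y|^{1/2})^{-2}$ is locally integrable in the parabolic sense: $\int_{|y|<1}\int\frac{dx}{(|x|+|y|^{1/2})^2}dy\sim\int|y|^{-1/2}dy<\infty$. The polynomial growth at infinity is absorbed by the exponential weights. Then $k\mapsto\mathcal A_k$ is real-analytic in $(k_1,k_2)$ on $\{k_2>0\}$ and on $\{k_2<0\}$, with values in the Hilbert--Schmidt class. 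For this I will use \eqref{formulagreen}: $G_k$ is analytic in $(k_1,k_2)$ for $k_2\ne0$, with derivatives growing at most polynomially in $X$ (see \eqref{dk1green}--\eqref{dk2green}, which produce factors $\tilde x$, $y$). The exponential weights and the decay \eqref{expdecay} of $u_0$ then give convergence of the power series in a complex neighborhood of each $(k_1,k_2)$, after complexifying $k_1,k_2$ in \eqref{formulagreen}. Define $D(k):=\det_2(I-\mathcal A_k)$, the regularized Fredholm determinant. It is real-analytic on each half-plane and vanishes exactly at the $k$ where $I-\mathcal A_k$ is not invertible, that is, on $\Sigma$.

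The third step concludes. Each connected half-plane $\C^\pm$ contains points of $\Omega$ (for example $|k|>R_0$), where $D\neq0$. So $D$ is a real-analytic function not identically zero on a connected open set, and its zero set has empty interior; hence $\Sigma$ has empty interior.

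I expect the main obstacle to be the real-analyticity of $k\mapsto\mathcal A_k$ in Hilbert--Schmidt norm. Complexifying $k_2$ in \eqref{formulagreen} is delicate because the domain $E_y$ has endpoints $\pm|k_2|$ and the factor $e^{k_2^2y}$ grows for $y>0$. I would first substitute $s=\pm(|k_2|+r)$ so that the integrals are over fixed domains, and check that the complexified integrands remain bounded by $e^{C|X|\,|\Im|}$ times the real bounds. The exponential weight $e^{-\eps|X|/8}$ then beats this for small imaginary parts, which is exactly where the exponential decay assumption \eqref{expdecay} is used.
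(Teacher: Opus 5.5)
Your proposal is correct and follows essentially the paper's route. The paper also reduces $T_k$ to a Hilbert--Schmidt operator on $L^2$ and characterizes the irregular points as the zeros of $\det_2$. It proves real-analyticity by complexifying $(k_1,k_2)$ in \eqref{formulagreen} after the substitution $s=\pm(k_2+r)$; for $y<0$ it uses $s=k_2\sigma$, which you omit but which is harmless. The growth $e^{2|\Im(k_2\tilde x)|}$ is absorbed by \eqref{expdecay}, and the identity theorem is applied on the connected half-planes.

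The remaining differences are cosmetic. The paper uses the factorization $u_0=|u_0|^{1/2}u_0^{1/2}$ instead of your exponential weights. It also writes $T_k^{-1}$ explicitly in terms of $(I-Q_k)^{-1}$, rather than invoking compactness of $G_k\ast(u_0\,\cdot\,)$ on $L^\infty$ and the Fredholm alternative there.
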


\begin{proof}
{\bf Step 1:} Reduction to a Hilbert--Schmidt operator. For $c\in\R$ let $g_c(x,y):=(|x+2cy|+|y|^{\frac12})^{-1}$, so that $|G_k|\lesssim g_{k_1}$ by \eqref{sizegreen}. The function $g_c$ is square integrable on the unit disc $D(0,1)\subset\R^2$, with
$$\int_{D(0,1)}g_c^2\,dX\le\int_{-1}^1\int_\R\frac{ds}{(|s|+|y|^{\frac12})^2}\,dy=\int_{-1}^1\frac{2\,dy}{|y|^{\frac12}}=8,$$
and $g_c\le4\jb c$ outside $D(0,1)$: if $|y|\ge(16\jb c^2)^{-1}$, then $g_c\le|y|^{-\frac12}$, and otherwise $|x|\ge\frac12$ and $|x+2cy|\ge\frac12-\frac18$. We write $u_0=|u_0|^{\frac12}u_0^{\frac12}$, with $u_0^{\frac12}:=\sgn(u_0)|u_0|^{\frac12}$; both factors belong to $L^2\cap L^\infty$ by \eqref{expdecay}. For $k\in\C$ let $Q_k$ be the operator on $L^2(\R^2)$ with kernel $|u_0(X)|^{\frac12}G_k(X-X')u_0^{\frac12}(X')$. Splitting $G_k=G_k\1_{D(0,1)}+G_k\1_{\R^2\setminus D(0,1)}$, we see that $Q_k$ is a Hilbert--Schmidt operator,
$$\begin{aligned}\|Q_k\|_{\rm HS}^2&=\int_{\R^2}\int_{\R^2}|u_0(X)||G_k(X-X')|^2|u_0(X')|\,dX\,dX'\\&\lesssim\|u_0\|_{L^\infty}\|u_0\|_{L^1}+\jb k^2\|u_0\|_{L^1}^2,\end{aligned}$$
and that $G_k\ast(u_0^{\frac12}\varphi)\in L^\infty$ for $\varphi\in L^2$, since $u_0^{\frac12}\varphi\in L^1\cap L^2$. If $H\in L^\infty$ and $T_kH=0$, then $\varphi:=|u_0|^{\frac12}H\in L^2$ satisfies $Q_k\varphi=|u_0|^{\frac12}G_k\ast(u_0H)=\varphi$. Conversely, if $\varphi\in L^2$ and $Q_k\varphi=\varphi$, then $H:=G_k\ast(u_0^{\frac12}\varphi)\in L^\infty$ satisfies $|u_0|^{\frac12}H=\varphi$, hence $u_0H=u_0^{\frac12}\varphi$ and $T_kH=0$. Since $|u_0|^{\frac12}H=0$ implies $u_0H=0$ and then $H=G_k\ast(u_0H)=0$, the map $H\mapsto|u_0|^{\frac12}H$ is a bijection from the kernel of $T_k$ onto the kernel of $I-Q_k$. If the latter is trivial, then $I-Q_k$ is invertible by the Fredholm alternative, and a direct computation shows that
$$T_k^{-1}b=b+G_k\ast\big(u_0^{\frac12}(I-Q_k)^{-1}(|u_0|^{\frac12}b)\big),\qquad b\in L^\infty(\R^2).$$
Therefore $k$ is a regular point if and only if $I-Q_k$ is invertible, that is, if and only if $\Xi(k):=\det_2(I-Q_k)\neq0$, where $\det_2$ is the regularized Fredholm determinant on the Hilbert--Schmidt class \cite[Chapter 9]{Simon}.\\

\noindent{\bf Step 2:} $\Sigma$ is closed. By Lemma \ref{lemmagreenop} (i), $\|T_k-T_{k'}\|_{L^\infty\to L^\infty}\lesssim|k-k'|\|u_0\|_{L^1}$. Since the set of invertible operators is open, the set of regular points is open, and $\|T_k^{-1}\|$ is locally bounded on it. By Lemma \ref{lemmagreenop} (iii), $\Sigma\subset\{|k|\le R_0,\ |k_2|\ge R_0^{-1}\}$.\\

\noindent{\bf Step 3:} Analyticity of $\Xi$. Let $k_2>0$. Substituting $s=\pm(k_2+r)$, $r>0$, if $y>0$, and $s=k_2\sigma$, $|\sigma|<1$, if $y<0$, in \eqref{formulagreen}, we obtain, with $\tilde x=x+2k_1y$,
$$G_k(x,y)=\begin{cases}\displaystyle\frac1{2\pi}\int_0^\infty\big(e^{ir\tilde x}+e^{-i(2k_2+r)\tilde x}\big)e^{-(r^2+2k_2r)y}\,dr,& y>0,\\[3mm] \displaystyle-\frac{k_2}{2\pi}\int_{-1}^1e^{-ik_2(1-\sigma)\tilde x-k_2^2(1-\sigma^2)|y|}\,d\sigma,& y<0.\end{cases}$$
For fixed $X$ with $y\neq0$, the right-hand sides are entire functions of $(k_1,k_2)\in\C^2$, which we denote by $\tilde G(k_1,k_2;X)$. Fix $k^0=(k^0_1,k^0_2)\in\R\times(0,\infty)$ and let $(k_1,k_2)=(p_1+iq_1,p_2+iq_2)\in\C^2$ with $|k_j-k^0_j|\le2\mu$, $j=1,2$, where $0<\mu\le\frac{k^0_2}8$ is chosen below. Then $p_2\pm q_1\ge0$ and $p_2^2-q_2^2\ge0$. For $y>0$, the first integrand is $e^{ir(x+2(p_1-q_2)y)}$ times the weight $e^{-r^2y-2(p_2+q_1)ry}$, which decreases from $1$ to $0$, and the second one is $e^{-2ik_2\tilde x}e^{-ir(x+2(p_1+q_2)y)}$ times the decreasing weight $e^{-r^2y-2(p_2-q_1)ry}$. The integration by parts of Step 2 of the proof of Lemma \ref{lemmagreen}, together with $\int_0^\infty e^{-r^2y}dr\lesssim y^{-\frac12}$, and the bound $|e^{-k_2^2(1-\sigma^2)|y|}|\le1$ for $y<0$, give
$$|\tilde G(k_1,k_2;X)|\lesssim e^{2|\Im(k_2\tilde x)|}\big(\jb{k^0}+g_{p_1-q_2}(X)+g_{p_1+q_2}(X)\big),$$
where $|\Im(k_2\tilde x)|=|q_2x+2(p_1q_2+q_1p_2)y|\le C\mu\jb{k^0}|X|$. We choose $\mu$ so small that $4C\mu\jb{k^0}\le\frac\eps2$. Since $e^{-\eps(|X|+|X'|)}e^{\frac\eps2|X-X'|}\le e^{-\frac\eps2(|X|+|X'|)}$, Step 1 shows that the operators $\tilde Q(k_1,k_2)$ with kernels $|u_0(X)|^{\frac12}\tilde G(k_1,k_2;X-X')u_0^{\frac12}(X')$ satisfy $\|\tilde Q(k_1,k_2)\|_{\rm HS}\lesssim_{k^0}1$ for $|k_j-k^0_j|\le2\mu$. By the Cauchy formula on the torus $\{|z_j-k^0_j|=2\mu\}$ and Minkowski's inequality, the operators $Q^{(\gamma)}$, $\gamma\in\{0,1,2,\dots\}^2$, whose kernels are given by the Taylor coefficients of $\tilde G$ at $k^0$, satisfy $\|Q^{(\gamma)}\|_{\rm HS}\lesssim_{k^0}(2\mu)^{-|\gamma|}$. Hence the series $\sum_\gamma(k_1-k^0_1)^{\gamma_1}(k_2-k^0_2)^{\gamma_2}Q^{(\gamma)}$ converges in the Hilbert--Schmidt norm for $|k_j-k^0_j|<\mu$, and its sum is $\tilde Q(k_1,k_2)$, since the Taylor series of $\tilde G$ converges pointwise. Thus $(k_1,k_2)\mapsto\tilde Q(k_1,k_2)$ is holomorphic with values in the Hilbert--Schmidt class, and since $A\mapsto\det_2(I-A)$ is holomorphic on this class \cite[Chapter 9]{Simon}, $\det_2(I-\tilde Q)$ is a holomorphic extension of $\Xi$ near $k^0_1+ik^0_2$. Hence $\Xi$ is real-analytic on $\C^+$. By Lemma \ref{lemmagreen} (iv), $Q_{\bar k}$ is the complex conjugate of $Q_k$, so that $\Xi(\bar k)=\overline{\Xi(k)}$, and $\Xi$ is real-analytic on $\C^-$ as well.\\

\noindent{\bf Step 4:} Conclusion. By Step 1 and Lemma \ref{lemmagreenop} (iii), $\Xi(k)\neq0$ for $|k|>R_0$. If $\Sigma$ contained a nonempty open set $U\subset\C^\pm$, then $\Xi$ would vanish on $U$, hence on the connected open set $\C^\pm$ by real-analyticity, a contradiction. Hence $\Sigma$ has empty interior.
\end{proof}

At every regular point $k\in\C$ we define the eigenfunction $\chi(k;\cdot)$, and at every regular point $k\in\C\setminus\R$ the scattering coefficient $a(k)$, by
\be
\label{defchi}
\chi(k;\cdot):=T_k^{-1}1,
\ee
\be
\label{defa}
\begin{aligned}
a(k)&:=-i\sgn(k_2)\,\widehat{\big(u_0\chi(k;\cdot)\big)}(\zeta(k))\\
&=-\frac{i\sgn(k_2)}{2\pi}\int_{\R^2}u_0\,\chi(k;\cdot)\,e^{-iX\cdot\zeta(k)}\,dX .
\end{aligned}
\ee
Thus $\chi(k;\cdot)$ is the unique bounded solution of the integral equation $\chi=1+G_k\ast(u_0\chi)$ in \eqref{eqchi}, and when $\chi=1$, \eqref{defa} reduces to the Born approximation \eqref{bornapprox}. At the irregular points these definitions break down, and we use the following result of Grinevich, in the form stated here.

\begin{theorem}[Grinevich \cite{Grin}]
\label{thmgrin}
Let $u_0$ be real and satisfy \eqref{expdecay}. The functions $k\mapsto\chi(k;X)$, $X\in\R^2$, and $k\mapsto a(k)$, defined on the set of regular points of $\C\setminus\R$ by \eqref{defchi} and \eqref{defa}, extend to $C^1$ functions on $\C\setminus\R$.
\end{theorem}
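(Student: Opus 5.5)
The plan is to combine the analytic structure from the proof of Lemma \ref{lemmairregular} with a $\bar\partial$ equation satisfied by $\chi$ on the regular set, and then to show that the apparent singularities at $\Sigma$ are removable. Only the last step uses the reality of $u_0$, and I expect it to be the main obstacle.

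\textbf{Step 1 (quotient representation).} By Step 1 of the proof of Lemma \ref{lemmairregular}, $T_k^{-1}$ is expressed through $(I-Q_k)^{-1}$, and by Step 3 the map $k\mapsto Q_k$ is real-analytic on $\C^\pm$ with values in the Hilbert--Schmidt class. The Carleman formula $(I-Q)^{-1}=\det_2(I-Q)^{-1}\,\mathcal N(Q)$, with $\mathcal N$ entire in $Q$, would give
$$\chi(k;X)=\frac{N(k;X)}{\Xi(k)},\qquad a(k)=\frac{A(k)}{\Xi(k)},$$
where $N$ and $A$ are real-analytic on $\C\setminus\R$. The real analyticity of $A$ uses the exponential decay \eqref{expdecay} to absorb $e^{-iX\cdot\zeta(k)}$ for complex $(k_1,k_2)$, as in that Step 3. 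So $\chi$ and $a$ are real-analytic off $\Sigma$, and the only issue is the zero set of $\Xi$.

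\textbf{Step 2 ($\bar\partial$ equation on the regular set).} Apply $\dbar_k$ to $\chi=1+G_k\ast(u_0\chi)$ and use Lemma \ref{lemmagreen} (v). This gives $T_k(\dbar\chi)=(\dbar G_k)\ast(u_0\chi)=a(k)\,e^{iX\cdot\zeta(k)}$, since the convolution factorizes and produces exactly the integral in \eqref{defa}. By Lemma \ref{lemmagreen} (iv), $T_k(e^{iX\cdot\zeta(k)}h)=e^{iX\cdot\zeta(k)}T_{\bar k}h$, so $T_k^{-1}e^{iX\cdot\zeta(k)}=e^{iX\cdot\zeta(k)}\chi(\bar k;X)$. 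Since $u_0$ is real, $T_{\bar k}=\overline{T_k}$ and hence $\chi(\bar k)=\overline{\chi(k)}$. Therefore, on the regular set,
$$\dbar_k\chi=a(k)\,e^{iX\cdot\zeta(k)}\,\overline{\chi}.$$
This is the time-zero version of \eqref{dbarintro}. The same identity also gives $\Sigma=\overline\Sigma$.

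\textbf{Step 3 (removability; main obstacle).} It remains to show that $N/\Xi$ and $A/\Xi$ extend across the closed nowhere dense set $\Sigma$ (Lemma \ref{lemmairregular}). First I would prove that $\chi(\cdot;X)$ and $a$ stay locally bounded near $\Sigma$. Once $\chi$ is bounded, the quotient representation makes the blow-up of $a$ at most a finite-order ratio of real-analytic functions. For the boundedness I would use the reality of $u_0$ in a positivity argument. Pair the heat eigenfunction $\psi=e^{kx+k^2y}\chi$ with the eigenfunctions of the adjoint operator $-\pa_y-\pa_x^2-u_0$, which for real $u_0$are again built from $\chi(\pm\bar k)$. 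A Wronskian identity in $X$ should then produce a relation of the form $\dbar$(bounded quantity) $=c\,|a|^2\cdot(\text{positive})$ near $k_0\in\Sigma$, in the spirit of \eqref{nonlinearplancherel}, and this would rule out a non-integrable singularity of $a$. Given local integrability of $a$ with $\cI|a|$ bounded near $k_0$, Lemma \ref{lemmavekua} (through the similarity principle $\chi=e^\sigma h$ with $|\sigma|\le\cI|a|$ and $h$ holomorphic off $\Sigma$) makes $h$ bounded, hence holomorphic across $\Sigma$ by Riemann's theorem, since $\Sigma$ is contained in an analytic variety. Hence $\chi$ extends continuously. Then the quotient forms show that $N$ and $A$ vanish on $\{\Xi=0\}$ to the same order as $\Xi$, which gives real-analytic, in particular $C^1$, extensions. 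If the positivity identity cannot be made to work, I would fall back on Grinevich's original argument \cite{Grin}, which is exactly this cancellation statement for real potentials with exponentially decaying derivatives.
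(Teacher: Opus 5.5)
Your Steps 1 and 2 are sound and match ingredients the paper also uses. The first is the quotient representation with denominator $\Xi=\det_2(I-Q_k)$, whose real-analyticity is Step 3 of the proof of Lemma \ref{lemmairregular}. The second is the $\dbar$ equation \eqref{dbarchi}, derived exactly as you do in Step 2 of the proof of Proposition \ref{propdirect}. The paper does not reprove Theorem \ref{thmgrin}; it cites \cite[Theorem 11]{Grin}. Your Step 3 carries the whole content of the theorem, and it does not go through. The local boundedness of $\chi$ and $a$ near $\Sigma$ is only announced: the Wronskian/positivity identity is not derived. The one identity of this kind in the paper, \eqref{plancherelsmooth}, is obtained through Proposition \ref{propdirect} (i), that is, through \eqref{hypa}, which itself rests on Theorem \ref{thmgrin}; using it here would be circular. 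Moreover, neither local integrability of $a$ nor a local $L^2$ bound gives a bounded $\cI|a|$ near $k_0$, since $|k-k'|^{-1}\notin L^2_{\rm loc}$. The bootstrap of Lemma \ref{lemmabootstrap} closes only because $\|a_n\|_\infty<\infty$ is known in advance, again from Theorem \ref{thmgrin}.

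Even granting boundedness, the removability step fails. By Lemma \ref{lemmagreen} (iv), $Q_{\bar k}$ is both the complex conjugate of $Q_k$ and unitarily equivalent to $Q_k$ (conjugation by multiplication with $e^{-iX\cdot\zeta(k)}$). Hence $\Xi(k)=\overline{\Xi(k)}$ is real-valued. So $\Sigma$ is the zero set of a real-valued real-analytic function, which contains curves as soon as $\Xi$ changes sign; it is not a complex-analytic variety. A bounded function holomorphic off a curve need not extend across it (it may jump), so Riemann's theorem is not available. Nor does boundedness, or even continuity, of a quotient of real-analytic functions give $C^1$ regularity. For instance, $x^2/(x^2+y^2)$ is bounded but discontinuous, and $x^3/(x^2+y^2)$ is continuous but not differentiable at the origin.

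The missing idea is Grinevich's: at each irregular point one constructs formal Taylor expansions of $\chi$ and $a$, and this is exactly where the reality of $u_0$ enters \cite[Theorems 7--10]{Grin}. Combined with the representation of $\chi$ and $a$ as ratios of real-analytic functions, these expansions force the numerators to be divisible by $\Xi$ near each irregular point, which removes the singularity. (Divisibility of real-analytic germs can be checked on formal power series.) Without this step your argument reduces, as you acknowledge, to quoting \cite{Grin}.
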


Theorem \ref{thmgrin} is our formulation of the main result of \cite[Theorem 11]{Grin}, which states that, for real potentials satisfying \eqref{expdecay}, the direct scattering transform is nonsingular, without any smallness assumption: the eigenfunctions and the scattering data are regular functions of the spectral parameter, also at the irregular points. In the notation of \cite{Grin}, the heat operator is $\pa_y-\pa_x^2+u$ and the spectral parameter is $\lambda\in\C$; with $u=-u_0$ and $\lambda=k$, the integral equation for the normalized eigenfunction $\chi$ in \cite{Grin} is the integral equation in \eqref{eqchi}, with the Green function \eqref{defgreen}, and the function $T(\lambda)=\frac{i\sgn\lambda_I}{2\pi}b(\lambda)$ of \cite{Grin}, where $\lambda_I:=\Im\lambda$ and $b(\lambda)$ is the scattering data of \cite{Grin}, is $a(k)$. A point is regular in \cite{Grin} if the homogeneous equation $H=G_k\ast(u_0H)$ has no nonzero solution; by Step 1 of the proof of Lemma \ref{lemmairregular}, this is our definition, since the bounded solutions of this equation tend to zero at infinity by \eqref{sizegreen}. The argument of \cite{Grin} combines formal Taylor expansions of the eigenfunctions and of the scattering data at the irregular points, whose construction uses the reality of the potential \cite[Theorems 7--10]{Grin}, with the representation of the scattering data as a ratio of two real-analytic functions of $(k_1,k_2)$; for this analyticity, which uses the exponential decay, \cite{Grin} refers to the argument of \cite{Novikov92} for the Schr\"odinger operator at a fixed energy. The analyticity of the denominator, a regularized Fredholm determinant, is proved in Lemma \ref{lemmairregular}. Since the set of irregular points has empty interior by Lemma \ref{lemmairregular}, the regular points are dense in $\C\setminus\R$, so that the extensions in Theorem \ref{thmgrin} are unique; we still denote them by $\chi$ and $a$. Lemma \ref{lemmairregular} and Theorem \ref{thmgrin} are the only places where the exponential decay in \eqref{expdecay} is used. For $e\in\{1,i\}$ we denote by $\pa_e$ the derivative in the direction $e$, that is, $\pa_1:=\pa_{k_1}$ and $\pa_i:=\pa_{k_2}$, and we write $\chi_k:=\chi(k;\cdot)$.

\begin{proposition}[The direct problem for smooth data]
\label{propdirect}
Let $u_0$ satisfy \eqref{expdecay}, and let $\chi$ and $a$ be given by \eqref{defchi}, \eqref{defa} and Theorem \ref{thmgrin}. Then:
\begin{enumerate}
\item $a$ satisfies \eqref{hypa};
\item for every $X\in\R^2$, $\chi(\cdot;X)=m(\cdot;0,X)$, where $m$ is the solution of \eqref{eqm} at $t=0$; in particular $\chi$ is bounded on $(\C\setminus\R)\times\R^2$, and the formula \eqref{defa} holds at every $k\in\C\setminus\R$;
\item the function reconstructed from $a$ by \eqref{reconstruction} at $t=0$ is $u_0$.
\end{enumerate}
\end{proposition}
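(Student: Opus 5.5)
The plan is to derive everything from the $\dbar$-equation satisfied by $\chi$ in $k$, and then to invoke the uniqueness of Lemma \ref{lemmavekua} and the reconstruction identities of Section \ref{sectioninverse}.

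\textbf{Step 1: the $\dbar$ equation for $\chi$.} At a regular point $k\notin\R$, differentiate $\chi_k=1+G_k\ast(u_0\chi_k)$ in $\bar k$ and use Lemma \ref{lemmagreen} (v). This gives
\[
T_k(\dbar_k\chi_k)=-\frac{i\sgn(k_2)}{2\pi}\,e^{iX\cdot\zeta(k)}\int u_0\chi_k e^{-iX'\cdot\zeta(k)}\,dX' = e^{iX\cdot\zeta(k)}a(k),
\]
with the sign to be checked against \eqref{defa}. Next, Lemma \ref{lemmagreen} (iv) shows that $\overline{\chi_k}=\chi_{\bar k}$, because $u_0$ is real, and that $e^{iX\cdot\zeta(k)}\chi_{\bar k}$ solves $T_kH=e^{iX\cdot\zeta(k)}$. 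Indeed,
\[
G_k\ast(u_0e^{iX\cdot\zeta}\chi_{\bar k})=e^{iX\cdot\zeta}\,(G_ke^{-iX\cdot\zeta})\ast(u_0\chi_{\bar k})=e^{iX\cdot\zeta}G_{\bar k}\ast(u_0\chi_{\bar k}).
\]
Uniqueness then gives $\dbar_k\chi=a\,P(k;0,X)\,\overline{\chi}$ on the regular set, and this extends to all of $\C\setminus\R$ by density (Lemma \ref{lemmairregular}) and the $C^1$ extension of Theorem \ref{thmgrin}. Across $\R$ there is no jump: $G_k$ is continuous in $k$ on $\C$ by Lemma \ref{lemmagreen} (iii), and $\Omega$ contains a strip around $\R$. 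So $\chi$ is continuous on $\C$ and $\dbar\chi=aP\overline\chi$ holds in $\mathcal D'(\C)$.

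\textbf{Step 2: decay of $a$, i.e. \eqref{hypa}.} On $\Omega$, the inverse $T_k^{-1}$ is uniformly bounded, so $\chi_k$ is bounded. To get $|a(k)|\lesssim_q\jb{\zeta(k)}^{-q}$, integrate by parts in $X$ in \eqref{defa}. This requires derivatives $\pa_X^\gamma\chi_k$, which solve $T_k(\pa^\gamma\chi)=G_k\ast(\text{lower order terms}\times\pa u_0)$ and are therefore bounded uniformly on $\Omega$. On the compact set $\{|k|\le R_0,|k_2|\ge R_0^{-1}\}$ the weight $\jb\zeta$ is bounded, so continuity (Theorem \ref{thmgrin}) suffices there. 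For $\nabla_ka$, differentiate \eqref{defa}. The term $\nabla_k e^{-iX\cdot\zeta}$ produces a factor $X\,\jb k$, which is absorbed by the exponential decay of $u_0$. The term $\nabla_k\chi$ is handled by differentiating the integral equation, using $|\nabla_kG_k|\lesssim1$ and the fact that $T_k$ commutes with $X$-derivatives; the resulting bound is $\lesssim\jb k$ times the decay. Continuity of $a$ up to $\R$ from each side follows from the explicit traces of $G_k$ and the Volterra solvability near the axis. I expect this step to be the main technical obstacle, namely obtaining uniform control of $\nabla_k\chi_k$ near $\R$ and at large $|k|$ with weights $|X|$, together with the one-sided limits of $a$.

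\textbf{Step 3: identification of $\chi$ with $m$.} By Lemma \ref{lemmagreenop} (ii), $\chi_k-1=T_k^{-1}(G_k\ast u_0)\to0$ as $|k|\to\infty$, so $\chi-1\in C_0(\C)$ and $\dbar(\chi-1)=aP\overline{(\chi-1)}+aP$. Since $a$ satisfies \eqref{hypa}, Lemma \ref{lemmaweights} gives $\cI|a|\in C_0$. The uniqueness part of Lemma \ref{lemmavekua} (i) with $f=aP$ then yields $\chi-1=m(\cdot;0,X)-1$. This shows that $\chi$ is bounded and that \eqref{defa} holds everywhere.

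\textbf{Step 4: reconstruction.} Apply the heat operator to $\chi$: by \eqref{eqchi},
\[
(\pa_y-\pa_x^2-2k\pa_x)\chi=u_0\chi .
\]
Expand in $1/k$, using $\chi=m=1+m^{(1)}/k+\dots$ with $m^{(1)}=\frac1\pi\int aP\overline m\,dA$. Because $a$ is not compactly supported, justify this expansion by applying \eqref{ibp}-type estimates, or by testing against $k\,\vartheta(k/L)$, with \eqref{weightsE} supplying the integrability. The $k^0$ terms give $-2\pa_xm^{(1)}=u_0$, which is exactly \eqref{reconstruction} at $t=0$, with $\pa_x$ passed inside the integral by Lemma \ref{lemmainverse} (iii). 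Concretely, $-2\pa_x m^{(1)}=\lim_{|k|\to\infty} k\,(-2\pa_x\chi_k)=\lim k\,(u_0\chi_k-L\chi_k)$, and one evaluates this limit from the Neumann series of $\chi_k$ at large $|k|$.
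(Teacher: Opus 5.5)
Your Steps 1--3 follow the paper's proof. You get the $\dbar$ equation from Lemma \ref{lemmagreen} (v) and the identity $T_k(e^{iX\cdot\zeta(k)}\overline{\chi_k})=e^{iX\cdot\zeta(k)}$, and extend it to the irregular points by density and Theorem \ref{thmgrin}. You get the decay of $a$ and $\nabla_ka$ by integrating by parts in $X$ on $\Omega$, with Theorem \ref{thmgrin} on the compact complement. You identify $\chi=m(\cdot;0,X)$ through the uniqueness part of Lemma \ref{lemmavekua} (i). Step 4 also has the right mechanism, the limit of $-2k\pa_x\chi_k=u_0\chi_k-(\pa_y-\pa_x^2)\chi_k$ as $|k|\to\infty$. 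Your displayed $\lim k(u_0\chi_k-L\chi_k)$ has a spurious factor $k$. The real problem is that Step 4 is routed through an object that does not exist in general.

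That object is $m^{(1)}=\frac1\pi\int_\C aP\overline m\,dA$. No zero-mean condition is imposed, so $a$ need not be integrable near the real axis. On the infinite-area region $\{|k_1|\ge1,\ |k_2|\le(8|k_1|)^{-1}\}$, the Born frequency $\zeta(k)$ stays bounded and $\chi_k\to1$. There $a(k)$ is close to $-i\sgn(k_2)\hat u_0(\zeta(k))$, which is not small when $\hat u_0(0,\cdot)\neq0$. So this $m^{(1)}$ need not exist, and ``passing $\pa_x$ inside'' it is not a valid step. Also, \eqref{weightsE} gives integrability of $\pa_x(aP\overline m)$, not of $aP\overline m$.

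The fix is to work with $\pa_xm$ directly. By \eqref{eqmx} at $t=0$, $\pa_xm=\cC f$ with $f:=aP(-2ik_2\overline m+\overline{\pa_xm})$. Since $|k_2|\jb k\lesssim\jb{\zeta}^2$, \eqref{hypa} gives $|f|\lesssim\jb k^{-1}\jb\zeta^{-6}$, so $f\in L^1$ and $|k'f(k')|\lesssim\jb{\zeta(k')}^{-6}$. You must then show that
\[
k\,\cC f(k)-\frac1\pi\int_\C f\,dA=\frac1\pi\int_\C f(k')\,\frac{k'}{k-k'}\,dA(k')
\]
tends to zero; integrability of $f$ alone does not give this. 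Split at $|k'|=\frac{|k|}2$. The inner part tends to zero by dominated convergence. The outer part is bounded by $C\,\cI(\jb\zeta^{-6}\1_{\{|k'|>|k|/2\}})(k)$, which tends to zero by \eqref{weightstail}. You also need $\chi_k\to1$ and $\pa_y\chi_k,\pa_x^2\chi_k\to0$ uniformly, which follow from Lemma \ref{lemmagreenop} (ii) applied to the differentiated integral equations. Together these give $u_0=-\frac2\pi\int_\C\pa_x(aP\overline m)\,dA$, and this is exactly how the paper proves (iii).
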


In particular, (iii) shows that the reconstruction \eqref{reconstruction} inverts the map $u_0\mapsto a$.

\begin{proof}
{\bf Step 1:} Derivatives in $X$. Let $\mathcal R$ be the set of regular points. Let $k\in\mathcal R$. Since convolution commutes with translations, $\tau_h\chi_k=1+G_k\ast((\tau_hu_0)\,\tau_h\chi_k)$ for $h\in\R^2$, hence
\be
\label{translationchi}
T_k(\tau_h\chi_k-\chi_k)=G_k\ast\big((\tau_hu_0-u_0)\,\tau_h\chi_k\big).
\ee
The difference quotients of $u_0$ converge to its derivatives in $L^1\cap L^2$, hence in $\cK$ by Lemma \ref{lemmaembedding} (i). By \eqref{translationchi} and Lemma \ref{lemmagreenop} (i), $\chi_k$ is differentiable in $X$ in the $L^\infty$ norm, and by induction on $|\gamma|$,
\be
\label{derivativeschi}
T_k\pa^\gamma\chi_k=\sum_{0<\nu\le\gamma}\binom\gamma\nu\,G_k\ast\big((\pa^\nu u_0)\,\pa^{\gamma-\nu}\chi_k\big),\qquad \gamma\neq0 .
\ee
Hence every derivative of $\chi_k$ is bounded, uniformly for $k\in\Omega$ and locally uniformly on $\mathcal R$, and $\chi_k$ is smooth. Since $\chi_k-1=T_k^{-1}(G_k\ast u_0)$, Lemma \ref{lemmagreenop} (ii) and \eqref{derivativeschi} also give
\be
\label{chiinfinity}
\lim_{|k|\to\infty}\Big(\|\chi_k-1\|_{L^\infty}+\sum_{0<|\gamma|\le2}\|\pa^\gamma\chi_k\|_{L^\infty}\Big)=0 .
\ee

\noindent{\bf Step 2:} Derivatives in $k$ and the $\dbar$ equation. Let $k\in\mathcal R\setminus\R$, $e\in\{1,i\}$ and $h\in\R\setminus\{0\}$ small. Subtracting the equations \eqref{eqchi} at $k+he$ and at $k$,
$$T_k\frac{\chi_{k+he}-\chi_k}h=\frac{G_{k+he}-G_k}h\ast\big(u_0\chi_{k+he}\big).$$
By Lemma \ref{lemmagreen} (iii) and Lemma \ref{lemmagreenop} (i), $\|\chi_{k+he}-\chi_k\|_{L^\infty}\lesssim|h|$, and the kernels on the right-hand side are bounded and converge pointwise on $\{y\neq0\}$ to $\pa_eG_k$. By dominated convergence, the right-hand side converges pointwise and boundedly to $(\pa_eG_k)\ast(u_0\chi_k)$, and Lemma \ref{lemmagreenop} (iv) shows that $\chi(\cdot;X)$ is differentiable in the direction $e$ at $k$, for every $X$, with
\be
\label{dkchi}
T_k\,\pa_e\chi_k=(\pa_eG_k)\ast(u_0\chi_k),\qquad \|\pa_e\chi_k\|_{L^\infty}\lesssim\|T_k^{-1}\|^2\,\|u_0\|_{L^1},
\ee
where we used $\|\chi_k\|_{L^\infty}\le\|T_k^{-1}\|$. Arguing as in Step 1, where now the derivatives in $X$ of $(\pa_eG_k)\ast(u_0\chi_k)$ fall on the smooth integrable function $u_0\chi_k$, we see that all derivatives in $X$ of $\pa_e\chi_k$ are bounded, locally uniformly on $\mathcal R\setminus\R$ and uniformly on $\Omega\setminus\R$. Next, combining \eqref{dkchi} with Lemma \ref{lemmagreen} (v), and using $(X-X')\cdot\zeta(k)=X\cdot\zeta(k)-X'\cdot\zeta(k)$,
$$\begin{aligned}T_k\,\dbar\chi_k&=(\dbar G_k)\ast(u_0\chi_k)\\&=-\frac{i\sgn(k_2)}{2\pi}e^{iX\cdot\zeta(k)}\int_{\R^2}u_0(X')\chi_k(X')e^{-iX'\cdot\zeta(k)}\,dX'=a(k)\,e^{iX\cdot\zeta(k)} .\end{aligned}$$
On the other hand, conjugating \eqref{eqchi} and using $\overline{G_k}=G_{\bar k}$ and $G_{\bar k}(X-X')=G_k(X-X')e^{-i(X-X')\cdot\zeta(k)}$ (Lemma \ref{lemmagreen} (iv)), we find that $\bar k$ is a regular point, $\overline{\chi_k}=\chi_{\bar k}$, and $T_k\big(e^{iX\cdot\zeta(k)}\overline{\chi_k}\big)=e^{iX\cdot\zeta(k)}$. Since $T_k$ is invertible, we obtain, for $k\in\mathcal R\setminus\R$,
\be
\label{dbarchi}
\dbar_k\chi(k;X)=a(k)\,e^{iX\cdot\zeta(k)}\,\overline{\chi(k;X)},\qquad \chi(\bar k;X)=\overline{\chi(k;X)} .
\ee
Finally, on the strip $\{|k_2|<R_0^{-1}\}$, which is contained in $\Omega$, the same subtraction gives $\|\chi_k-\chi_{k'}\|_{L^\infty}\lesssim|k-k'|$, across the real axis.\\

\noindent{\bf Step 3:} Decay of $a$ on $\Omega$. Let $k\in\Omega\setminus\R$. Since $(1-\pa_x^2-\pa_y^2)e^{-iX\cdot\zeta(k)}=\jb{\zeta(k)}^2e^{-iX\cdot\zeta(k)}$, integrations by parts in \eqref{defa}, legitimate by \eqref{expdecay} and Step 1, give for every integer $q\ge0$
$$\jb{\zeta(k)}^{2q}a(k)=-\frac{i\sgn(k_2)}{2\pi}\int_{\R^2}(1-\pa_x^2-\pa_y^2)^q\big(u_0\chi_k\big)\,e^{-iX\cdot\zeta(k)}\,dX,$$
hence $|a(k)|\lesssim_q\jb{\zeta(k)}^{-2q}$.
By Step 2 we may differentiate \eqref{defa} under the integral sign:
\be
\label{dka}
\pa_ea(k)=-\frac{i\sgn(k_2)}{2\pi}\int_{\R^2}\Big(u_0\,\pa_e\chi_k-i\big(X\cdot\pa_e\zeta(k)\big)u_0\chi_k\Big)e^{-iX\cdot\zeta(k)}\,dX,
\ee
with $|\pa_e\zeta(k)|\le4\jb k$. The functions $u_0\pa_e\chi_k$, $xu_0\chi_k$ and $yu_0\chi_k$ have integrable derivatives of all orders, uniformly on $\Omega\setminus\R$, and the same integrations by parts give $|\nabla_ka(k)|\lesssim_q\jb k\jb{\zeta(k)}^{-2q}$. By the Lipschitz bound at the end of Step 2, the integral in \eqref{defa} is continuous across the real axis, so that $a$ has continuous extensions to $\overline{\C^+}$ and to $\overline{\C^-}$. By \eqref{dbarchi}, $\zeta(\bar k)=-\zeta(k)$ and the reality of $u_0$, $a(\bar k)=\overline{a(k)}$ for $k\in\mathcal R\setminus\R$.\\

\noindent{\bf Step 4:} Proof of (i). By Theorem \ref{thmgrin}, $a$ is $C^1$ on $\C\setminus\R$, hence bounded with bounded derivatives on the compact set $\{|k|\le R_0,\ |k_2|\ge R_0^{-1}\}$, on which $\jb{\zeta(k)}$ is bounded; by density, $a(\bar k)=\overline{a(k)}$ on $\C\setminus\R$. Together with Step 3, this proves \eqref{hypa}.\\

\noindent{\bf Step 5:} Proof of (ii). Fix $X$. By Theorem \ref{thmgrin}, both sides of \eqref{dbarchi} are continuous on $\C\setminus\R$, and they coincide on the dense set $\mathcal R\setminus\R$ (Lemma \ref{lemmairregular}); hence \eqref{dbarchi} holds on $\C\setminus\R$. On the strip $\{|k_2|<R_0^{-1}\}$, the function $k\mapsto\chi(k;X)$ is Lipschitz by Step 2, so that its distributional derivative $\dbar\chi$ is a bounded function, with no singular part on the real axis. Hence
$$\dbar\chi(\cdot;X)=a\,P\,\overline{\chi(\cdot;X)}\quad\mbox{in}\ \mathcal D'(\C),\qquad P=e^{iX\cdot\zeta(k)}=e^{i\phi(k;0,X)} .$$
The function $g:=\chi(\cdot;X)-1$ is continuous on $\C$, tends to zero at infinity by \eqref{chiinfinity}, and satisfies $\dbar g=aP\overline g+aP$. By (i), Lemma \ref{lemmaweights} (ii) and Lemma \ref{lemmacauchy} (iv), $\cI|a|\in C_0(\C)$, and the uniqueness part of Lemma \ref{lemmavekua} (i), with the coefficient $aP$ and $f=aP$, shows that $g=m(\cdot;0,X)-1$. By Lemma \ref{lemmainverse} (i), $|\chi(k;X)|\le1+\|\cI|a|\|_\infty e^{2\|\cI|a|\|_\infty}$ for all $(k,X)$. Finally, let $k\in\C\setminus\R$ and let $k_j\in\mathcal R\setminus\R$ tend to $k$. Then $a(k_j)\to a(k)$ and $\chi(k_j;X)\to\chi(k;X)$ for every $X$, and the dominated convergence theorem, with the majorant $C|u_0|$, shows that the formula \eqref{defa} holds at $k$.\\

\noindent{\bf Step 6:} Proof of (iii). By (ii), $\pa_xm(k;0,X)=\pa_x\chi(k;X)$ for $k\in\Omega\setminus\R$. By Lemma \ref{lemmainverse} (ii), $\pa_xm=\cC f$ with $f:=\pa_x(aP\overline m)=aP\big(-2ik_2\overline m+\overline{\pa_xm}\big)$. Since $|k_2|\jb k\le|k_2|+|k_1k_2|+k_2^2\lesssim\jb{\zeta(k)}^2$, \eqref{hypa} and Lemma \ref{lemmainverse} (i), (ii) give $|f|\lesssim\jb k^{-1}\jb{\zeta}^{-6}$. Hence $f\in L^1(\C)$ by \eqref{weightsE}, and
$$k\,\cC f(k)-\frac1\pi\int_\C f\,dA=\frac1\pi\int_\C f(k')\frac{k'}{k-k'}\,dA(k').$$
On $\{|k'|\le\frac{|k|}2\}$ the integrand is bounded by $|f(k')|$ and tends to zero as $|k|\to\infty$. On $\{|k'|>\frac{|k|}2\}$ we use $|k'||f(k')|\lesssim\jb{\zeta(k')}^{-6}$, so that, for $|k|\ge2$, this part is bounded by $C\cI(\jb\zeta^{-6}\1_{\{|k'|>|k|/2\}})(k)\lesssim\frac{1+\log|k|}{|k|}$ by \eqref{weightstail}. Hence $k\,\pa_xm(k;0,X)\to\frac1\pi\int_\C f\,dA$ as $|k|\to\infty$. On the other hand, \eqref{eqchi} gives $2k\pa_x\chi=\pa_y\chi-\pa_x^2\chi-u_0\chi$, which tends to $-u_0$ uniformly by \eqref{chiinfinity}. Therefore $u_0=-\frac2\pi\int_\C\pa_x(aP\overline m)\,dA$, which is \eqref{reconstruction} at $t=0$.
\end{proof}

\begin{corollary}[The solution for smooth exponentially decaying data]
\label{corsmooth}
Let $u_0$ satisfy \eqref{expdecay}, let $a$ be its scattering coefficient, and let $u$ be the solution of \eqref{kp}. Then $u$ coincides with the function reconstructed from $a$ by \eqref{reconstruction}, and
\be
\label{plancherelsmooth}
\|u_0\|_{L^2}^2=8\int_\C|k_2||a(k)|^2\,dA(k)=\|\cB a\|_{L^2}^2 .
\ee
\end{corollary}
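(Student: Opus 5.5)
By Proposition \ref{propdirect} (i), the scattering coefficient $a$ of $u_0$ satisfies \eqref{hypa}. The plan is to apply Proposition \ref{proprapid} to the function $\tilde u$ reconstructed from $a$ by \eqref{reconstruction}, and then to identify $\tilde u$ with Bourgain's solution $u$ through Lemma \ref{lemmaunique}. The Plancherel identity \eqref{plancherelsmooth} will then follow from Proposition \ref{proprapid} (ii).

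\textbf{Properties of $\tilde u$ and matching of the data.} Proposition \ref{proprapid} (i) gives $\tilde u\in C(\R;L^2)$, real-valued by Lemma \ref{lemmainverse} (iii). It also gives that $\tilde u$ and $\pa_x\tilde u$ are bounded on $\R\times\R^2$, and that $\tilde u$ satisfies the Duhamel formula \eqref{duhamel}. By Proposition \ref{propdirect} (iii), $\tilde u(0)=u_0$.

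\textbf{Regularity of Bourgain's solution.} Since $u_0$ satisfies \eqref{expdecay}, it lies in $H^s$ for every $s$, so $u\in C(\R;H^s)$ for every $s$ by the persistence of regularity recalled in Subsection \ref{subsectionkp}. By Sobolev embedding, $u$ and $\pa_xu$ are bounded on $[-T,T]\times\R^2$ for every $T$. The point I expect to need most care is that $u$ satisfies \eqref{duhamel} in the Bochner sense. Bourgain's solution is constructed as the fixed point of the Duhamel map in Fourier restriction spaces. For $H^s$ data with $s$ large, the nonlinearity $u\pa_xu$ lies in $C([-T,T];L^2)$. Consequently the Duhamel integral of the $X^{s,b}$ theory coincides with the $L^2$-valued Bochner integral.

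If one prefers not to rely on the construction, an alternative route is to verify the Duhamel formula in Fourier variables, as in Step 2 of the proof of Proposition \ref{proprapid}. Both $u$ and the Duhamel expression solve $\pa_x(\pa_tv+\pa_x^3v+6u\pa_xu)+3\pa_y^2v=0$ in $\mathcal D'$ with the same datum. Their difference $z$ then satisfies $\pa_t(e^{-it\omega}\hat z)=0$ on $\{\xi\neq0\}$, so $z=0$.

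\textbf{Conclusion.} Lemma \ref{lemmaunique}, applied on $[0,T]$ and on $[-T,0]$ for every $T$, gives $u=\tilde u$. Proposition \ref{proprapid} (ii) then yields $\|u_0\|_{L^2}^2=\|\tilde u(0)\|_{L^2}^2=8\int|k_2||a|^2\,dA=\|\cB a\|_{L^2}^2$, using \eqref{bornisometry} for the last equality.
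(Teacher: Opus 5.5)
Your proposal is correct and follows essentially the same route as the paper: Proposition \ref{propdirect} gives \eqref{hypa} and $\tilde u(0)=u_0$, Proposition \ref{proprapid} gives the properties of $\tilde u$ together with the Plancherel identity, persistence of regularity gives the boundedness of Bourgain's solution and its derivative $\pa_xu$, and Lemma \ref{lemmaunique} identifies $u$ with $\tilde u$. Your extra justification that Bourgain's solution satisfies \eqref{duhamel} as an $L^2$-valued Bochner integral is a reasonable addition; the paper simply asserts this.
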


\begin{proof}
By Proposition \ref{propdirect}, $a$ satisfies \eqref{hypa} and its reconstruction $\tilde u$ satisfies $\tilde u(0)=u_0$. By Proposition \ref{proprapid} and Lemma \ref{lemmainverse} (iii), $\tilde u\in C(\R;L^2)$ is real, satisfies the Duhamel formula \eqref{duhamel}, $\tilde u$ and $\pa_x\tilde u$ are bounded, and \eqref{plancherelsmooth} holds. Since $u_0\in H^s(\R^2)$ for every $s$, the solution $u$ belongs to $C(\R;H^s)$ for every $s$ \cite{Bourgain}, so that $u$ and $\pa_xu$ are bounded on $[-T,T]\times\R^2$ for every $T$, and $u$ satisfies \eqref{duhamel}. Lemma \ref{lemmaunique} gives $u=\tilde u$.
\end{proof}

Let $u_0$ satisfy \eqref{expdecay}. By Proposition \ref{propdirect} (i) and Step 3 of the proof of Proposition \ref{proprapid}, Proposition \ref{propcone} and Lemma \ref{lemmaweak} give $S(-t)u(t)\weakto\cB a$ as $t\to\pm\infty$; since $\|u(t)\|_{L^2}=\|u_0\|_{L^2}=\|\cB a\|_{L^2}$ by \eqref{mass} and \eqref{plancherelsmooth}, the convergence is strong. Thus the solutions with data satisfying \eqref{expdecay} scatter, with $u_\pm=\cB a$, and Theorem \ref{thmmain} extends this to rough data, by approximation.

\subsection{Uniform bounds for approximating sequences}

To treat rough data, we approximate $u_0$ by compactly supported smooth functions $u_{0,n}$, and we need bounds on their coefficients $a_n$ which are uniform in $n$. On the set $\Omega$ of Lemma \ref{lemmagreenop} (iii), the operators $T_k$ are uniformly invertible, and by \eqref{defa} the size and the regularity of $a_n$ are governed by the regularity in $X$ of the products $u_{0,n}\chi_k$. The first lemma below shows that the finite differences of the eigenfunctions are controlled by those of the data; it will be used with the mixed differences $\delta^x_{h_1}\delta^y_{h_2}$ in Section \ref{sectiontensor} and with the second-order differences $\delta_h\delta_h$ in Section \ref{sectionhalf}.

\begin{lemma}[Finite differences of the eigenfunctions]
\label{lemmadifferences}
Let $u_0$ satisfy \eqref{expdecay}, let $k\in\C\setminus\R$ be a regular point of $u_0$ with $\|T_k^{-1}\|\le\Lambda$, and let $g\in L^\infty(\R^2)$ solve
\be
\label{eqg}
g=G_k\ast(u_0g)+G_k\ast f_1+(\pa_eG_k)\ast f_2
\ee
for some $e\in\{1,i\}$, $f_1\in\cK$ and $f_2\in L^1(\R^2)$. Then, for all $h,h'\in\R^2$, with implicit constants which depend only on $\Lambda$,
\begin{enumerate}
\item $\|\delta_hg\|_{L^\infty}\lesssim\|\delta_hu_0\|_\cK\|g\|_{L^\infty}+\|\delta_hf_1\|_\cK+\|\delta_hf_2\|_{L^1}$;
\item for the second-order differences,
\begin{multline*}
\|\delta_{h'}\delta_hg\|_{L^\infty}\lesssim\|\delta_{h'}\delta_hu_0\|_\cK\|g\|_{L^\infty}+\|\delta_hu_0\|_\cK\|\delta_{h'}g\|_{L^\infty}+\|\delta_{h'}u_0\|_\cK\|\delta_hg\|_{L^\infty}\\
+\|\delta_{h'}\delta_hf_1\|_\cK+\|\delta_{h'}\delta_hf_2\|_{L^1};
\end{multline*}
\item {\em (Equations).} for $\pa\in\{\pa_x,\pa_y\}$ and $e\in\{1,i\}$, the functions $g=\chi_k-1$, $\pa\chi_k$, $\pa_e\chi_k$ and $\pa\pa_e\chi_k$ are bounded and solve \eqref{eqg}, with $(f_1,f_2)$ equal to $(u_0,0)$, $\big((\pa u_0)\chi_k,0\big)$, $(0,u_0\chi_k)$ and $\big((\pa u_0)\pa_e\chi_k,\pa(u_0\chi_k)\big)$ respectively.
\end{enumerate}
\end{lemma}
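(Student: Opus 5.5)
The plan is to apply the translation $\tau_h$ to \eqref{eqg}, subtract, and invert $T_k$; the bounds then follow from Lemma \ref{lemmagreenop} (i) together with the Lipschitz bound on the kernel of $\pa_eG_k$. Since convolution commutes with translations, $\tau_hg=G_k\ast((\tau_hu_0)\tau_hg)+G_k\ast\tau_hf_1+(\pa_eG_k)\ast\tau_hf_2$. Subtracting \eqref{eqg} and writing $\delta_h(u_0g)=u_0\,\delta_hg+(\delta_hu_0)\tau_hg$ gives
$$T_k\delta_hg=G_k\ast\big((\delta_hu_0)\tau_hg\big)+G_k\ast\delta_hf_1+(\pa_eG_k)\ast\delta_hf_2 .$$
The first two terms are bounded in $L^\infty$ by $\|\delta_hu_0\|_\cK\|g\|_{L^\infty}$ and $\|\delta_hf_1\|_\cK$, using Lemma \ref{lemmagreenop} (i) with $H=\tau_hg$ and $H=1$. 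For the third term, Lemma \ref{lemmagreen} (iii) gives $|\pa_eG_k|\lesssim1$ on $\{y\neq0\}$, so it is bounded by $\|\delta_hf_2\|_{L^1}$. Applying $\|T_k^{-1}\|\le\Lambda$ proves (i).

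For (ii), I would apply $\delta_{h'}$ to the identity above in the same way. Expanding $\delta_{h'}\big((\delta_hu_0)\tau_hg\big)=(\delta_{h'}\delta_hu_0)\tau_{h+h'}g+(\delta_hu_0)\tau_h\delta_{h'}g$ and $\delta_{h'}(u_0\delta_hg)=u_0\,\delta_{h'}\delta_hg+(\delta_{h'}u_0)\tau_{h'}\delta_hg$, we get
$$T_k\delta_{h'}\delta_hg=G_k\ast\big((\delta_{h'}\delta_hu_0)\tau_{h+h'}g+(\delta_hu_0)\tau_h\delta_{h'}g+(\delta_{h'}u_0)\tau_{h'}\delta_hg\big)+G_k\ast\delta_{h'}\delta_hf_1+(\pa_eG_k)\ast\delta_{h'}\delta_hf_2 .$$
The same estimates then give (ii), because $\|\tau_\cdot H\|_{L^\infty}=\|H\|_{L^\infty}$.

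For (iii), the first three equations come from earlier results. The case $\chi_k-1$ is \eqref{eqchi}, since $G_k\ast(u_0\chi_k)=G_k\ast(u_0(\chi_k-1))+G_k\ast u_0$. The case $\pa\chi_k$ is \eqref{derivativeschi} with $|\gamma|=1$. The case $\pa_e\chi_k$ is \eqref{dkchi}. Boundedness in each case comes from Steps 1 and 2 of the proof of Proposition \ref{propdirect}.

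For $\pa\pa_e\chi_k$, I would differentiate \eqref{dkchi} in $X$ by the translation/difference-quotient argument of Step 1 of that proof. Since $\pa$ commutes with convolution and $\pa_eG_k$ is bounded, the derivative of $(\pa_eG_k)\ast(u_0\chi_k)$ lands on the smooth integrable function $u_0\chi_k$, producing $(\pa_eG_k)\ast\pa(u_0\chi_k)$. Differentiating the term $G_k\ast(u_0\pa_e\chi_k)$ hidden in $T_k$ produces $G_k\ast(u_0\pa\pa_e\chi_k)+G_k\ast((\pa u_0)\pa_e\chi_k)$. This gives \eqref{eqg} with $f_1=(\pa u_0)\pa_e\chi_k\in\cK$ (bounded times $\cK$, by \eqref{propK}) and $f_2=\pa(u_0\chi_k)\in L^1$. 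The only delicate point is this justification of differentiation under the convolution. The difference quotients of $u_0\chi_k$ converge in $L^1$, which suffices for the bounded kernel $\pa_eG_k$, and Lemma \ref{lemmagreenop} (iv) or an $L^\infty$ convergence handles $T_k^{-1}$. I expect no real obstacle beyond this bookkeeping.
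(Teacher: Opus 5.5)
Your proposal is correct and follows the paper's proof essentially line by line. You apply $\delta_h$ and then $\delta_{h'}$ to \eqref{eqg} with the product rule \eqref{productrule}, bound the right-hand side by Lemma \ref{lemmagreenop} (i) and the boundedness of $\pa_eG_k$, and invert $T_k$; for (iii), you obtain the equations from \eqref{eqchi}, \eqref{derivativeschi}, \eqref{dkchi} and their $X$-derivatives, justified as in Steps 1 and 2 of the proof of Proposition \ref{propdirect}.
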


The differences of products are computed with the product rule
\be
\label{productrule}
\begin{aligned}
\delta_h(fg)&=(\delta_hf)\,\tau_hg+f\,\delta_hg,\\
\delta_{h'}\delta_h(fg)&=(\delta_{h'}\delta_hf)\,\tau_{h+h'}g+(\delta_hf)\,\tau_h\delta_{h'}g+(\delta_{h'}f)\,\tau_{h'}\delta_hg+f\,\delta_{h'}\delta_hg,
\end{aligned}
\ee
where the second identity follows by applying $\delta_{h'}$ to the first one, since translations commute with differences.

\begin{proof}
By Steps 1 and 2 of the proof of Proposition \ref{propdirect}, which apply at every regular point $k\in\C\setminus\R$, $\chi_k$ and $\pa_e\chi_k$ are bounded together with their derivatives in $X$. The equation \eqref{eqchi} reads $\chi_k-1=G_k\ast(u_0(\chi_k-1))+G_k\ast u_0$, and \eqref{dkchi} reads $\pa_e\chi_k=G_k\ast(u_0\pa_e\chi_k)+(\pa_eG_k)\ast(u_0\chi_k)$; differentiating these identities in $X$ gives the equations \eqref{eqg} for $\pa\chi_k$ and $\pa\pa_e\chi_k$. Since convolutions commute with translations, applying $\delta_h$ to \eqref{eqg} and using \eqref{productrule} gives
$$T_k\delta_hg=G_k\ast\big((\delta_hu_0)\,\tau_hg+\delta_hf_1\big)+(\pa_eG_k)\ast\delta_hf_2,$$
and applying $\delta_{h'}$ once more,
$$\begin{aligned}T_k\delta_{h'}\delta_hg&=G_k\ast\big((\delta_{h'}\delta_hu_0)\tau_{h+h'}g+(\delta_hu_0)\tau_h\delta_{h'}g+(\delta_{h'}u_0)\tau_{h'}\delta_hg+\delta_{h'}\delta_hf_1\big)\\&\quad+(\pa_eG_k)\ast\delta_{h'}\delta_hf_2 .\end{aligned}$$
By Lemma \ref{lemmagreenop} (i), $\|G_k\ast(fH)\|_{L^\infty}\lesssim\|f\|_\cK\|H\|_{L^\infty}$, and by Lemma \ref{lemmagreen} (iii), $\pa_eG_k$ is bounded, so that $\|(\pa_eG_k)\ast f\|_{L^\infty}\lesssim\|f\|_{L^1}$. Since translations do not change the $L^\infty$ norm, applying $T_k^{-1}$ gives (i) and (ii).
\end{proof}

On the compact complement of $\Omega$, the bounds of Proposition \ref{propdirect} depend on $n$ in an uncontrolled way. The following lemma replaces them by uniform bounds. Its proof combines the nonlinear Plancherel identity \eqref{plancherelsmooth}, which bounds $a_n$ in $L^2$ on the compact subsets of $\C\setminus\R$, with the bound \eqref{boundeigen}, which depends on $a_n$ only through $\cI|a_n|$; the two bounds close because the Riesz potential of a function which is bounded in $L^2$ grows only like the square root of the logarithm of its $L^\infty$ norm.

\begin{lemma}[Uniform bounds]
\label{lemmabootstrap}
Let $u_{0,n}\in C^\infty_c(\R^2)$, $n\ge1$, be real with $\sup_n(\|u_{0,n}\|_{L^1}+\|u_{0,n}\|_{L^2})<\infty$, and let $a_n$ and $m_n$ be their scattering coefficients and the corresponding eigenfunctions \eqref{eqm}. Assume that there are a compact set $K\subset\C\setminus\R$ and a measurable function $F\ge0$ on $\C$ with $\|\cI F\|_\infty<\infty$ such that $|a_n|\le F$ on $(\C\setminus\R)\setminus K$ for every $n$.
\begin{enumerate}
\item {\em (Uniform bounds).} $\sup_n\big(\|a_n\|_\infty+\|\cI|a_n|\|_\infty\big)<\infty$.
\end{enumerate}
Assume moreover that, for some $\theta\in(0,1]$,
\be
\label{tightmoment}
\sup_n\|\jb X^\theta u_{0,n}\|_{L^1}<\infty,\qquad \lim_{R\to\infty}\sup_n\|\jb X^\theta u_{0,n}\|_{L^1(\{|X|>R\})}=0,
\ee
and let $K'\subset\C\setminus\R$ be compact.
\begin{enumerate}[resume]
\item {\em (H\"older regularity).} If $\theta<1$, then
\be
\label{littleholder}
\lim_{r\to0}\ \sup_n\ \sup_{k,k+h\in K',\ 0<|h|\le r}\frac{|a_n(k+h)-a_n(k)|}{|h|^\theta}=0 .
\ee
\item {\em (Lipschitz regularity).} If $\theta=1$, then $\sup_n\sup_{K'}|\nabla_ka_n|<\infty$.
\end{enumerate}
\end{lemma}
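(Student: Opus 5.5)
For part (i) I would play the nonlinear Plancherel identity against the phase-independent bound of Lemma \ref{lemmavekua}. Set $A_n:=\|a_n\|_\infty$ and $M_n:=\|\cI|a_n|\|_\infty$. Each $a_n$ satisfies \eqref{hypa} by Proposition \ref{propdirect} (i), so $A_n$ and $M_n$ are finite, and the point is to bound them uniformly in $n$.

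\emph{Bound of $M_n$ by $A_n$.} Since $|k_2|\ge c_K>0$ on $K$, \eqref{plancherelsmooth} gives
$$\|a_n\|_{L^2(K)}^2\le(8c_K)^{-1}\|u_{0,n}\|_{L^2}^2\lesssim1 .$$
Off $K$ we use $\cI(|a_n|\1_{\C\setminus K})\le\|\cI F\|_\infty$. On $K$ we split $\cI(|a_n|\1_K)(k)$ into three regions. The disc $D(k,r)$ contributes at most $2rA_n$. The annulus $r\le|k-k'|\le1$ contributes at most $\|a_n\|_{L^2(K)}(2\pi\log\frac1r)^{1/2}$ by Cauchy--Schwarz. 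The region $|k-k'|>1$ contributes at most $\|a_n\|_{L^1(K)}\lesssim1$. Choosing $r=(2+A_n)^{-1}$ yields
$$M_n\le C+C\sqrt{\log(2+A_n)} .$$

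\emph{Bound of $A_n$ by $M_n$.} By Proposition \ref{propdirect} (ii), the formula \eqref{defa} holds at every $k\in\C\setminus\R$ with $\chi_k=m_n(k;0,\cdot)$. Hence \eqref{boundeigen} gives
$$A_n\le\frac1{2\pi}\|u_{0,n}\|_{L^1}\big(1+M_ne^{2M_n}\big),$$
so that $\log(2+A_n)\le C(1+M_n)$. Inserting this into the first bound gives $M_n\le C(1+\sqrt{M_n})$. Thus $M_n$ is bounded uniformly, and then so is $A_n$.

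For parts (ii) and (iii) I would again start from \eqref{defa}, now written as $a_n(k)=-\frac{i\sgn k_2}{2\pi}\int u_{0,n}\,m_n(k;0,X)\,e^{-iX\cdot\zeta(k)}\,dX$, and split $a_n(k+h)-a_n(k)$ into a phase part and an eigenfunction part. For the phase part, on $K'$ we have $|e^{-iX\cdot\zeta(k+h)}-e^{-iX\cdot\zeta(k)}|\le\min(2,C|X||h|)$, and $\|m_n\|_\infty\lesssim1$ by (i). For $\theta=1$ this is immediately $\lesssim|h|\,\|\jb X u_{0,n}\|_{L^1}$. For $\theta<1$ I would split at $|X|=R$: the region $|X|\le R$ contributes $CR|h|\lesssim R|h|^{1-\theta}|h|^\theta$, while the region $|X|>R$ contributes $\lesssim|h|^\theta\sup_n\|\jb X^\theta u_{0,n}\|_{L^1(\{|X|>R\})}$. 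Letting $r\to0$ and then $R\to\infty$, the tightness in \eqref{tightmoment} gives the little-H\"older statement \eqref{littleholder} for this part.

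For the eigenfunction part, $m_n-1=\cC(a_nP\overline{m_n})$, so \eqref{cauchymodulus} gives
$$|m_n(k+h)-m_n(k)|\lesssim|h|\log\tfrac1{|h|}\,(A_n+M_n),$$
uniformly in $X$ and $n$ by (i). Integrated against $|u_{0,n}|$, this is $o(|h|^\theta)$ for $\theta<1$, which completes (ii).

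The main obstacle is (iii), because the logarithmic loss above is fatal at $\theta=1$. I would try to remove it by exchanging the $X$-integral with the Cauchy integral. This rewrites the eigenfunction part as
$$\frac1\pi\int_\C\Big(\frac1{k+h-k'}-\frac1{k-k'}\Big)g_n(k';k)\,dA(k'),\qquad g_n(k';k):=a_n(k')\int u_{0,n}(X)\,\overline{m_n(k';0,X)}\,e^{iX\cdot(\zeta(k')-\zeta(k))}\,dX .$$
The logarithm comes only from the constant term $g_n(k;k)$, and one can subtract $g_n(k;k)$ times the kernel difference integrated over a fixed disc around $k$. That integral is $O(|h|)$: it vanishes identically by the mean value property once $|h|$ is smaller than the radius, and is bounded otherwise. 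What remains is $\int|g_n(k';k)-g_n(k;k)|\,|h|\,|k-k'|^{-1}|k+h-k'|^{-1}\,dA(k')$ near $k$, which is $O(|h|)$ as soon as $g_n(\cdot;k)$ is H\"older of some order $\alpha>0$ near $k$, uniformly in $n$. This H\"older bound comes from three ingredients: part (ii), already available for some $\theta'<1$ since the moment of order $1$ controls those of order $\theta'$; the log-Lipschitz continuity of $m_n$ in $k'$ just proved; and the first moment of $u_{0,n}$, which controls the phase factor. The far region $|k'-k|\gtrsim1$ is handled by $|h|\,\cI|a_n|\lesssim|h|$. The delicate points are that all constants stay uniform in $n$ and in $k\in K'$, and that the Fubini interchange is justified.
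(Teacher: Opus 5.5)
Your proof is correct. Parts (i) and (ii) are the paper's argument. In (i), the Plancherel bound on $K$ combined with Cauchy--Schwarz on an annulus gives $M_n\le C(1+\sqrt{\log(2+\|a_n\|_\infty)})$, which closes against \eqref{boundeigen}. In (ii), $a_n(k+h)-a_n(k)$ is split into an eigenfunction part controlled by \eqref{cauchymodulus} and a phase part cut at $|X|=R$. You should add that $|h|<\operatorname{dist}(K',\R)$, so that $\sgn(k_2)$ is the same at $k$ and $k+h$.

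For (iii) your route differs from the paper's.

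\emph{The paper} works pointwise in $X$. It shows that $a_nP\overline{m_n}$ is $C^{\theta'}$ near $K'$ with norm $\lesssim\jb X^{\theta'}$. It then writes $m_n$ locally as the Cauchy transform of this localized source plus a bounded holomorphic function. The interior Schauder estimate for the Newtonian potential gives $|\nabla_km_n(k;0,X)|\lesssim\jb X^{\theta'}$ uniformly in $n$, and \eqref{defa} is then differentiated under the integral sign.

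\emph{You} integrate in $X$ first. Fubini is justified by $|h|\,|k-k'|^{-1}|k+h-k'|^{-1}\le|k-k'|^{-1}+|k+h-k'|^{-1}$ and the bound on $\cI|a_n|$. The averaged density $g_n(\cdot;k)$ is then uniformly $\theta'$-H\"older near $k$, with the first moment absorbing the $X$-dependence of the phase. You then verify by hand the Dini-type fact that a Cauchy transform with such a density is Lipschitz.

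\emph{Comparison.} Your route avoids the Schauder estimate and any pointwise information on $\nabla_km_n$. It gives directly a uniform Lipschitz bound for $a_n$, which becomes the gradient bound because $a_n\in C^1(\C\setminus\R)$ by Proposition \ref{propdirect} (i). The paper's route, in exchange, yields the pointwise bound on $\nabla_km_n$.

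One small correction. The integral of the kernel difference over $D(k,\rho)$ does not vanish for $|h|<\rho$, because $k'\mapsto(k+h-k')^{-1}$ has its pole inside the disc. The Cauchy transform of $\1_{D(0,\rho)}$ equals $\bar z$ on that disc, so the integral is exactly $\pi\bar h$. This is still $O(|h|)$, which is all you need.
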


\begin{proof}
{\bf Step 1:} Proof of (i). Let $M_n:=\|\cI|a_n|\|_\infty$, which is finite for each $n$, as well as $\|a_n\|_\infty$, by \eqref{hypa}, Lemma \ref{lemmaweights} (ii) and Lemma \ref{lemmacauchy} (iv). By \eqref{plancherelsmooth}, $\int|k_2||a_n|^2dA=\frac18\|u_{0,n}\|_{L^2}^2\lesssim1$, and since $|k_2|$ is bounded below on $K$, $\|a_n\|_{L^2(K)}+\|a_n\|_{L^1(K)}\lesssim1$. Let $R_K\ge1$ be such that $K\subset D(0,R_K)$, and $r:=(1+\|a_n\|_\infty)^{-1}$. For $|k|\le2R_K$ we split the potential of $|a_n|\1_K$ at distance $r$ from $k$, and we use the Cauchy--Schwarz inequality on the annulus $\{r\le|k-k'|\le3R_K\}$:
$$\begin{aligned}\cI(|a_n|\1_K)(k)&\le2r\|a_n\|_\infty+\frac1\pi\|a_n\|_{L^2(K)}\Big(\int_{\{r\le|z|\le3R_K\}}\frac{dA(z)}{|z|^2}\Big)^{\frac12}\\&\le C+C\sqrt{\log(2+\|a_n\|_\infty)} .\end{aligned}$$
For $|k|>2R_K$ the potential is bounded by $C\|a_n\|_{L^1(K)}$, and outside $K$ we use $\cI F\le C$. Hence
\be
\label{boot1}
M_n\le C\Big(1+\sqrt{\log(2+\|a_n\|_\infty)}\Big) .
\ee
On the other hand, by Proposition \ref{propdirect} (ii), the formula \eqref{defa} holds at every $k\in\C\setminus\R$ with $\chi=m_n(\cdot;0,\cdot)$, and \eqref{boundeigen} gives
\be
\label{boot2}
\|a_n\|_\infty\le\frac{\|u_{0,n}\|_{L^1}}{2\pi}\sup_{k,X}|m_n(k;0,X)|\le C\big(1+M_ne^{2M_n}\big).
\ee
Since $M_n\le e^{M_n}$, we deduce from \eqref{boot2} that $2+\|a_n\|_\infty\le Ce^{3M_n}$, so that $\sqrt{\log(2+\|a_n\|_\infty)}\le C+\sqrt{3M_n}$, and then \eqref{boot1} gives $M_n\le C(1+\sqrt{M_n})$. Since $M_n<\infty$, this implies $M_n\le C$ for all $n$, and then $\sup_n\|a_n\|_\infty<\infty$ by \eqref{boot2}.\\

\noindent{\bf Step 2:} Proof of (ii). By (i) and \eqref{boundeigen}, $\sup_{n,k,X}|m_n(k;0,X)|<\infty$, and the source $a_nP\overline{m_n}$ of the equation $m_n-1=\cC(a_nP\overline{m_n})$ satisfies $\|a_nP\overline{m_n}\|_\infty+\|\cI|a_nP\overline{m_n}|\|_\infty\lesssim1$. By \eqref{cauchymodulus}, for every $\theta'\in(0,1)$,
\be
\label{holdermn}
|m_n(k+h;0,X)-m_n(k;0,X)|\lesssim_{\theta'}|h|^{\theta'},\qquad k\in\C,\ |h|\le1,
\ee
uniformly in $n$ and $X$; for $\frac12<|h|\le1$ this follows from the boundedness of $m_n$. Let $K'\subset\C\setminus\R$ be compact, and let $k,k+h\in K'$ with $0<|h|<\min(1,\operatorname{dist}(K',\R))$, so that $k$ and $k+h$ belong to the same half-plane. By \eqref{defa} and Proposition \ref{propdirect} (ii), and since $|\zeta(k+h)-\zeta(k)|\lesssim_{K'}|h|$,
$$\begin{aligned}|a_n(k+h)-a_n(k)|&\lesssim\int_{\R^2}|u_{0,n}|\,|m_n(k+h;0,X)-m_n(k;0,X)|\,dX\\&\quad+\int_{\R^2}|u_{0,n}|\min\big(1,|X||h|\big)\,dX .\end{aligned}$$
We choose $\theta'\in(\theta,1)$ in \eqref{holdermn}, and for $R\ge1$ we split the last integral at $|X|=R$, using $\min(1,s)\le s^\theta$ on $\{|X|>R\}$:
$$\frac{|a_n(k+h)-a_n(k)|}{|h|^\theta}\lesssim|h|^{\theta'-\theta}+R|h|^{1-\theta}+\sup_n\int_{\{|X|>R\}}\jb X^\theta|u_{0,n}|\,dX .$$
By \eqref{tightmoment}, choosing first $R$ large and then $|h|$ small proves \eqref{littleholder}.\\

\noindent{\bf Step 3:} Proof of (iii). Fix $\theta'\in(0,1)$, let $d:=\frac14\operatorname{dist}(K',\R)$, and let $K_1$ and $K_2$ be the sets of points at distance at most $d$, respectively $3d$, from $K'$; they are compact subsets of $\{|k_2|\ge d\}$. The computation of Step 2 with $\theta=1$ gives $|a_n(k+h)-a_n(k)|\lesssim|h|^{\theta'}$ for $k,k+h\in K_2$ in the same half-plane with $|h|\le1$, while two points of $K_2$ in different half-planes are at distance at least $2d$. Since moreover $|P(k;0,X)-P(k';0,X)|\le\min(2,|X||\zeta(k)-\zeta(k')|)$, \eqref{holdermn} shows that $f_n:=a_nP\overline{m_n}$ satisfies $\|f_n\|_{C^{\theta'}(K_2)}\lesssim\jb X^{\theta'}$, uniformly in $n$. Let $k_0\in K_1$ and $\vartheta_{k_0}:=\vartheta(\frac{\cdot-k_0}d)$, which equals $1$ on $D(k_0,d)$ and is supported in $D(k_0,2d)\subset K_2$. Then $\|\vartheta_{k_0}f_n\|_{C^{\theta'}}\lesssim\jb X^{\theta'}$. Since $\dbar m_n=f_n$, the function $m_n-\cC(\vartheta_{k_0}f_n)$ is holomorphic on $D(k_0,d)$, and it is bounded by $C$ there, so that its gradient at $k_0$ is $O(d^{-1})$. Moreover $\cC g=4\pa_k\mathcal Ng$, where $\mathcal Ng:=\frac1{2\pi}\int_\C\log|\cdot-k'|\,g(k')\,dA(k')$ is the Newtonian potential of $g$ \cite[Lemma 4.1]{GilbargTrudinger}. Since $\vartheta_{k_0}f_n$ is supported in $D(k_0,2d)$, \cite[Lemma 4.4]{GilbargTrudinger}, applied with the discs $D(k_0,d)\subset D(k_0,2d)$, bounds the second derivatives of $\mathcal N(\vartheta_{k_0}f_n)$ on $D(k_0,d)$ by $C\|\vartheta_{k_0}f_n\|_{C^{\theta'}}$; the pairs of points at distance larger than $1$, which are not seen by our H\"older norm, contribute at most $2\sup|\vartheta_{k_0}f_n|$. Hence $\cC(\vartheta_{k_0}f_n)$ is $C^1$ near $k_0$, with $|\nabla\cC(\vartheta_{k_0}f_n)(k_0)|\lesssim\jb X^{\theta'}$. Thus $m_n(\cdot;0,X)$ is $C^1$ on the interior of $K_1$, which is a neighborhood of $K'$, and $|\nabla_km_n(k_0;0,X)|\lesssim\jb X^{\theta'}$ for $k_0\in K_1$, uniformly in $n$. Differentiating \eqref{defa} under the integral sign, with the majorant $C\jb X|u_{0,n}|$, we obtain for $k\in K'$
$$|\nabla_ka_n(k)|\lesssim\int_{\R^2}|u_{0,n}|\big(|\nabla_km_n(k;0,X)|+\jb k|X|\big)\,dX\lesssim\int_{\R^2}\jb X|u_{0,n}|\,dX\lesssim1 .$$
\end{proof}

The approximating sequences are constructed by truncation and mollification. For a weight $\jb X^\theta$, the following lemma shows that these operations preserve the tightness \eqref{tightmoment} of the moments, and it gives an identity which transfers regularity from $\jb X^\theta u_0$ to $\jb X^\theta u_{0,n}$.

\begin{lemma}[Approximation of the data]
\label{lemmaapprox}
Let $0<\theta\le1$, and let $u_0\in L^2(\R^2)$ be real with $\jb X^\theta u_0\in L^1(\R^2)$. There are $\eps_n\in(0,\frac1n]$ such that the real functions
\be
\label{defu0n}
u_{0,n}:=\varrho_{\eps_n}\ast\big(\vartheta(\cdot/n)\,u_0\big)\in C^\infty_c(\R^2),\qquad n\ge1,
\ee
where $\vartheta$ and $\varrho_\eps$ are the cutoff and the mollifier of the Notations, satisfy the following.
\begin{enumerate}
\item {\em (Convergence).} $u_{0,n}\to u_0$ in $L^1\cap L^2$, and $\jb X^\theta u_{0,n}\to\jb X^\theta u_0$ in $L^1$; in particular, \eqref{tightmoment} holds.
\item {\em (Weights).} For $|Z|\le1$, the function $\mu_Z(X):=\jb X^\theta\jb{X-Z}^{-\theta}$ satisfies $\|\mu_Z\|_{W^{3,\infty}}\lesssim1$, and
\be
\label{weightconv}
\jb X^\theta u_{0,n}=\int_{\R^2}\varrho_{\eps_n}(Z)\,\mu_Z\,\tau_{-Z}\big(\vartheta(\cdot/n)\jb X^\theta u_0\big)\,dZ .
\ee
\end{enumerate}
\end{lemma}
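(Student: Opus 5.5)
The proof will be a direct computation. Part (ii) is an algebraic identity together with a weight estimate, and part (i) follows from standard truncation and mollification arguments once $\eps_n$ is chosen small enough.

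\emph{Part (ii).} Write $v_n:=\vartheta(\cdot/n)u_0$. By definition,
$$\jb X^\theta u_{0,n}(X)=\int\varrho_{\eps_n}(Z)\,\jb X^\theta v_n(X-Z)\,dZ .$$
Insert $1=\jb{X-Z}^{-\theta}\jb{X-Z}^\theta$ in the integrand. The factor $\jb X^\theta\jb{X-Z}^{-\theta}$ is $\mu_Z(X)$, and $\jb{X-Z}^\theta v_n(X-Z)=\tau_{-Z}(\jb\cdot^\theta v_n)(X)$, which is \eqref{weightconv}. For the bound on $\mu_Z$, first note that $\jb X\le\sqrt2\,\jb{X-Z}\jb Z$ (Peetre) and $|Z|\le1$, so $\mu_Z\lesssim1$. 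Next, each derivative of $\jb X^\theta$, respectively of $\jb{X-Z}^{-\theta}$, is bounded by $\jb X^{\theta}$, respectively by $\jb{X-Z}^{-\theta}$. By the Leibniz rule, every derivative of order at most $3$ of $\mu_Z$ is therefore $O(\jb X^\theta\jb{X-Z}^{-\theta})=O(1)$, uniformly in $|Z|\le1$.

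\emph{Part (i).} First, $\vartheta(\cdot/n)u_0\to u_0$ in $L^1\cap L^2$ and $\jb X^\theta\vartheta(\cdot/n)u_0\to\jb X^\theta u_0$ in $L^1$, by dominated convergence, since $0\le\vartheta\le1$ and $\vartheta(X/n)\to1$ pointwise. For fixed $n$, $v_n$ has compact support in $D(0,2n)$. Since $\varrho_\eps\ast v_n\to v_n$ in $L^1\cap L^2$ as $\eps\to0$, we may choose $\eps_n\le\frac1n$ with $\|\varrho_{\eps_n}\ast v_n-v_n\|_{L^1\cap L^2}\le\frac1n$ and also $\|\jb X^\theta(\varrho_{\eps_n}\ast v_n-v_n)\|_{L^1}\le\frac1n$. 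The last requirement is possible because everything is supported in $D(0,2n+1)$, where $\jb X^\theta\lesssim_n1$. The convergences in (i) follow. Clearly $u_{0,n}\in C^\infty_c$, and $u_{0,n}$ is real.

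\emph{Tightness \eqref{tightmoment}.} The first condition follows from the $L^1$ convergence of $\jb X^\theta u_{0,n}$. For the second, the $L^1$ convergence gives $\|\jb X^\theta u_{0,n}\|_{L^1(\{|X|>R\})}\le\|\jb X^\theta u_0\|_{L^1(\{|X|>R\})}+\|\jb X^\theta(u_{0,n}-u_0)\|_{L^1}$. Given $\delta>0$, the second term is below $\delta$ for $n\ge N$, and the first term is below $\delta$ for $R$ large. The finitely many $n<N$ are compactly supported, so their tails vanish for $R$ large.

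I see no real obstacle here; the only point requiring care is the uniform $W^{3,\infty}$ bound on $\mu_Z$, which is handled by the Peetre inequality as above.
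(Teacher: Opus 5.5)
Your proof is correct and follows the paper's argument: truncate, mollify with $\eps_n$ chosen small for each $n$, insert $\jb{X-Z}^{-\theta}\jb{X-Z}^{\theta}$ to get \eqref{weightconv}, bound $\mu_Z$ via $\jb X\le2\jb{X-Z}$ and the Leibniz rule, and deduce tightness from convergence in $L^1$. The one difference is how you get $\jb X^\theta u_{0,n}\to\jb X^\theta u_0$ in $L^1$: the paper derives it from \eqref{weightconv} and the bound $|\mu_Z-1|\le\theta|Z|$, whereas you use the compact support of $\vartheta(\cdot/n)u_0$ to absorb the $n$-dependent weight into the choice of $\eps_n$, which is equally valid.
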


\begin{proof}
Let $f:=\jb X^\theta u_0\in L^1$, so that $u_0\in L^1\cap L^2$. By dominated convergence, $\vartheta(\cdot/n)u_0\to u_0$ in $L^1\cap L^2$ and $\vartheta(\cdot/n)f\to f$ in $L^1$. We choose $\eps_n\le\frac1n$ such that
$$\big\|\varrho_{\eps_n}\ast\big(\vartheta(\cdot/n)u_0\big)-\vartheta(\cdot/n)u_0\big\|_{L^1\cap L^2}+\big\|\varrho_{\eps_n}\ast\big(\vartheta(\cdot/n)f\big)-\vartheta(\cdot/n)f\big\|_{L^1}\le\frac1n .$$
Then $u_{0,n}\in C^\infty_c(\R^2)$ is real, and $u_{0,n}\to u_0$ in $L^1\cap L^2$. The identity \eqref{weightconv} follows from $u_{0,n}(X)=\int\varrho_{\eps_n}(Z)\,g(X-Z)\,dZ$ and $\jb X^\theta g(X-Z)=\mu_Z(X)\jb{X-Z}^\theta g(X-Z)$, with $g:=\vartheta(\cdot/n)u_0$. For $|Z|\le1$ we have $\jb X\le2\jb{X-Z}$, and the derivatives of order $|\gamma|\le3$ of $\jb X^\theta$ and of $\jb{X-Z}^{-\theta}$ are bounded by $C\jb X^\theta$ and $C\jb{X-Z}^{-\theta}$, so that $\|\mu_Z\|_{W^{3,\infty}}\lesssim1$. Moreover $|\nabla\jb X^\theta|\le\theta$ and $\jb{X-Z}\ge1$ give $|\mu_Z-1|\le\theta|Z|$. Hence \eqref{weightconv} gives
$$\big\|\jb X^\theta u_{0,n}-\varrho_{\eps_n}\ast\big(\vartheta(\cdot/n)f\big)\big\|_{L^1}\le\theta\eps_n\|f\|_{L^1},$$
and, by the choice of $\eps_n$, $\jb X^\theta u_{0,n}\to f$ in $L^1$. A convergent sequence in $L^1$ is bounded and tight, which gives \eqref{tightmoment}.
\end{proof}
\section{Anisotropic data: proof of Theorem \ref{thmmain} under condition~(ii)}
\label{sectiontensor}

In this section we prove Theorem \ref{thmmain} under condition (ii). We approximate $u_0$ by compactly supported smooth functions $u_{0,n}$. We first show that the mixed regularity of the data gives the common majorant $F_{\alpha,\beta}$ of \eqref{defweight} for the coefficients $a_n$ (Lemma \ref{lemmamixed}), so that Proposition \ref{propcone} applies uniformly in $n$. The majorant $F_{\alpha,\beta}$ need not be square integrable for the measure $|k_2|dA(k)$, so that the convergence of the masses $\|\cB a_n\|_{L^2}^2=8\int|k_2||a_n|^2dA$ does not follow from dominated convergence; we identify the limit through the heat equation instead (Lemma \ref{lemmasandwich}).

\subsection{Mixed regularity and a majorant of the coefficients}

The scattering coefficient \eqref{defa} is the Fourier transform of $u_0\chi_k$ at the Born frequency $\zeta(k)$. Its decay in $\zeta_1(k)=-2k_2$ and in $\zeta_2(k)=-4k_1k_2$ is therefore governed by the regularity of $u_0\chi_k$ in $x$ and in $y$ separately, which we measure by finite differences in each variable. The regularity of $\chi_k$ is inherited from that of $u_0$ through the equation \eqref{eqchi}, in the norm $\|\cdot\|_\cK$ (Lemma \ref{lemmadifferences}).

Let $0<\beta<1$, and let $Y$ be one of the spaces $L^1(\R^2)$, $\cK$ and $L^\infty(\R^2)$. For $0<\alpha\le1$ we define the mixed H\"older norm
\be
\label{defmixed}
\|f\|_{\alpha,\beta;Y}:=\|f\|_Y+\sup_{h\neq0}\frac{\|\delta^x_hf\|_Y}{|h|^\alpha}+\sup_{h'\neq0}\frac{\|\delta^y_{h'}f\|_Y}{|h'|^\beta}+\sup_{h,h'\neq0}\frac{\|\delta^x_h\delta^y_{h'}f\|_Y}{|h|^\alpha|h'|^\beta},
\ee
where $h,h'\in\R$, and for $1<\alpha<2$ we set $\|f\|_{\alpha,\beta;Y}:=\|f\|_{1,\beta;Y}+\|\pa_xf\|_{\alpha-1,\beta;Y}$. Since $\delta^x_hf=\int_0^h\pa_xf(\cdot+s,\cdot)\,ds$, we have $\|\delta^x_hf\|_Y\le|h|\|\pa_xf\|_Y$ and $\|\delta^x_h\delta^y_{h'}f\|_Y\le|h|\|\delta^y_{h'}\pa_xf\|_Y$, hence
\be
\label{mixedone}
\|f\|_{1,\beta;Y}\le\|f\|_Y+\|\pa_xf\|_Y+\sup_{h'\neq0}\frac{\|\delta^y_{h'}f\|_Y}{|h'|^\beta}+\sup_{h'\neq0}\frac{\|\delta^y_{h'}\pa_xf\|_Y}{|h'|^\beta},
\ee
and splitting the suprema according to $|h|\le1$ or $|h|>1$ shows that $\|f\|_{\alpha',\beta;Y}\le3\|f\|_{\alpha,\beta;Y}$ for $0<\alpha'\le\alpha<2$. The norms $\|\cdot\|_{\alpha,\beta;Y}$ are invariant under translations, and $\|\kappa\ast f\|_{\alpha,\beta;Y}\le\|\kappa\|_{L^1}\|f\|_{\alpha,\beta;Y}$ for $\kappa\in L^1(\R^2)$, by \eqref{propK}, Young's inequality and since differences and derivatives commute with translations and convolutions. For $Y\in\{L^1,\cK\}$ and $\alpha,\beta>0$ we also set
\be
\label{defNab}
N_{\alpha,\beta}(f;Y):=\sum_{j,l\ge0}2^{\alpha j+\beta l}\,\|\Delta^x_j\Delta^y_lf\|_Y ,
\ee
so that $N_{\alpha,\beta}(f;L^1)=\|f\|_{B^{\alpha,\beta}_{1,1}}$ by \eqref{defbesov}.

\begin{lemma}[Mixed H\"older norms]
\label{lemmamixednorm}
Let $0<\alpha<2$, $0<\beta<1$ and $Y\in\{L^1(\R^2),\cK,L^\infty(\R^2)\}$.
\begin{enumerate}
\item {\em (Littlewood--Paley pieces).} If $Y\neq L^\infty$, then $\|f\|_{\alpha,\beta;Y}\lesssim N_{\alpha,\beta}(f;Y)$.
\item {\em (Products).} $\|fg\|_{\alpha,\beta;Y}\lesssim\|f\|_{\alpha,\beta;Y}\|g\|_{\alpha,\beta;L^\infty}$, and $\|c\|_{\alpha,\beta;L^\infty}\lesssim\|c\|_{W^{3,\infty}}$ for every $c\in W^{3,\infty}(\R^2)$.
\item {\em (Fourier decay).} $|\hat\Phi(\xi,\eta)|\lesssim\|\Phi\|_{\alpha,\beta;L^1}\jb\xi^{-\alpha}\jb\eta^{-\beta}$ for $(\xi,\eta)\in\R^2$.
\item {\em (Eigenfunctions).} Let $u_0$ satisfy \eqref{expdecay}, and let $k\in\C\setminus\R$ be a regular point of $u_0$ with $\|T_k^{-1}\|\le\Lambda$. Then, for $e\in\{1,i\}$, $\|\chi_k\|_{\alpha,\beta;L^\infty}+\|\pa_e\chi_k\|_{\alpha,\beta;L^\infty}$ is bounded by a constant which depends only on $\Lambda$ and on an upper bound for $\|u_0\|_{\alpha,\beta;\cK}+\|u_0\|_{\alpha,\beta;L^1}$.
\end{enumerate}
\end{lemma}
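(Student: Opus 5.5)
Parts (i)--(iii) are handled by Littlewood--Paley decomposition and elementary difference estimates, and (iv) by Lemma \ref{lemmadifferences}.

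\emph{Part (i).} Write $f=\sum_{j,l}\Delta^x_j\Delta^y_lf$ and bound each difference piece by piece. For the pure $x$-difference with $0<\alpha\le1$, use $\|\delta^x_h\Delta^x_jg\|_Y\lesssim\min(2,2^j|h|)\|\Delta^x_jg\|_Y$. The bound $2^j|h|$ comes from writing $\delta^x_h\Delta^x_j=\int_0^h\pa_x\Delta^x_j$ and noting that $\pa_x\Delta^x_j$ is convolution in $x$ with an $L^1$ kernel of norm $O(2^j)$. Convolution with an $L^1$ kernel is bounded on $Y$ by \eqref{propK}. Since $\min(1,2^j|h|)\le(2^j|h|)^\alpha$, summing gives $\|\delta^x_hf\|_Y\lesssim|h|^\alpha N_{\alpha,\beta}(f;Y)$. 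The mixed difference $\delta^x_h\delta^y_{h'}$ is handled the same way, by tensorizing the two estimates. For $1<\alpha<2$, apply the same argument to $\pa_xf$, using $\|\pa_x\Delta^x_jg\|_Y\lesssim2^j\|\Delta^x_jg\|_Y$, together with \eqref{mixedone} for the $\|\cdot\|_{1,\beta;Y}$ part. This step needs $Y\neq L^\infty$ only in the sense that $N_{\alpha,\beta}(\cdot;Y)$ is defined there; the estimate itself holds verbatim.

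\emph{Part (ii).} For the product bound, apply the rule \eqref{productrule}, in the one-dimensional forms $\delta^x$, $\delta^y$ and the mixed $\delta^x\delta^y$, together with $\|cf\|_Y\le\|c\|_{L^\infty}\|f\|_Y$ (from \eqref{propK} when $Y=\cK$). Every term pairs a difference of $f$, measured in $Y$, with a translated difference of $g$, measured in $L^\infty$. When $\alpha>1$, first expand $\pa_x(fg)=(\pa_xf)g+f\pa_xg$ and apply the $\alpha-1$ case to each term. The bound $\|c\|_{\alpha,\beta;L^\infty}\lesssim\|c\|_{W^{3,\infty}}$ follows from the mean value theorem, since $|h|\le|h|^\alpha$ for $|h|\le1$, while for $|h|>1$ the supremum norm suffices. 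Only $\pa_x,\pa_y,\pa_x\pa_y$ and $\pa_x^2\pa_y$ are needed.

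\emph{Part (iii).} Choose $h=\pi/\xi$, so that $\widehat{\delta^x_h\Phi}=(e^{ih\xi}-1)\hat\Phi=-2\hat\Phi$. Choose $h'=\pi/\eta$ in the same way. This gives $|\hat\Phi|\lesssim|\xi|^{-\alpha}|\eta|^{-\beta}\|\delta^x_h\delta^y_{h'}\Phi\|_{L^1}$. The degenerate cases $|\xi|\le1$ or $|\eta|\le1$ use only the lower-order terms of the norm. For $1<\alpha<2$, first use $\widehat{\pa_x\Phi}=i\xi\hat\Phi$.

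\emph{Part (iv).} This part carries the real content. By Lemma \ref{lemmadifferences} (iii), $g=\chi_k-1$ solves \eqref{eqg} with $(f_1,f_2)=(u_0,0)$, and $\|\chi_k\|_{L^\infty}\le\Lambda$. Apply Lemma \ref{lemmadifferences} (i) with $h=(h_1,0)$ and $h=(0,h')$. This bounds the first-order differences, for instance $\|\delta^x_{h_1}g\|_{L^\infty}\lesssim\|\delta^x_{h_1}u_0\|_\cK\Lambda+\|\delta^x_{h_1}u_0\|_\cK$. Next apply Lemma \ref{lemmadifferences} (ii) with $h=(h_1,0)$ and $h'=(0,h')$. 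The cross terms there are products of first-order differences that are already controlled, which gives the mixed bound $|h_1|^\alpha|h'|^\beta$. For $1<\alpha<2$, repeat the argument for $\pa_x\chi_k$, which solves \eqref{eqg} with $f_1=(\pa_xu_0)\chi_k$. The data bound $\|(\pa_xu_0)\chi_k\|_{\alpha-1,\beta;\cK}$ then follows from (ii) with $Y=\cK$, using the $\|\cdot\|_{1,\beta;L^\infty}$ bound on $\chi_k$ just obtained.

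For $\pa_e\chi_k$, the data are $f_2=u_0\chi_k$, measured in $L^1$. Its mixed norm is controlled by (ii) with $Y=L^1$, again using the bound on $\chi_k$. This is where the hypothesis on $\|u_0\|_{\alpha,\beta;L^1}$ enters. For $\alpha>1$, the term $\pa_x\pa_e\chi_k$ has data $f_1=(\pa_xu_0)\pa_e\chi_k$ and $f_2=\pa_x(u_0\chi_k)$, both handled by (ii).

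The main obstacle is bookkeeping rather than any new idea. The order must be: first-order differences, then mixed differences, then the $\pa_x$ level. At each stage the right-hand sides of Lemma \ref{lemmadifferences} must involve only quantities already bounded. This works because the lemma's constants depend only on $\Lambda$.
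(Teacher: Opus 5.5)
Your proposal is correct and follows the paper's proof essentially step by step: Bernstein-type bounds on the pieces $\Delta^x_j\Delta^y_lf$ for (i), the product rule \eqref{productrule} for mixed differences in (ii), the steps $h=\pi/\xi$, $h'=\pi/\eta$ in (iii), and in (iv) Lemma \ref{lemmadifferences} applied first to first-order differences, then to mixed differences, then at the level of $\pa_x\chi_k$ and $\pa_x\pa_e\chi_k$. Two small repairs are needed. First, the bound $\|\pa_x\Delta^x_jg\|_Y\lesssim2^j\|\Delta^x_jg\|_Y$ requires the reproducing identity $\Delta^x_jg=\kappa_j\ast_x\Delta^x_jg$, as in the paper; the kernel of $\pa_x\Delta^x_j$ alone only gives $2^j\|g\|_Y$, which cannot be summed against $N_{\alpha,\beta}$. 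Relatedly, one-dimensional convolutions are bounded on $Y$ by Minkowski's inequality and translation invariance, not directly by \eqref{propK}. Second, in (iii) the display should read $|\hat\Phi(\xi,\eta)|\lesssim\|\delta^x_h\delta^y_{h'}\Phi\|_{L^1}\le|h|^\alpha|h'|^\beta\|\Phi\|_{\alpha,\beta;L^1}$.
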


\begin{proof}
{\bf Step 1:} Proof of (i). Let $g:=\Delta^x_j\Delta^y_lf$. Let $\kappa$ be the Schwartz function on $\R$ of the proof of Lemma \ref{lemmaembedding}, with $\kappa\ast g=g$ whenever $\hat g$ is supported in $[-2,2]$, and $\kappa_j:=2^j\kappa(2^j\cdot)$. Since $\hat g$ is supported in $\{|\xi|\le2^{j+1}\}$, we have $g=\kappa_j\ast_xg$, hence $\pa_xg=\kappa_j'\ast_xg$ with $\|\kappa_j'\|_{L^1(\R)}=2^j\|\kappa'\|_{L^1(\R)}$. By Minkowski's inequality and the invariance of $\|\cdot\|_Y$ under translations, $\|\pa_xg\|_Y\lesssim2^j\|g\|_Y$, and since $\|\delta^x_hg\|_Y\le\min(2\|g\|_Y,|h|\|\pa_xg\|_Y)$,
$$\|\delta^x_hg\|_Y\lesssim\min(1,2^j|h|)\|g\|_Y,\qquad \|\delta^y_{h'}g\|_Y\lesssim\min(1,2^l|h'|)\|g\|_Y,$$
where the second bound is obtained in the same way. Applying the first bound to $\delta^y_{h'}g$, whose Fourier transform is supported in that of $g$, we also obtain $\|\delta^x_h\delta^y_{h'}g\|_Y\lesssim\min(1,2^j|h|)\min(1,2^l|h'|)\|g\|_Y$. The series $f=\sum_{j,l}\Delta^x_j\Delta^y_lf$ converges absolutely in $Y$. Let $\alpha\le1$. Summing over $(j,l)$ and using $\min(1,2^j|h|)\le(2^j|h|)^\alpha$ and $\min(1,2^l|h'|)\le(2^l|h'|)^\beta$, we obtain, for instance, $\|\delta^x_h\delta^y_{h'}f\|_Y\lesssim|h|^\alpha|h'|^\beta N_{\alpha,\beta}(f;Y)$, and the other terms of \eqref{defmixed} are bounded in the same way. If $1<\alpha<2$, the bound $\|\pa_x\Delta^x_j\Delta^y_lf\|_Y\lesssim2^j\|\Delta^x_j\Delta^y_lf\|_Y$ gives $N_{1,\beta}(f;Y)+N_{\alpha-1,\beta}(\pa_xf;Y)\lesssim N_{\alpha,\beta}(f;Y)$, and the case $\alpha\le1$ applies to $f$ and to $\pa_xf$.\\

\noindent{\bf Step 2:} Proof of (ii). Let $\alpha\le1$. By \eqref{productrule} with $h=(h_1,0)$ and $h'=(0,h_2)$,
$$\begin{aligned}\delta^x_{h_1}\delta^y_{h_2}(fg)&=(\delta^x_{h_1}\delta^y_{h_2}f)\,\tau_{(h_1,h_2)}g+(\delta^x_{h_1}f)\,\tau_{(h_1,0)}\delta^y_{h_2}g\\&\quad+(\delta^y_{h_2}f)\,\tau_{(0,h_2)}\delta^x_{h_1}g+f\,\delta^x_{h_1}\delta^y_{h_2}g,\end{aligned}$$
and the bound $\|cf\|_Y\le\|c\|_{L^\infty}\|f\|_Y$, which holds by \eqref{propK} for $Y=\cK$, shows that each term has norm at most $|h_1|^\alpha|h_2|^\beta\|f\|_{\alpha,\beta;Y}\|g\|_{\alpha,\beta;L^\infty}$. The other terms of \eqref{defmixed} are treated in the same way. If $1<\alpha<2$, we apply the case $\alpha\le1$ to $fg$, with the exponent $1$, and to $\pa_x(fg)=(\pa_xf)g+f\pa_xg$, with the exponent $\alpha-1$, and we use $\|\cdot\|_{\alpha-1,\beta;Y}\le3\|\cdot\|_{1,\beta;Y}$. Finally, $\|\delta^x_hc\|_{L^\infty}\le2\min(1,|h|)\|c\|_{W^{1,\infty}}$, and $\|\delta^x_h\delta^y_{h'}c\|_{L^\infty}$ is bounded by each of $4\|c\|_{L^\infty}$, $2|h|\|\pa_xc\|_{L^\infty}$, $2|h'|\|\pa_yc\|_{L^\infty}$ and $|h||h'|\|\pa_x\pa_yc\|_{L^\infty}$, hence by $4\min(1,|h|)\min(1,|h'|)\|c\|_{W^{2,\infty}}$. This gives $\|c\|_{1,\beta;L^\infty}\lesssim\|c\|_{W^{2,\infty}}$, and the bound on $\|c\|_{\alpha,\beta;L^\infty}$ follows, applied to $c$ and to $\pa_xc$ if $\alpha>1$.\\

\noindent{\bf Step 3:} Proof of (iii). Let $\alpha\le1$. Since $\widehat{\delta^x_h\delta^y_{h'}\Phi}(\xi,\eta)=(e^{ih\xi}-1)(e^{ih'\eta}-1)\hat\Phi(\xi,\eta)$, the choice $h=\pi/\xi$, $h'=\pi/\eta$ gives, when $|\xi|,|\eta|\ge1$,
$$4|\hat\Phi(\xi,\eta)|\le\frac1{2\pi}\|\delta^x_h\delta^y_{h'}\Phi\|_{L^1}\le\frac1{2\pi}\Big(\frac\pi{|\xi|}\Big)^\alpha\Big(\frac\pi{|\eta|}\Big)^\beta\|\Phi\|_{\alpha,\beta;L^1}.$$
Omitting one or both differences when $|\xi|<1$ or $|\eta|<1$ gives (iii). If $1<\alpha<2$, we apply this to $\Phi$ with the exponent $1$ and to $\pa_x\Phi$ with the exponent $\alpha-1$, and we use $|\hat\Phi|=|\xi|^{-1}|\widehat{\pa_x\Phi}|$ for $|\xi|\ge1$.\\

\noindent{\bf Step 4:} Proof of (iv). We write $u:=u_0$, $\chi:=\chi_k$, and we apply Lemma \ref{lemmadifferences} (i), (ii) with $h=(h_1,0)$ and $h'=(0,h_2)$, to the equations of Lemma \ref{lemmadifferences} (iii), so that $\delta_h=\delta^x_{h_1}$ and $\delta_{h'}=\delta^y_{h_2}$. Let first $\alpha\le1$. With $g=\chi-1$ and $(f_1,f_2)=(u,0)$, and since $\|\chi\|_{L^\infty}\le\Lambda$, Lemma \ref{lemmadifferences} (i) gives $\|\delta^x_{h_1}\chi\|_{L^\infty}\lesssim|h_1|^\alpha$ and $\|\delta^y_{h_2}\chi\|_{L^\infty}\lesssim|h_2|^\beta$, and then Lemma \ref{lemmadifferences} (ii) gives $\|\delta^y_{h_2}\delta^x_{h_1}\chi\|_{L^\infty}\lesssim|h_1|^\alpha|h_2|^\beta$. Hence $\|\chi\|_{\alpha,\beta;L^\infty}\lesssim1$, and $\|u\chi\|_{\alpha,\beta;L^1}\lesssim1$ by (ii). With $g=\pa_e\chi$, which is bounded by \eqref{dkchi}, and $(f_1,f_2)=(0,u\chi)$, the same argument gives $\|\pa_e\chi\|_{\alpha,\beta;L^\infty}\lesssim1$.

Let now $1<\alpha<2$. By \eqref{derivativeschi}, $\|\pa_x\chi\|_{L^\infty}\le C\Lambda\|\pa_xu\|_\cK\|\chi\|_{L^\infty}\lesssim1$. Lemma \ref{lemmadifferences} (i), with $(0,h_2)$ in place of $h$, applied to $g=\chi-1$ and to $g=\pa_x\chi$, whose data are $(f_1,f_2)=((\pa_xu)\chi,0)$, gives $\|\delta^y_{h_2}\chi\|_{L^\infty}+\|\delta^y_{h_2}\pa_x\chi\|_{L^\infty}\lesssim|h_2|^\beta$, where we used $\delta^y_{h_2}((\pa_xu)\chi)=(\delta^y_{h_2}\pa_xu)\,\tau_{(0,h_2)}\chi+(\pa_xu)\,\delta^y_{h_2}\chi$. By \eqref{mixedone}, $\|\chi\|_{1,\beta;L^\infty}\lesssim1$. Since $\|u\|_{\alpha-1,\beta;\cK}\le3\|u\|_{\alpha,\beta;\cK}$ and $\|(\pa_xu)\chi\|_{\alpha-1,\beta;\cK}\lesssim\|\pa_xu\|_{\alpha-1,\beta;\cK}\|\chi\|_{1,\beta;L^\infty}\lesssim1$ by (ii), the argument of the case $\alpha\le1$, with the exponent $\alpha-1$ and applied to $g=\pa_x\chi$, gives $\|\pa_x\chi\|_{\alpha-1,\beta;L^\infty}\lesssim1$. Hence $\|\chi\|_{\alpha,\beta;L^\infty}\lesssim1$, and $\|u\chi\|_{\alpha,\beta;L^1}\lesssim1$ by (ii). The same argument, with $g=\pa_e\chi$ and $g=\pa_x\pa_e\chi$, whose data $(0,u\chi)$ and $((\pa_xu)\pa_e\chi,\pa_x(u\chi))$ are given in Lemma \ref{lemmadifferences} (iii), gives $\|\pa_e\chi\|_{\alpha,\beta;L^\infty}\lesssim1$.
\end{proof}

\begin{lemma}[A common majorant for mixed regularity]
\label{lemmamixed}
Let $0<\alpha<2$ and $0<\beta<1$ with $\alpha+2\beta>2$. Let $u_{0,n}\in C^\infty_c(\R^2)$ be real functions such that
\begin{enumerate}
\item $u_{0,n}\to u_0$ in $L^1\cap L^2$;
\item the first moments of $u_{0,n}$ are tight, that is, \eqref{tightmoment} holds with $\theta=1$;
\item $\sup_n\big(\|u_{0,n}\|_{\alpha,\beta;\cK}+\|u_{0,n}\|_{\alpha,\beta;L^1}+\|xu_{0,n}\|_{\alpha,\beta;L^1}+\|yu_{0,n}\|_{\alpha,\beta;L^1}\big)<\infty$.
\end{enumerate}
Then the scattering coefficients $a_n$ of $u_{0,n}$ satisfy, uniformly in $n$ and $k\in\C\setminus\R$,
\be
\label{envelopemixed}
|a_n(k)|\lesssim F_{\alpha,\beta}(k),\qquad |\nabla_ka_n(k)|\lesssim\jb k\,F_{\alpha,\beta}(k),
\ee
and a subsequence of $(a_n)$ converges uniformly on the compact subsets of $\C\setminus\R$.
\end{lemma}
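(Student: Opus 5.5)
The plan is to split the spectral plane into the set $\Omega$ of Lemma \ref{lemmagreenop} (iii) and its compact complement. On $\Omega$ we use the direct formula \eqref{defa} together with the mixed Hölder theory of Lemma \ref{lemmamixednorm}. On the complement we use Lemma \ref{lemmabootstrap}.

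\emph{Uniform invertibility.} By hypothesis (i) and Lemma \ref{lemmaembedding} (i), $u_{0,n}\to u_0$ in $\cK\cap L^1$. Lemma \ref{lemmagreenop} (iii) therefore provides $R_0$ and $\Lambda$ such that, for $n$ large, every $k\in\Omega=\{|k|>R_0\}\cup\{|k_2|<R_0^{-1}\}$ is a regular point of $u_{0,n}$ with $\|T_k^{-1}\|\le\Lambda$. The finitely many remaining $n$ can be absorbed into the constants, since each $a_n$ satisfies \eqref{hypa} by Proposition \ref{propdirect}.

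\emph{Bounds on $\Omega\setminus\R$.} Fix $k$ in this set. By Lemma \ref{lemmamixednorm} (iv) and hypothesis (iii), $\|\chi_k\|_{\alpha,\beta;L^\infty}+\|\pa_e\chi_k\|_{\alpha,\beta;L^\infty}\lesssim1$ uniformly in $n$. The product rule of Lemma \ref{lemmamixednorm} (ii) then gives
$$\|u_{0,n}\chi_k\|_{\alpha,\beta;L^1}+\|u_{0,n}\pa_e\chi_k\|_{\alpha,\beta;L^1}+\|xu_{0,n}\chi_k\|_{\alpha,\beta;L^1}+\|yu_{0,n}\chi_k\|_{\alpha,\beta;L^1}\lesssim1 .$$
Lemma \ref{lemmamixednorm} (iii), evaluated at the frequency $\zeta(k)$, turns \eqref{defa} into $|a_n(k)|\lesssim\jb{\zeta_1}^{-\alpha}\jb{\zeta_2}^{-\beta}=F_{\alpha,\beta}(k)$. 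Applying it to the formula \eqref{dka}, with $|\pa_e\zeta|\le4\jb k$, gives $|\nabla_ka_n|\lesssim\jb kF_{\alpha,\beta}$.

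\emph{Bounds on $K:=\{|k|\le R_0,\ |k_2|\ge R_0^{-1}\}$.} Since $\alpha+2\beta>2$ and $\beta<1$, Lemma \ref{lemmaweights} (i) gives $\|\cI F_{\alpha,\beta}\|_\infty<\infty$. Hence Lemma \ref{lemmabootstrap} (i) applies with $F=CF_{\alpha,\beta}$ and yields $\sup_n\|a_n\|_\infty<\infty$. Lemma \ref{lemmabootstrap} (iii), with $\theta=1$ (hypothesis (ii)) and $K'=K$, gives $\sup_n\sup_K|\nabla a_n|<\infty$. On $K$ both $F_{\alpha,\beta}$ and $\jb k$ are bounded above and below, so \eqref{envelopemixed} holds there as well.

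\emph{Subsequence.} By \eqref{envelopemixed}, the family $(a_n)$ is uniformly bounded and uniformly Lipschitz on every compact subset of $\C\setminus\R$. Arzelà--Ascoli together with a diagonal argument over an exhaustion of $\C\setminus\R$ by compact sets then gives a subsequence converging locally uniformly.

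\emph{Main obstacle.} The hard step is the uniform-in-$k$ claim on $\Omega$ that the constant in Lemma \ref{lemmamixednorm} (iv) depends only on $\Lambda$ and on the mixed norms of $u_{0,n}$. This requires Lemma \ref{lemmadifferences} to be valid at every regular point, and it requires checking that $\pa_e\chi_k$ stays bounded by $\Lambda^2\|u_{0,n}\|_{L^1}$ via \eqref{dkchi}. A secondary subtlety is that on the strip near $\R$ we need $k\notin\R$, which holds by assumption, and continuity of the traces is not needed for the estimate.
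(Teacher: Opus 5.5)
Your proposal is correct and follows the paper's proof essentially step by step. The steps match in order: uniform invertibility on $\Omega$ from Lemma \ref{lemmagreenop} (iii); the Fourier bounds of Lemma \ref{lemmamixednorm} (ii)--(iv) applied to \eqref{defa} and \eqref{dka} on $\Omega\setminus\R$, with the finitely many early indices handled by \eqref{hypa}; Lemma \ref{lemmabootstrap} (i) and (iii) with $\theta=1$ on the compact complement $K$; and Arzel\`a--Ascoli with a diagonal argument. The only cosmetic difference is that you apply Lemma \ref{lemmabootstrap} (iii) just on $K$ and read the local bounds on other compact sets off \eqref{envelopemixed}, which is equally valid (equicontinuity on a compact set follows by covering it with small discs in $\C\setminus\R$).
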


\begin{proof}
{\bf Step 1:} Bounds on $\Omega$. By (i) and Lemma \ref{lemmaembedding} (i), $u_0\in\cK_0\cap L^1$ and $u_{0,n}\to u_0$ in $\cK\cap L^1$. By Lemma \ref{lemmagreenop} (iii), there are $R_0$, $\Lambda$ and $n_0$ such that, for $n\ge n_0$, the operators $T_k$ of $u_{0,n}$ satisfy $\|T_k^{-1}\|\le\Lambda$ on $\Omega=\{|k|>R_0\}\cup\{|k_2|<R_0^{-1}\}$. The finitely many $a_n$ with $n<n_0$ satisfy \eqref{envelopemixed} by \eqref{hypa}, since $\jb{\zeta}^{-4}\le F_{\alpha,\beta}$. Let $n\ge n_0$ and $k\in\Omega\setminus\R$, and write $u:=u_{0,n}$ and $\chi:=\chi_n(k;\cdot)$, where $\chi_n$ is the eigenfunction \eqref{defchi} of $u_{0,n}$. By Lemma \ref{lemmamixednorm} (iv) and (ii), and by (iii), the functions
$$\Phi\in\big\{u\chi,\ u\,\pa_e\chi,\ xu\chi,\ yu\chi\big\},\qquad e\in\{1,i\},$$
satisfy $\|\Phi\|_{\alpha,\beta;L^1}\lesssim1$, uniformly in $n$ and $k$. By Lemma \ref{lemmamixednorm} (iii), $|\hat\Phi(\zeta(k))|\lesssim F_{\alpha,\beta}(k)$. With $\Phi=u\chi$, \eqref{defa} gives $|a_n(k)|\lesssim F_{\alpha,\beta}(k)$. The formula \eqref{dka} expresses $\pa_ea_n(k)$ through $\hat\Phi(\zeta(k))$ for $\Phi\in\{u\,\pa_e\chi,\ xu\chi,\ yu\chi\}$, with coefficients bounded by $|\pa_e\zeta(k)|\le4\jb k$, which gives $|\nabla_ka_n(k)|\lesssim\jb kF_{\alpha,\beta}(k)$.\\

\noindent{\bf Step 2:} Bounds on the compact set $K:=\{|k|\le R_0,\ |k_2|\ge R_0^{-1}\}$. By Step 1, $|a_n|\lesssim F_{\alpha,\beta}$ outside $K$, and $\cI F_{\alpha,\beta}$ is bounded by Lemma \ref{lemmaweights} (i). By (i) and (ii), Lemma \ref{lemmabootstrap} applies with $\theta=1$: its parts (i) and (iii) give $\sup_n\sup_{K'}(|a_n|+|\nabla_ka_n|)<\infty$ for every compact set $K'\subset\C\setminus\R$. Since $F_{\alpha,\beta}$ is bounded below on $K$, \eqref{envelopemixed} holds on $\C\setminus\R$. Finally, every compact subset of $\C\setminus\R$ is covered by finitely many discs contained in $\C\setminus\R$, on which the $a_n$ are uniformly bounded and uniformly Lipschitz, and the Arzel\`a--Ascoli theorem, with a diagonal argument, gives the convergent subsequence.
\end{proof}

\subsection{The mass of the limiting coefficient}

Let $a_n\to a$ locally uniformly. Fatou's lemma gives $\|\cB a\|_{L^2}\le\liminf\|\cB a_n\|_{L^2}=\|u_0\|_{L^2}$. For the reverse inequality, we approximate $a$ by compactly supported coefficients $\tilde a_R$, for which the nonlinear Plancherel identity holds exactly, and we identify the weak limit $u_*$ of the corresponding potentials: the eigenfunctions of $u_*$ and of $u_0$ coincide, and the heat equation forces $u_*=u_0$.

\begin{lemma}[Identification of the limiting mass]
\label{lemmasandwich}
Let $u_{0,n}\in C^\infty_c(\R^2)$ be real with $u_{0,n}\to u_0$ in $L^1\cap L^2$, and let $a_n$ be their scattering coefficients. Assume that there is $F\in L^\infty(\C)$ with $\cI F\in C_0(\C)$ such that $|a_n|\le F$ for all $n$, and that $a_n\to a$ uniformly on the compact subsets of $\C\setminus\R$. Then
\be
\label{conclusionsandwich}
\lim_{n\to\infty}\|\cB a_n-\cB a\|_{L^2}=0,\qquad \|\cB a\|_{L^2}=\|u_0\|_{L^2}.
\ee
\end{lemma}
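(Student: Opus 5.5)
The plan is to prove the two inequalities $\|\cB a\|_{L^2}\le\|u_0\|_{L^2}$ and $\|u_0\|_{L^2}\le\|\cB a\|_{L^2}$, and then to upgrade weak convergence of $\cB a_n$ to strong convergence.

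\emph{Upper bound and reduction.} By Corollary \ref{corsmooth}, $\|\cB a_n\|_{L^2}=\|u_{0,n}\|_{L^2}\to\|u_0\|_{L^2}$. Since $a_n\to a$ pointwise on $\C\setminus\R$, Fatou's lemma in \eqref{bornisometry} gives $\|\cB a\|_{L^2}\le\|u_0\|_{L^2}<\infty$. Moreover, $\widehat{\cB a_n}\to\widehat{\cB a}$ uniformly on compact subsets of $\{\xi\neq0\}$, and the $\cB a_n$ are bounded in $L^2$. Hence $\cB a_n\weakto\cB a$, because functions with $\hat\varphi\in C^\infty_c(\{\xi\neq0\})$ are dense. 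It therefore suffices to prove $\|u_0\|_{L^2}\le\|\cB a\|_{L^2}$: then the norms converge, and weak convergence together with convergence of norms yields the first claim of \eqref{conclusionsandwich}.

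\emph{Compactly supported approximation of $a$.} As in Step 1 of the proof of Proposition \ref{proprapid}, set $\tilde a_R:=\vartheta_R(\varrho_{\eps_R}\ast a)\in C^\infty_c(\C\setminus\R)$, which is real. Here $\varrho_{\eps_R}\ast a-a$ is small on $\supp\vartheta_R$, since $a$ is continuous on $\C\setminus\R$ as a locally uniform limit. On compact subsets of $\C\setminus\R$ we have $|\tilde a_R|\le\sup_{D(k,\eps_R)}F\lesssim1$, and outside a large compact set $|\tilde a_R|\le|a|+o(1)\1_{\supp\vartheta_R}$. Using Lemma \ref{lemmacauchy} (iv), (v) with the majorant $F+\1_{\text{compact}}$, I expect $\|\cI|\tilde a_R-a|\|_\infty\to0$. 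If the smoothed coefficient is not dominated near the axis, I will instead take $\varrho_{\eps_R}\ast(\vartheta_R a)$ with $\eps_R$ smaller than the distance of $\supp\vartheta_R$ to $\R$; this is the technical point to watch. By dominated convergence on each compact set, and since $\int|k_2||a|^2<\infty$, we get $\|\cB\tilde a_R\|_{L^2}\to\|\cB a\|_{L^2}$. Let $u_R$ be reconstructed from $\tilde a_R$. By Proposition \ref{propplancherel}, $\|u_R(0)\|_{L^2}=\|\cB\tilde a_R\|_{L^2}$ is bounded, so along a subsequence $u_R(0)\weakto u_*$ with $\|u_*\|_{L^2}\le\|\cB a\|_{L^2}$. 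It remains to show $u_*=u_0$.

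\emph{Identification via the heat equation (main step).} Let $m$ be the eigenfunction of $a$ at $t=0$, given by Lemma \ref{lemmavekua} (ii) with coefficient $aP$, which is admissible since $|a|\le F$. By the comparison bound \eqref{comparison}, $m_R:=m_{\tilde a_R}(\cdot;0,\cdot)\to m$ uniformly in $(k,X)$. Likewise $m_n\to m$, because $\|\cI|a_n-a|\|_\infty\to0$ by Lemma \ref{lemmacauchy} (v) with majorant $F$. By Proposition \ref{propdirect} (ii), $m_n(\cdot;0,\cdot)=\chi_n$, which solves \eqref{eqchi} with potential $u_{0,n}$. By Proposition \ref{propcompact} (iii), $m_R$ solves \eqref{heatm} with potential $u_R(0)$. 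Fix $k$ and pass to the limit in $\mathcal D'(\R^2_X)$. The linear terms converge because $m_R,m_n\to m$ uniformly. The products satisfy $u_{0,n}\chi_n\to u_0m$ and $u_R(0)m_R\weakto u_*m$ locally, as products of a weakly or strongly $L^2_{\rm loc}$-convergent sequence with a uniformly convergent bounded one. This gives
\[
(\pa_y-\pa_x^2-2k\pa_x)m=u_0m=u_*m .
\]
Hence $(u_*-u_0)m(k;0,\cdot)=0$ for every $k$. Since $|m-1|\le e^{2M}\cI F(k)\to0$ as $|k|\to\infty$, by \eqref{boundeigen} and $\cI F\in C_0$, we can choose $k$ with $|m-1|\le\frac12$ everywhere, which forces $u_*=u_0$. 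Thus $\|u_0\|_{L^2}\le\|\cB a\|_{L^2}$.

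I expect the main obstacle to be the domination of $\tilde a_R$ uniformly in $R$, so that the comparison estimate applies and the eigenfunctions converge uniformly. The passage to the limit in the heat equation itself is routine once that uniform convergence is in hand.
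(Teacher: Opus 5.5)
Your outline follows the paper's proof step by step. The steps are: Fatou and Corollary \ref{corsmooth} for the upper bound; weak convergence $\cB a_n\weakto\cB a$ tested on $\hat\varphi\in C^\infty_c(\{\xi\neq0\})$; a compactly supported approximation $\tilde a_R$ of $a$ fed into Proposition \ref{propplancherel}; a weak limit $u_*$ of the reconstructed potentials; and $u_*=u_0$ from the two heat equations at one large $k$ where $|m-1|\le\frac12$. The step that does not go through as you justify it is $\|\cI|\tilde a_R-a|\|_\infty\to0$. The same applies to $8\int|k_2||\tilde a_R-a|^2\,dA\to0$, which you also need for $\|\cB\tilde a_R\|_{L^2}\to\|\cB a\|_{L^2}$.

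Lemma \ref{lemmacauchy} (v) needs a single majorant independent of $R$. Here $F$ is merely bounded with $\cI F\in C_0(\C)$ and has no regularity, so a mollification of $a$ is not dominated by a multiple of $F$. This differs from Proposition \ref{proprapid}, where \eqref{hypa} made $\jb{\zeta}$ comparable at nearby points. A majorant of the form $F+\1_{\rm compact}$ would have to use the growing set $\supp\vartheta_R\subset D(0,2R)$. An error of size $\eta_R$ on $D(0,2R)$ has Riesz potential as large as $4R\eta_R$, which is not small if you only know $\eta_R=o(1)$. Moving the mollifier outside the cutoff fixes the support near the axis, but not this.

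The remedy is to choose $\eps_R$ quantitatively, as the paper does. Set $b_R:=\vartheta_Ra$ and $\tilde a_R:=\varrho_{\eps}\ast b_R$ with $\eps\le(8R)^{-1}$.
\begin{enumerate}
\item The piece $|b_R-a|\le F\1_{\{|k|>R\}\cup\{|k_2|<R^{-1}\}}$ is handled by the first claims of Lemma \ref{lemmacauchy} (iv), (v), and by dominated convergence in $\int|k_2||b_R-a|^2dA$.
\item The function $g:=|\tilde a_R-b_R|$ is bounded by $2\|F\|_\infty$, supported in $\{|k|\le3R\}$, and $\|g\|_{L^1}\to0$ as $\eps\to0$. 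Hence $\|\cI g\|_\infty\le2(\|g\|_\infty\|g\|_{L^1})^{\frac12}$ and $\int|k_2|g^2dA\le3R\|g\|_\infty\|g\|_{L^1}$ can be made as small as desired by the choice of $\eps=\eps_R$.
\end{enumerate}
In your ordering, demanding $\sup_{\supp\vartheta_R}|\varrho_{\eps_R}\ast a-a|\le R^{-2}$ works equally well.

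Once $\|\cI|\tilde a_R-a|\|_\infty\to0$, the bound on $\|\cI|\tilde a_R|\|_\infty$ needed in \eqref{comparison} is automatic. So no uniform domination of $\tilde a_R$ is required, and the obstacle you flagged disappears.

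A small point: also take the fixed $k$ with $|k|>R_0$. Then it is a regular point of $u_{0,n}$ for $n$ large, and $\chi_n=T_k^{-1}1$ solves \eqref{eqchi} directly. This avoids invoking the equation at irregular points, where $\chi_n$ is only defined through Theorem \ref{thmgrin}.
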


\begin{proof}
{\bf Step 1:} Upper bound. By Corollary \ref{corsmooth}, $\|\cB a_n\|_{L^2}=\|u_{0,n}\|_{L^2}\to\|u_0\|_{L^2}$. Since $a_n\to a$ pointwise on $\C\setminus\R$, Fatou's lemma gives
\be
\label{upperbound}
\|\cB a\|_{L^2}^2=8\int_\C|k_2||a|^2\,dA\le\liminf_{n\to\infty}8\int_\C|k_2||a_n|^2\,dA=\|u_0\|_{L^2}^2 ;
\ee
in particular $\cB a\in L^2$. If $\hat\varphi\in C^\infty_c(\{\xi\neq0\})$, then, by the Plancherel identity, \eqref{defborn} and \eqref{changevariables},
$$\big(\cB a_n,\varphi\big)=8\int_\C i\sgn(k_2)\,a_n(k)\,\overline{\hat\varphi(\zeta(k))}\,|k_2|\,dA(k)$$
is an integral over a fixed compact subset of $\C\setminus\R$, which converges to $(\cB a,\varphi)$. Since these functions $\varphi$ are dense in $L^2$ and $(\cB a_n)$ is bounded in $L^2$, $\cB a_n\weakto\cB a$.\\

\noindent{\bf Step 2:} The eigenfunctions. The limit $a$ is continuous on $\C\setminus\R$, real, and satisfies $|a|\le F$, so that $\cI|a|\in C_0(\C)$ by Lemma \ref{lemmacauchy} (iv). Let $m_n$ and $m$ be the eigenfunctions \eqref{eqm} of $a_n$ and $a$ at $t=0$, given by Lemma \ref{lemmavekua} (ii), and $M:=\|\cI F\|_\infty$. By Lemma \ref{lemmacauchy} (v), applied with the majorant $2F$, $\|\cI|a_n-a|\|_\infty\to0$, and by \eqref{comparison}, $\sup_{k,X}|m_n(k;0,X)-m(k;0,X)|\to0$.\\

\noindent{\bf Step 3:} Compactly supported approximation of $a$. Let $\vartheta_R$ be the cutoff of Step 1 of the proof of Proposition \ref{proprapid}, $b_R:=\vartheta_Ra$ and $\tilde a_R:=\varrho_\eps\ast b_R$, where $\eps=\eps_R\le(8R)^{-1}$ will be chosen small. Then $\tilde a_R\in C^\infty_c(\C\setminus\R)$ is real, since $\varrho$ is radial. By Lemma \ref{lemmacauchy} (iv) and (v), $\|\cI|b_R-a|\|_\infty\le\|\cI\big(F\1_{\{|k|>R\}\cup\{|k_2|<R^{-1}\}}\big)\|_\infty\to0$, and by \eqref{upperbound} and dominated convergence, $\int|k_2||b_R-a|^2dA\to0$ as $R\to\infty$. The function $g:=|\tilde a_R-b_R|$ is bounded by $2\|F\|_\infty$, supported in $\{|k|\le3R\}$, and tends to zero in $L^1(\C)$ as $\eps\to0$. By the last bound of \eqref{potentialsets}, $\|\cI g\|_\infty\le2(\|g\|_\infty\|g\|_{L^1})^{\frac12}$, and $\int|k_2|g^2\,dA\le3R\|g\|_\infty\|g\|_{L^1}$. Hence we can choose $\eps_R$ such that
$$\|\cI|\tilde a_R-a|\|_\infty\to0,\qquad 8\int_\C|k_2||\tilde a_R-a|^2\,dA\to0\qquad(R\to\infty).$$
Let $\tilde m_R$ and $\tilde u_R$ be the eigenfunction and the function reconstructed from $\tilde a_R$ at $t=0$. By \eqref{comparison}, $\tilde m_R\to m$ uniformly in $(k,X)$, and by Proposition \ref{propplancherel},
$$\|\tilde u_R\|_{L^2}^2=8\int_\C|k_2||\tilde a_R|^2\,dA\longrightarrow8\int_\C|k_2||a|^2\,dA=\|\cB a\|_{L^2}^2 .$$
Up to a subsequence, $\tilde u_R\weakto u_*$ in $L^2$.\\

\noindent{\bf Step 4:} The heat equation. Since $u_{0,n}\to u_0$ in $L^1\cap L^2$, Lemma \ref{lemmaembedding} (i) gives $u_0\in\cK_0\cap L^1$ and $u_{0,n}\to u_0$ in $\cK\cap L^1$, and by Lemma \ref{lemmagreenop} (iii) there is $R_0$ such that every $k$ with $|k|>R_0$ is a regular point of $u_{0,n}$ for $n$ large. Since $\cI F\in C_0(\C)$, we can fix $k\notin\R$ with $|k|>R_0$ and $e^{2M}\cI F(k)<\frac12$. By Proposition \ref{propcompact} (iii), $\pa_y\tilde m_R-\pa_x^2\tilde m_R-2k\pa_x\tilde m_R-\tilde u_R\tilde m_R=0$ on $\R^2$. Since $\tilde m_R\to m$ uniformly and $\tilde u_R\weakto u_*$ in $L^2$, we have $\tilde u_R\tilde m_R\to u_*m$ in $\mathcal D'(\R^2)$, and therefore
$$\pa_ym-\pa_x^2m-2k\pa_xm-u_*m=0\qquad\mbox{in}\ \mathcal D'(\R^2).$$
By Proposition \ref{propdirect} (ii) and \eqref{eqchi}, $\pa_ym_n-\pa_x^2m_n-2k\pa_xm_n-u_{0,n}m_n=0$, and Step 2 together with $u_{0,n}\to u_0$ in $L^2$ gives the same equation with $u_0$ in place of $u_*$. Subtracting, $(u_*-u_0)\,m(k;0,\cdot)=0$. By \eqref{boundeigen}, $|m(k;0,X)-1|\le e^{2M}\cI F(k)<\frac12$ for every $X$, hence $u_*=u_0$.\\

\noindent{\bf Step 5:} Conclusion. By weak lower semicontinuity, $\|u_0\|_{L^2}=\|u_*\|_{L^2}\le\liminf_R\|\tilde u_R\|_{L^2}=\|\cB a\|_{L^2}$. Together with \eqref{upperbound}, $\|\cB a\|_{L^2}=\|u_0\|_{L^2}=\lim_n\|\cB a_n\|_{L^2}$, and the weak convergence $\cB a_n\weakto\cB a$ becomes strong.
\end{proof}

\subsection{Proof of Theorem \ref{thmmain} under condition (ii)}

The regularity $s_1$ in $x$ and $s_2$ in $y$ of condition (ii) is split into a part $(d_1,d_2)$, used to control the direct problem in the norm $\|\cdot\|_\cK$ through \eqref{bernsteinK}, and a part $(\alpha,\beta)$, which gives the decay $F_{\alpha,\beta}$ of the coefficients. The first part requires $d_1+2d_2>1$ and the second one $\alpha+2\beta>2$, which explains the condition $s_1+2s_2>3$. The moment $\jb Xu_0$ provides the regularity of the coefficients in $k$, through Lemma \ref{lemmabootstrap} (iii).

\begin{proof}[Proof of Theorem \ref{thmmain} under condition (ii)]
{\bf Step 1:} Choice of the exponents. Choose $0<s_1'<\min(s_1,3)$ and $0<s_2'<\min(s_2,2)$ with $s':=s_1'+2s_2'>3$. The open intervals
$$I_1:=\big(\max(0,s_1'-2),\,\min(1,s_1')\big),\qquad I_2:=\big(\max(0,s_2'-1),\,\min(1,s_2')\big)$$
are nonempty, and $I_1+2I_2$ is an open interval. Its supremum $\min(1,s_1')+2\min(1,s_2')$ is larger than $1$: since $s'>3$, we have $s_1'\ge1$ or $s_2'\ge1$; in the first case the supremum is $1+2\min(1,s_2')$, and in the second one it is $\min(1,s_1')+2$. Its infimum $\max(0,s_1'-2)+2\max(0,s_2'-1)$ is smaller than $s'-2$: according to whether $s_1'>2$ and whether $s_2'>1$, the difference between $s'-2$ and the infimum equals $s'-2$, $2s_2'$, $s_1'$ or $2$, which are all positive. Since also $s'-2>1$, we can choose $d_1\in I_1$ and $d_2\in I_2$ with $1<d_1+2d_2<s'-2$. Let
$$p:=(1-d_1)^{-1},\qquad q:=(1-d_2)^{-1},\qquad \alpha:=s_1'-d_1,\qquad \beta:=s_2'-d_2 .$$
Then $1<p,q<\infty$, $\frac1p+\frac2q=3-(d_1+2d_2)<2$, $0<\alpha<2$, $0<\beta<1$ and $\alpha+2\beta=s'-(d_1+2d_2)>2$. By \eqref{bernsteinK} with these $p,q$, for every $f$,
\be
\label{NKbound}
\begin{gathered}
N_{\alpha,\beta}(f;\cK)\lesssim\sum_{j,l\ge0}2^{(\alpha+d_1)j+(\beta+d_2)l}\|\Delta^x_j\Delta^y_lf\|_{L^1}=\|f\|_{B^{s_1',s_2'}_{1,1}},\\
N_{\alpha,\beta}(f;L^1)\le\|f\|_{B^{s_1',s_2'}_{1,1}} .
\end{gathered}
\ee

\noindent{\bf Step 2:} Approximation. Let $f_0:=\jb Xu_0$, which belongs to $B^{s_1,s_2}_{1,1}\subset B^{s_1',s_2'}_{1,1}$. By \eqref{NKbound} and Lemma \ref{lemmamixednorm} (i), $\|f_0\|_{\alpha,\beta;\cK}+\|f_0\|_{\alpha,\beta;L^1}<\infty$; in particular $f_0\in L^1$. Let $u_{0,n}$ be the functions given by Lemma \ref{lemmaapprox} with $\theta=1$. By Lemma \ref{lemmaapprox} (i), $u_{0,n}\to u_0$ in $L^1\cap L^2$, and the first moments of $u_{0,n}$ are tight. Let $c\in\{\jb X^{-1},x\jb X^{-1},y\jb X^{-1}\}$. Since $\|c\,\mu_Z\|_{W^{3,\infty}}+\|\vartheta(\cdot/n)\|_{W^{3,\infty}}\lesssim1$ for $|Z|\le1$, the identity \eqref{weightconv}, Lemma \ref{lemmamixednorm} (ii), the invariance under translations and Minkowski's inequality give, for $Y\in\{L^1,\cK\}$,
$$\|c\jb Xu_{0,n}\|_{\alpha,\beta;Y}\le\sup_{|Z|\le1}\big\|c\,\mu_Z\,\tau_{-Z}\big(\vartheta(\cdot/n)f_0\big)\big\|_{\alpha,\beta;Y}\lesssim\|f_0\|_{\alpha,\beta;Y},$$
uniformly in $n$. Since $c\jb X\in\{1,x,y\}$, the assumptions of Lemma \ref{lemmamixed} hold.\\

\noindent{\bf Step 3:} Uniform asymptotics and identification of the state. By Lemma \ref{lemmamixed}, after extraction of a subsequence, the coefficients $a_n$ of $u_{0,n}$ satisfy \eqref{envelopemixed} and converge to a limit $a$ uniformly on the compact subsets of $\C\setminus\R$. Each $a_n$ satisfies \eqref{hypa} by Proposition \ref{propdirect}, the function $F:=CF_{\alpha,\beta}$ satisfies \eqref{hypF1} and, by Lemma \ref{lemmaweights} (i), the conditions \eqref{weightsF}, and the reconstructed functions $u_n$ are the solutions of \eqref{kp} with data $u_{0,n}$, by Corollary \ref{corsmooth}. Proposition \ref{propcone} gives \eqref{hypcone} with $g_n:=\cB a_n$, for every compact set $V\subset\cV$.
Lemma \ref{lemmasandwich}, applied with the majorant $F$, gives $\cB a_n\to u_+:=\cB a$ in $L^2$ and $\|u_+\|_{L^2}=\|u_0\|_{L^2}$.\\

\noindent{\bf Step 4:} Conclusion. By the continuity of the $L^2$ flow \cite{Bourgain}, $u_n(t)\to u(t)$ in $L^2$ for every $t$, and $\|u(t)\|_{L^2}=\|u_0\|_{L^2}=\|u_+\|_{L^2}$ by \eqref{mass}. Lemma \ref{lemmaweak} gives $\|u(t)-S(t)u_+\|_{L^2}\to0$ as $t\to+\infty$ and as $t\to-\infty$. This proves Theorem \ref{thmmain} under condition (ii), with $u_+=u_-=\cB a$.
\end{proof}
\section{Half a moment: proof of Theorem \ref{thmmain} under condition~(i)}
\label{sectionhalf}

In this section we prove Theorem \ref{thmmain} under condition (i). As in Section \ref{sectiontensor}, we approximate $u_0$ by compactly supported smooth data $u_{0,n}$, and we need the conclusion of Proposition \ref{propcone} uniformly in $n$. Under condition (i), however, the data have only half a spatial moment, and their coefficients $a_n$ are not uniformly differentiable: when the Born frequency $\zeta(k)$ moves by $\delta$, the coefficient $a_n(k)$, which is a Fourier transform at $\zeta(k)$, moves by $O(\delta^{\frac12})$. The derivative bound of \eqref{hypF1} fails, while the proof of Proposition \ref{propcone} used it in the integrations by parts of Steps 2, 4 and 5.

We replace $a_n$ in these integrations by parts by its average $a_{n,t}$ over discs of radius $\ell_t(k)$, the scale on which the phase $P=e^{i\phi}$ oscillates, that is, $\ell_t(k)\sim|\pa_k\phi(k)|^{-1}$. Let us explain the balance. On a disc of radius $\ell_t$, the Born frequency moves by $O(\ell_t\jb k)$, so that the error $a_n-a_{n,t}$ is of relative size $(\ell_t\jb k)^{\frac12}$. In the integration by parts, the derivative of $a_{n,t}$ costs the factor $\ell_t^{-1}(\ell_t\jb k)^{\frac12}$, and the oscillation gains $|\pa_k\phi|^{-1}\sim\ell_t$, which gives the same relative size $(\ell_t\jb k)^{\frac12}$. Away from the critical points $k_\pm(v)$ of Lemma \ref{lemmaphase}, $|\pa_k\phi|\sim|t|\jb k^2$, so that $(\ell_t\jb k)^{\frac12}\sim|t|^{-\frac12}\jb k^{-\frac12}$. Near $k_\pm(v)$, $|\pa_k\phi|\sim|t||k-k_\pm(v)|$, so that $(\ell_t\jb k)^{\frac12}\sim|t|^{-\frac12}|k-k_\pm(v)|^{-\frac12}$, and we stop at the distance $|t|^{-\frac12}$ from $k_\pm(v)$, where the discs of radius $|t|^{-\frac12}$ are small enough to be estimated directly. Thus the relative size is $|t|^{-\frac12}W_t(k)$, where the weight $W_t$ is bounded by $\jb k^{-\frac12}$ away from the critical points and has the integrable singularity $|k-k_\pm(v)|^{-\frac12}$ near them. After integration against the majorant of $a_n$, both the regularization error and the term produced by the integration by parts are $O(|t|^{-\frac12})$, and even $o(|t|^{-\frac12})$, since the H\"older moduli of the coefficients $a_n$ tend to zero at small scales. This gives the quantitative decay $\|m_n-1\|_\infty+\|\pa_xm_n\|_\infty\lesssim|t|^{-\frac12}$, instead of the qualitative decay $\eps(t)\to0$ of Step 2 of the proof of Proposition \ref{propcone}. In the remainder \eqref{remainderfn}, the factor $B_n=O\big((1+|k_2|)|t|^{-\frac12}\big)$ and the regularization error then combine into $o(|t|^{-1})$, which is what we need.

The decay $\jb k^{-\frac12}$ of $W_t$ far from the critical points is needed at spectral infinity: the natural majorant $(1+|k_2|)\jb{\zeta}^{-\frac32}$ of $(1+|k_2|)|a_n|$ is not integrable on the shells $\{\jb{\zeta(k)}\sim2^j\}$, which have infinite area near the real axis. Even with the factor $\jb k^{-\frac12}$, the decay $\jb{\zeta}^{-\frac32}$ is critical: the integrals of $(1+|k_2|)\jb{\zeta}^{-\frac32}\jb k^{-\frac12}$ over these shells are bounded uniformly in $j$ (see the proof of Lemma \ref{lemmashell} (ii)), but they are not summable in $j$, since the part of the shell where $|k_1|\le\frac18$ and $|k_2|\sim2^j$ already contributes an amount of order one. We therefore keep track of the dyadic shells, and the summability over the shells is provided by the summability in the Besov space $B^2_{1,1}$ of condition (i).

\subsection{Dyadic shells and the main estimate}

For $j\ge0$ we define the dyadic shells of Born frequencies
\be
\label{defshell}
A_j:=\big\{k\in\C\setminus\R:\ 2^j\le\jb{\zeta(k)}<2^{j+1}\big\}.
\ee
They form a partition of $\C\setminus\R$, and $|k_2|=\frac12|\zeta_1(k)|<2^j$ on $A_j$. The differential of $\zeta$ has norm at most $4\jb k$, so that $|\zeta(k')-\zeta(k)|\le8|k'-k|\jb k$ for $|k'-k|\le1$; hence $\delta:=|k'-k|\jb k$ measures the displacement of the Born frequency.

\begin{proposition}[The nonlinear remainder for coefficients with half a derivative]
\label{propcritical}
Let $(a_n)_{n\ge1}$ be a family of coefficients which satisfy \eqref{hypa}, let $u_n$ be the functions reconstructed from $a_n$ by \eqref{reconstruction}, and $g_n:=\cB a_n$. Assume that there are $C_*>0$, $\delta_*\in(0,1]$ and nondecreasing functions $d_j:(0,\delta_*]\to[0,\infty)$, $j\ge0$, with
\be
\label{hypshell1}
\lim_{\delta\to0}\sum_{j\ge0}d_j(\delta)=0,
\ee
such that, for all $n\ge1$ and $k,k'\in\C\setminus\R$,
\be
\label{hypshell2}
|a_n(k)|\le C_*\jb{\zeta(k)}^{-\frac32},\qquad |a_n(k')-a_n(k)|\le\delta^{\frac12}\,2^{-\frac{3j}2}\,d_j(\delta),
\ee
where the second bound is required whenever $k\in A_j$, $k_2k'_2>0$ and $0<\delta:=|k'-k|\jb k\le\delta_*$.
Then \eqref{hypcone} holds for every compact set $V\subset\cV$.
\end{proposition}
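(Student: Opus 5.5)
We follow the structure of the proof of Proposition \ref{propcone}. Steps 1 and 3 (uniform bounds on $m_n,\pa_xm_n$ and the cancellation \eqref{cancellation}) carry over verbatim: the majorant $C_*\jb{\zeta}^{-3/2}\le C_*F_{3/4,3/4}$ satisfies \eqref{weightsF} by Lemma \ref{lemmaweights} (i), since $\frac34+\frac32>2$. Every place where the derivative bound of \eqref{hypF1} was used is handled with a regularized coefficient instead. Fix $V$, $|t|\ge1$, $X=tv$. Let $\ell_t(k):=c\min\big(|t|^{-1}\jb k^{-2},\,|t|^{-1}|k-k_\pm|^{-1}\big)$, truncated below at the scale $|t|^{-1/2}$ near $k_\pm(v)$, and also capped by $\frac12|k_2|$ so that the discs stay in one half-plane. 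We set $a_{n,t}:=\varrho_{\ell_t(k)}\ast a_n$, a mollification with a smoothly varying radius.

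\textbf{Regularization bounds.} From \eqref{hypshell2}, for $k\in A_j$ we get
\[
|a_n-a_{n,t}|\lesssim (\ell_t\jb k)^{\frac12}2^{-\frac{3j}2}d_j\big((\ell_t\jb k)\wedge\delta_*\big),\qquad |\nabla a_{n,t}|\lesssim\ell_t^{-1}(\ell_t\jb k)^{\frac12}2^{-\frac{3j}2}d_j(\cdot),
\]
where we use $\int\nabla\varrho=0$ and that $|\nabla\ell_t|\lesssim1$. Near the real axis, where the cap $\frac12|k_2|$ is active, we would instead use the pointwise bound and Lemma \ref{lemmacauchy} (v), since that region carries small potential. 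Away from $D(k_\pm,|t|^{-1/2})$ we have $(\ell_t\jb k)^{1/2}\lesssim|t|^{-1/2}W_t(k)$, with $W_t\lesssim\jb k^{-1/2}$ far from $k_\pm$ and $W_t\lesssim|k-k_\pm|^{-1/2}$ near them.

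\textbf{Quantitative decay of $m_n$.} We redo \eqref{oscillatory} by splitting $c_nP=(c_n-c_{n,t})P+c_{n,t}P$, with $c_n\in\{a_n,k_2a_n\}$. The first piece is bounded by $\cI$ of the error. For the second we apply Lemma \ref{lemmacauchy} (vi) away from small discs around $k_\pm$; the discs themselves have potential $\lesssim|t|^{-1/2}$ by \eqref{potentialsets}. Both contributions reduce to bounds of the form
\[
\sup_k\ \cI\Big((1+|k_2|)\,|t|^{-\frac12}W_t\sum_j 2^{-\frac{3j}2}d_j\,\1_{A_j}\Big).
\]
We therefore need a shell lemma: $\sup_k\cI\big((1+|k_2|)2^{-3j/2}\jb k^{-1/2}\1_{A_j}\big)\lesssim1$ uniformly in $j$, together with the corresponding integral bound $\int_{A_j}(1+|k_2|)2^{-3j/2}\jb k^{-1/2}\,dA\lesssim1$. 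We would prove it by the rectangle decomposition of Lemma \ref{lemmaweights}, Step 1; the $|k-k_\pm|^{-1/2}$ singularity is harmless. Summing over $j$ with \eqref{hypshell1} then gives $\|m_n-1\|_\infty+\|\pa_xm_n\|_\infty\lesssim|t|^{-1/2}$ uniformly in $n$, as well as $o(|t|^{-1/2})$ for the regularization term.

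\textbf{The remainder.} In \eqref{remainderfn} we write $f_n=(a_n-a_{n,t})B_n+a_{n,t}B_n$. Since $|B_n|\lesssim(1+|k_2|)|t|^{-1/2}$, the first term contributes
\[
|t|^{-\frac12}\int(1+|k_2|)|a_n-a_{n,t}|\,dA\lesssim|t|^{-1}\sum_j d_j(\cdot)=o(|t|^{-1}),
\]
by the shell integral bound. For $a_{n,t}B_n$ we repeat Steps 4 and 5 of Proposition \ref{propcone}. Near $k_\pm$, the disc $D(k_\pm,|t|^{-1/2})$ is estimated directly by area times sup, giving $|t|^{-1}\cdot|t|^{-1/2}$. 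Outside it we integrate by parts once: $\pa_k(a_{n,t}B_n)/\pa_k\phi$ is controlled by $|\nabla a_{n,t}||B_n|\ell_t+|a_{n,t}||\pa_kB_n|\ell_t$, plus the term where the derivative falls on $1/\pa_k\phi$. This gives $|t|^{-1/2}\cdot|t|^{-1/2}W_t\sum_j2^{-3j/2}d_j$ weights, which integrate to $o(|t|^{-1})$. The term $|a_{n,t}||\pa_kB_n|\ell_t\lesssim|t|^{-1/2}\ell_t$ is even better. The boundary term on $\R$ is as before, since $B_n=\overline{\pa_xm_n}=O(|t|^{-1/2})$ there.

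\textbf{Main obstacle.} The hardest step is the shell lemma together with the near-axis region. The integrals are only uniformly bounded per shell, and all the summability must come from the $d_j$. So we must verify that none of the estimates loses a factor $\log$ or $j$; handling the cap $\ell_t\le\frac12|k_2|$ is where that is most at risk.
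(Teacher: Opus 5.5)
Your plan follows the paper's proof. It has one simplification, one harmless variant, and one step that fails as written.

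\textbf{Common architecture.} As in the paper, you average $a_n$ at the oscillation scale $\ell_t\sim|\pa_k\phi|^{-1}$, truncated at $\rho=|t|^{-\frac12}$ near $k_\pm(v)$. You use the per-shell bound $\int_{A_j}(1+|k_2|)W_t\,dA+\sup_k\cI\big((1+|k_2|)W_t\1_{A_j}\big)\lesssim2^{\frac{3j}2}$, which is Lemma \ref{lemmashell} (ii), and sum over shells through the $d_j$. You prove the rate $|t|^{-\frac12}$ for $m_n-1$ and $\pa_xm_n$. Finally you split the remainder into the averaging error and the averaged part, and integrate the latter by parts using \eqref{cancellation} and the boundary term on $\R$.

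\textbf{Difference 1: decay of $m_n$.} You obtain it from $\|\cC(c_nP)\|_\infty$ via Lemma \ref{lemmavekua} (iii), exactly as in Step 2 of Proposition \ref{propcone}. The paper instead estimates $\cC(c_nPf)$ with $f\in\{\overline{m_n},\overline{\pa_xm_n}\}$. That forces it to prove the uniform H\"older bound \eqref{holderm} and to average $f$ at scale $|t|^{-1}$ in Lemma \ref{lemmaoschalf}. Your reduction is valid and removes that layer.

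\textbf{Difference 2: staying in one half-plane.} You cap $\ell_t\le\frac12|k_2|$, whereas the paper averages $a_n^\pm\circ\Pi_\pm$ at full scale. The cap works, but where it is active the bound $|\pa_k\phi|^{-1}\lesssim\ell_t$ you use in the remainder is false. There you only have $|\nabla a_{n,t}|/|\pa_k\phi|\lesssim|t|^{-1}|k_2|^{-\frac12}\jb k^{-\frac32}$. This is still integrable over the cap region $\{|k_2|\lesssim|t|^{-1}\jb k^{-2}\}\subset A_0$, so that region contributes only $O(|t|^{-2})$.

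\textbf{The gap: the real axis in the oscillatory Cauchy transforms.} Since $a_n$, hence $a_{n,t}$, jumps across $\R$, Lemma \ref{lemmacauchy} (vi) can only be applied after cutting out a strip $\{|k_2|<\eps\}$. You propose to control that strip with Lemma \ref{lemmacauchy} (v), but that lemma gives smallness without any rate.

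A rate is indispensable here. Let $N(t):=\sup_n\sup_{X/t\in V}(\|m_n-1\|_\infty+\|\pa_xm_n\|_\infty)$ and $\sigma_t:=\sum_jd_j(C|t|^{-\frac12})$. The averaging error and the term where $\pa_k$ falls on $a_{n,t}$ in the remainder are only $O\big(N(t)\,|t|^{-\frac12}\sigma_t\big)$. So you need $N(t)=O(|t|^{-\frac12})$, not merely $N(t)\to0$. This is why the qualitative argument sufficed in Proposition \ref{propcone} but does not here.

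A strip of fixed width gives a potential that does not decay in $t$. A strip of shrinking width $\eps$ creates, where $\dbar$ hits the cutoff, a density of size $\eps^{-1}|\pa_k\phi|^{-1}\lesssim\eps^{-1}|t|^{-1}\jb k^{-2}$ on $\{\eps\le|k_2|\le2\eps\}$.

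What is missing is the quantitative strip estimate $\sup_k\cI\big((\jb{\zeta}^{-\frac32}+\jb k^{-2})\1_{\{|k_2|<\eps\}}\big)\lesssim\eps\log\frac2\eps$. It follows from the rectangle bound in \eqref{potentialsets} by a dyadic decomposition in $k_1$ and $k_2$; this is \eqref{stripbound}. Take $\eps\sim|t|^{-1}$, chosen so that the strip contains your cap region. Then both the strip and the cutoff derivative contribute $O(|t|^{-1}\log|t|)$, and your argument closes.

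The strip may be discarded only in these potential estimates. In the remainder itself the majorant is not integrable on $\{|k_2|<\eps\}$ (its $k_1$-integral is of order $|k_2|^{-1}$). So integrating by parts all the way up to $\R$, as you do there, is necessary.
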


We write
\be
\label{envelopehalf}
F(k):=C_*\jb{\zeta(k)}^{-\frac32},\qquad F_1:=(1+|k_2|)F .
\ee
By the remark following \eqref{defweight}, $F\le C_*F_{\frac34,\frac34}$, and Lemma \ref{lemmaweights} (i), applied to $F_{\frac34,\frac34}$, together with Lemma \ref{lemmacauchy} (iv), shows that $F$ satisfies the conditions \eqref{weightsF}. Moreover $1+|k_2|\le\sqrt2\jb{\zeta_1}\le\sqrt2\jb{\zeta}$, so that $F_1\le\sqrt2C_*\jb{\zeta}^{-\frac12}$ is bounded. The second bound of \eqref{hypshell2} replaces the derivative bound of \eqref{hypF1}.

\subsection{Regularization at the oscillation scale}

Throughout this subsection and the next one, we fix a compact set $V\subset\cV$ and a family $(a_n)$ as in Proposition \ref{propcritical}. We consider $|t|\ge1$ and $X=tv$ with $v\in V$, and we write $k_\pm=k_\pm(v)$ for the critical points of Lemma \ref{lemmaphase}, $r_0=\frac{c_V}8$ and $D_\pm=D(k_\pm,r_0)$ as in Subsection \ref{subsectioncone}, and $\rho:=|t|^{-\frac12}$. The functions $\ell_t$, $W_t$, $a_{n,t}$ and $\vartheta_t$ defined below depend on $v$ through $k_\pm$. All constants depend on $V$, $C_*$, $\delta_*$ and $(d_j)$, and in Subsection \ref{subsectionhalfproof} also on a H\"older exponent $\alpha$, but not on $n$, $t$ and $X$.

\medskip
\noindent{\it The oscillation scale.} We define
\be
\label{defellt}
\ell_t(k):=\frac{1}{|t|\,(\rho^2+|k-k_+|^2)^{\frac12}(\rho^2+|k-k_-|^2)^{\frac12}},
\ee
and the weight $W_t(k):=\big(|t|\,\ell_t(k)\jb k\big)^{\frac12}$, so that $(\ell_t(k)\jb k)^{\frac12}=|t|^{-\frac12}W_t(k)$. By Lemma \ref{lemmaphase} (i), $|\pa_k\phi|=12|t||k-k_+||k-k_-|$. Since $|k_\pm|\le C_V$ and $|k_+-k_-|\ge2c_V$, there is a constant $C_\ell\ge1$ such that
\be
\label{propellt}
\begin{gathered}
|\pa_k\phi(k)|\ge\frac6{\ell_t(k)}\quad\mbox{if}\ |k-k_+|\ge\rho\ \mbox{and}\ |k-k_-|\ge\rho,\\
\ell_t(k)\jb k\le C_\ell\rho,\qquad |\nabla\ell_t(k)|\le C_\ell,
\end{gathered}
\ee
and
\be
\label{propW}
W_t\le C_\ell|k-k_\pm|^{-\frac12}\ \mbox{on}\ D_\pm,\qquad W_t\le C_\ell\jb k^{-\frac12}\ \mbox{on}\ \C\setminus(D_+\cup D_-).
\ee
Indeed, the first bound of \eqref{propellt} follows from $s\ge(\rho^2+s^2)^{\frac12}/\sqrt2$ for $s\ge\rho$. For the second one, the smaller of the two factors $(\rho^2+|k-k_\pm|^2)^{\frac12}$ is at least $\rho$, and the larger one is at least $\max(c_V,|k|-C_V)\gtrsim\jb k$, while $|t|^{-1}\rho^{-1}=\rho$. For the third one, $|\nabla\ell_t|\le\ell_t\sum_\pm(\rho^2+|k-k_\pm|^2)^{-\frac12}$. One of the distances $|k-k_+|$ and $|k-k_-|$ is at least $c_V$, say $|k-k_\mp|\ge c_V$. Then the term of $k_\pm$ in this sum is at most $|t|^{-1}\rho^{-2}c_V^{-1}$, and the term of $k_\mp$ is at most $|t|^{-1}\rho^{-1}c_V^{-2}$; both are at most $c_V^{-2}$, since $|t|\rho^2=1$ and $\rho,c_V\le1$. The bounds \eqref{propW} follow in the same way: on $D_\pm$ we have $\jb k\lesssim1$ and $|k-k_\mp|\ge c_V$, and outside $D_+\cup D_-$ both distances $|k-k_\pm|$ are $\gtrsim\jb k$.

The following lemma collects the Riesz potentials and integrals which appear below. Part (i) controls the neighborhoods of the critical points, part (ii) is the critical shell estimate, and part (iii) controls a thin strip along the real axis, on which the Cauchy transforms of Lemma \ref{lemmaoschalf} below are not integrated by parts.

\begin{lemma}[Integrals on annuli, shells and strips]
\label{lemmashell}
\begin{enumerate}
\item For $k_0\in\C$ and $0<r_1\le r\le1$, with $E:=\{r_1\le|k-k_0|\le r\}$,
\be
\label{annulusbound}
\sup_{k\in\C}\cI\big(|\cdot-k_0|^{-1}\1_E\big)(k)\lesssim\log\frac{2r}{r_1},\qquad \sup_{k\in\C}\cI\big(|\cdot-k_0|^{-2}\1_E\big)(k)\lesssim\frac1{r_1} .
\ee
\item For every $j\ge0$,
\be
\label{shellbound}
\int_{A_j}(1+|k_2|)W_t\,dA+\sup_{k\in\C}\cI\big((1+|k_2|)W_t\1_{A_j}\big)(k)\lesssim2^{\frac{3j}2}.
\ee
\item For $0<\eps\le\frac12$,
\be
\label{stripbound}
\sup_{k\in\C}\cI\big(\jb{\zeta}^{-\frac32}\1_{\{|k_2|<\eps\}}\big)(k)+\sup_{k\in\C}\cI\big(\jb{\cdot}^{-2}\1_{\{|k_2|<\eps\}}\big)(k)\lesssim\eps\log\frac2\eps .
\ee
\end{enumerate}
\end{lemma}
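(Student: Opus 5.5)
The plan is to prove each of the three bounds by decomposing the relevant set into pieces on which the integrand is essentially constant, bounding the Riesz potential of each piece with \eqref{potentialsets}, and summing. The only delicate point is to avoid a logarithmic loss in the critical shell estimate (ii).

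For (i), I will decompose $E$ into the dyadic annuli $E_s:=\{s\le|k-k_0|<2s\}$, $s\in[r_1,r]$ dyadic. There are $O(\log\frac{2r}{r_1})$ of them. Each $E_s$ lies in the disc $D(k_0,2s)$, and the integrand is at most $s^{-1}$, respectively $s^{-2}$, on it. By the disc bound $\cI\1_D\le2\cdot$radius, the potential of the piece is $\lesssim s^{-1}\cdot s=1$, respectively $\lesssim s^{-1}$. Summing gives $\log\frac{2r}{r_1}$ in the first case and the geometric sum $\sum_{s\ge r_1}s^{-1}\lesssim r_1^{-1}$ in the second.

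For (ii), I will first treat the discs. On $A_j\cap D_\pm$ we have $1+|k_2|\lesssim1$ and $W_t\le C_\ell|k-k_\pm|^{-\frac12}$ by \eqref{propW}. The integral of $|k-k_\pm|^{-\frac12}$ over $D_\pm$ is $O(1)$. Its potential is also $O(1)$: near $k_\pm$ the singularity is integrable against $|k-k'|^{-1}$, which I will check either by a dyadic decomposition as in (i) or by the sharper form of the disc bound.

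Outside $D_+\cup D_-$ I will use $W_t\lesssim\jb k^{-\frac12}\le(|k_2|+|k_1|)^{-\frac12}$. On $A_j$ we have $|k_2|<2^j$ and $|k_1k_2|<2^{j+1}$, so I will cover $A_j$ by the pieces $\{|k_2|\sim\mu,\ |k_1|\sim L\}$, with $\mu\le2^{j}$ dyadic and $L\lesssim2^j/\mu$ dyadic, together with $|k_1|\lesssim1$.

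For $\mu\ge1$, the density on a piece is $\lesssim\mu(\mu+L)^{-\frac12}$ and the area is $\mu L$. Summing over $L\le2^{j+1}/\mu$ gives $\mu^2\min\big(2^j\mu^{-\frac32},(2^j/\mu)^{\frac12}\big)$. This equals $2^j\mu^{\frac12}$ when $\mu\ge2^{j/2}$ and $\mu^{\frac32}2^{j/2}\le2^{\frac{5j}4}$ otherwise. The sum over dyadic $\mu\le2^{j}$ is therefore $\lesssim2^{\frac{3j}2}$. For $\mu<1$ the contribution is $\mu^{\frac12}2^{j/2}$, which is summable.

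For the potentials I will use the rectangle bound $\cI\1_Q\lesssim\min(L_1,L_2)(1+|\log\frac{L_1}{L_2}|)$ on each piece. The main obstacle is the logarithm, which could produce a factor $j$ when $\mu\sim2^j$ and $L\lesssim1$. On such thin pieces I will use the bound $\cI g\le2(\|g\|_\infty\|g\|_{L^1})^{\frac12}$ of \eqref{potentialsets} instead. With density $\mu^{\frac12}$ and area $\mu L$, this gives $\lesssim\mu L^{\frac12}\lesssim2^j$, with no logarithm. On the remaining pieces the exponents above leave slack: every piece except $\mu\sim2^j$ gains a positive power of $2^{-j}$ or of $\mu/2^j$, which absorbs the logarithms. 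I expect this bookkeeping of logarithms near $\mu\sim2^j$ to be the only real difficulty.

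For (iii), I will cover $\{|k_2|<\eps\}$ by the regions $E_{\mu,\nu}$ of Step 1 of the proof of Lemma \ref{lemmaweights}, with dyadic $\mu<\eps$. On $E_{\mu,\nu}$ we have $\jb\zeta^{-\frac32}\lesssim\max(1,\nu)^{-\frac32}$, and \eqref{potentialregions} bounds the potential of $\1_{E_{\mu,\nu}}$ by $\mu\,(1+\log\frac{\max(1,\nu)}{\mu^2})$, since $\min(\mu,\nu/\mu)=\mu$ for $\mu<1$. Summing over $\nu$ using the factor $\nu^{-\frac32}$, and then over dyadic $\mu<\eps$, gives $\lesssim\eps\log\frac2\eps$.

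For $\jb{\cdot}^{-2}\1_{\{|k_2|<\eps\}}$ I will split the strip into $\{|k_1|\le1\}$ and the pieces $\{|k_1|\sim L\}$, $L\ge1$ dyadic. These are rectangles with sides $L$ and $2\eps$ and density $\lesssim L^{-2}$, so their potentials are $\lesssim L^{-2}\eps\log\frac{2L}{\eps}$. The sum over $L$ is $\lesssim\eps\log\frac2\eps$, which proves \eqref{stripbound}.
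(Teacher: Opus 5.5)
Your proof is correct. Part (i), the contribution of the discs $D_\pm$ in (ii), the integral in (ii) and the second term of (iii) are essentially the paper's arguments. In (i), start the dyadic range at $r_1/2$ so that the annuli cover $E$ when $r_1$ is not dyadic.

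The routes differ in two places.

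For the potential in (ii), the paper does no piecewise bookkeeping. On $A_j\setminus(D_+\cup D_-)$ the density satisfies $(1+|k_2|)W_t\lesssim(1+|k_2|)\jb k^{-\frac12}=O(2^j)$. Its integral is $O(2^{\frac{3j}2})$ by the integral bound in (ii). A single application of $\cI g\le2(\|g\|_\infty\|g\|_{L^1})^{\frac12}$ from \eqref{potentialsets} then gives $O(2^{\frac{5j}4})$.

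Your decomposition also works, but the logarithmic obstacle you anticipate does not exist: the exponent $\frac{3j}2$ is critical for the integral, not for the potential. On the pieces with $\mu\sim2^j$ and $|k_1|\lesssim1$ the density is only of order $2^{\frac j2}$. The rectangle bound therefore gives $O(j2^{\frac j2})$ there, which is better than the $O(2^j)$ produced by your $L^\infty$--$L^1$ fix. Summing the rectangle bounds over all your pieces gives $O(2^{\frac{3j}4})$.

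For the first term of (iii), you reuse the regions $E_{\mu,\nu}$ and the bound \eqref{potentialregions} from the proof of Lemma \ref{lemmaweights}, with $\mu<\eps$ and density $\lesssim\max(1,\nu)^{-\frac32}$. This is shorter than the paper's argument. The paper treats $\{|k_1|<\eps^{-1},\ |k_2|<\eps\}$ as one rectangle and splits the rest into blocks $|k_1|\sim L=2^p\eps^{-1}$. It subdivides each block according to $|k_2|\sim2^qL^{-1}$, using $\jb{\zeta}^{-\frac32}\le\jb{4Lk_2}^{-\frac32}$. Both amount to the same dyadic count; yours simply recycles an estimate already proved.
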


\begin{proof}
{\bf Step 1:} Proof of (i). We decompose into the dyadic annuli $\{R\le|k-k_0|<2R\}$, which are contained in the discs $D(k_0,2R)$. By \eqref{potentialsets}, the potential of the indicator function of such an annulus is at most $4R$, so that a density $|k-k_0|^{-\gamma}$, $\gamma>0$, contributes at most $4R^{1-\gamma}$. Summing over the dyadic $R\in[\frac{r_1}2,r]$ for $\gamma=1$ (there are at most $2+\log_2\frac r{r_1}$ of them), and over the dyadic $R\ge\frac{r_1}2$ for $\gamma=2$, gives \eqref{annulusbound}. Summing over the dyadic $R\le r$ for $\gamma=\frac12$ gives
\be
\label{annulushalf}
\sup_{k\in\C}\cI\big(|\cdot-k_0|^{-\frac12}\1_{D(k_0,r)}\big)(k)\lesssim r^{\frac12}.
\ee

\noindent{\bf Step 2:} Proof of (ii). On $D_\pm$ we have $1+|k_2|\lesssim1$, and \eqref{propW} and \eqref{annulushalf} give
$$\int_{D_\pm}(1+|k_2|)W_t\,dA+\sup_{k\in\C}\cI\big((1+|k_2|)W_t\1_{D_\pm}\big)(k)\lesssim\int_{D(0,r_0)}\frac{dA(z)}{|z|^{\frac12}}+r_0^{\frac12}\lesssim1 ,$$
and $1\le2^{\frac{3j}2}$. Outside $D_+\cup D_-$, \eqref{propW} gives $(1+|k_2|)W_t\lesssim(1+|k_2|)\jb k^{-\frac12}$. On $A_j$ we have $|k_2|<2^j$ and $|k_1|<\frac{2^j}{2|k_2|}$, since $|\zeta_2|=4|k_1k_2|<2^{j+1}$. For fixed $k_2$ with $|k_2|=s$, we use $\jb k\ge|k_1|$ and $\jb k\ge\max(1,s)$ to bound the integral in $k_1$:
$$\int_{\{|k_1|<\frac{2^j}{2s}\}}\jb k^{-\frac12}dk_1\le\min\Big(4\Big(\frac{2^j}{2s}\Big)^{\frac12},\ \frac{2^j}s\max(1,s)^{-\frac12}\Big).$$
We use the first bound for $s<2^{\frac j2}$ and the second one for $s\ge2^{\frac j2}$:
$$\begin{aligned}\int_{A_j}\frac{1+|k_2|}{\jb k^{\frac12}}\,dA&\lesssim\int_0^1\Big(\frac{2^j}s\Big)^{\frac12}ds+\int_1^{2^{j/2}}s\Big(\frac{2^j}s\Big)^{\frac12}ds+\int_{2^{j/2}}^{2^j}s\,\frac{2^j}{s^{\frac32}}\,ds\\&\lesssim2^{\frac j2}+2^{\frac{5j}4}+2^{\frac{3j}2}\lesssim2^{\frac{3j}2}.\end{aligned}$$
Moreover $(1+|k_2|)\jb k^{-\frac12}\le1+2^j\le2^{j+1}$ on $A_j$. By the last bound of \eqref{potentialsets}, the potential of $(1+|k_2|)W_t\1_{A_j\setminus(D_+\cup D_-)}$ is $O\big((2^j\cdot2^{\frac{3j}2})^{\frac12}\big)=O(2^{\frac{5j}4})$, hence $O(2^{\frac{3j}2})$ as well. This proves \eqref{shellbound}.\\

\noindent{\bf Step 3:} Proof of (iii). We use the rectangle bound of \eqref{potentialsets}. The rectangle $\{|k_1|<\eps^{-1},\ |k_2|<\eps\}$ has potential $O(\eps\log\frac2\eps)$. For $L=2^p\eps^{-1}$, $p\ge0$, on $\{L\le|k_1|<2L,\ |k_2|<\eps\}$ we have $\jb\zeta^{-\frac32}\le\jb{4Lk_2}^{-\frac32}$; we split this set into $\{|k_2|<L^{-1}\}$, where the density is at most $1$, and the nonempty sets $\{2^{q-1}L^{-1}\le|k_2|<\min(2^qL^{-1},\eps)\}$, $q\ge1$, where it is at most $2^{-\frac{3q}2}$. These are unions of four rectangles with sides $L$ and at most $2^{q+1}L^{-1}$, $q\ge0$, so that the potential of the density on $\{L\le|k_1|<2L,\ |k_2|<\eps\}$ is at most
$$C\sum_{q=0}^{p}2^{-\frac{3q}2}\,\frac{2^{q+1}}L\Big(1+\log\frac{L^2}{2^{q}}\Big)\lesssim\frac{\log(2L)}L .$$
Summing over $p\ge0$ gives $O(\eps\log\frac2\eps)$, which proves the first bound of \eqref{stripbound}. For the second one, we bound $\jb k^{-2}$ by $L^{-2}$ on $\{L\le|k_1|<2L\}$, $L\ge1$ dyadic, and by $1$ on $\{|k_1|<1\}$; the corresponding sets are unions of at most two rectangles with sides at most $2L$ and $2\eps$, and $\sum_{L}L^{-2}\eps(1+\log\frac L\eps)\lesssim\eps\log\frac2\eps$.
\end{proof}

\noindent{\it The regularized coefficients.} Since $a_n$ may jump across the real axis, we average in each half-plane separately. Let $\Pi_\pm(k'):=k'_1\pm i|k'_2|$ for $k'\in\C$. For $k\in\C^\pm$ we set
\be
\label{defant}
a_{n,t}(k):=\int_\C\varrho_{\ell_t(k)}(k-k')\,a^\pm_n\big(\Pi_\pm(k')\big)\,dA(k'),
\ee
where $\varrho_\eps$ is the mollifier of the Notations, and $a_n^\pm$ is the continuous extension to $\overline{\C^\pm}$ of the restriction of $a_n$ to $\C^\pm$, given by \eqref{hypa}. For $k\in\R$, the same formula defines the two traces $a_{n,t}^\pm(k)$ of $a_{n,t}$ on the real axis. The map $\Pi_\pm$ is $1$-Lipschitz and fixes $\overline{\C^\pm}$, so that $|\Pi_\pm(k')-k|\le|k'-k|$ for $k\in\overline{\C^\pm}$: the average \eqref{defant} only involves values of $a_n^\pm$ on $\overline{\C^\pm}$, at distance at most $\ell_t(k)$ from $k$. Finally, with the cutoff $\vartheta$ of the Notations, we localize to the critical points with
\be
\label{defcutoffs}
\vartheta_t(k):=\vartheta\Big(\frac{k-k_+}\rho\Big)+\vartheta\Big(\frac{k-k_-}\rho\Big),
\ee
which is supported in $D(k_+,2\rho)\cup D(k_-,2\rho)$ and equals $1$ on $D(k_\pm,\rho)$ when $4\rho<r_0$. When $C_\ell\rho\le\delta_*$, we set
$$\sigma_t:=\sum_{j\ge0}d_j(C_\ell\rho).$$
Since $C_\ell\rho\to0$ as $|t|\to\infty$, \eqref{hypshell1} shows that $\sigma_t$ is finite for $|t|$ large and tends to zero as $|t|\to\infty$.

\begin{lemma}[Regularization at the oscillation scale]
\label{lemmaregular}
There are $t_0\ge1$ and $C>0$ such that $\sigma_t\le1$ for $|t|\ge t_0$, and such that, for $|t|\ge t_0$, $X/t\in V$ and $n\ge1$, the following holds.
\begin{enumerate}
\item {\em (Smoothness and size).} The restrictions of $a_{n,t}$ to $\C^+$ and to $\C^-$ extend to smooth functions on $\C$, and $|a_{n,t}(k)|\le C\jb{\zeta(k)}^{-\frac32}$ for $k\in\C\setminus\R$, as well as for the two traces $a^\pm_{n,t}$ on $\R$.
\item {\em (Errors).} The functions
\be
\label{errordensities}
\mathcal E_{n,t}:=(1+|k_2|)\,|a_n-a_{n,t}|,\qquad \mathcal E'_{n,t}:=(1+|k_2|)\,(1-\vartheta_t)\,\frac{|\nabla_ka_{n,t}|}{|\pa_k\phi|},
\ee
defined on $\C\setminus\R$, where $\mathcal E'_{n,t}:=0$ on $D(k_+,\rho)\cup D(k_-,\rho)$, satisfy
\be
\label{errorbounds}
\|\mathcal E_{n,t}\|_{L^1}+\|\cI\mathcal E_{n,t}\|_\infty+\|\mathcal E'_{n,t}\|_{L^1}+\|\cI\mathcal E'_{n,t}\|_\infty\le C|t|^{-\frac12}\sigma_t .
\ee
\item {\em (Decay for fixed $n$).} For every $n$ and $k\in\C\setminus\R$, $|a_{n,t}(k)|+\jb k^{-1}|\nabla_ka_{n,t}(k)|\lesssim_n\jb{\zeta(k)}^{-6}$.
\end{enumerate}
\end{lemma}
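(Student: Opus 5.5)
The proof proceeds pointwise in $k$, using three facts: $\ell_t(k)\jb k\le C_\ell\rho$ from \eqref{propellt}, the H\"older hypothesis \eqref{hypshell2} with $\delta\le C_\ell\rho$, and the shell integrals of Lemma \ref{lemmashell}. First choose $t_0$ so that $C_\ell\rho\le\delta_*$, $4\rho<r_0$ and $\sigma_t\le1$; this is possible by \eqref{hypshell1}, since $\rho=|t|^{-1/2}\to0$.

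For part (i), note that $\ell_t$ is smooth and positive. The integrand of \eqref{defant} is $\ell_t(k)^{-2}\varrho((k-k')/\ell_t(k))$ times the continuous function $a_n^\pm\circ\Pi_\pm$, which is defined on all of $\C$. Substituting $k'=k-\ell_t(k)z$ does not help because $a_n^\pm$ is only continuous. Instead I differentiate the kernel in $k$ directly: $a_{n,t}$ is a convolution-type integral with a smooth kernel depending smoothly on $k$ through $\ell_t(k)$, so the formula for $k\in\C^\pm$ extends smoothly to all $k\in\C$. For the size bound, every point used in the average lies within $\ell_t(k)\le C_\ell\rho\jb k^{-1}\le\jb k^{-1}$ of $k$. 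Hence $\zeta$ moves by $O(1)$, so $\jb{\zeta}$ is comparable along the average, and $|a_n|\le C_*\jb\zeta^{-3/2}$ transfers to $a_{n,t}$. Part (iii) is the same argument with \eqref{hypa} for fixed $n$. For the derivative in (iii), differentiating the kernel in $k$ costs $\ell_t^{-1}(1+|\nabla\ell_t|)$. I would then move the derivative onto $a_n$ via the identity $\nabla_k\varrho_{\ell}(k-k')=-\nabla_{k'}\varrho_\ell(k-k')$ plus a term involving $\pa_\ell\varrho_\ell\,\nabla\ell_t$. The latter is handled by noting that $\int\pa_\ell\varrho_\ell=0$, which lets me subtract $a_n(k)$; then $|\nabla a_n|\lesssim_n\jb k\jb\zeta^{-6}$ gives the bound.

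For part (ii), fix $k\in A_j$ with $k$ outside $D(k_\pm,\rho)$ for the $\mathcal E'$ term. The error bound follows from \eqref{hypshell2}:
\[
|a_n(k)-a_{n,t}(k)|\le\sup_{|k'-k|\le\ell_t}|a_n(k')-a_n(k)|\le(\ell_t\jb k)^{\frac12}2^{-\frac{3j}2}d_j(C_\ell\rho)=|t|^{-\frac12}W_t(k)2^{-\frac{3j}2}d_j(C_\ell\rho),
\]
using monotonicity of $d_j$. This requires that $k'$ and $k$ lie in the same half-plane. That holds automatically for $\Pi_\pm(k')$ when $\Pi_\pm(k')$ is off the axis; points on the axis are limits, handled by continuity of $a_n^\pm$. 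For the derivative, I subtract the constant $a_n(k)$ before differentiating, using $\int\nabla_k[\varrho_{\ell_t(k)}(k-k')]\,dA(k')=0$. The derivative of the kernel is $O(\ell_t^{-3})$ on a disc of area $\ell_t^2$, because $|\nabla\ell_t|\le C_\ell$. This gives $|\nabla a_{n,t}(k)|\lesssim\ell_t^{-1}|t|^{-1/2}W_t2^{-3j/2}d_j$. Since $|\pa_k\phi|\ge6/\ell_t$ off $D(k_\pm,\rho)$ by \eqref{propellt}, the quotient is controlled by the same quantity as the error term. Both densities are therefore bounded by
\[
C|t|^{-\frac12}\sum_j(1+|k_2|)W_t\1_{A_j}2^{-\frac{3j}2}d_j(C_\ell\rho),
\]
and \eqref{shellbound} bounds the $L^1$ norm and the Riesz potential of each shell term by $C2^{3j/2}$. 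Summing over $j$ gives $C|t|^{-1/2}\sigma_t$, where the Riesz potential is subadditive and monotone.

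The main obstacle I expect is the derivative estimate in (ii) near the real axis and across shells. The averaging disc around $k\in A_j$ may meet neighbouring shells, and hypothesis \eqref{hypshell2} is stated with the shell index of the base point $k$. Using $k$ itself as the base point is legitimate, since every difference is taken as $a_n(k')-a_n(k)$. A further point to check is that the points $\Pi_\pm(k')$ stay in the same open half-plane, or that the extension to the boundary is justified by continuity.
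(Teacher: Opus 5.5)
Your proposal is correct and follows the paper's proof essentially step by step. It uses the same choice of $t_0$, the same pointwise bound \eqref{regerror} obtained from \eqref{hypshell2} with $\delta\le\ell_t(k)\jb k\le C_\ell\rho$, and the same subtraction of $a_n(k)$ via $\int\nabla_k\big(\varrho_{\ell_t(k)}(k-k')\big)\,dA(k')=0$; it then compares with $\ell_t^{-1}\le\frac16|\pa_k\phi|$ off $D(k_\pm,\rho)$ and sums \eqref{shellbound} over the shells. The only differences are cosmetic and both are fine: in (iii) you integrate by parts using $\int\pa_\ell\varrho_\ell=0$ where the paper substitutes $k'=k-\ell_t(k)z$, and you get comparability of $\jb\zeta$ from an $O(1)$ displacement rather than by also imposing $C_\ell\rho\le\frac1{16}$.
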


\begin{proof}
We choose $t_0$ such that $C_\ell\rho\le\min(\delta_*,\frac1{16})$, $4\rho<r_0$ and $\sigma_t\le1$ for $|t|\ge t_0$. We treat $k\in\overline{\C^+}$; the case of $\overline{\C^-}$ is identical. Let $\tilde a_n:=a_n^+\circ\Pi_+$, which is bounded and continuous on $\C$.\\

\noindent{\bf Step 1:} Proof of (i). The right-hand side of \eqref{defant}, with $\Pi_+$ and $a_n^+$, is defined for every $k\in\C$, and it is a smooth function of $k$, since $\ell_t$ is smooth and positive and $\tilde a_n$ is bounded and continuous. Moreover $\int\varrho_{\ell_t(k)}(k-k')\,dA(k')=1$ for every $k$, so that $\int\nabla_k\big(\varrho_{\ell_t(k)}(k-k')\big)\,dA(k')=0$, and a direct computation using $|\nabla\ell_t|\le C_\ell$ gives $\int|\nabla_k(\varrho_{\ell_t(k)}(k-k'))|\,dA(k')\lesssim\ell_t(k)^{-1}$. Let $k'\in D(k,\ell_t(k))$. Then $|\Pi_+(k')-k|\le\ell_t(k)$, and $\delta':=|\Pi_+(k')-k|\jb k\le\ell_t(k)\jb k\le C_\ell\rho$ by \eqref{propellt}. In particular $|\zeta(\Pi_+(k'))-\zeta(k)|\le8\delta'\le\frac12$, so that $\jb{\zeta(\Pi_+(k'))}\ge\jb{\zeta(k)}-\frac12\ge\frac12\jb{\zeta(k)}$, and \eqref{hypshell2}, together with the continuity of $a_n^+$, gives $|\tilde a_n(k')|\le2^{\frac32}F(k)$. Integrating against $\varrho_{\ell_t(k)}(k-k')$ proves (i).\\

\noindent{\bf Step 2:} Proof of (ii). Let $j\ge0$ and $k\in A_j\cap\C^+$. If $k'\in D(k,\ell_t(k))$ and $\Pi_+(k')\notin\R\cup\{k\}$, which holds for almost every $k'$, then \eqref{hypshell2}, the monotonicity of $d_j$ and the identity $(\ell_t(k)\jb k)^{\frac12}=|t|^{-\frac12}W_t(k)$ give
\be
\label{regmodulus}
|\tilde a_n(k')-a_n(k)|\le|t|^{-\frac12}W_t(k)\,2^{-\frac{3j}2}d_j(C_\ell\rho).
\ee
We have $a_n(k)-a_{n,t}(k)=\int\varrho_{\ell_t(k)}(k-k')\big(a_n(k)-\tilde a_n(k')\big)dA(k')$ and
$$\nabla_ka_{n,t}(k)=\int_\C\nabla_k\big(\varrho_{\ell_t(k)}(k-k')\big)\,\big(\tilde a_n(k')-a_n(k)\big)\,dA(k'),$$
so that \eqref{regmodulus} and Step 1 give
\be
\label{regerror}
|a_n(k)-a_{n,t}(k)|+\ell_t(k)|\nabla_ka_{n,t}(k)|\lesssim|t|^{-\frac12}W_t(k)\,2^{-\frac{3j}2}d_j(C_\ell\rho).
\ee
If $\vartheta_t(k)\neq1$, then $|k-k_\pm|\ge\rho$, and $\ell_t(k)^{-1}\le\frac16|\pa_k\phi(k)|$ by \eqref{propellt}. Hence \eqref{regerror} gives $\mathcal E_{n,t}+\mathcal E'_{n,t}\lesssim|t|^{-\frac12}(1+|k_2|)W_t\,2^{-\frac{3j}2}d_j(C_\ell\rho)$ on $A_j$, and summing the bounds \eqref{shellbound} over $j$ proves (ii).\\

\noindent{\bf Step 3:} Proof of (iii). For fixed $n$, $\tilde a_n$ is locally Lipschitz, with $|\nabla\tilde a_n|\le|\nabla a_n|\circ\Pi_+$ almost everywhere, and the change of variables $k'=k-\ell_t(k)z$ in \eqref{defant} gives $|\nabla a_{n,t}(k)|\le(1+C_\ell)\sup_{k'\in D(k,\ell_t(k))}|\nabla a_n(\Pi_+(k'))|$. The points $\Pi_+(k')$, $k'\in D(k,\ell_t(k))$, belong to $D(k,\ell_t(k))$, on which $\jb{\zeta}$ and $\jb k$ change by bounded factors, by Step 1. Hence \eqref{hypa} with $q=6$ gives (iii).
\end{proof}

\subsection{Proof of Proposition \ref{propcritical}}
\label{subsectionhalfproof}

The eigenfunctions are uniformly H\"older continuous in $k$, and the next lemma shows that the oscillatory Cauchy transforms of Step 2 of the proof of Proposition \ref{propcone} decay with the rate $|t|^{-\frac12}$ when they act on H\"older continuous functions of order $\alpha>\frac12$. Near the real axis we do not integrate by parts: by \eqref{stripbound}, the potential of the majorant restricted to the strip $\{|k_2|<\eps\}$ is $O(\eps\log\frac2\eps)$, which is $O(|t|^{-\frac12})$ for $\eps=2|t|^{-1}$. We localize to this strip with $\Theta_t(k):=\vartheta(tk_2)$, which is supported in $\{|k_2|<2|t|^{-1}\}$ and equals $1$ on $\{|k_2|\le|t|^{-1}\}$. Increasing $t_0$, we assume that $2|t|^{-1}+4\rho<r_0$ for $|t|\ge t_0$. In particular $2|t|^{-1}\le\frac12$, and since $\pm\Im k_\pm\ge c_V=8r_0$, the supports of $\vartheta_t$ and $\Theta_t$ are disjoint, and $\Theta_t$ vanishes on $D_\pm$.

\begin{lemma}[Oscillatory Cauchy transforms with half a derivative]
\label{lemmaoschalf}
Let $\frac12<\alpha<1$, and let $t_0$ be as above. There is $C>0$ such that, for $|t|\ge t_0$, $X/t\in V$, $n\ge1$, $c_n\in\{a_n,\ k_2a_n\}$ and every function $f$ on $\C$ with $\|f\|_{C^\alpha}<\infty$,
\be
\label{oschalf}
\big\|\cC\big(c_n\,P\,f\big)\big\|_\infty\le C|t|^{-\frac12}\,\|f\|_{C^\alpha} .
\ee
\end{lemma}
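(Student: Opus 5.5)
We split $c_nPf$ with the partition of unity $1=\Theta_t+\vartheta_t+(1-\Theta_t-\vartheta_t)$, which is legitimate because the supports of $\Theta_t$ and $\vartheta_t$ are disjoint, and we replace $a_n$ by its regularization $a_{n,t}$ of Lemma \ref{lemmaregular} on the third piece. Let $\tilde c_n$ be $a_{n,t}$ or $k_2a_{n,t}$, according to $c_n$. We then estimate the following terms separately.
\begin{itemize}
\item[(a)] The strip term $\cC(\Theta_tc_nPf)$.
\item[(b)] The critical-point term $\cC(\vartheta_tc_nPf)$.
\item[(c)] The regularization error $\cC\big((1-\Theta_t-\vartheta_t)(c_n-\tilde c_n)Pf\big)$.
\item[(d)] The main oscillatory term $\cC\big((1-\Theta_t-\vartheta_t)\tilde c_nPf\big)$.
\end{itemize}

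The first three terms need no oscillation; we bound them crudely by $\|f\|_\infty\,\cI(\cdot)$ of the density.
\begin{itemize}
\item For (a), $|c_n|\le F_1\lesssim\jb{\zeta}^{-\frac32}$ on $\{|k_2|<1\}$. Then \eqref{stripbound} with $\eps=2|t|^{-1}$ gives $O(|t|^{-1}\log|t|)\le O(|t|^{-\frac12})$.
\item For (b), the density is bounded by $C\1_{D(k_\pm,2\rho)}$, so \eqref{potentialsets} gives $O(\rho)=O(|t|^{-\frac12})$.
\item For (c), $|c_n-\tilde c_n|\le\mathcal E_{n,t}$, so \eqref{errorbounds} gives $O(|t|^{-\frac12}\sigma_t)$.
\end{itemize}

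For (d), we apply the integration by parts of Lemma \ref{lemmacauchy} (vi). The function $f$ is only H\"older, so first we mollify $f$ at scale $\ell_t(k)$ exactly as in \eqref{defant}. This gives $f_t$ with $|f-f_t|\lesssim(\ell_t)^\alpha\|f\|_{C^\alpha}$ and $|\nabla f_t|\lesssim\ell_t^{\alpha-1}\|f\|_{C^\alpha}$, where $|\nabla\ell_t|\le C_\ell$ keeps this valid.

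\emph{Error from mollifying $f$.} This error has density $\lesssim F_1\ell_t^\alpha$. By \eqref{propellt}, $\ell_t\lesssim\rho\jb k^{-1}$ away from the critical points, and $\ell_t\lesssim|t|^{-1}|k-k_\pm|^{-1}$ near them, outside $D(k_\pm,\rho)$. Hence the density is $\lesssim|t|^{-\alpha/2}F_1\jb k^{-\alpha}$ plus a term near the critical points. Since $\alpha>\frac12$, this is at least $|t|^{-1/2}$ decay times a majorant whose potential should be bounded, via \eqref{shellbound}-type dyadic sums or Lemma \ref{lemmaweights}. Near $k_\pm$ we get $|t|^{-\alpha}\cI(|k-k_\pm|^{-\alpha}\1_{\rho\le|k-k_\pm|\le r_0})\lesssim|t|^{-\alpha}$.

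\emph{Integration by parts.} With $g=(1-\Theta_t-\vartheta_t)\tilde c_nf_t$, formula \eqref{ibp} gives the boundary term $g/\dbar\phi$ and the bulk term $\cC\big(P\,\dbar(g/\dbar\phi)\big)$. Here $\tilde c_n$ is smooth on each half-plane, and $\Theta_t$ removes the axis, so no jump occurs. The pieces are bounded as follows.
\begin{itemize}
\item The boundary term is $\lesssim F_1\ell_t\lesssim|t|^{-1/2}$ by \eqref{propellt}.
\item When $\dbar$ falls on $\tilde c_n$, we get the potential of $\mathcal E'_{n,t}$ times $\|f\|_\infty$, plus the term $(1-\vartheta_t)|\tilde c_n||\nabla\ell_t|$, which is handled similarly.
\item When $\dbar$ falls on $f_t$, the density is $F_1\ell_t^{\alpha}$, bounded as in the mollification error above.
\item When $\dbar$ falls on $\vartheta_t$, the density is $\rho^{-1}\cdot(|t|\rho)^{-1}$ on an annulus of radius $\rho$, giving $O(\rho)$.
\item When $\dbar$ falls on $\Theta_t$, the density is $|t|\cdot F_1\cdot|t|^{-1}$ on the strip, giving $O(|t|^{-1}\log|t|)$ by \eqref{stripbound}.
\item When $\dbar$ falls on $1/\dbar\phi$, the factor $|\pa_k^2\phi|/|\pa_k\phi|^2$ is $\lesssim|t|^{-1}\jb k^{-3}$ far away and $|t|^{-1}|k-k_\pm|^{-2}$ near $k_\pm$. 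The bound \eqref{annulusbound} then gives $|t|^{-1}\rho^{-1}=\rho$.
\end{itemize}

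\emph{Main obstacle.} We expect the hardest step to be the uniform-in-$j$ summation for the term where the derivative falls on $f_t$ (and on the mollification error) at spectral infinity, near the real axis. There $F_1\ell_t^\alpha$ must be shown to have a bounded potential after extracting $|t|^{-1/2}$. The gain $\jb k^{-\alpha}$ with $\alpha>\frac12$, compared with the critical $\jb k^{-1/2}$ of Lemma \ref{lemmashell} (ii), is what should make the shell sum converge without Besov summability. We would verify this first via the dyadic regions $E_{\mu,\nu}$ of Lemma \ref{lemmaweights}.
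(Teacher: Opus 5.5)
Your overall scheme is the paper's. You use the same cutoffs $\Theta_t$ and $\vartheta_t$, replace $a_n$ by $a_{n,t}$ with the error controlled by $\mathcal E_{n,t}$ and $\mathcal E'_{n,t}$, apply Lemma \ref{lemmacauchy} (vi) to the remaining piece, and bound every density through its Riesz potential. The only structural difference is that you mollify $f$ at the variable scale $\ell_t(k)$, whereas the paper uses the fixed scale $f_t=\varrho_{1/|t|}\ast f$.

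\textbf{The gap.} It lies in the one step where $\alpha>\frac12$ matters, namely the terms with density $F_1\ell_t^\alpha\|f\|_{C^\alpha}$ (the error $f-f_t$, and the derivative falling on $f_t$). You bound $\ell_t\lesssim\rho\jb k^{-1}$, which gives $\ell_t^\alpha\lesssim|t|^{-\alpha/2}\jb k^{-\alpha}$. You then assert that this is ``at least $|t|^{-1/2}$ decay since $\alpha>\frac12$''. That is false: since $\alpha<1$, $|t|^{-\alpha/2}\gg|t|^{-1/2}$. No dyadic summation over the regions $E_{\mu,\nu}$ or over the shells can recover a missing power of $|t|$. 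As written, the proposal does not reach \eqref{oschalf}, and the ``main obstacle'' you identify is misdiagnosed.

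\textbf{The repair.} Outside $D_+\cup D_-$, both factors in \eqref{defellt} are $\gtrsim\jb k$, so $\ell_t\lesssim|t|^{-1}\jb k^{-2}$. The density is then $\lesssim|t|^{-\alpha}F_1\|f\|_{C^\alpha}$, whose potential is $O(|t|^{-\alpha})$ because $\cI F_1$ is bounded (Lemma \ref{lemmaweights} (i)). On $D_\pm\setminus D(k_\pm,\rho)$, the bound $\ell_t\lesssim|t|^{-1}|k-k_\pm|^{-1}$ gives the integrable density $|t|^{-\alpha}|k-k_\pm|^{-\alpha}$, with potential $O(|t|^{-\alpha})$.

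No shell summation is needed. The condition $\alpha>\frac12$ enters only through $|t|^{-\alpha}\le|t|^{-\frac12}$. The critical Besov summability concerns the coefficient $a_n$, and it is already absorbed into \eqref{errorbounds} through $\sigma_t$. With this correction your variable scale works, and it even avoids the factor $\log(2+|t|)$ that the paper's fixed scale produces on the annuli around $k_\pm$.

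\textbf{Two smaller points.}
\begin{enumerate}
\item Lemma \ref{lemmacauchy} (vi) requires $w$ and $\dbar(w/\dbar\phi)$ to belong to $L^1$. This does not follow from the uniform majorant: by \eqref{changevariables}, $F_1$ is not integrable on $\{|k_2|\ge|t|^{-1}\}$. You must invoke the fixed-$n$ decay of Lemma \ref{lemmaregular} (iii), as the paper does.
\item When $c_n=k_2a_n$, the derivative falling on $k_2$ produces the density $|a_{n,t}||f_t|/|\pa_k\phi|\lesssim F\ell_t\|f\|_\infty$. This term is harmless, but it is what should appear in place of your unexplained term involving $|\nabla\ell_t|$, since $\nabla_ka_{n,t}$ is already fully contained in $\mathcal E'_{n,t}$.
\end{enumerate}
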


\begin{proof}
{\bf Step 1:} Decomposition. Let $f_t:=\varrho_{1/|t|}\ast f$ be the average of $f$ at the scale $|t|^{-1}$. Since $\varrho$ is supported in the unit disc,
\be
\label{averagef}
\|f-f_t\|_\infty\le|t|^{-\alpha}\|f\|_{C^\alpha},\qquad \|\nabla f_t\|_\infty\lesssim|t|^{1-\alpha}\|f\|_{C^\alpha},
\ee
where for the second bound we write $\nabla f_t(k)=\int\nabla\varrho_{1/|t|}(k-k')(f(k')-f(k))\,dA(k')$. Let $c_{n,t}$ be defined as $c_n$, with $a_{n,t}$ in place of $a_n$. Since $\vartheta_t\Theta_t=0$, we have the decomposition
$$c_nPf=\Theta_tc_nPf+(1-\Theta_t)(c_n-c_{n,t})Pf+(1-\Theta_t)c_{n,t}(f-f_t)P+\vartheta_tc_{n,t}f_tP+wP,$$
with $w:=(1-\Theta_t)(1-\vartheta_t)c_{n,t}f_t$. Recall that $|\cC g|\le\cI|g|$. By \eqref{hypshell2} and Lemma \ref{lemmaregular} (i), $|c_n|+|c_{n,t}|\lesssim F_1$.\\

\noindent{\bf Step 2:} The terms without integration by parts. On the strip $\{|k_2|<2|t|^{-1}\}$ we have $F_1\lesssim\jb{\zeta}^{-\frac32}$, so that \eqref{stripbound} with $\eps=2|t|^{-1}$ gives
$$\cI|\Theta_tc_nPf|\lesssim\|f\|_\infty\,|t|^{-1}\log(2+|t|)\lesssim|t|^{-\frac12}\|f\|_\infty .$$
Since $|c_n-c_{n,t}|\le\mathcal E_{n,t}$ and $\sigma_t\le1$, \eqref{errorbounds} gives $\cI|(1-\Theta_t)(c_n-c_{n,t})Pf|\le\|f\|_\infty\,\cI\mathcal E_{n,t}\lesssim|t|^{-\frac12}\|f\|_\infty$. By \eqref{averagef} and the boundedness of $\cI F_1$ (Lemma \ref{lemmaweights} (i)), the third term has potential $O(|t|^{-\alpha}\|f\|_{C^\alpha})$. The fourth term is bounded by $C\|f\|_\infty$ and supported in $D(k_+,2\rho)\cup D(k_-,2\rho)$, so that its potential is at most $C\rho\|f\|_\infty$ by \eqref{potentialsets}.\\

\noindent{\bf Step 3:} Integration by parts. On the support of $w$ we have $|k-k_\pm|\ge\rho$, so that $\dbar\phi=\overline{\pa_k\phi}\neq0$. The function $w$ is $C^1$ on $\C$, since it vanishes on the strip $\{|k_2|\le|t|^{-1}\}$, and $w/\dbar\phi\in C_0(\C)$, since $w$ is bounded and $|\pa_k\phi|\gtrsim|t|\jb k^2$ outside $D_+\cup D_-$ (Lemma \ref{lemmaphase} (ii)). For fixed $n$, Lemma \ref{lemmaregular} (iii) shows that $w$ and $\dbar(w/\dbar\phi)$ are bounded and integrable, because $w$ vanishes on the strip $\{|k_2|\le|t|^{-1}\}$ and $\int_{\{|k_2|\ge|t|^{-1}\}}(1+|k_2|)\jb\zeta^{-6}\,dA<\infty$ by \eqref{changevariables}. Hence Lemma \ref{lemmacauchy} (vi), applied with $w$ in place of $f$, gives
\be
\label{ibpcauchy}
\cC(wP)=\frac{Pw}{i\dbar\phi}-\cC\Big(P\,\dbar\Big(\frac w{i\dbar\phi}\Big)\Big).
\ee
We now bound the right-hand side uniformly in $n$. By Lemma \ref{lemmaphase}, $|\pa_k\phi|\gtrsim|t||k-k_\pm|$ on $D_\pm$, $|\pa_k\phi|\gtrsim|t|\jb k^2$ outside $D_+\cup D_-$, and $|\pa_k^2\phi|\lesssim|t|\jb k$. First, by \eqref{propellt}, $|Pw/(i\dbar\phi)|\le\frac16\ell_t|w|\lesssim\rho\|f\|_\infty$. Next, since $\dbar^2\phi=\overline{\pa_k^2\phi}$,
$$\Big|\dbar\Big(\frac w{i\dbar\phi}\Big)\Big|\le\frac{|\nabla w|}{|\pa_k\phi|}+\frac{|w||\pa_k^2\phi|}{|\pa_k\phi|^2},$$
and $\nabla w$ is the sum of the terms where the derivative falls on $\Theta_t$, on $\vartheta_t$, on $a_{n,t}$, on $k_2$ (if $c_n=k_2a_n$) or on $f_t$. We bound the potentials of the corresponding densities.

The derivative of $\Theta_t$ is bounded by $C|t|$ and supported in $\{|t|^{-1}\le|k_2|\le2|t|^{-1}\}$, outside $D_+\cup D_-$, so that the corresponding density is bounded by $C\jb k^{-2}\1_{\{|k_2|\le2|t|^{-1}\}}\|f\|_\infty$. By \eqref{stripbound}, its potential is $O(|t|^{-1}\log(2+|t|)\|f\|_\infty)$. The derivative of $\vartheta_t$ is bounded by $C\rho^{-1}$ and supported in $D(k_\pm,2\rho)\setminus D(k_\pm,\rho)$, where $|\pa_k\phi|^{-1}\le\frac16\ell_t\lesssim\rho$ by \eqref{propellt}; the corresponding density is $O(\|f\|_\infty)$ on $D(k_\pm,2\rho)$, and its potential is $O(\rho\|f\|_\infty)$. The density produced by the derivative of $a_{n,t}$ is bounded by $\|f\|_\infty\mathcal E'_{n,t}$, whose potential is $O(|t|^{-\frac12}\sigma_t\|f\|_\infty)$ by \eqref{errorbounds}. By \eqref{averagef}, the densities produced by the derivatives of $f_t$ and of $k_2$ are bounded by $C|t|^{1-\alpha}\|f\|_{C^\alpha}(1-\vartheta_t)\frac{F_1}{|\pa_k\phi|}$, which is $O\big(|t|^{-\alpha}|k-k_\pm|^{-1}\big)\|f\|_{C^\alpha}$ on $D_\pm\setminus D(k_\pm,\rho)$, and $O\big(|t|^{-\alpha}F_1\jb k^{-2}\big)\|f\|_{C^\alpha}$ outside $D_+\cup D_-$. By \eqref{annulusbound}, and by the last bound of \eqref{potentialsets} together with $\int F_1\jb k^{-1}dA<\infty$ (Lemma \ref{lemmaweights} (i)), their potentials are $O(|t|^{-\alpha}\log(2+|t|)\|f\|_{C^\alpha})$. Finally, the density $\frac{|w||\pa_k^2\phi|}{|\pa_k\phi|^2}$ is $O\big(|t|^{-1}|k-k_\pm|^{-2}\|f\|_\infty\big)$ on $D_\pm\setminus D(k_\pm,\rho)$ and $O\big(|t|^{-1}F_1\jb k^{-3}\|f\|_\infty\big)$ outside $D_+\cup D_-$; by \eqref{annulusbound} and \eqref{potentialsets}, its potential is $O\big((|t|^{-1}\rho^{-1}+|t|^{-1})\|f\|_\infty\big)=O(\rho\|f\|_\infty)$.

Collecting Steps 2 and 3, and using $\rho=|t|^{-\frac12}$, $|t|^{-1}\log(2+|t|)\lesssim|t|^{-\frac12}$, $\sigma_t\le1$ and $|t|^{-\alpha}\log(2+|t|)\lesssim|t|^{-\frac12}$, which holds since $\alpha>\frac12$, we obtain \eqref{oschalf}.
\end{proof}

\begin{proof}[Proof of Proposition \ref{propcritical}]
Fix $V$ and $\alpha\in(\frac12,1)$, and let $|t|\ge t_0$ and $X/t\in V$, with $t_0$ as above.\\

\noindent{\bf Step 1:} Uniform bounds. By \eqref{envelopehalf} and the remarks which follow it, Step 1 of the proof of Proposition \ref{propcone} applies with the majorant $F$, and $\|m_n\|_\infty+\|\pa_xm_n\|_\infty\lesssim1$. The sources $a_nP\overline{m_n}$ and $a_nP\overline{\pa_xm_n}-2ik_2a_nP\overline{m_n}$ of the equations \eqref{eqm} and \eqref{eqmx} are therefore bounded by $CF_1$, which is bounded and has a bounded Riesz potential. By \eqref{cauchymodulus}, and since $|h|\log\frac1{|h|}\lesssim_\alpha|h|^\alpha$ for $|h|\le\frac12$, while the case $\frac12<|h|\le1$ follows from the boundedness of $m_n$ and $\pa_xm_n$,
\be
\label{holderm}
\|m_n\|_{C^\alpha}+\|\pa_xm_n\|_{C^\alpha}\lesssim1,
\ee
uniformly in $n$, $t$ and $X$.\\

\noindent{\bf Step 2:} Decay of the eigenfunctions. By \eqref{eqm} and \eqref{eqmx},
$$m_n-1=\cC\big(a_nP\overline{m_n}\big),\qquad \pa_xm_n=\cC\big(a_nP\overline{\pa_xm_n}\big)-2i\,\cC\big(k_2a_nP\overline{m_n}\big).$$
Lemma \ref{lemmaoschalf}, applied with $f\in\{\overline{m_n},\overline{\pa_xm_n}\}$, and \eqref{holderm} give
\be
\label{decayhalf}
\|m_n-1\|_\infty+\|\pa_xm_n\|_\infty\lesssim\rho=|t|^{-\frac12}.
\ee
By the definition of $B_n$ in \eqref{remainderfn} and by \eqref{cancellation},
\be
\label{boundBhalf}
\begin{gathered}
|B_n|\le2(1+|k_2|)\big(\|m_n-1\|_\infty+\|\pa_xm_n\|_\infty\big)\lesssim(1+|k_2|)\rho,\\
|\pa_kB_n|\le\|m_n-1\|_\infty+|a_n|\|\pa_xm_n\|_\infty\lesssim\rho .
\end{gathered}
\ee

\noindent{\bf Step 3:} The remainder. By \eqref{remainderfn}, the difference $u_n(t,X)-S(t)g_n(X)$ equals $-\frac2\pi\int_\C a_nPB_n\,dA$, and we decompose
$$\begin{aligned}\int_\C a_nPB_n\,dA&=\int_\C(a_n-a_{n,t})PB_n\,dA+\int_\C\vartheta_ta_{n,t}PB_n\,dA\\&\quad+\int_\C(1-\vartheta_t)a_{n,t}PB_n\,dA .\end{aligned}$$
All three integrals converge absolutely for each fixed $n$, by Lemma \ref{lemmaregular} (iii), Lemma \ref{lemmainverse} (i), (ii) and \eqref{weightsE}. By \eqref{boundBhalf} and \eqref{errorbounds}, the first integral is bounded by $C\rho\|\mathcal E_{n,t}\|_{L^1}\lesssim\rho|t|^{-\frac12}\sigma_t=|t|^{-1}\sigma_t$. By Lemma \ref{lemmaregular} (i) and \eqref{boundBhalf}, the second one is bounded by $C\rho\,|D(k_+,2\rho)\cup D(k_-,2\rho)|\lesssim\rho^3$.

In the third integral we integrate by parts in $\C^+$ and in $\C^-$, as in Step 5 of the proof of Proposition \ref{propcone}. With $Q:=(1-\vartheta_t)a_{n,t}B_n$, which is continuous up to the real axis from each side, we obtain
$$\int_\C QP\,dA=-\int_{\C\setminus\R}P\,\pa_k\Big(\frac Q{i\pa_k\phi}\Big)dA-\frac1{2i}\int_\R\frac{Q^+-Q^-}{i\pa_k\phi}\,dk_1,$$
where the radial cutoffs are removed exactly as in that proof, by the decay for fixed $n$. On the real axis, $\vartheta_t=0$, $k_2=0$ and $B_n=\overline{\pa_xm_n}$, so that $|Q^\pm|\lesssim\rho$ by Lemma \ref{lemmaregular} (i) and \eqref{decayhalf}; since $|\pa_k\phi(k_1)|\gtrsim|t|(1+k_1^2)$, the boundary term is $O(\rho|t|^{-1})$. In the bulk term, the derivative falls on $\vartheta_t$, on $a_{n,t}$, on $B_n$ or on $(\pa_k\phi)^{-1}$. The derivative of $\vartheta_t$ is $O(\rho^{-1})$ on $D(k_\pm,2\rho)\setminus D(k_\pm,\rho)$, where $|a_{n,t}B_n|\lesssim\rho$ and $|\pa_k\phi|^{-1}\lesssim\rho$; it contributes $O(\rho^{-1}\cdot\rho\cdot\rho\cdot\rho^2)=O(\rho^3)$. By \eqref{boundBhalf} and \eqref{errorbounds}, the term with the derivative of $a_{n,t}$ is bounded by $C\rho\|\mathcal E'_{n,t}\|_{L^1}\lesssim\rho|t|^{-\frac12}\sigma_t=|t|^{-1}\sigma_t$. By \eqref{boundBhalf}, the derivative of $B_n$ contributes at most $C\rho\int(1-\vartheta_t)\frac{F}{|\pa_k\phi|}dA$, and the integral is $O\big(|t|^{-1}\int_{D_\pm}|k-k_\pm|^{-1}dA+|t|^{-1}\int F\jb k^{-2}dA\big)=O(|t|^{-1})$. Finally, the derivative of $(\pa_k\phi)^{-1}$ contributes at most $C\rho\int(1-\vartheta_t)\frac{F_1|\pa_k^2\phi|}{|\pa_k\phi|^2}dA$, and the integral is $O\big(|t|^{-1}\int_{D_\pm\setminus D(k_\pm,\rho)}|k-k_\pm|^{-2}dA+|t|^{-1}\int F_1\jb k^{-3}dA\big)=O(|t|^{-1}\log(2+|t|))$. Here we used the bounds on $\pa_k\phi$ and $\pa_k^2\phi$ of Lemma \ref{lemmaphase} (i) and (ii), and $\int F_1\jb k^{-1}dA<\infty$.\\

\noindent{\bf Step 4:} Conclusion. Collecting Step 3, and recalling $\rho=|t|^{-\frac12}$, we obtain
$$\sup_{n\ge1}\ \sup_{X/t\in V}\ |u_n(t,X)-S(t)g_n(X)|\lesssim|t|^{-1}\sigma_t+|t|^{-\frac32}\log(2+|t|),$$
and $\sigma_t\to0$ proves \eqref{hypcone}.
\end{proof}
\subsection{Data with half a moment}

We now show that condition (i) provides the assumptions of Proposition \ref{propcritical}. Both bounds of \eqref{hypshell2} are read off the formula \eqref{defa}, $a(k)=-i\sgn(k_2)\widehat{(u_0\chi_k)}(\zeta(k))$. The decay of a Fourier transform at a frequency $z\in\R^2\setminus\{0\}$ is controlled by the second differences of the function at the step $|h|\sim|z|^{-1}$: for $\Phi\in L^1(\R^2)$, the step $h:=\pi z/|z|^2$ satisfies $h\cdot z=\pi$, so that $\widehat{\delta_h\delta_h\Phi}(z)=(e^{ih\cdot z}-1)^2\hat\Phi(z)=4\hat\Phi(z)$ and
\be
\label{fouriersecond}
|\hat\Phi(z)|\le\frac1{8\pi}\,\|\delta_h\delta_h\Phi\|_{L^1},\qquad |h|=\frac\pi{|z|}.
\ee
Since the norm $\|\cdot\|_\cK$ costs half a derivative by \eqref{bernsteinK}, the regularity $B^2_{1,1}$ leaves $\frac32$ derivatives for the decay of $a$, which is the size bound of \eqref{hypshell2}. The modulus bound comes from the half moment: when the frequency moves by $\nu\in\R^2$, the phase $e^{-iX\cdot z}$ changes by a factor $e^{-iX\cdot\nu}$, and $|e^{-iX\cdot\nu}-1|\le(2|X||\nu|)^{\frac12}$.

We write $\delta_h^2:=\delta_h\delta_h$ and, for $u_0$ with $\jb X^{\frac12}u_0\in B^2_{1,1}$, we define the nondecreasing modulus of smoothness
\be
\label{defvarpi}
\varpi(r):=\sup_{|h|\le r}\Big(\big\|\delta_h^2\big(\jb X^{\frac12}u_0\big)\big\|_\cK+\big\|\delta_h^2\big(\jb X^{\frac12}u_0\big)\big\|_{L^1}\Big)+r^2,\qquad r>0 .
\ee
The product rule \eqref{productrule} with $h'=h$ reads
$$\delta_h^2(cf)=(\delta_h^2c)\,\tau_{2h}f+2(\delta_hc)\,\tau_h\delta_hf+c\,\delta_h^2f,$$
and, for $Y\in\{L^1,\cK\}$, together with $\|cf\|_Y\le\|c\|_{L^\infty}\|f\|_Y$, $\|\delta_hf\|_Y\le|h|\|\nabla f\|_Y$ and the invariance of $\|\cdot\|_Y$ under translations, it gives
\be
\label{productmodulus}
\|\delta_h^2(cf)\|_Y\lesssim\|c\|_{W^{2,\infty}}\Big(|h|^2\big(\|f\|_Y+\|\nabla f\|_Y\big)+\|\delta_h^2f\|_Y\Big).
\ee

\begin{lemma}[Bounds on the shells for data with half a moment]
\label{lemmahalfdirect}
Let $u_0$ be real with $\jb X^{\frac12}u_0\in B^2_{1,1}(\R^2)$, and let $u_{0,n}$ be the functions given by Lemma \ref{lemmaapprox} with $\theta=\frac12$. After removal of finitely many indices $n$, the following holds.
\begin{enumerate}
\item The scattering coefficients $a_n$ of $u_{0,n}$ satisfy the assumptions \eqref{hypshell1} and \eqref{hypshell2} of Proposition \ref{propcritical}.
\item A subsequence of $(a_n)$ converges uniformly on the compact subsets of $\C\setminus\R$.
\end{enumerate}
\end{lemma}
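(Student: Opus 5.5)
Write $f_0:=\jb X^{\frac12}u_0\in B^2_{1,1}$. The first step is to turn the Besov assumption into control of the modulus $\varpi$ of \eqref{defvarpi}. For $g_j:=\Delta_jf_0$, Lemma \ref{lemmaembedding} (ii) gives $\|g_j\|_\cK\lesssim2^{j/2}\|g_j\|_{L^1}$, since $\hat g_j$ is supported in $\{|\xi|,|\eta|\le2^{j+1}\}$. Bernstein's inequality gives $\|\delta_h^2g_j\|_Y\lesssim\min(1,2^j|h|)^2\|g_j\|_Y$ for $Y\in\{L^1,\cK\}$. Summing over $j$ yields the bound
$$\sup_{|h|\le r}\|\delta_h^2f_0\|_\cK\lesssim\sum_j\min(1,2^jr)^22^{j/2}\|g_j\|_{L^1}.$$
We define $D_j:=2^{2j}\|g_j\|_{L^1}$, which is summable by condition (i). We also need the approximations $u_{0,n}$ to be controlled uniformly in $n$. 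For this we use \eqref{weightconv} and \eqref{productmodulus} with $c=\mu_Z\vartheta(\cdot/n)$. This gives a uniform modulus $\varpi_n\lesssim\varpi+r^2$ for $\jb X^{\frac12}u_{0,n}$, and uniform bounds on $\|u_{0,n}\|_{\cK}$, $\|\nabla u_{0,n}\|_{\cK\cap L^1}$ and similar quantities. These only use the $B^{3/2}$-type part of the norm, so they are bounded.

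\textbf{Size bound.} On $\Omega$ (Lemma \ref{lemmagreenop} (iii)), uniformly for $n$ large, we apply \eqref{fouriersecond} to $\Phi=u_{0,n}\chi_k$ at $z=\zeta(k)$ with $|h|=\pi/|z|$. By \eqref{productrule}, $\delta_h^2(u\chi)$ involves $\delta_h^2u$, $\delta_hu\,\delta_h\chi$ and $u\,\delta_h^2\chi$. Lemma \ref{lemmadifferences} (i), (ii), applied to $g=\chi_k-1$, bounds $\delta_h\chi$ and $\delta_h^2\chi$ in $L^\infty$ by $\cK$-norms of differences of $u_0$. These are at most $\varpi(|h|)$, and $\|\delta_hu\|_\cK\lesssim|h|$ by the gradient bound. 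The weight $\jb X^{\frac12}$ is not needed here; we drop it using $|u|\le|\jb X^{\frac12}u|$ and \eqref{productmodulus} with $c=\jb X^{-1/2}$. This should give $|a_n(k)|\lesssim\varpi(\pi/|\zeta|)$, which is $\lesssim\jb\zeta^{-3/2}\sum_j\min(1,2^j/|\zeta|)^2\,2^{-3j/2}D_j\cdot\jb\zeta^{3/2}$. The resulting quantity is $O(\jb\zeta^{-3/2})$ because each term is bounded by $(2^j/|\zeta|)^{\min(2,3/2)}D_j$-type expressions; one must check that exponents $2>\frac32$ work on both sides of $2^j\sim|\zeta|$. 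On the compact complement $K$ of $\Omega$, we use Lemma \ref{lemmabootstrap} (i) with $F=C\jb\zeta^{-3/2}$, which has bounded potential. This gives the first bound of \eqref{hypshell2}.

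\textbf{Modulus bound.} For $k\in A_j$, $k'$ in the same half-plane, and $\delta=|k'-k|\jb k\le\delta_*$, we split $a_n(k')-a_n(k)$ into two pieces. The first is the phase change, $\widehat{u\chi_{k}}$ evaluated at $\zeta(k')$ versus $\zeta(k)$. The second is the eigenfunction change, $\widehat{u(\chi_{k'}-\chi_k)}(\zeta(k'))$. For the phase change we write $e^{-iX\cdot\zeta(k')}-e^{-iX\cdot\zeta(k)}$ and use $|e^{-iX\cdot\nu}-1|\lesssim(|X||\nu|)^{1/2}$ with $|\nu|\le8\delta$. We combine this with the second-difference trick, placing the weight $\jb X^{\frac12}$ onto $u_{0,n}$. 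We write $\hat\Phi(z')-\hat\Phi(z)=\frac14\big(\widehat{\delta_h^2\Phi}(z')-\widehat{\delta_h^2\Phi}(z)\big)$ up to an error from $h\cdot z'\ne\pi$, controlled by $\delta/|z|$. This should give $\delta^{1/2}\,\|\delta_h^2(\jb X^{\frac12}u\chi)\|_{L^1}\lesssim\delta^{1/2}\varpi(2^{-j})$. We then define $d_j(\delta)$ as $2^{3j/2}$ times the sum of this quantity and an $o(1)$-in-$\delta$ factor. Summability \eqref{hypshell1} follows from
$$\sum_j2^{3j/2}\varpi_{\rm pure}(2^{-j})\lesssim\sum_{j,i}2^{3j/2}\min(1,2^{i-j})^22^{-3i/2}D_i\lesssim\sum_iD_i<\infty,$$
together with dominated convergence as $\delta\to0$.

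The eigenfunction change must be controlled through $\pa_e\chi_k$ with Lemma \ref{lemmadifferences} (iii), as in \eqref{dkchi}. On $\Omega$ the difference $\chi_{k'}-\chi_k$ solves \eqref{eqg} with $f_2=u\chi$ and the kernel $(G_{k'}-G_k)$. This kernel is bounded by $|k'-k|$ (Lemma \ref{lemmagreen} (iii)) and also by $|G_k|+|G_{k'}|$. Interpolating these two bounds yields the gain $\delta^{1/2}$. Near $K$ we use the Hölder bound \eqref{littleholder} of Lemma \ref{lemmabootstrap} (ii) with $\theta=\frac12$. This holds because the tightness \eqref{tightmoment} is given by Lemma \ref{lemmaapprox} (i), and on $K$ only finitely many shells occur. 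Part (ii) then follows from the uniform size bound and \eqref{littleholder}, via Arzelà--Ascoli and a diagonal argument.

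\textbf{Main obstacle.} I expect the main obstacle to be the modulus bound on the high shells. There one needs the precise factor $2^{-3j/2}$ \emph{and} summable $d_j$ simultaneously, while the eigenfunction variation in $k$ also has to be estimated through second differences at step $2^{-j}$. I would first try to apply Lemma \ref{lemmadifferences} (ii) directly to $g=\pa_e\chi_k$ integrated along the segment $[k,k']$. Since $\pa_e\chi_k$ carries $|\pa_e\zeta|\lesssim\jb k$, this gives $\delta$ rather than $\delta^{1/2}$ per unit moment. That is acceptable if the moment falls on $u$ via the half moment only after interpolating with the trivial bound.
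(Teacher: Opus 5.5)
Your overall route is the paper's: the same split of $a_n(k')-a_n(k)$ into a phase change and an eigenfunction change, Fourier decay through the second-difference identity \eqref{fouriersecond}, Lemma \ref{lemmadifferences} for $\delta_h^2\chi$ and $\delta_h^2\pa_e\chi$, and Lemma \ref{lemmabootstrap} near $K$. The size bound and part (ii) are fine; for (ii), equicontinuity from \eqref{littleholder} on every compact set works.

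\textbf{The gap is in \eqref{hypshell1}.} Condition \eqref{hypshell2} already extracts the factor $\delta^{\frac12}2^{-\frac{3j}2}$. So \eqref{hypshell1} needs each $d_j(\delta)$ to tend to zero as $\delta\to0$, and also a summable dominating sequence in $j$.

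Your estimate of the phase-change term uses only $|e^{-iX\cdot\nu}-1|\lesssim(|X||\nu|)^{\frac12}$ and gives $\delta^{\frac12}\varpi(c2^{-j})$. This forces $d_j(\delta)\ge c\,2^{\frac{3j}2}\varpi(c2^{-j})$, which does not depend on $\delta$. Your appeal to dominated convergence as $\delta\to0$ therefore has nothing that converges to zero. None of your estimates on $\Omega$ produces the ``$o(1)$-in-$\delta$ factor'' you add; \eqref{littleholder} only covers finitely many shells near $K$.

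\textbf{The missing idea} is a second bound on the phase term: one that does not decay in $j$, but is $o(\delta^{\frac12})$ uniformly in $n$. It comes directly from $|\hat\Phi(z')-\hat\Phi(z)|\le\frac1{2\pi}\int|u\chi|\,|e^{-iX\cdot\nu}-1|\,dX$. Write $e^{-iX\cdot\nu}-1=|\nu|^{\frac12}\jb X^{\frac12}b_\nu$. Then $|b_\nu|\le\sqrt2$, $|b_\nu(X)|\le(|\nu|R)^{\frac12}$ for $|X|\le R$, and $\|b_\nu\|_{W^{2,\infty}}\lesssim1$. This gives the bound $|\nu|^{\frac12}\Upsilon(|\nu|)$, where $\Upsilon(s):=\sup_n\sup_{|\nu|\le s}\int|b_\nu|\jb X^{\frac12}|u_{0,n}|\,dX$. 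Moreover $\Upsilon(s)\to0$ by the uniform tightness \eqref{tightmoment} of the half moments (Lemma \ref{lemmaapprox} (i)).

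Combining the two bounds, the phase term is $O\big(\delta^{\frac12}\min(\varpi(2^{3-j}),\Upsilon(8\delta))\big)$. The first entry gives summability in $j$ through $\sum_l2^{\frac{3l}2}\varpi(2^{-l})<\infty$. The second gives $d_j(\delta)\to0$ for each fixed $j$. Neither alone yields \eqref{hypshell1}: the $\Upsilon$ bound by itself is not summable after multiplying by $2^{\frac{3j}2}$.

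\textbf{You have also misidentified the main obstacle.} The eigenfunction change is the easy term. On $\Omega$, $\pa_e\chi_k$ is bounded uniformly by \eqref{dkchi} and carries no factor $\jb k$; the factor $|\pa_e\zeta|\lesssim\jb k$ enters only through the phase. Lemma \ref{lemmadifferences} (ii) with $(f_1,f_2)=(0,u\chi)$ gives $\|\delta_h^2\pa_e\chi_k\|_{L^\infty}\lesssim\varpi(|h|)$.

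You can then integrate along $[k,k']\subset\Omega$, using Theorem \ref{thmgrin} for the $C^1$ regularity in $k$, and apply \eqref{fouriersecond} to $u\,\pa_e\chi$. This gives $O(|k'-k|\,\varpi(2^{4-j}))\le O(\delta\,\varpi(2^{4-j}))$, which already carries a vanishing factor $\delta^{\frac12}$.

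No interpolation between $|G_{k'}-G_k|\lesssim|k'-k|$ and $|G_k|+|G_{k'}|$ is needed. In any case, an $L^\infty$ bound on $\chi_{k'}-\chi_k$ alone would not produce the shell decay $2^{-\frac{3j}2}$.
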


\begin{proof}
{\bf Step 1:} Approximation. By the remark following Theorem \ref{thmmain}, $u_0\in L^2$, and $\jb X^{\frac12}u_0\in B^2_{1,1}\subset L^1$, so that Lemma \ref{lemmaapprox} applies. By Lemma \ref{lemmaapprox} (i), $u_{0,n}\to u_0$ in $L^1\cap L^2$, hence in $\cK$ by Lemma \ref{lemmaembedding} (i), and \eqref{tightmoment} holds with $\theta=\frac12$.\\

\noindent{\bf Step 2:} Common moduli of smoothness. We claim that
\be
\label{sumvarpi}
\begin{gathered}
\sum_{l\ge0}2^{\frac{3l}2}\varpi(2^{-l})<\infty,\\
\|\jb X^{\frac12}u_0\|_Y+\|\nabla(\jb X^{\frac12}u_0)\|_Y<\infty\qquad\mbox{for}\ Y\in\{L^1,\cK\},
\end{gathered}
\ee
and that, for all $n\ge1$, $h\in\R^2$ and $Y\in\{L^1,\cK\}$,
\be
\label{moduluscommon}
\begin{gathered}
\|\delta_h^2u_{0,n}\|_Y+\|\delta_h^2(\jb X^{\frac12}u_{0,n})\|_{L^1}\lesssim\varpi(|h|),\\
\|u_{0,n}\|_Y+\|\nabla u_{0,n}\|_Y+\|\jb X^{\frac12}u_{0,n}\|_{L^1}+\|\nabla(\jb X^{\frac12}u_{0,n})\|_{L^1}\lesssim1 .
\end{gathered}
\ee
Let $f\in B^2_{1,1}$. For $m\ge0$, the block $\Delta_mf$ is the convolution of itself with $\tilde\kappa_m:=2^{2m}\tilde\kappa(2^m\cdot)$, where $\tilde\kappa$ is a Schwartz function on $\R^2$ such that $\tilde\kappa\ast g=g$ whenever $\hat g$ is supported in $\{|(\xi,\eta)|\le2\}$. Since $\|\nabla\tilde\kappa_m\|_{L^1}\lesssim2^m$ and $\|\delta_h^2\tilde\kappa_m\|_{L^1}\lesssim\min(1,2^m|h|)^2$, \eqref{propK} gives, for $Y\in\{L^1,\cK\}$,
$$\|\nabla\Delta_mf\|_Y\lesssim2^m\|\Delta_mf\|_Y,\qquad \|\delta_h^2\Delta_mf\|_Y\lesssim\min(1,2^m|h|)^2\|\Delta_mf\|_Y,$$
and $\|\Delta_mf\|_\cK\lesssim2^{\frac m2}\|\Delta_mf\|_{L^1}$ by \eqref{bernsteinK}. Hence, with $f=\jb X^{\frac12}u_0$,
$$\begin{aligned}\sum_{l\ge0}2^{\frac{3l}2}\sup_{|h|\le2^{-l}}\|\delta_h^2f\|_\cK&\lesssim\sum_{m\ge0}2^{\frac m2}\|\Delta_mf\|_{L^1}\sum_{l\ge0}2^{\frac{3l}2}\min(1,2^{m-l})^2\\&\lesssim\sum_{m\ge0}2^{2m}\|\Delta_mf\|_{L^1},\end{aligned}$$
since $\sum_l2^{\frac{3l}2}\min(1,2^{2(m-l)})\lesssim2^{\frac{3m}2}$. The same computation in $L^1$ gives the bound $\|f\|_{B^{3/2}_{1,1}}$, and $\sum_l2^{\frac{3l}2}2^{-2l}<\infty$, which proves the first part of \eqref{sumvarpi}; the second part is bounded by $C\|f\|_{B^2_{1,1}}$ in the same way. We now prove \eqref{moduluscommon}. Since $u_{0,n}=\varrho_{\eps_n}\ast(c\,\jb X^{\frac12}u_0)$ with $c:=\vartheta(\cdot/n)\jb X^{-\frac12}$, and $\|c\|_{W^{2,\infty}}\lesssim1$ uniformly in $n$, the bound \eqref{productmodulus} with $f=\jb X^{\frac12}u_0$, together with $\|\varrho_\eps\ast f\|_Y\le\|f\|_Y$ and the commutation of $\delta_h^2$ with convolutions, gives $\|\delta_h^2u_{0,n}\|_Y\lesssim|h|^2+\varpi(|h|)\lesssim\varpi(|h|)$, since $\varpi(r)\ge r^2$. Similarly, the identity \eqref{weightconv} with $\theta=\frac12$, and the bound \eqref{productmodulus}, applied with $c=\mu_Z$ and then with $c=\vartheta(\cdot/n)$, give $\|\delta_h^2(\jb X^{\frac12}u_{0,n})\|_{L^1}\lesssim\varpi(|h|)$. The bounds of first order in \eqref{moduluscommon} follow in the same way from the second part of \eqref{sumvarpi}.\\

\noindent{\bf Step 3:} Bounds on $\Omega$. By Step 1 and Lemma \ref{lemmagreenop} (iii), there are $R_0\ge1$ and $\Lambda$ such that, for $n$ large, the operators $T_k$ of $u_{0,n}$ satisfy $\|T_k^{-1}\|\le\Lambda$ on the set $\Omega=\{|k|>R_0\}\cup\{|k_2|<R_0^{-1}\}$ of Lemma \ref{lemmagreenop} (iii); we remove the finitely many other indices $n$, which does not affect Steps 1 and 2. Fix $n$ and $k\in\Omega\setminus\R$, and write $u:=u_{0,n}$ and $\chi:=\chi_n(k;\cdot)$, where $\chi_n$ is the eigenfunction \eqref{defchi} of $u_{0,n}$, extended to $\C\setminus\R$ by Theorem \ref{thmgrin}. Let $h\in\R^2$ and $e\in\{1,i\}$.

By \eqref{derivativeschi}, \eqref{dkchi}, the equations of Lemma \ref{lemmadifferences} (iii) and \eqref{moduluscommon}, the functions $\chi$ and $\pa_e\chi$ and their first derivatives in $X$ are bounded, uniformly in $n$ and $k\in\Omega\setminus\R$; hence $\|\delta_h\chi\|_{L^\infty}+\|\delta_h\pa_e\chi\|_{L^\infty}\lesssim|h|$, and also $\|\delta_hu\|_\cK\lesssim|h|$. Lemma \ref{lemmadifferences} (ii) with $h'=h$, applied to $g=\chi-1$, gives $\|\delta_h^2\chi\|_{L^\infty}\lesssim\|\delta_h^2u\|_\cK+|h|^2\lesssim\varpi(|h|)$. By \eqref{productrule}, $\|\delta_h^2(u\chi)\|_{L^1}\lesssim\varpi(|h|)$, and Lemma \ref{lemmadifferences} (ii), applied to $g=\pa_e\chi$ with $(f_1,f_2)=(0,u\chi)$, gives $\|\delta_h^2\pa_e\chi\|_{L^\infty}\lesssim\varpi(|h|)$. By \eqref{productrule} and \eqref{productmodulus}, for every function $b$ on $\R^2$ with $\|b\|_{W^{2,\infty}}\le1$, the functions
$$\Phi\in\big\{u\chi,\ u\,\pa_e\chi,\ b\,\jb X^{\frac12}u\chi\big\}$$
satisfy $\|\Phi\|_{L^1}\lesssim1$ and $\|\delta_h^2\Phi\|_{L^1}\lesssim\varpi(|h|)$, uniformly in $n$, $k\in\Omega\setminus\R$ and $b$. By \eqref{fouriersecond},
\be
\label{fourierhalf}
|\hat\Phi(z)|\lesssim\min\Big(1,\varpi\Big(\frac\pi{|z|}\Big)\Big),\qquad z\in\R^2\setminus\{0\}.
\ee

{\it Size.} If $k\in A_j\cap\Omega$ with $j\ge1$, then $|\zeta(k)|^2=\jb{\zeta(k)}^2-1\ge4^{j-1}$, and \eqref{defa}, \eqref{fourierhalf} with $\Phi=u\chi$ and the monotonicity of $\varpi$ give $|a_n(k)|\lesssim\varpi(2^{3-j})$. For $j=0$, $|a_n|\lesssim1$.

{\it Modulus.} Let $k\in A_j$ and $k'\in\C$ with $k_2k'_2>0$ and $0<\delta:=|k'-k|\jb k\le\frac1{16}$, and assume that the segment $[k,k']$ is contained in $\Omega$. Let $z:=\zeta(k)$, $z':=\zeta(k')$ and $\nu:=z'-z$, so that $|k'-k|\le\delta$ and $|\nu|\le8\delta\le\frac12$. By \eqref{defa}, since $\sgn(k_2)=\sgn(k_2')$,
$$a_n(k')-a_n(k)=-i\sgn(k_2)\Big(\widehat{\big(u(\chi_n(k';\cdot)-\chi)\big)}(z')+\widehat{\big(u\chi\,(e^{-iX\cdot\nu}-1)\big)}(z)\Big).$$
By Theorem \ref{thmgrin}, $\chi_n(\cdot;X)$ is $C^1$ on $\C\setminus\R$, with the partial derivatives $\pa_1\chi_n$ and $\pa_i\chi_n$ given by \eqref{dkchi} on $\Omega\setminus\R$. Hence $s\mapsto\chi_n(k+s(k'-k);X)$ is $C^1$ on $[0,1]$, with derivative $\Re(k'-k)\,\pa_1\chi_n(k+s(k'-k);X)+\Im(k'-k)\,\pa_i\chi_n(k+s(k'-k);X)$. Integrating in $s$ and applying \eqref{fourierhalf} with $\Phi=u\,\pa_e\chi_n(k+s(k'-k);\cdot)$, the first term is $O\big(|k'-k|\min(1,\varpi(\pi/|z'|))\big)$. In the second term we write
$$e^{-iX\cdot\nu}-1=|\nu|^{\frac12}\jb X^{\frac12}\,b_\nu(X),\qquad b_\nu(X):=\frac{e^{-iX\cdot\nu}-1}{|\nu|^{\frac12}\jb X^{\frac12}} .$$
Since $|e^{-iX\cdot\nu}-1|\le\min(2,|\nu||X|)\le(2|\nu||X|)^{\frac12}$, we have $|b_\nu|\le\sqrt2$, $|b_\nu(X)|\le(|\nu|R)^{\frac12}$ for $|X|\le R$, and a direct computation gives $\|b_\nu\|_{W^{2,\infty}}\lesssim1$ for $0<|\nu|\le1$. By \eqref{fourierhalf} with $\Phi=b\,\jb X^{\frac12}u\chi$, $b=b_\nu/\|b_\nu\|_{W^{2,\infty}}$, and by the direct bound $|\widehat{b_\nu\jb X^{\frac12}u\chi}|\le\frac1{2\pi}\|\chi\|_{L^\infty}\int|b_\nu|\jb X^{\frac12}|u|\,dX$, the second term is $O\big(|\nu|^{\frac12}\min(\varpi(\pi/|z|),\Upsilon(|\nu|))\big)$, where
$$\Upsilon(s):=\sup_{n}\ \sup_{0<|\nu|\le s}\int_{\R^2}|b_\nu|\,\jb X^{\frac12}|u_{0,n}|\,dX .$$
For every $R\ge1$, the bounds on $b_\nu$ give
$$\Upsilon(s)\le(sR)^{\frac12}\sup_n\|\jb X^{\frac12}u_{0,n}\|_{L^1}+\sqrt2\sup_n\int_{\{|X|>R\}}\jb X^{\frac12}|u_{0,n}|\,dX .$$
By \eqref{tightmoment}, choosing $R$ large and then $s$ small shows that the nondecreasing function $\Upsilon$ tends to zero at $0$. If $j\ge2$, then $|z|\ge2^{j-1}$ and $|z'|\ge|z|-\frac12\ge2^{j-2}$. Altogether, since $\varpi$ is nondecreasing,
\be
\label{modulusomega}
|a_n(k')-a_n(k)|\lesssim\begin{cases}\delta\,\varpi(2^{4-j})+\delta^{\frac12}\min\big(\varpi(2^{3-j}),\Upsilon(8\delta)\big),& j\ge2,\\ \delta+\delta^{\frac12}\,\Upsilon(8\delta),& j\le1.\end{cases}
\ee

\noindent{\bf Step 4:} Bounds on the compact set $K:=\{|k|\le R_0,\ |k_2|\ge R_0^{-1}\}$. We have $(\C\setminus\R)\setminus K=\Omega\setminus\R$. By \eqref{sumvarpi}, $\varpi(2^{-l})\lesssim2^{-\frac{3l}2}$ for $l\ge0$, and $\varpi$ is bounded on bounded sets, so that $\varpi(2^{3-j})\lesssim2^{-\frac{3j}2}$ for $j\ge1$. Hence Step 3 gives $|a_n|\lesssim\jb{\zeta}^{-\frac32}\le F_{\frac34,\frac34}$ on $\Omega\setminus\R$, and $\cI F_{\frac34,\frac34}$ is bounded by Lemma \ref{lemmaweights} (i). By Step 1, Lemma \ref{lemmabootstrap} applies with $CF_{\frac34,\frac34}$ in place of $F$ and with $\theta=\frac12$. Let $K':=\{|k|\le R_0+1,\ |k_2|\ge(2R_0)^{-1}\}$. By Lemma \ref{lemmabootstrap} (i), the coefficients $a_n$ are uniformly bounded, and by Lemma \ref{lemmabootstrap} (ii), applied to the compact set $K'$, the nondecreasing function
$$\Upsilon_{K'}(s):=\sup_n\ \sup_{k,k'\in K',\ 0<|k'-k|\le s}\frac{|a_n(k')-a_n(k)|}{|k'-k|^{\frac12}}$$
tends to zero as $s\to0$; it is finite for every $s>0$, by \eqref{littleholder} and the uniform bound on $a_n$. Let $\delta_*:=\min(\frac1{16},(2R_0)^{-1})$, and let $k,k'$ be as in Step 3 with $\delta\le\delta_*$, but such that the segment $[k,k']$ is not contained in $\Omega$. Then $[k,k']$ meets $K$, and since $|k'-k|\le\delta\le\delta_*$, both $k$ and $k'$ belong to $K'$. Since $\jb k\ge1$,
\be
\label{moduluscompact}
|a_n(k')-a_n(k)|\le|k'-k|^{\frac12}\,\Upsilon_{K'}(|k'-k|)\le\delta^{\frac12}\,\Upsilon_{K'}(\delta).
\ee
Finally, $\jb{\zeta}$ is bounded on $K'$, so that $K'\subset A_0\cup\dots\cup A_{j_1}$ for some $j_1\ge1$.\\

\noindent{\bf Step 5:} Conclusion. By Step 4, and since the coefficients $a_n$ are uniformly bounded and $\jb\zeta$ is bounded on $K$, $|a_n|\le C_*\jb{\zeta}^{-\frac32}$ on $\C\setminus\R$ for some $C_*$ independent of $n$. Let $C_1\ge1$ be larger than the implicit constants of Steps 3 and 4, and set, for $j\ge0$ and $\delta\in(0,\delta_*]$,
$$\begin{aligned}d_j(\delta):=C_1\,2^{\frac{3j}2}\Big(&\big(\delta^{\frac12}\varpi(2^{4-j})+\min\big(\varpi(2^{3-j}),\Upsilon(8\delta)\big)\big)\1_{\{j\ge2\}}\\&+\big(\delta^{\frac12}+\Upsilon(8\delta)+\Upsilon_{K'}(\delta)\big)\1_{\{j\le j_1\}}\Big).\end{aligned}$$
By \eqref{modulusomega} and \eqref{moduluscompact}, the second bound of \eqref{hypshell2} holds; for instance, for $j\ge2$ and $[k,k']\subset\Omega$, \eqref{modulusomega} gives $|a_n(k')-a_n(k)|\le C_1\delta^{\frac12}\big(\delta^{\frac12}\varpi(2^{4-j})+\min(\varpi(2^{3-j}),\Upsilon(8\delta))\big)\le\delta^{\frac12}2^{-\frac{3j}2}d_j(\delta)$. The functions $d_j$ are nondecreasing. Each $d_j(\delta)$ tends to zero as $\delta\to0$, and $d_j(\delta)\le C_12^{\frac{3j}2}\big(\varpi(2^{4-j})+\varpi(2^{3-j})\big)+C\1_{\{j\le j_1\}}$, which is summable in $j$ by \eqref{sumvarpi}; by dominated convergence, $\sum_jd_j(\delta)\to0$ as $\delta\to0$. This proves (i). Finally, by \eqref{hypshell2}, the functions $a_n$ are uniformly bounded and equicontinuous on every compact subset of $\C\setminus\R$, and the Arzel\`a--Ascoli theorem, with a diagonal argument, gives (ii).
\end{proof}

\subsection{Proof of Theorem \ref{thmmain} under condition (i)}

We combine Lemma \ref{lemmahalfdirect} with Proposition \ref{propcritical}. In contrast with Section \ref{sectiontensor}, the common majorant of the coefficients is square integrable for the measure $|k_2|dA$, so that the mass of the scattering state is identified by dominated convergence.

\begin{proof}[Proof of Theorem \ref{thmmain} under condition (i)]
Let $u_{0,n}$ be the functions of Lemma \ref{lemmahalfdirect}, so that $u_{0,n}\to u_0$ in $L^1\cap L^2$ by Lemma \ref{lemmaapprox} (i). By Lemma \ref{lemmahalfdirect} (ii), after extraction of a subsequence, the coefficients $a_n$ of $u_{0,n}$ converge to a limit $a$ uniformly on the compact subsets of $\C\setminus\R$.\\

\noindent{\bf Step 1:} Uniform asymptotics. Each $a_n$ satisfies \eqref{hypa} by Proposition \ref{propdirect}, and the function reconstructed from $a_n$ is the solution $u_n$ of \eqref{kp} with datum $u_{0,n}$, by Corollary \ref{corsmooth}. By Lemma \ref{lemmahalfdirect} (i) and Proposition \ref{propcritical}, \eqref{hypcone} holds with $g_n:=\cB a_n$, for every compact set $V\subset\cV$.\\

\noindent{\bf Step 2:} The scattering state. By \eqref{hypshell2}, $|a_n|\le F=C_*\jb{\zeta}^{-\frac32}$ for every $n$, hence $|a|\le F$. By \eqref{changevariables},
$$8\int_\C|k_2|F(k)^2\,dA(k)=C_*^2\int_{\R^2}\jb{(\xi,\eta)}^{-3}\,d\xi\,d\eta<\infty,$$
and the dominated convergence theorem gives $\|\cB a_n-\cB a\|_{L^2}^2=8\int|k_2||a_n-a|^2\,dA\to0$. By \eqref{plancherelsmooth}, $\|\cB a_n\|_{L^2}=\|u_{0,n}\|_{L^2}\to\|u_0\|_{L^2}$. Hence $u_+:=\cB a$ satisfies $\cB a_n\to u_+$ in $L^2$ and $\|u_+\|_{L^2}=\|u_0\|_{L^2}$.\\

\noindent{\bf Step 3:} Conclusion. The argument of Step 4 of the proof of Theorem \ref{thmmain} under condition (ii) applies without change: the continuity of the $L^2$ flow, \eqref{mass} and Lemma \ref{lemmaweak} give $\|u(t)-S(t)u_+\|_{L^2}\to0$ as $t\to\pm\infty$. This proves Theorem \ref{thmmain} under condition (i), with $u_+=u_-=\cB a$.
\end{proof}


\begin{thebibliography}{99}

\bibitem{ABF} Ablowitz, M. J.; Bar Yaacov, D.; Fokas, A. S., {\em On the inverse scattering transform for the Kadomtsev--Petviashvili equation}, Stud. Appl. Math. 69 (1983), 135--143.

\bibitem{AblowitzClarkson} Ablowitz, M. J.; Clarkson, P. A., {\em Solitons, nonlinear evolution equations and inverse scattering}, London Mathematical Society Lecture Note Series 149, Cambridge University Press, Cambridge, 1991.

\bibitem{AizenmanSimon} Aizenman, M.; Simon, B., {\em Brownian motion and Harnack inequality for Schr\"odinger operators}, Comm. Pure Appl. Math. 35 (1982), 209--273.

\bibitem{BealsCoifman} Beals, R.; Coifman, R. R., {\em Linear spectral problems, non-linear equations and the $\bar\partial$-method}, Inverse Problems 5 (1989), 87--130.

\bibitem{Benjamin67} Benjamin, T. B., {\em Internal waves of permanent form in fluids of great depth}, J. Fluid Mech. 29 (1967), 559--592.

\bibitem{Bers} Bers, L., {\em Theory of pseudo-analytic functions}, Institute for Mathematics and Mechanics, New York University, New York, 1953.

\bibitem{BlackstoneGGM24a} Blackstone, E.; Gassot, L.; G\'erard, P.; Miller, P. D., {\em The Benjamin--Ono initial-value problem for rational data with application to long time asymptotics and scattering}, Ann. Inst. H. Poincar\'e C Anal. Non Lin\'eaire (2025), published online, \href{https://doi.org/10.4171/AIHPC/169}{doi:10.4171/AIHPC/169}.

\bibitem{BorgheseJMcL18} Borghese, M.; Jenkins, R.; McLaughlin, K. D. T.-R., {\em Long time asymptotic behavior of the focusing nonlinear Schr\"odinger equation}, Ann. Inst. H. Poincar\'e Anal. Non Lin\'eaire 35 (2018), 887--920.

\bibitem{Bourgain} Bourgain, J., {\em On the Cauchy problem for the Kadomtsev--Petviashvili equation}, Geom. Funct. Anal. 3 (1993), 315--341.

\bibitem{ChenLiu21} Chen, G.; Liu, J., {\em Soliton resolution for the focusing modified KdV equation}, Ann. Inst. H. Poincar\'e Anal. Non Lin\'eaire 38 (2021), 2005--2071.

\bibitem{Chen25} Chen, X., {\em Scattering of the defocusing Calogero--Moser derivative nonlinear Schr\"odinger equation}, preprint (2025), \href{https://arxiv.org/abs/2511.06432}{arXiv:2511.06432}.

\bibitem{Chen26} Chen, X., {\em $L^2$ soliton resolution for the Benjamin--Ono equation}, preprint (2026), \href{https://hal.science/hal-05713824v1}{hal-05713824}.

\bibitem{CuccJenk16} Cuccagna, S.; Jenkins, R., {\em On the asymptotic stability of $N$-soliton solutions of the defocusing nonlinear Schr\"odinger equation}, Comm. Math. Phys. 343 (2016), 921--969.

\bibitem{BouardMartel} de Bouard, A.; Martel, Y., {\em Non existence of $L^2$-compact solutions of the Kadomtsev--Petviashvili II equation}, Math. Ann. 328 (2004), 525--544.

\bibitem{BouardSaut} de Bouard, A.; Saut, J.-C., {\em Solitary waves of generalized Kadomtsev--Petviashvili equations}, Ann. Inst. H. Poincar\'e Anal. Non Lin\'eaire 14 (1997), 211--236.

\bibitem{DeiftVZ94} Deift, P.; Venakides, S.; Zhou, X., {\em The collisionless shock region for the long-time behavior of solutions of the KdV equation}, Comm. Pure Appl. Math. 47 (1994), 199--206.

\bibitem{DeiftZhou93} Deift, P.; Zhou, X., {\em A steepest descent method for oscillatory Riemann--Hilbert problems. Asymptotics for the MKdV equation}, Ann. of Math. (2) 137 (1993), 295--368.

\bibitem{DonmazovLiuPerry} Donmazov, S.; Liu, J.; Perry, P., {\em Large-time asymptotics for the Kadomtsev--Petviashvili I equation}, preprint (2024), \href{https://arxiv.org/abs/2409.14480}{arXiv:2409.14480}.

\bibitem{Dryuma} Dryuma, V. S., {\em Analytic solution of the two-dimensional Korteweg--de Vries (KdV) equation}, JETP Lett. 19 (1974), 387--388.

\bibitem{DuyKM22} Duyckaerts, T.; Kenig, C.; Martel, Y.; Merle, F., {\em Soliton resolution for critical co-rotational wave maps and radial cubic wave equation}, Comm. Math. Phys. 391 (2022), 779--871.

\bibitem{DuyKM23} Duyckaerts, T.; Kenig, C.; Merle, F., {\em Soliton resolution for the radial critical wave equation in all odd space dimensions}, Acta Math. 230 (2023), 1--92.

\bibitem{EckSch83} Eckhaus, W.; Schuur, P., {\em The emergence of solitons of the Korteweg--de Vries equation from arbitrary initial conditions}, Math. Methods Appl. Sci. 5 (1983), 97--116.

\bibitem{GGKM} Gardner, C. S.; Greene, J. M.; Kruskal, M. D.; Miura, R. M., {\em Method for solving the Korteweg--de Vries equation}, Phys. Rev. Lett. 19 (1967), 1095--1097.

\bibitem{GGM26} Gassot, L.; G\'erard, P.; Miller, P. D., {\em A proof of the soliton resolution conjecture for the Benjamin--Ono equation}, preprint (2026), \href{https://arxiv.org/abs/2601.10488}{arXiv:2601.10488}.

\bibitem{Gerard22} G\'erard, P., {\em An explicit formula for the Benjamin--Ono equation}, Tunis. J. Math. 5 (2023), 593--603.

\bibitem{GeLe-24} G\'erard, P.; Lenzmann, E., {\em The Calogero--Moser derivative nonlinear Schr\"odinger equation}, Comm. Pure Appl. Math. 77 (2024), 4008--4062.

\bibitem{GilbargTrudinger} Gilbarg, D.; Trudinger, N. S., {\em Elliptic partial differential equations of second order}, Classics in Mathematics, Springer-Verlag, Berlin, 2001.

\bibitem{Grin} Grinevich, P. G., {\em Nonsingularity of the direct scattering transform for the KP II equation with a real exponentially decaying-at-infinity potential}, Lett. Math. Phys. 40 (1997), 59--73, \href{https://arxiv.org/abs/solv-int/9509010}{arXiv:solv-int/9509010}.

\bibitem{Hadac} Hadac, M., {\em Well-posedness for the Kadomtsev--Petviashvili II equation and generalisations}, Trans. Amer. Math. Soc. 360 (2008), 6555--6572.

\bibitem{HHK} Hadac, M.; Herr, S.; Koch, H., {\em Well-posedness and scattering for the KP-II equation in a critical space}, Ann. Inst. H. Poincar\'e Anal. Non Lin\'eaire 26 (2009), 917--941.

\bibitem{HHKcorr} Hadac, M.; Herr, S.; Koch, H., {\em Erratum to ``Well-posedness and scattering for the KP-II equation in a critical space''}, Ann. Inst. H. Poincar\'e Anal. Non Lin\'eaire 27 (2010), 971--972.
\bibitem{Hadama} Hadama, C., {\em Small-data L2 theory for the intermediate NLS and the Calogero–Moser derivative NLS}, \href{https://arxiv.org/abs/2608.01138}{arXiv:2608.01138}.

\bibitem{Hormander} H\"ormander, L., {\em The analysis of linear partial differential operators I. Distribution theory and Fourier analysis}, second edition, Grundlehren der Mathematischen Wissenschaften 256, Springer-Verlag, Berlin, 1990.

\bibitem{IfrimTataru2019} Ifrim, M.; Tataru, D., {\em Well-posedness and dispersive decay of small data solutions for the Benjamin--Ono equation}, Ann. Sci. \'Ec. Norm. Sup\'er. (4) 52 (2019), 297--335.

\bibitem{IsazaMejia} Isaza, P.; Mej\'ia, J., {\em Local and global Cauchy problems for the Kadomtsev--Petviashvili (KP-II) equation in Sobolev spaces of negative indices}, Comm. Partial Differential Equations 26 (2001), 1027--1054.

\bibitem{JenLaw23} Jendrej, J.; Lawrie, A., {\em Soliton resolution for the energy-critical nonlinear wave equation in the radial case}, Ann. PDE 9 (2023), Paper No. 18.

\bibitem{JenLaw25} Jendrej, J.; Lawrie, A., {\em Soliton resolution for energy-critical wave maps in the equivariant case}, J. Amer. Math. Soc. 38 (2025), 783--875.

\bibitem{JenkLPS18} Jenkins, R.; Liu, J.; Perry, P.; Sulem, C., {\em Soliton resolution for the derivative nonlinear Schr\"odinger equation}, Comm. Math. Phys. 363 (2018), 1003--1049.

\bibitem{KP} Kadomtsev, B. B.; Petviashvili, V. I., {\em On the stability of solitary waves in weakly dispersing media}, Soviet Phys. Dokl. 15 (1970), 539--541; Russian original: Dokl. Akad. Nauk SSSR 192 (1970), 753--756.

\bibitem{KLMV26} Killip, R.; Laurens, T.; Murphy, J.; Vi\c{s}an, M., {\em Dispersive decay and scattering for continuum Calogero--Moser models}, \href{https://arxiv.org/abs/2609.12985}{arXiv:2609.12985}.

\bibitem{KillipLaurensVisan23-BO} Killip, R.; Laurens, T.; Vi\c{s}an, M., {\em Sharp well-posedness for the Benjamin--Ono equation}, Invent. Math. 236 (2024), 999--1054.

\bibitem{KLV25} Killip, R.; Laurens, T.; Vi\c{s}an, M., {\em Scaling-critical well-posedness for continuum Calogero--Moser models on the line}, Commun. Amer. Math. Soc. 5 (2025), 284--320.

\bibitem{KimKwon} Kim, T.; Kwon, S., {\em Soliton resolution for Calogero--Moser derivative nonlinear Schr\"odinger equation}, J. Eur. Math. Soc., to appear, \href{https://arxiv.org/abs/2408.12843}{arXiv:2408.12843}.

\bibitem{Kis} Kiselev, O. M., {\em Asymptotic behaviour of a solution for Kadomtsev--Petviashvili-2 equation}, preprint (2000), \href{https://arxiv.org/abs/math-ph/0003014}{arXiv:math-ph/0003014}.

\bibitem{KleinS21} Klein, C.; Saut, J.-C., {\em Nonlinear dispersive equations---inverse scattering and PDE methods}, Applied Mathematical Sciences 209, Springer, Cham, 2021.

\bibitem{LannesSaut} Lannes, D.; Saut, J.-C., {\em Weakly transverse Boussinesq systems and the KP approximation}, Nonlinearity 19 (2006), 2853--2875.

\bibitem{MMPP} Mendez, A. J.; Mu\~noz, C.; Poblete, F.; Pozo, J. C., {\em Long time asymptotics of large data in the Kadomtsev--Petviashvili models}, Nonlinearity 37 (2024), Paper No. 055017.

\bibitem{Mizumachi} Mizumachi, T., {\em Stability of line solitons for the KP-II equation in $\R^2$}, Mem. Amer. Math. Soc. 238 (2015), no. 1125.

\bibitem{MizumachiTzvetkov} Mizumachi, T.; Tzvetkov, N., {\em Stability of the line soliton of the KP-II equation under periodic transverse perturbations}, Math. Ann. 352 (2012), 659--690.

\bibitem{NachmanRegevTataru} Nachman, A.; Regev, I.; Tataru, D., {\em A nonlinear Plancherel theorem with applications to global well-posedness for the defocusing Davey--Stewartson equation and to the inverse boundary value problem of Calder\'on}, Invent. Math. 220 (2020), 395--451.

\bibitem{Novikov92} Novikov, R. G., {\em The inverse scattering problem on a fixed energy level for the two-dimensional Schr\"odinger operator}, J. Funct. Anal. 103 (1992), 409--463.

\bibitem{Perry} Perry, P., {\em Global well-posedness and long-time asymptotics for the defocussing Davey--Stewartson II equation in $H^{1,1}(\C)$}, J. Spectr. Theory 6 (2016), 429--481.

\bibitem{Saut93} Saut, J.-C., {\em Remarks on the generalized Kadomtsev--Petviashvili equations}, Indiana Univ. Math. J. 42 (1993), 1011--1026.

\bibitem{SchmeisserTriebel} Schmeisser, H.-J.; Triebel, H., {\em Topics in Fourier analysis and function spaces}, John Wiley \& Sons, Chichester, 1987.

\bibitem{Simon} Simon, B., {\em Trace ideals and their applications}, second edition, Mathematical Surveys and Monographs 120, American Mathematical Society, Providence, RI, 2005.

\bibitem{Stein} Stein, E. M., {\em Singular integrals and differentiability properties of functions}, Princeton Mathematical Series 30, Princeton University Press, Princeton, NJ, 1970.

\bibitem{Sung} Sung, L.-Y., {\em An inverse scattering transform for the Davey--Stewartson II equations. I, II, III}, J. Math. Anal. Appl. 183 (1994), 121--154, 289--325, 477--494.

\bibitem{TakaokaTzvetkov} Takaoka, H.; Tzvetkov, N., {\em On the local regularity of the Kadomtsev--Petviashvili-II equation}, Internat. Math. Res. Notices 2001, no. 2, 77--114.

\bibitem{Triebel} Triebel, H., {\em Theory of function spaces}, Monographs in Mathematics 78, Birkh\"auser Verlag, Basel, 1983.

\bibitem{Ukai} Ukai, S., {\em Local solutions of the Kadomtsev--Petviashvili equation}, J. Fac. Sci. Univ. Tokyo Sect. IA Math. 36 (1989), 193--209.

\bibitem{Vekua} Vekua, I. N., {\em Generalized analytic functions}, Pergamon Press, London; Addison-Wesley, Reading, MA, 1962.

\bibitem{Wick87} Wickerhauser, M. V., {\em Inverse scattering for the heat operator and evolutions in $2+1$ variables}, Comm. Math. Phys. 108 (1987), 67--89.

\bibitem{Wick} Wickerhauser, M. V., {\em Hamilton's form for the Kadomtsev--Petviashvili equation}, J. Math. Phys. 29 (1988), 2300--2302.

\bibitem{Wu} Wu, D., {\em The inverse scattering theory of Kadomtsev--Petviashvili II equations}, J. Nonlinear Waves 1 (2025), Paper No. e4.

\bibitem{WuLT} Wu, D., {\em Long time asymptotics for the KPII equation}, preprint (2025), \href{https://arxiv.org/abs/2509.02907}{arXiv:2509.02907}.

\bibitem{ZakharovShabat} Zakharov, V. E.; Shabat, A. B., {\em A scheme for integrating the nonlinear equations of mathematical physics by the method of the inverse scattering problem. I}, Funct. Anal. Appl. 8 (1974), 226--235.

\end{thebibliography}
\end{document}